\documentclass[12pt]{article}
\setlength{\oddsidemargin}{0.25in}
\setlength{\evensidemargin}{0.25in}
\setlength{\textwidth}{6in}
\setlength{\headsep}{0in}
\setlength{\headheight}{0in}
\setlength{\topmargin}{0.25in}
\setlength{\textheight}{8.5in}

\title{A topological proof of the Shapiro--Shapiro conjecture}

\makeatletter\def\@fnsymbol#1{\ensuremath{\ifcase#1\or \dagger\or \ddagger\or\mathsection\or \mathparagraph\or \|\or **\or \dagger\dagger \or \ddagger\ddagger \else\@ctrerr\fi}}\makeatother

\author{Jake Levinson and Kevin Purbhoo\thanks{Research supported by NSERC Discovery Grant RGPIN-04741-2018.}}

\usepackage{latexsym, amsmath, amscd, hyperref, bbm}
\usepackage[shortlabels]{enumitem}
\usepackage{amsthm}
\usepackage{graphicx, xspace, ifthen, rotating, pict2e}
\usepackage[svgnames]{xcolor}
\usepackage[charter]{mathdesign}
\usepackage{algpseudocode, algorithmicx}
\usepackage{tikz}

\usepackage{young}
\ysetshade{Blue!30}
\ysetaltshade{Green!50}
\YSetShade{Green!30}
\YSetAltShade{Purple!45}

\setcounter{MaxMatrixCols}{20}
\allowdisplaybreaks[1]
\setcounter{totalnumber}{1} 

\newcommand{\bigmid}{\ \big|\ }

%

\newcommand{\CC}{\mathbb{C}}
\newcommand{\FF}{\mathbb{F}}

\newcommand{\QQ}{\mathbb{Q}}
\newcommand{\RR}{\mathbb{R}}

\newcommand{\calU}{\mathcal{U}}

\newcommand{\boldw}{\mathbf{w}}
\newcommand{\boldx}{\mathbf{x}}
\newcommand{\boldv}{\mathbf{v}}

\newcommand{\rmx}{\mathrm{x}}

%
%
\newcommand{\nth}{\ensuremath{^\text{th}}\xspace}
\newcommand{\Plucker}{Pl\"ucker\xspace}

%
%

\newcommand{\PGL}{\mathrm{PGL}}
\newcommand{\GL}{\mathrm{GL}}
\newcommand{\Wr}{\mathrm{Wr}}
\newcommand{\Gr}{\mathrm{Gr}}

\newcommand{\OG}{\mathrm{OG}(n,2n{+}1)}

\newcommand{\ddt}{{\textstyle \frac{d}{dt}}}

\newcommand{\imag}{\mathbbm{i}}
\newcommand{\half}{{\mathchoice
{\tfrac{1}{2}}
{\tfrac{1}{2}}
{\frac{1}{2}}
{\frac{1}{2}}
}}

\newcommand{\Spec}{\mathop{\mathrm{Spec}}}

\newcommand{\vdim}{\mathrm{vdim}}
\newcommand{\Sym}{\mathrm{Sym}}
\newcommand{\sgn}{\mathrm{sgn}}

\newcommand{\inv}{\mathrm{inv}}

\newcommand{\discriminant}{\mathrm{Disc}}

%
%
\newcommand{\SYT}{\mathsf{SYT}}
\newcommand{\Tab}{\mathsf{Tab}}
\newcommand{\MN}{\mathsf{MN}}

\newcommand{\shape}{\mathrm{shape}}
\newcommand{\rows}{\mathrm{rows}}
\newcommand{\numsyt}[1]{\mathsf{f}^{#1}}

\newcommand{\one}{{\mathchoice%
{\yng[bb][1ex](1)}
{\yng[bb][1ex](1)}
{\yng[bb][.7ex](1)}
{\yng[bb][.4ex](1)}
}}
\newcommand{\onesmall}{{\mathchoice%
{\yng[bb][1ex](1)}
{\yng[bb][1ex](1)}
{\yng[bb][.5ex](1)}
{\yng[bb][.3ex](1)}
}}
\newcommand{\hdomino}{{\mathchoice%
{\yng[bb][1ex](2)}
{\yng[bb][1ex](2)}
{\yng[bb][.5ex](2)}
{\yng[bb][.3ex](2)}
}}
\newcommand{\vdomino}{{\mathchoice%
{\yng[c][1ex](1,1)}
{\yng[c][1ex](1,1)}
{\yng[bb][.5ex](1,1)}
{\yng[bb][.3ex](1,1)}
}}
\newcommand{\twoskew}{{\mathchoice%
{\yng[c][1ex](,1,1)}
{\yng[c][1ex](,1,1)}
{\yng[bb][.5ex](,1,1)}
{\yng[bb][.3ex](,1,1)}
}}
\newcommand{\twoone}{{\mathchoice%
{\yng[c][1ex](2,1)}
{\yng[c][1ex](2,1)}
{\yng[bb][.5ex](2,1)}
{\yng[bb][.3ex](2,1)}
}}

%
%
\newcommand{\conjugate}[1]{#1^*}
\newcommand{\dualsgn}{\sgn^*}
\newcommand{\ambsgn}{\mathrm{asgn}}
\newcommand{\borelgroup}{B_+}
\newcommand{\scell}{\mathcal{X}^\lambda}
\newcommand{\scellclosure}{\smash{\overline{\mathcal{X}}}^\lambda}
\newcommand{\scellb}[1]{\mathcal{X}^{#1}}
\newcommand{\conjscell}{\mathcal{X}^{\conjugate{\lambda}}}
\newcommand{\monics}{\mathcal{P}_n}
\newcommand{\monicsb}[1]{\mathcal{P}_{#1}}
\newcommand{\richmonics}{\mathcal{P}_n^\circ}
\newcommand{\monicsclosure}{\overline{\mathcal{P}_n}}
\newcommand{\modsc}[1]{\overline{\mathcal{M}}_{0,#1}}
\newcommand{\openmodsc}[1]{\mathcal{M}_{0,#1}}
\newcommand{\sgchar}{\chi^\lambda}
\newcommand{\skewsgchar}{\chi^{\lambda/\lambda'}}
\newcommand{\conjsgchar}{\chi^{\conjugate{\lambda}}}
\newcommand{\affinespace}{\mathbf{A}}
\newcommand{\projspace}{\mathbf{P}}
\newcommand{\tspace}{\mathbf{T}}
\newcommand{\component}{\mathcal{C}}
\newcommand{\schubert}{X_\lambda}
\newcommand{\schubertopposite}{X_{\lambda^\vee}}
\newcommand{\schubertone}{X_\one}
\newcommand{\hdominoschubert}{X_\hdomino}
\newcommand{\vdominoschubert}{X_\vdomino}
\newcommand{\MNpoint}{\boldw}
\newcommand{\openrich}{\mathcal{X}^{\lambda/\lambda'}}
\newcommand{\Wrich}{\Wr_{\lambda/\lambda'}}
\newcommand{\rich}{X^\lambda_\kappa}

\newcommand{\richfamily}{E^\lambda_\kappa}
\newcommand{\richfamilyclosure}{\overline{E}}
\newcommand{\hdominovar}{Z^\lambda_\hdomino}
\newcommand{\vdominovar}{Z^\lambda_\vdomino}
\newcommand{\dominovar}{Z^\lambda_\kappa}
\newcommand{\hdominofcn}{\Phi^\lambda_\hdomino}
\newcommand{\vdominofcn}{\Phi^\lambda_\vdomino}
\newcommand{\dominofcn}{\Phi^\lambda_\kappa}
\newcommand{\critvar}{R^\lambda}
\newcommand{\discrimvar}{\varDelta_n}
\newcommand{\rd}{R}
\newcommand{\rdsmooth}{R^\mathrm{sm}}
\newcommand{\laurentC}{\CC(\!(u)\!)}
\newcommand{\laurentR}{\RR(\!(u)\!)}

\newcommand{\fpsR}{\RR[\![u]\!]}
\newcommand{\dualityiso}{\delta}
\newcommand{\maxideal}{\mathfrak{m}}
\newcommand{\finitemap}{\psi}
\newcommand{\jacobian}[1]{\partial#1}
\newcommand{\mfamily}{\smash{\overline{\mathcal{Y}}}^\lambda}
\newcommand{\richmfamily}{\smash{\overline{\mathcal{Y}}}^{\lambda/\lambda'}}
\newcommand{\openmfamily}{\mathcal{Y}^\lambda}
\newcommand{\mmap}{\Psi}
\newcommand{\mfpoint}{\textbf{\textit{y}}}
\newcommand{\mtosmap}{\pi}
\newcommand{\pol}{\mathrm{pol}}
\newcommand{\mbase}{\modsc{n+3}}
\newcommand{\openmbase}{\openmodsc{n+3}}
\newcommand{\mGr}{\widetilde{\mathrm{Gr}}(d,d+m)}
\newcommand{\tmax}{t_\mathrm{max}}
\newcommand{\mschubert}{\widetilde{X}_\lambda}
\newcommand{\mschubertlambdaprime}{\widetilde{X}_{\lambda'}}
\newcommand{\mschubertone}{\widetilde{X}_\one}

\newcommand{\mschubertopposite}{\widetilde{X}_{\lambda^\vee}}
\newcommand{\bigfibre}{Q}
\newcommand{\bigfibrefactor}[1]{Q_T^{(#1)}}
\newcommand{\mfibre}{Y}
\newcommand{\mfibrefactor}[1]{Y_T^{(#1)}}
\newcommand{\symgroup}{\mathfrak{S}}

\newenvironment{packeditemize}{
\begin{itemize}[topsep=1ex]
  \setlength{\itemsep}{0pt}
  \setlength{\parskip}{.4ex}
}{\end{itemize}}

%
%

%
%
\newtheorem{lemma}{Lemma}[section]
\newtheorem{theorem}[lemma]{Theorem}

\newtheorem{proposition}[lemma]{Proposition}
\newtheorem{corollary}[lemma]{Corollary}

\newenvironment{restatetheorem}[1]
   {\begingroup \newtheorem*{theoremx}{#1}\begin{theoremx}\em}
   {\end{theoremx}\endgroup}

\theoremstyle{definition}
\newtheorem{example}[lemma]{Example}
\newtheorem{definition}[lemma]{Definition}
\newtheorem{remark}[lemma]{Remark}

\numberwithin{equation}{section}
\numberwithin{figure}{section}
\numberwithin{table}{section}

\definecolor{DarkBlue}{rgb}{0, 0.1, 0.55}
\definecolor{DarkRed}{rgb}{0.45, 0, 0}

\newcommand{\defn}[1]{\textbf{\textit{#1}}}

\begin{document}
\maketitle

\begin{abstract}
We prove a generalization of the Shapiro--Shapiro conjecture on
Wronskians of polynomials, allowing the Wronskian to have complex
conjugate roots.  
We decompose the real Schubert cell according to the number of
real roots of the Wronski map, and define an orientation of each
connected component. For each part of
this decomposition, we prove that the topological degree of the 
restricted Wronski
map is given as an evaluation of a symmetric
group character.  In the case where all roots are real, this implies
that the restricted Wronski map is a topologically trivial covering map; 
in particular, this gives a new proof of the Shapiro--Shapiro conjecture.
\end{abstract}


\section{Introduction}

\subsection{The Shapiro--Shapiro conjecture}

In the mid-1990s, B. Shapiro and M. Shapiro formulated a striking
conjecture about the real Schubert calculus.  They considered the 
real 1-parameter family of 
flags osculating a rational normal curve,
and postulated the following: for every $0$-dimensional 
intersection of Grassmannian Schubert 
varieties defined relative to these osculating flags, 
all points of the intersection are real.  

This conjecture is now a theorem of Mukhin, Tarasov, and Varchenko
\cite{MTV1, MTV2}, and it is notable for several reasons.
For typical real flags, there is no reason to expect all points of 
a $0$-dimensional Schubert intersection to be real, and in general
they will not be.
For most flag varieties (in type A, and beyond)
it is not known whether it is even possible 
to choose flags such that the solutions to a Schubert intersection problem 
are all real.  In the Grassmannian case, this was resolved
by Vakil \cite{Vak}, 
but the Shapiro--Shapiro conjecture provides a radically different 
way of addressing this question.

It has long been known that solutions to Schubert problems can be counted using families of combinatorial objects, such as Young tableaux. It is, however, much more difficult to establish precise correspondences between these objects and individual solutions. A core obstruction to such a project is the presence of monodromy among the solutions. On one hand, standard geometric arguments involving Schubert varieties rely on Kleiman's transversality theorem \cite{Kl}, which tells us that Schubert varieties defined relative to generic flags intersect transversely. This space of flags has an intractable fundamental group and, in general, Schubert problems with these flags have large monodromy groups \cite{BV, LS, SW}. The Shapiro--Shapiro conjecture, on the other hand, uses flags defined with respect to a tuple of distinct points on $\mathbb{RP}^1$ -- data with a small fundamental group. This affords the possibility of establishing an explicit mapping. Since the resolution of the Shapiro--Shapiro conjecture in 2005 (see below), this project has been carried out successfully \cite{Pur-Gr, Sp}.

There are several equivalent formulations of the Shapiro--Shapiro conjecture, which connect it to other parts of algebraic geometry and representation theory.  It can be stated in terms of limit linear series \cite{EH}, parameterized rational curves \cite{KS}, families (of Schubert problems and of representations) over the moduli space of stable curves $\modsc{n}$ \cite{HKRW, Ry, Sp, Whi2}, and it has applications in combinatorics \cite{Pur-Gr, Pur-ribbon}, K-theory \cite{GL, Lev}, and control theory \cite{EG-deg, EG-pole, RS}. The geometry has also been generalized to other homogeneous spaces, to elliptic curves, and more \cite{CMPB, LMV, MTV3, Os, Pur-OG, Pur-LG}.

The most elementary formulation involves Wronskians of polynomials.
If $f_1, \dots, f_d$ are polynomials with 
coefficients in a field
of characteristic zero, their \defn{Wronskian} is the polynomial
\[
  \Wr(f_1, \dots, f_d) := \begin{vmatrix}
f_1 & f_1' & f_1'' & \dots & f_1^{(d-1)} \\
f_2 & f_2' & f_2'' & \dots & f_2^{(d-1)} \\
\vdots & \vdots & \vdots & & \vdots \\
f_d & f_d' & f_d'' & \dots & f_d^{(d-1)} \\
\end{vmatrix}
\,.
\]
Recall that a linear subspace $\boldv \subset \CC[z]$ is \defn{real} if $\boldv$ is invariant under complex conjugation.  Equivalently, $\boldv$ is real if $\boldv$ has a 
basis in $\RR[z]$.

\begin{theorem}[Shapiro--Shapiro conjecture]
\label{thm:MTV}
If $f_1(z), \dots, f_d(z) \in \CC[z]$ are linearly independent polynomials 
such that $\Wr(f_1, \dots, f_d)$ has only real roots, then the 
subspace of $\CC[z]$ spanned by $(f_1, \dots, f_d)$ is real.
\end{theorem}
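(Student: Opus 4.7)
The plan is to prove Theorem~\ref{thm:MTV} via a topological degree computation on the Wronski map. First, reformulate the problem: writing $V = \vspan(f_1,\dots,f_d) \subset \CC[z]_{<n}$ for $n$ large enough, we regard $V$ as a point of $\Gr(d,n)$, and the Wronskian defines a finite algebraic map
\[
\Wr : \Gr(d,n) \to \PP^{d(n-d)}, \qquad V \mapsto \Wr(V)
\]
(taken up to scalar). Its degree over $\CC$ is classically known to equal the number $f^\lambda$ of standard Young tableaux of the rectangular shape $\lambda = d \times (n-d)$, i.e.\ $\chi^\lambda(\identity)$. In these terms, Theorem~\ref{thm:MTV} asserts that the fiber of $\Wr$ over any point of $\PP^{d(n-d)}_\RR$ whose roots are all real lies entirely in $\Gr(d,n)_\RR$.

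The strategy I would follow is to analyze the real Wronski map $\Wr_\RR$ over the open locus $U \subset \PP^{d(n-d)}_\RR$ of polynomials with distinct real roots. The goal is threefold: (a)~describe the connected components of $\Wr_\RR^{-1}(U)$ combinatorially, (b)~equip each component with a canonical orientation, and (c)~show that the resulting signed topological degree of $\Wr_\RR$ over $U$ equals $f^\lambda$. Since a complex fiber of $\Wr$ contains only $f^\lambda$ points, once the signed real count matches the complex count, every preimage over $U$ must be real. Density of $U$ in the locus of real polynomials with only real (possibly repeated) roots will then extend the conclusion to general Wronskians with only real roots.

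For (a), I would use a monodromy analysis as the roots of the Wronskian move along $\RR$ to label the sheets of $\Wr_\RR^{-1}(U)$ by standard Young tableaux of shape $\lambda$, in the spirit of the cactus-group action on Schubert intersections. For (b)--(c), the orientation of each sheet should be chosen so that the restriction of $\Wr_\RR$ to that sheet is an orientation-preserving diffeomorphism onto $U$; summing $+1$ over the $f^\lambda$ sheets then yields the desired signed degree $f^\lambda$, showing that $\Wr_\RR$ is a trivial covering map of degree $f^\lambda$ over $U$.

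The main obstacle is the construction and control of the orientation. It must be intrinsic, vary continuously with the Wronskian, and be compatible with the symmetric-group monodromy. This is precisely what forces the paper's more general setting, in which the Wronskian is allowed to have complex conjugate roots: to verify that orientations on the all-real locus are consistent, one must follow them as two real roots collide and leave $\RR$ as a conjugate pair. The signed degrees on such mixed-root strata are predicted by the abstract to be evaluations $\chi^\lambda(\sigma)$, where $\sigma$ encodes the complex-root structure; the all-real case then corresponds to $\sigma = \identity$ with $\chi^\lambda(\identity) = f^\lambda$, yielding the Shapiro--Shapiro conjecture as a special case.
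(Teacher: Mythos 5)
Your overall architecture is the paper's: restrict the Wronski map to loci determined by the root type of the Wronskian, orient the source, and deduce reality from the coincidence of a signed topological degree with the complex degree $\numsyt\lambda$. The final inference (signed real count $=$ complex count $\Rightarrow$ all points real) is also exactly how the paper passes from Theorem~\ref{thm:main} to Corollaries~\ref{cor:bounds} and~\ref{cor:cover}. But the core of the argument is missing, and the one concrete mechanism you propose is circular. You define the orientation on each component of $\Wr^{-1}(U)$ ``so that the restriction of $\Wr_\RR$ to that sheet is an orientation-preserving diffeomorphism onto $U$'' and then ``sum $+1$ over the $\numsyt\lambda$ sheets.'' That presupposes (i) that the map is unramified over $U$ and each component is a single sheet mapping diffeomorphically onto $U$, and (ii) that there are $\numsyt\lambda$ real sheets --- which is precisely the Shapiro--Shapiro conjecture. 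A priori the real fibre over $U$ could have fewer than $\numsyt\lambda$ points (the rest occurring in conjugate pairs) and could contain ramification points; with your definition the degree is simply the unknown number of real sheets, so nothing has been computed. An orientation defined by fiat to make the map orientation-preserving cannot be used to evaluate the degree independently.

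What the paper actually supplies, and what your proposal lacks, is an \emph{intrinsic} orientation together with an independent evaluation of the degree. The character orientation is defined by the sign of a regular function $\vdominofcn$ cutting out the hypersurface $\vdominovar$ swept out by the codimension-two Schubert conditions $X_\vdomino(a)$; it is in general \emph{not} the orientation making the map orientation-preserving (some components carry sign $-1$), and it does not restrict from any orientation of the ambient cell $\scell(\RR)$ --- the ambient degree is the much smaller Eremenko--Gabrielov number $I_\lambda$. The degree is then computed at an explicit special fibre $\Wr^{-1}(h_\mu)$ whose points are labelled (via a degeneration over $\laurentC$ to a $\projspace^1$-chain in $\modsc{n+3}$, not via a monodromy/cactus action, which in the existing literature already presupposes reality) by weakly increasing tableaux, with the real points corresponding to $\MN(\lambda;\mu)$; the signs $\sgn(\MNpoint_T)=\sgn(T)$ are determined by connecting these points with explicit paths and counting simple crossings of $\critvar$, $\hdominovar$, $\vdominovar$; and the Murnaghan--Nakayama rule converts the signed tableau count into $\sgchar(\mu)$. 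Your closing paragraph correctly diagnoses that the orientation must be intrinsic and must be controlled as real roots collide and leave the axis, but it defers to the abstract for the value $\sgchar(\mu)$ rather than proving it; without that computation the argument does not close.
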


Up to a scalar, the Wronskian depends
only on the span of $f_1, \dots, f_d$, and is zero if and only if 
$f_1, \dots, f_d$ are linearly dependent; as such, $\Wr$ defines
a map from a Grassmannian to projective space of the same dimension,
called the Wronski map.  Roughly, the connection to Schubert calculus comes
from the fact that Schubert varieties in this Grassmannian map to 
linear spaces under the Wronski map.

Motivated by the Shapiro--Shapiro conjecture, Eremenko and Gabrielov \cite{EG-deg} computed the topological degree of the Wronski map over $\RR$.  Their computation provides a lower bound for the number of real solutions to certain Schubert intersection problems. Unfortunately this result is not strong enough to deduce Theorem \ref{thm:MTV}: the lower bound on the number of real solutions is always strictly less than the number of complex solutions (except in the trivial cases, when the Grassmannian is a projective space).

Mukhin, Tarasov, and Varchenko \cite{MTV1, MTV2} have since given two proofs of Theorem~\ref{thm:MTV}. The second proof moreover establishes the transversality of Shapiro-type Schubert intersections.
Both proofs are quite complicated: they use machinery from quantum integrable systems, Fuchsian differential equations, and representation 
theory.  Their work establishes deep connections between these different areas and Schubert calculus.
However, their approach also has two major drawbacks.  First, it is heavily algebraic, using the Bethe Ansatz as a black box for solving systems 
of equations.  As such, the proof offers little geometric insight into why the Shapiro--Shapiro conjecture is true.  Second, there are several 
modifications and generalizations of the Shapiro--Shapiro conjecture which are still open
problems (see \cite{Sot-RSEG}). 
It is not known how to adapt the Mukhin--Tarasov--Varchenko 
machinery to handle these related conjectures.

The purpose of this paper is to formulate and prove a generalization of Theorem~\ref{thm:MTV}, using geometric and topological methods. We address not only the case where the roots of $\Wr(f_1, \dots, f_d)$ are all real, but all cases where $\Wr(f_1, \dots, f_d)$ has real coefficients. When the roots are all real, we obtain a new, independent, conceptually simpler proof of the Shapiro--Shapiro conjecture.
Our main theorem is a degree computation, similar in some respects to that of Eremenko and Gabrielov, but with a significant twist. The statement is less obvious, more powerful, and involves a surprising 
connection to characters of the symmetric group.

\subsection{Decomposition of real Schubert cells}

The real Wronski map is a morphism from the real Grassmannian $\Gr(d,\RR_{d+m-1}[z])$, the space of $d$-planes inside the vector space of polynomials of degree at most $d+m-1$, to projective space.  In this paper, we will mainly focus on
the restriction of this map to a Schubert cell.

Let $\monics(\RR)$ denote the set of real monic polynomials of 
degree $n$.  For a partition $\lambda \vdash n$ with $d$ parts, the real Schubert cell
$\scell(\RR)$ consists of all $d$-dimensional linear subspaces 
of $\RR[z]$ that have a basis of polynomials $(f_1, \dots, f_d)$, 
with $\deg f_i = \lambda_i+d-i$.  
The Wronskian $\Wr(f_1, \dots, f_d)$
is a polynomial of degree $n$, which, up to a scalar multiple,
is independent of the choice of basis. 
Rescaling so that $\Wr(f_1, \dots, f_d)$ is monic, we obtain a map
$\Wr : \scell(\RR) \to \monics(\RR)$; this is the Wronski map
restricted to $\scell(\RR)$.

Regarded as algebraic varieties $\monics(\RR)$ and $\scell(\RR)$ are
both isomorphic to affine space $\affinespace^n(\RR)$, and the Wronski map is
a finite morphism from $\affinespace^n(\RR)$ to itself.  The algebraic
degree of this morphism is $\numsyt\lambda = \#\SYT(\lambda)$, 
the number of standard Young
tableaux of shape $\lambda$.  This comes from a standard
calculation in the Schubert calculus: the intersection number of $n$ 
Schubert divisors with an $n$-dimensional Schubert variety.  Notably,
we also have $\numsyt\lambda = \sgchar(1^n)$, 
where $\sgchar$ is the irreducible symmetric group character associated to 
the partition $\lambda$.

Regarded as a manifold,
$\monics(\RR)$ can be further decomposed according to 
the number of real and non-real roots of polynomials.
Let $\mu = 2^{n_2} 1^{n_1}$ be a partition of $n$ with parts of
size at most $2$.   Let $\monics(\mu) \subseteq \monics(\RR)$ be the subset consisting of polynomials with $n_1$ distinct real
roots, and $n_2$ conjugate pairs of
non-real roots.  (Real roots are required to be distinct, non-real
roots are not.)  The closure, $\overline{\monics(\mu)}$, consists
of all real polynomials with at least $n_1$ real roots (not necessarily
distinct), and at least $n_2$ conjugate pairs of roots (which are no longer required to be non-real).  

\begin{example}
The polynomial $z(z^2+1)^2$ is in $\monicsb{5}(2^21)$.
The polynomial $z^3(z^2+1)$ is in 
both $\overline{\monicsb{5}(21^3)}$ and $\overline{\monicsb{5}(2^21)}$, 
but does not lie in any
$\monicsb{5}(\mu)$.  
\end{example}

Note that $\monics(\mu)$ is a contractible
open semi-algebraic subset of $\monics(\RR)$; in particular it is a
connected orientable (but as yet, not oriented) manifold.   
We orient all spaces $\monics(\mu)$ simultaneously, in the obvious way, 
by fixing an orientation of $\monics(\RR)$ and defining the
orientation of $\monics(\mu)$ to be the restriction
the orientation of the ambient space.

Define $\scell(\mu) := \Wr^{-1}(\monics(\mu))$.  
Then $\scell(\mu)$ is an open semi-algebraic 
subset of $\scell(\RR)$; hence $\scell(\mu)$ is an orientable manifold.  
However, unlike $\monics(\mu)$, $\scell(\mu)$ 
typically has many components.

The restricted 
Wronski map $\Wr : \scell(\mu) \to \monics(\mu)$ is a proper map 
of $n$-dimensional manifolds over a connected base.  
Therefore, with the additional data of an orientation of
$\scell(\mu)$, this map has a well-defined topological degree.
However, there many possible choices of orientation 
for $\scell(\mu)$ (two choices for each component), and the degree depends 
on the choice of orientation.

This brings us to our main result.  We will define an orientation of
$\scell(\mu)$, called the \emph{character orientation}.

\begin{theorem}
\label{thm:main}
With the character orientation,
the topological degree of the restricted Wronski map
$\Wr : \scell(\mu) \to \monics(\mu)$ is equal to
$\sgchar(\mu)$.
\end{theorem}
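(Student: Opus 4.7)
The plan is to prove Theorem~\ref{thm:main} by a wall-crossing argument in $\monics(\RR)$, with the Murnaghan--Nakayama rule providing the combinatorial backbone of the degree count. The first step is to give a concrete, \emph{local} definition of the character orientation on $\scell(\mu)$, i.e.\ a sign function on the components of each $\scell(\mu)$. The guiding constraint is the case $\mu = 1^n$: over a generic $w \in \monics(1^n)$ every preimage should count with sign $+1$, so that the signed preimage count at $w$ reproduces the algebraic degree $\numsyt{\lambda} = \chi^\lambda(1^n)$. For general $\mu$, the sign attached to a component is to be defined intrinsically at a preimage, by comparing the derivative of $\Wr$ with the fixed orientation of $\monics(\mu)$ and correcting by a discrete sign determined by how that component meets the codimension-one boundary strata of the Schubert cell; locality is essential for the wall-crossing analysis that follows.

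The second step is to reduce the topological degree to a wall-crossing computation. Choose a generic real path $\gamma\colon [0,1] \to \monics(\RR)$ from a point of $\monics(1^n)$ to a point of $\monics(\mu)$, arranged so that $\gamma$ transversally crosses a sequence of codimension-one walls in $\monics(\RR)$, each corresponding to a pair of real roots of the Wronskian coalescing and departing as a conjugate complex pair. Equivalently, each crossing replaces ``$11$'' by ``$2$'' in the cycle type. Over each such wall, two real preimages in $\scell(\RR)$ collide at a single real point and then either persist as a real pair or escape into the complex locus. I expect the net change in the signed preimage count across each wall to correspond to one term of the Murnaghan--Nakayama recursion
\[
  \chi^\lambda(\mu) \;=\; \sum_{D} (-1)^{\height(D)-1}\,\chi^{\lambda \setminus D}(\mu'),
\]
where $D$ ranges over removable dominoes of $\lambda$ and $\mu'$ is obtained from $\mu$ by deleting one part of size $2$. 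Iterating this reduction and appealing to the base case $\mu = 1^n$ produces $\chi^\lambda(\mu)$.

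The third step is the local analysis at each wall. Near a collision point $\boldv \in \scell(\RR)$ the Wronski map has a quadratic singularity and factors, in suitable local coordinates, as a branched double cover governed by a local quadratic form; the sign of this form (its real discriminant) dictates whether the two colliding sheets swap as a real pair or move into the complex locus. Meanwhile, the combinatorial label of the pair of colliding sheets, read off from the Schubert-cell structure at the relevant codimension-one boundary stratum, should identify with a choice of removable domino $D \subset \lambda$. The character orientation must then be constructed so that the net signed contribution of the collision is exactly $(-1)^{\height(D)-1}$; granting this, an induction on $n$ completes the proof.

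The main obstacle is the wall-crossing analysis: verifying that every collision of real sheets is naturally labeled by a removable domino $D$ of $\lambda$, computing the local discriminant that determines whether the colliding pair remains real, and confirming that these two pieces of data combine into the Murnaghan--Nakayama sign $(-1)^{\height(D)-1}$. Since this compatibility must hold for \emph{every} wall, the combinatorial definition of the character orientation and the topological verification have to be engineered in tandem rather than independently.
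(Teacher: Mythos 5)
Your outline shares the paper's broad architecture---the degree as a signed fibre count, walls where real roots coalesce, dominoes producing the Murnaghan--Nakayama signs---but three of its load-bearing steps are genuinely missing, and one is circular. The circular step is the base case: asserting that over a generic $w \in \monics(1^n)$ all $\numsyt\lambda$ preimages are real and count with sign $+1$ \emph{is} Corollary~\ref{cor:cover}, i.e.\ the Shapiro--Shapiro conjecture, which is precisely what the theorem is meant to reprove. The paper earns this case by constructing an explicit special fibre $\Wr^{-1}(h_{1^n})$, proving its points are real via a degeneration to $\projspace^1$-chains in $\mbase$ (Lemma~\ref{lem:specialfibres}), and then showing all signs equal $+1$ by joining the points with paths that cross $\critvar(\RR)$ and $\vdominovar(\RR)$ exactly once each (Lemma~\ref{lem:SYTsigns}), so the two sign flips cancel. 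None of this is supplied by, or replaceable with, a general-position argument.

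The second gap is your insistence on a \emph{local} definition of the character orientation. A sign assigned at a preimage ``corrected by how the component meets the boundary strata'' must be proved constant on each component of $\scell(\mu)$, and a priori nothing is known about the topology of these components; the paper explicitly flags that local-to-global arguments are unavailable here. It instead defines the orientation globally as the sign of the polynomial $\vdominofcn$ cutting out $\vdominovar$, and the proof that this sign is constant on a component is a real argument: $\vdominovar(\RR)$ meets a component of $\scell(\mu)$ only over polynomials with repeated \emph{non-real} roots, a locus of codimension $2$. Without a well-defined orientation on both sides of a wall, your ``net change in the signed preimage count'' is not well posed, since the degrees being compared live on different spaces whose orientations you have not related. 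The third gap is the wall analysis itself: for a generic crossing of $\discrimvar$ the colliding pairs of sheets carry no visible domino labels, and deciding which pairs survive as real points is not formal. The paper obtains both only over highly degenerate fibres, where Lemma~\ref{lem:calculation2} computes the local discriminant explicitly as $(a_1+a_2)^2 - 4(1-L^{-2})a_1a_2$ with $L$ the distance between the two boxes---adjacent boxes give one surviving real point, non-adjacent boxes give a conjugate pair---and where Speyer's stable-curve model supplies the tableau labels (Theorem~\ref{thm:finitefibreiso}). Your step three correctly identifies this as the main obstacle, but the proposal offers no mechanism to overcome it.
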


As an immediate consequence, we obtain new proofs of some previously
known results.  The first is a bound on the number of
real points in the fibre of the Wronski map.
The algebraic degree of a finite morphism gives an upper bound for the 
number of real preimages of any real point in the base; the topological
degree gives a lower bound.

\begin{corollary}
\label{cor:bounds}
For $g \in \monics(\RR)$, 
let $N_g$ be the number of real points in the 
fibre $\Wr^{-1}(g)$, counted with algebraic multiplicity.
If $g \in \overline{\monics(\mu)}$, then
\[
   |\sgchar(\mu)| \leq N_g \leq \numsyt\lambda
\,.
\]
\end{corollary}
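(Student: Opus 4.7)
The plan is to derive both inequalities from Theorem~\ref{thm:main} combined with standard properties of finite morphisms. For the upper bound, I would use that $\Wr : \scell \to \monics$ is a finite morphism of algebraic degree $\numsyt\lambda$: the scheme-theoretic length of every complex fibre equals $\numsyt\lambda$, and summing algebraic multiplicities over only the real points in the fibre cannot exceed this total, giving $N_g \leq \numsyt\lambda$.

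For the lower bound $|\sgchar(\mu)| \leq N_g$, I would first handle a regular value $g \in \monics(\mu)$ of the restricted Wronski map $\Wr : \scell(\mu) \to \monics(\mu)$. At such a $g$, every real preimage is simple, and the signed count $N_g^+ - N_g^-$ (with $N_g^{\pm}$ counting preimages of local degree $\pm 1$) equals the topological degree, which is $\sgchar(\mu)$ by Theorem~\ref{thm:main}. Thus
\[
N_g \,=\, N_g^+ + N_g^- \,\geq\, |N_g^+ - N_g^-| \,=\, |\sgchar(\mu)|.
\]

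To extend the lower bound to arbitrary $g \in \overline{\monics(\mu)}$, I would use a specialization argument. By Sard's theorem the regular values are dense in $\monics(\mu)$, and $\monics(\mu)$ is itself dense in $\overline{\monics(\mu)}$, so I can choose a sequence of regular values $g_t \in \monics(\mu)$ with $g_t \to g$. Each fibre $\Wr^{-1}(g_t)$ contains at least $|\sgchar(\mu)|$ distinct real points. Because $\Wr$ is proper, these points have subsequential limits in $\Wr^{-1}(g)$, and because complex conjugation is continuous, limits of real points are themselves real and hence lie in $\scell(\RR)$. Since $\Wr$ is a flat, finite morphism between smooth varieties of equal dimension, conservation of number tells us that the algebraic multiplicity of each limit point $x_0 \in \Wr^{-1}(g)$ is at least the number of real preimages of $g_t$ converging to $x_0$. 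Summing over the limits gives $N_g \geq |\sgchar(\mu)|$.

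The main obstacle I anticipate is the specialization step, in particular confirming that no cancellation can occur. Real preimages of $g_t$ may coalesce with one another, and additional non-real conjugate pairs may also coalesce at the same real limit point, but no real preimage can escape to infinity (by properness) or limit to a non-real point (by continuity of conjugation). Once these observations are made precise, algebraic multiplicities correctly absorb the local degrees of colliding preimages and the bound follows.
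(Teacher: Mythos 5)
Your proposal is correct and follows essentially the same route as the paper: the upper bound from the algebraic degree $\numsyt\lambda$ of the finite map, the lower bound at regular values of $\monics(\mu)$ from Theorem~\ref{thm:main} via the signed point count, and then passage to $\overline{\monics(\mu)}$ by a limiting argument. Your specialization step is just an expanded version of the paper's one-line appeal to upper semi-continuity of $N_g$, so the two proofs coincide in substance.
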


Corollary~\ref{cor:bounds} is equivalent to a special case of 
a theorem of Mukhin and Tarasov \cite{MT-bound}, which gives
a lower bound for the number of real points in a
Schubert intersection, with respect to (not-necessarily real) flags
osculating a rational normal curve.  Such Schubert problems were 
previously studied in \cite{HHMS, HHS}, where it was observed 
that there are non-trivial restrictions on the number
of real intersection points.   Mukhin and Tarasov's theorem 
was the first to offer an explanation for some of these 
observations, using the machinery developed in \cite{MTV2}.
Our approach can also be used to recover these inequalities. In Section \ref{sec:MT-bound}, we sketch how to extend our argument to the general case using standard techniques.
The two approaches are very different --- Mukhin and Tarasov's argument is predominantly algebraic, whereas ours is topological --- and it is not clear why they produce exactly the same inequalities.

In the case where $\mu=1^n$, the lower and upper bounds in 
Corollary~\ref{cor:bounds} agree, and we
deduce the following statement, which is a formulation of the
Shapiro--Shapiro conjecture (equivalent to Theorem~\ref{thm:MTV}).

\begin{corollary}
\label{cor:cover}
$\Wr : \scell(1^n) \to \monics(1^n)$ is a topologically trivial 
covering map of degree $\numsyt\lambda$.  
\end{corollary}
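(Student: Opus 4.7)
The plan is to combine Theorem~\ref{thm:main} and Corollary~\ref{cor:bounds} with the representation-theoretic identity $\sgchar(1^n) = \numsyt\lambda$: the value of an irreducible symmetric-group character at the identity is the dimension of the associated representation. With $\mu = 1^n$, the two bounds of Corollary~\ref{cor:bounds} collapse, so $N_g = \numsyt\lambda$ for every $g \in \monics(1^n)$. Since the algebraic degree of $\Wr$ is also $\numsyt\lambda$, the equality $N_g = \numsyt\lambda$ forces every complex preimage of $g$ to be real: there is no room left for conjugate pairs.

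Next, I would show that each real preimage is simple. By Theorem~\ref{thm:main}, the topological degree of $\Wr \colon \scell(1^n) \to \monics(1^n)$ with the character orientation is $\numsyt\lambda$, so for every $g \in \monics(1^n)$,
\[
\sum_{x \in \Wr^{-1}(g)} \deg_x^{\mathrm{top}}(\Wr) \;=\; \numsyt\lambda \;=\; \sum_{x \in \Wr^{-1}(g)} \mult_x(\Wr).
\]
The pointwise bound $|\deg_x^{\mathrm{top}}(\Wr)| \le \mult_x(\Wr)$, valid for any real algebraic map between smooth manifolds of equal dimension, forces (together with positivity of the total degree) the equality $\deg_x^{\mathrm{top}}(\Wr) = \mult_x(\Wr)$ at every preimage. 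Such pointwise equality can hold only at unramified preimages, giving $\mult_x = 1$ for each. Alternatively, one could invoke the classical transversality of Schubert intersections defined by flags osculating the rational normal curve at distinct real points to obtain unramifiedness of $\Wr$ over $\monics(1^n)$ directly. Either way, $\Wr^{-1}(g)$ consists of exactly $\numsyt\lambda$ distinct simple real points, and $\Wr$ is a local diffeomorphism at each.

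A proper local diffeomorphism between smooth manifolds of equal dimension with fibres of constant cardinality is a finite covering map, so $\Wr \colon \scell(1^n) \to \monics(1^n)$ is a covering of degree $\numsyt\lambda$. The ordered-roots map identifies $\monics(1^n)$ with the open convex region $\{(r_1, \dots, r_n) \in \RR^n : r_1 < r_2 < \cdots < r_n\}$, so the base is contractible, and the covering is therefore topologically trivial.

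The main non-formal ingredient in this outline is the inequality $|\deg_x^{\mathrm{top}}| \le \mult_x$ together with its equality criterion characterising unramified points, which is a standard fact from real algebraic geometry (for instance via the Eisenbud--Levine--Khimshiashvili formula for the local topological degree). Aside from this input, the corollary is an essentially formal consequence of the preceding results, combined with the elementary character identity $\sgchar(1^n) = \numsyt\lambda$ and the contractibility of $\monics(1^n)$.
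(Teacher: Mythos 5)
Your main route is correct and is essentially the paper's: with $\mu=1^n$ the bounds in Corollary~\ref{cor:bounds} collapse, so every fibre over $\monics(1^n)$ is entirely real; an unramifiedness criterion then upgrades this to a local diffeomorphism; properness plus connectedness of the base gives a covering map; and contractibility of $\monics(1^n)$ (noted in the introduction) gives triviality. The only divergence is the unramifiedness step. The paper invokes Lemma~\ref{lem:realunramified} (Sottile's Theorem 13.2): a finite real morphism whose fibres over an open set $\calU\subset Y(\RR)$ are entirely real is unramified over $\calU$. You instead use the Eisenbud--Levine--Khimshiashvili bound $|\deg_x^{\mathrm{top}}|\le \mult_x$, with equality only at reduced points, squeezed against $\sum_x \deg_x^{\mathrm{top}} = \numsyt\lambda = \sum_x \mult_x$. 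This works (the local ELK form is isotropic on the socle whenever $\mult_x\ge 2$, so equality does force $\mult_x=1$), and it is in effect a proof of the lemma the paper cites; note it also consumes Theorem~\ref{thm:main}, whereas Lemma~\ref{lem:realunramified} needs only the all-real-fibres input.

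One warning: your proposed alternative --- invoking ``the classical transversality of Schubert intersections defined by flags osculating the rational normal curve at distinct real points'' --- is circular here. That transversality is not classical: Kleiman's theorem does not apply to osculating flags, establishing it is a main point of the Mukhin--Tarasov--Varchenko work, and in this paper it is \emph{deduced from} Corollary~\ref{cor:cover} (see Corollary~\ref{cor:transverse} and the surrounding discussion in Section~\ref{sec:transversality}). Stick with the ELK argument or with Lemma~\ref{lem:realunramified}.
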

 
Here, the statement about the degree is immediate from 
Corollary~\ref{cor:bounds}, and the fact that the map is topologically 
trivial follows from an argument
of Eremenko and Gabrielov (see Section~\ref{sec:transversality}
for details).
Theorem~\ref{thm:main} and Corollaries~\ref{cor:bounds} and~\ref{cor:cover}
also have straightforward generalizations in which the Schubert cell 
$\scell(\RR)$ is replaced by an open Richardson variety.
We discuss these in Section~\ref{sec:richgeneralization}.

Although Theorem~\ref{thm:main} does not imply the
strong transversality statement proved by Mukhin, Tarasov, and Varchenko 
in~\cite{MTV2}, it does imply some important special cases of it.
For example, when rephrased in terms of Schubert intersections, 
Corollary~\ref{cor:cover} asserts that a Shapiro-type
intersection of $n$ Schubert divisors with an $n$-dimensional 
Schubert variety is transverse.  There are a few other similar 
corollaries, which we discuss in Section~\ref{sec:transversality}.
A number of known applications of the Shapiro--Shapiro conjecture
depend on these special cases, but do not require the full 
power of the Mukhin--Tarasov--Varchenko transversality theorem.
For example, the geometric proof of the Littlewood-Richardson rule 
in~\cite{Pur-Gr} and
the cyclic sieving results in~\cite{Pur-ribbon} rely only on these
special cases, and are thus consequences of Theorem~\ref{thm:main}.

\subsection{Aspects of the character orientation}

The key to Theorem~\ref{thm:main} and the main novelty of this paper
is the definition of the character orientation.
In Section~\ref{sec:example}, we explain how this definition works
in the special case where $\lambda = (n-1,1)$: this case is particularly
accessible and will serve as a recurring example throughout the paper.
In the general case, the definition requires additional background on the 
Wronski map, and will be given in Section~\ref{sec:orientation}.
For now, we mention three features that are partially inferrable from
the statement of Theorem~\ref{thm:main}.

\begin{enumerate}
\item
Although $\scell(\mu)$ is an
open submanifold of $\scell(\RR)$, in general the character orientation of 
$\scell(\mu)$ is not a restriction of some orientation of the
ambient space $\scell(\RR)$.  If it were,
then the degree of $\Wr : \scell(\mu) \to \monics(\mu)$ 
would be equal to the degree of the total map 
$\Wr : \scell(\RR) \to \monics(\RR)$, and hence would be independent
of $\mu$.  Instead, the definition of the character orientation
begins by choosing an orientation on ambient space $\scell(\RR)$; we then twist the orientation by a certain real regular function. As such, some components of $\scell(\mu)$ will be oriented the same as the ambient orientation, and some will be oriented opposite to it.

\item
The character orientation is defined globally on each
component of $\scell(\mu)$, rather than locally.  This is more or
less mandatory, because \textit{a priori}, we do not know enough about
the topology of $\scell(\mu)$ to make any kind of local to global arguments.

\item
The character orientation exhibits a kind of skew-symmetry with 
respect to Grassmann duality.  
If $\conjugate{\lambda}$ denotes the 
conjugate partition to $\lambda$, there is
an isomorphism
$\dualityiso : \conjscell(\RR) \to \scell(\RR)$ such that 
$\Wr \circ \dualityiso$ is the Wronski map on $\conjscell(\RR)$.  This 
restricts to a diffeomorphism $\dualityiso : \conjscell(\mu) \to \scell(\mu)$.
Algebraically, $\scell(\mu)$ and $\conjscell(\mu)$ are indistinguishable.  
Yet according to Theorem~\ref{thm:main}, the Wronski map 
can have different topological degrees on these two spaces.
The difference is only
a sign, since $\conjsgchar(\mu) = (-1)^{n_2} \sgchar(\mu)$,
but it is not a global sign 
(it depends on $\mu$).
This seems bizarre, as it implies that the orientation cannot depend only on the abstract geometry of the Wronski map. The situation is reconciled by the fact that there is a second orientation 
on $\scell(\mu)$ called the \emph{dual character orientation}, 
which is interchanged with the character orientation under $\dualityiso$.  We show
that the two orientations coincide on $\scell(\mu)$ if $n_2$ is even, and are opposites if $n_2$ is odd, which explains the
signs.

\end{enumerate}

Once we have formulated this definition, our proof of Theorem~\ref{thm:main} proceeds along the following lines. For each $\mu$ we choose a polynomial $h_\mu(z) \in \monics(\mu)$, such that we can identify all points in the fibre $\Wr^{-1}(h_\mu)$.  We label these points by tableaux. The topological degree of the map $\Wr : \scell(\mu) \to \monics(\mu)$ is then a signed count of points in the fibre (the sign is positive if the Wronski map is locally orientation preserving, and negative otherwise). To compute the signs, we connect up all of these points by a network of paths in $\scell(\RR)$, and count the number of sign changes along each path. Since the points are labelled by tableaux, we are left with a problem of counting certain tableaux with signs. We recognize this enumeration problem as a case of the Murnaghan--Nakayama rule, which gives us the answer as a character evaluation.

\subsection{An example}
\label{sec:example}

We illustrate Theorem~\ref{thm:main} with an elementary example:
the case where $\lambda = (n-1,1)$.  Here, to compute 
$\Wr^{-1}(g)$ for $g \in \monics(\RR)$,
we are looking for polynomials $(f_1, f_2)$ such that
$\deg(f_1) = n$, $\deg(f_2) = 1$ and
\begin{equation}
\label{eqn:elementary}
\Wr(f_1,f_2) = f_1f_2' - f_1'f_2 = g
\,.
\end{equation}
Two solutions to this equation represent the same 
point of $\scell(\RR)$, if they are linearly equivalent, i.e.
they are bases for the same subspace of $\RR[z]$.

We claim that $\Wr^{-1}(g) \subset \scell(\RR)$ can be identified
with the critical points of $g$.
To see this, take derivatives of both sides of the equation 
\eqref{eqn:elementary}, which gives $f_1f_2'' - f_1''f_2 = g'$.  
Since $f_2$ is a linear polynomial, $f_2'' = 0$, and we obtain
\[
-f_1'' f_2 = g'
\,.
\] 
It follows that all solutions to \eqref{eqn:elementary}
are of the form where $f_2(z) = z-c$, $g'(c) = 0$.
Furthermore, it is easy to check that if $g'(c) = 0$, then up to linear 
equivalence, there is a unique polynomial $f_1$ such that 
$\Wr(f_1(z), z-c) = g(z)$.  Thus for $\lambda = (n-1,1)$,
we identify $\scell(\RR)$ with
\[
   \{(g,c) \in \monics(\RR) \times \affinespace^1(\RR) \mid g'(c) = 0\} 
\,.
\]
The Wronski map
is identified with the projection onto the first factor.

From this description, we see immediately that 
Corollary~\ref{cor:cover} is true for $\lambda = (n-1,1)$: if 
$g$ is a degree $n$ polynomial with 
$n$ distinct real roots then $g$ has 
$n-1 = \numsyt\lambda$ distinct real critical points.  
There are $n-1$ components of $\scell(1^n)$: the $i$\nth 
component is the set of pairs
$(g,c)$ such that $g \in \monics(1^n)$, and $c$ is the unique critical 
point of $g$
between the $i$\nth and $(i+1)$\nth smallest roots of $g$; 
this clearly maps diffeomorphically to $\monics(1^n)$.

Before we consider Theorem~\ref{thm:main}, let us first compute
the topological degree of the full map 
$\Wr : \scell(\RR) \to \monics(\RR)$.
The Wronski map fails to be locally one-to-one in a neighbourhood of $(g,c)$
when $c$ is a double or higher order critical point of $g$, i.e. 
when $g'(c) = g''(c) = 0$.  Thus we see that the Jacobian of the 
Wronski map is, up to a scalar,  the function 
$\jacobian\Wr :\scell(\RR) \to \RR$, 
$\jacobian\Wr(g,c) = g''(c)$ (here, the partial derivatives in the Jacobian
are computed with respect to affine coordinates on
$\scell(\RR)$ and $\monics(\RR)$).
It follows that we can find orientations of $\scell(\RR)$ and 
$\monics(\RR)$ such that the Wronski map is locally orientation 
preserving at $(g,c) \in \scell(\RR)$ if and only if $(-1)^n g''(c) > 0$.
We call these the \emph{ambient orientations}.
(The global sign $(-1)^n$ is not necessary for this example in isolation, but ensures that
the Wronski map is locally orientation preserving at 
$(g,c_0)$, where $c_0$ is the smallest critical point of $g$,
in accordance with conventions used throughout this paper.)

To compute the topological degree of the Wronski map with respect to
the ambient orientations, we can pick any
$g \in \monics(\RR)$ with distinct real critical points, and
count the critical points of $g$ with signs: $c$ is counted positively
if $(-1)^n g''(c) > 0$, and negatively if $(-1)^n g''(c) < 0$  
(see Figure~\ref{fig:critpointsigns}, left).  These signs must
alternate, so the degree is $1$ if $n$ is even, and $0$ if $n$ is odd.
This agrees with Eremenko and Gabrielov's result 
(see Section~\ref{sec:EG-deg}).
\begin{figure}
\centering
\newcommand{\xmin}{-2.7} \newcommand{\xmax}{3.3}
\newcommand{\ymin}{-2.8} \newcommand{\ymax}{2.8}
\begin{tikzpicture}[x=2em,y=2em]
   \draw[violet] (\xmin,0) -- (\xmax,0);
   \draw[violet] (.3,\ymax) -- (.3,\ymin);
   \clip  (\xmin,\ymin) rectangle (\xmax,\ymax);
   \draw[samples=200,domain=\xmin:\xmax,smooth,variable=\z,blue,thick] 
     plot ({\z},{.05*\z*\z*(((((\z-3.61667)*\z-6.664)*\z+27.965)*\z+8.77333)*\z-45.08)-.5});
	\node at (-2,2.3) {\small $+$};
	\node at (-1,-1.9) {\small $-$};
	\node at (0,-.25) {\small $+$};
	\node at (1,-1.6) {\small $-$};
	\node at (2.3,1.1) {\small $+$};
	\node at (2.8,0.2) {\small $-$};
\end{tikzpicture}
\qquad \qquad
\begin{tikzpicture}[x=2em,y=2em]
   \draw[violet] (\xmin,0) -- (\xmax,0);
   \draw[violet] (.3,\ymax) -- (.3,\ymin);
   \clip  (\xmin,\ymin) rectangle (\xmax,\ymax);
   \draw[samples=100,domain=\xmin:\xmax,smooth,variable=\z,blue,thick] 
     plot ({\z},{.05*\z*\z*(((((\z-3.61667)*\z-6.664)*\z+27.965)*\z+8.77333)*\z-45.08)-.5});
	\node at (-2,2.3) {\small $+$};
	\node at (-1,-1.9) {\small $+$};
	\node at (0,-.25) {\small $-$};
	\node at (1,-1.6) {\small $+$};
	\node at (2.3,1.1) {\small $+$};
	\node at (2.8,0.2) {\small $-$};
\end{tikzpicture}
\caption{The signs of the critical points of a polynomial 
in $\monicsb{7}(2^21^3)$, with respect to the
ambient orientation (left), and the character orientation (right).}
\label{fig:critpointsigns}
\end{figure}

Now, let $\component$ be a component of $\scell(\mu)$.
The \emph{character orientation} of
$\component$ is defined to be consistent with the ambient orientation if
$(-1)^{n-1} g(c) > 0$ for all $(g,c) \in \component$, and 
opposite to the ambient 
orientation if $(-1)^{n-1} g(c)<0$ for all $(g,c) \in \component$.  
One of these two conditions must hold,
because we cannot have $g(c) = 0$ for $(g,c) \in \component$, or
else $c$ would be a double real root of $g$.
Thus, the Wronski map is locally orientation preserving 
at $(g,c)$ with respect to the character orientation if and only if 
$- \frac{g''(c)}{g(c)} > 0$.  

To compute the topological degree with respect to the character orientation,
we pick any $g \in \monics(\mu)$ with distinct
critical points and count the critical points of $g$ with signs.  This
time, $c$ is counted positively if $- \frac{g''(c)}{g(c)} > 0$, and
negatively if $- \frac{g''(c)}{g(c)} < 0$
(see Figure~\ref{fig:critpointsigns}, right).  
For example, if $n_1 > 0$, we can take $g$ to be a polynomial with
$n_1$ distinct real roots, and $n_1-1$ distinct real critical points,
which will all be counted with positive signs.
If $n_1 = 0$, we can take $g$ to have no real roots and $1$ real critical
point, which is counted with a negative sign.  In either case, we
see that the topological degree is $n_1-1 = \sgchar(2^{n_2}1^{n_1})$, 
in agreement with Theorem~\ref{thm:main}.

We will revisit this example several times in
Sections~\ref{sec:preliminaries} and \ref{sec:orientation},
to illustrate other aspects of Theorem~\ref{thm:main}.

\begin{remark}
For every partition $\lambda$ and $g \in \monics(\RR)$, 
there is a function $\Theta^\lambda_g$ called the \emph{master function},
whose critical points are identified with $\Wr^{-1}(g)$.
This fact is the starting 
point for Mukhin, Tarasov and Varchenko's proof of 
Theorem~\ref{thm:MTV} in \cite{MTV1}.
In general, $\Theta^\lambda_g$ is rational function of several 
variables, and the analysis of its critical points
cannot be carried out in an elementary way;
$\lambda = (n-1,1)$ is exceptionally nice because it is the only
case in which $\Theta^\lambda_g$ is a univariate function.
Our proof of Theorem~\ref{thm:main} is not based on the master function,
but instead generalizes this example in a different way.
\end{remark}

\subsection{Outline}

Section~\ref{sec:preliminaries} begins with an overview of the main 
properties of the Wronski map that will be needed throughout the paper.  
These include the connection with Schubert 
varieties, and an explicit formula for the Wronski map in coordinates.  
We then use these properties to compute certain points in the 
fibre $\Wr^{-1}(g)$, in two important special cases.  We recall
the statement of the Murnaghan--Nakayama rule, and
state a lemma which labels the real points
in certain ``special fibres'' of the Wronski map by 
Murnaghan--Nakayama tableaux (Lemma~\ref{lem:specialfibres}).

In Section~\ref{sec:orientation}, we define the ambient orientation
of $\monics(\RR)$ and $\scell(\RR)$, the character orientation 
of $\scell(\mu)$, and the dual character orientation.  For each
of these orientations, we consider how the sign of a point in $\scell(\RR)$ changes when traversing certain paths.  We state two lemmas, which
provide a collection of paths connecting up all of the real points in the 
aforementioned special fibres of the Wronski map 
(Lemmas~\ref{lem:MNsigns} and~\ref{lem:SYTsigns}).  
Since the sign changes along these paths are predictable, we can
compute the signs of all points in each of our special fibres.  
Theorem~\ref{thm:main} follows from this computation.

To complete the argument, we still need to prove
Lemmas~\ref{lem:specialfibres}, \ref{lem:MNsigns} 
and~\ref{lem:SYTsigns}.  This is the goal of 
Section~\ref{sec:stablecurves}.  We work with Speyer's model of
the Shapiro--Shapiro conjecture \cite{Sp}, in which the Wronski map is 
replaced by a related family over the moduli space of genus zero stable 
curves.  The big advantage of this formulation is that it has a rich 
boundary structure, 
on which explicit fibre calculations can be carried out with relative ease.
This allows us to analyze fibres and paths by degeneration arguments.
We begin Section~\ref{sec:stablecurves},
by reviewing Speyer's construction, and explaining how to compute fibres 
of this map over stable curve that is a $\projspace^1$-chain.  We prove
Lemma~\ref{lem:specialfibres} by replacing polynomials with appropriate
curves, and allowing these curves to degenerate to $\projspace^1$-chains.
The assertions of the lemma essentially translate into properties of 
this degeneration.  The paths in Lemmas~\ref{lem:MNsigns} 
and~\ref{lem:SYTsigns} are constructed and analyzed using similar ideas.

In Section~\ref{sec:bounds} we discuss Corollary~\ref{cor:bounds} and
other related results.  We show that
in several cases, the lower bound in Corollary~\ref{cor:bounds} is
tight.  We use the results of Section~\ref{sec:orientation} to give a quick
proof of the Eremenko--Gabrielov lower bound, and compare 
these results.
We also discuss the relationship between Corollary~\ref{cor:bounds} and 
the Mukhin--Tarasov bound in more detail.

We conclude with some generalizations of our main results and open 
questions, in Section~\ref{sec:conclusion}. 

\begin{remark}
Many of the geometric objects we are considering have a dual 
nature as algebraic varieties/schemes over $\RR$ and differentiable 
manifolds, and we will often move back and forth between these points of
view.
For the most part, topological statements (involving orientations,
paths, continuity, etc.) use the analytic topology.  
When we talk about fibres of 
a morphism over $\RR$, we normally mean the scheme theoretic fibre.
It should hopefully be clear from context how to interpret these 
types of statements.
\end{remark}


\section{Preliminaries}
\label{sec:preliminaries}

\subsection{The Wronski map}

We begin this section by recalling some of the fundamental properties 
of the Wronski map.  We omit proofs of results that are
fairly well established, and refer the reader to \cite{Pur-Gr},
\cite{Sot-F} or \cite{Sot-RSEG} for further details or additional 
background.

Let $\FF$ be a field of characteristic zero.  We denote the 
vector space of polynomials of degree at most $\ell$ over $\FF$ by
$\FF_\ell[z]$.  The \defn{Wronski map}
$\Wr : \Gr(d,\FF_{d+m-1}[z]) \to \projspace(\FF_{dm}[z])$, maps
a $d$-plane spanned by polynomials $f_1, \dots, f_d$ to the line
in $\FF_{dm}[z]$ spanned by $\Wr(f_1, \dots, f_d)$.

When the choice of field $\FF$ is not relevant to discussion at hand,
we will sometimes suppress it from our notation.  In this case, we may
also write the Wronski map as
$\Wr : \Gr(d,d+m) \to \projspace^{dm}$.  

The group $\GL_2(\FF)$ acts on $\FF_\ell[z]$ by M\"obius transformations.
If $\phi = 
\left(\begin{smallmatrix} 
\phi_{11} & \phi_{12} \\ \phi_{21} & \phi_{22}
\end{smallmatrix}\right) \in \GL_2(\FF)$ and
$f(z) \in \FF_\ell[z]$,
the action is given by
\[
\phi f(z) := (\phi_{21} z + \phi_{11})^\ell 
f\Big(\frac{\phi_{22} z + \phi_{12}}{\phi_{21} z + \phi_{11}}\Big)
\,.
\]
This induces a $\PGL_2(\FF)$ action on 
$\Gr(d,\FF_{d+m-1}[z])$ and $\projspace(\FF_{dm}[z])$, and the Wronski map
is $\PGL_2(\FF)$-equivariant with respect to these actions.

We define a family of flags over $\projspace^1(\FF) = \FF \cup \{\infty\}$:
\[
  F_\bullet(a) \ :\ 
  F_0(a) \subset F_1(a) \subset \dots \subset F_{d+m}(a)
\,.
\]
For $a \in \FF$,
$F_i(a) := (z+a)^{d+m-i}\FF[z] \cap \FF_{d+m-1}[z]$ is the subspace of
polynomials in $\FF_{d+m}[z]$ divisible by $(z+a)^{d+m-i}$,
$i=0, \dots, d+m$.
We also set
$F_i(\infty) := \FF_{i-1}[z] = \lim_{a \to \infty} F_i(a)$.
We note that $\phi(F_\bullet(a)) = F_\bullet(\phi(a))$ for
$\phi \in \PGL_2(\FF)$.

Let $\lambda = (\lambda_1, \dots, \lambda_d)$ be a partition, with
$m \geq \lambda_1 \geq \dots \geq \lambda_d \geq 0$.
Then $\lambda$ indexes a \defn{Schubert cell} relative to the flag
$F_\bullet(a)$:
\[
  \schubert^\circ(a) := \big\{ \boldx \in \Gr(d,d{+}m) \bigmid 
   \dim (\boldx \cap F_j(a))  - \dim (\boldx \cap F_{j-1}(a))  = 
        \eta_j,\   j = 1, \dots, d{+}m\}
\,,
\]
where $\eta_j = 1$ if $j = m+i - \lambda_i$ for some $i$, 
and $\eta_j = 0$ otherwise.
Its closure 
\[
  \schubert(a) := \overline{\schubert^\circ(a)}
\]
is the \defn{Schubert variety}. These conventions are such that $|\lambda|$ is the codimension of
$\schubert(a)$ in $\Gr(d,d+m)$.  

We will often identify the partition $\lambda$ with its diagram, 
$\lambda = \{(i,j) \mid 1 \leq i \leq d,\, 1 \leq j \leq \lambda_i\}$,
which is represented pictorially as an array of $|\lambda|$ boxes,
with $\lambda_i$ boxes in row $i$.  We will write
$\schubertone(a)$ to denote the Schubert variety associated to the 
partition $\one = (1,0,\dots,0)$, $\hdominoschubert(a)$ for the
partition $\hdomino = (2,0, \dots, 0)$, etc.

The relationship between these
Schubert varieties and the Wronski map is given by the following
lemma.

\begin{lemma}
\label{lem:schubertwronskian}
Let  $\boldx \in \Gr(d,d+m)$, and let $g = \Wr(\boldx)$.
\begin{enumerate}[(i)]
\item
For $a \in \FF$,
$(z+a)^\ell$  divides $g(z)$
if and only if
$\boldx \in \schubert(a)$ for some partition $\lambda \vdash \ell$.
If $(z+a)^\ell$ is
the largest power of $(z+a)$ that divides $g(z)$, then
$\lambda$ is unique and moreover $\boldx \in \schubert^\circ(a)$.

\item
$\deg(g) \leq dm-\ell$ if and only if $\boldx \in \schubert(\infty)$ for
some $\lambda \vdash \ell$.
If $\deg(g) = dm-\ell$, then in fact $\lambda$ is unique and moreover $\boldx \in \schubert^\circ(\infty)$.
\end{enumerate}
\end{lemma}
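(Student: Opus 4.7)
The plan is to read off both sides of each equivalence from a single judicious basis of $\boldx$, then match them via a direct computation of the Wronskian's leading term. The main input is that for any finite $a$ (resp.\ at $\infty$) one can choose a basis of $\boldx$ whose vanishing orders at $-a$ (resp.\ degrees) are pairwise distinct; these invariants encode both the Schubert position of $\boldx$ and the leading behavior of $g$.

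For (i), I would begin by applying Gaussian elimination to produce a basis $f_1, \dots, f_d$ of $\boldx$ with pairwise distinct orders $\mu_i := \ord_{z=-a}(f_i)$, ordered as $\mu_1 > \mu_2 > \cdots > \mu_d \geq 0$. The multiset $\{\mu_i\}$ is an invariant of $\boldx$, since $\dim(\boldx \cap F_j(a)) = \#\{i : \mu_i \geq d+m-j\}$; comparing this with the dimension-jump pattern in the definition of $X_\lambda^\circ(a)$ shows that $\boldx \in X_\lambda^\circ(a)$ precisely when $\mu_i = d - i + \lambda_i$, whence $\sum_i \mu_i = \binom{d}{2} + |\lambda|$. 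Writing $f_i(z) = c_i(z+a)^{\mu_i} + O((z+a)^{\mu_i + 1})$ with $c_i \neq 0$ and invoking the standard identity $\Wr(u^{\mu_1}, \ldots, u^{\mu_d}) = \prod_{i<j}(\mu_j - \mu_i)\cdot u^{\sum_i \mu_i - \binom{d}{2}}$ (with $u = z + a$), the leading term of $\Wr(f_1, \dots, f_d)$ at $z = -a$ becomes $\big(\prod_i c_i\big)\big(\prod_{i<j}(\mu_j - \mu_i)\big)(z+a)^{\sum_i \mu_i - \binom{d}{2}}$, which is nonzero since the $\mu_i$ are distinct. Thus $\ord_{-a}(g) = \sum_i \mu_i - \binom{d}{2} = |\lambda|$ whenever $\boldx \in X_\lambda^\circ(a)$. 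Both parts of (i) then follow from the Schubert closure relation $X_{\lambda'}^\circ(a) \subseteq X_\lambda(a) \iff \lambda \subseteq \lambda'$: letting $\lambda'$ be the unique partition with $\boldx \in X_{\lambda'}^\circ(a)$, the condition $(z+a)^\ell \mid g$ is equivalent to $|\lambda'| \geq \ell$, which is equivalent to the existence of $\lambda \vdash \ell$ with $\lambda \subseteq \lambda'$; equality $\ell = |\lambda'|$ yields the $X_\lambda^\circ(a)$ statement.

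Part (ii) runs completely in parallel, with ``degree'' and $F_\bullet(\infty)$ replacing ``order of vanishing at $-a$'' and $F_\bullet(a)$: a basis with pairwise distinct degrees, obtained again by Gaussian elimination, satisfies $\deg f_i = m + i - 1 - \lambda_i$ when $\boldx \in X_\lambda^\circ(\infty)$, and the analogous computation on top-degree coefficients in $z$ yields $\deg g = \sum_i \deg f_i - \binom{d}{2} = dm - |\lambda|$. Alternatively, one can use the $\PGL_2$-equivariance of the Wronski map to send $\infty$ to a finite point and reduce (ii) to (i). The only nontrivial step in the entire argument is the leading-term Wronskian identity for monomials; the distinctness of the $\mu_i$ (resp.\ of the degrees) is precisely what prevents cancellation in the top-order Vandermonde factor, and the remainder is bookkeeping between the jump data of the Schubert cell and the invariants of a normalized basis.
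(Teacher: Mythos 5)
Your proof is correct, and it is the standard argument. Note that the paper itself does not prove Lemma~\ref{lem:schubertwronskian} --- it is stated as a well-established fact with the proof deferred to the references \cite{Pur-Gr, Sot-F, Sot-RSEG} --- and your argument (normalize a basis to have distinct vanishing orders at $-a$, read off the Schubert position from those orders, and compute $\ord_{-a}(g)=|\lambda|$ via the Vandermonde leading-term identity $\Wr(u^{\mu_1},\dots,u^{\mu_d})=\prod_{i<j}(\mu_j-\mu_i)\,u^{\sum_i\mu_i-\binom{d}{2}}$, then finish with the closure relations) is precisely the one found there.
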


It follows that Schubert varieties of the form $\schubert(a)$ intersect
properly.

\begin{lemma}
\label{lem:generalschubertintersection}
Let $a_1, \dots, a_k \in \projspace^1$ be distinct, and let
$\alpha^1, \dots, \alpha^k$ be partitions such that 
$\sum_{i=1}^k |\alpha^i| = dm$.  the intersection
\[
   X_{\alpha^1}(a_1) \cap \dots \cap X_{\alpha^k}(a_k)
\]
is proper and hence
a finite scheme of length equal to the Schubert intersection number
$\int_{\Gr(d,d+m)} [X_{\alpha^1}] \dots [X_{\alpha^k}]\,$.
\end{lemma}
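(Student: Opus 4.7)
The plan is to identify every intersection point as lying in a single fiber of the Wronski map, and then use finiteness of that map. Let $\boldx \in X_{\alpha^1}(a_1)\cap\cdots\cap X_{\alpha^k}(a_k)$ and set $g := \Wr(\boldx) \in \projspace^{dm}$. By Lemma~\ref{lem:schubertwronskian}(i), $(z+a_i)^{|\alpha^i|}$ divides $g$ for each finite $a_i$; by part (ii), if some $a_j = \infty$ (which can happen for at most one index, since the $a_i$ are distinct), then $\deg g \le dm - |\alpha^j|$. Because the finite $a_i$ are pairwise distinct and $\sum_i |\alpha^i| = dm$, comparing degrees forces
\[
g(z) \;=\; c \prod_{a_i \ne \infty} (z+a_i)^{|\alpha^i|}
\]
for some nonzero scalar $c$. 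In particular, $g$ depends only on the data $(a_i,\alpha^i)$, and the entire intersection is contained in the scheme-theoretic fiber $\Wr^{-1}([g])$.

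Next I would invoke the classical fact that the Wronski map $\Wr\colon \Gr(d,d+m) \to \projspace^{dm}$ is a finite morphism between irreducible projective varieties of the same dimension $dm$: it is proper (being a morphism of projective varieties), and its general fiber has the finite cardinality $\numsyt{(m^d)}$, the number of standard Young tableaux of rectangular shape $m\times d$; by upper semicontinuity of fiber dimension every fiber is then zero-dimensional. Consequently, $\Wr^{-1}([g])$ is a finite scheme, and so is our intersection.

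Since $\codim X_{\alpha^i}(a_i) = |\alpha^i|$, the sum of codimensions equals $dm = \dim \Gr(d,d+m)$, so the actual dimension $0$ of the intersection matches its expected codimension; this is precisely the statement that the intersection is proper. For a proper intersection in a smooth projective variety, the length of the scheme-theoretic intersection equals the degree of the cup product $[X_{\alpha^1}]\smile\cdots\smile[X_{\alpha^k}]$ in $H^\ast(\Gr(d,d+m))$, which is $\int_{\Gr(d,d+m)} [X_{\alpha^1}]\cdots[X_{\alpha^k}]$. The only nontrivial input beyond Lemma~\ref{lem:schubertwronskian} is the finiteness of the Wronski map; this is standard and forms the single potential obstacle, but since the generic-degree count $\numsyt{(m^d)}$ is classical, no serious difficulty is expected.
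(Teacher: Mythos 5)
Your overall route is the intended one: the paper omits the proof precisely because the lemma is meant to follow from Lemma~\ref{lem:schubertwronskian} together with finiteness of the Wronski map (which the paper takes from Eisenbud--Harris), exactly as you argue --- every point of the intersection has Wronskian forced, up to scalar, to be $\prod_{a_i\neq\infty}(z+a_i)^{|\alpha^i|}$, so the whole intersection sits inside a single fibre of a finite morphism, is therefore zero-dimensional, hence proper since the codimensions sum to $dm$, and its length is then computed by the intersection product.

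Two steps need repair. First, your justification of finiteness is backwards: upper semicontinuity of fibre dimension permits special fibres to jump \emph{up} in dimension (a blow-up is proper and birational, with generic fibre a point but a positive-dimensional special fibre), so a proper morphism with finite generic fibre need not be finite, and you cannot deduce finiteness of $\Wr$ from the generic count $\numsyt{m^d}$. You must either take finiteness as the cited classical input, or prove it; for instance, by Proposition~\ref{prop:wronskiplucker} the Wronski map is the restriction to the Pl\"ucker-embedded Grassmannian of a linear projection $\projspace^N \dashrightarrow \projspace^{dm}$ whose centre is disjoint from the Grassmannian (since $\Wr$ is everywhere defined there), and a linear projection is finite on any closed subvariety missing its centre. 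Second, the claim that for a dimensionally proper intersection the length of the scheme-theoretic intersection equals the intersection number is false for arbitrary subvarieties --- Serre's intersection formula carries higher Tor corrections. It holds here because Schubert varieties are Cohen--Macaulay, which makes those corrections vanish; that hypothesis should be invoked explicitly.
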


The Wronski map 
$\Wr: \Gr(d,d+m) \to \projspace^{dm}$ is a finite morphism \cite{EH}.  
Its degree is the length of the finite scheme
$\Wr^{-1}(g)$, for any $g \in \projspace^{dm}$.  This is independent
of $\FF$. If $\FF$ is 
algebraically closed, then we can assume $g(z) = \prod_{i=1}^{dm} (z+a_i)$,
with distinct roots.
Lemma~\ref{lem:schubertwronskian} shows that the fibre is an intersection
of $dm$ Schubert divisors on $\Gr(d,d+m)$,
\[
   \Wr^{-1}(g) = \schubertone(a_1) \cap \dots \cap \schubertone(a_{dm})
\,,
\]
and hence the degree is $\#\SYT(m^d)$.

The \defn{\Plucker coordinates} on the Grassmannian are homogeneous 
coordinates $[\rmx_\lambda]$
indexed by the same partitions $\lambda$
as the Schubert varieties.  For $\boldx \in \Gr(d,d+m)$, choose a basis
$(f_1, \dots, f_d)$, and let
$M$ be the $d \times (m + d)$ matrix $M_{ij} = f_i^{(j-1)}(0)$.
Then $\rmx_\lambda$ is the maximal minor of $M$, with columns
$1+\lambda_d, 2+\lambda_{d-1}, \dots, d+ \lambda_1$.
\begin{proposition}
\label{prop:wronskiplucker}
In terms of \Plucker
coordinates, the Wronski map is 
\[
   \Wr(\boldx; z) = \sum_{\ell=0}^{dm} 
         \sum_{\lambda \vdash \ell} 
             \numsyt\lambda \rmx_\lambda \frac{z^\ell}{\ell!} 
\,.
\]
\end{proposition}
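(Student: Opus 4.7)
The plan is to compute the Taylor expansion of $\Wr(f_1,\dots,f_d)(z)$ about $z=0$ and match the coefficient of each $z^\ell/\ell!$ with $\sum_{\lambda \vdash \ell} \numsyt\lambda\, \rmx_\lambda$. First, I would fix a basis $(f_1,\dots,f_d)$ of $\boldx$. Since $M_{ij} = f_i^{(j-1)}(0)$ by definition, the Taylor expansion of the $(i,j)$ entry of the Wronskian matrix is
\[
f_i^{(j-1)}(z) \;=\; \sum_{k \geq 0} M_{i,\, j+k}\, \frac{z^k}{k!}
\]
(with the convention $M_{i,\ell}=0$ for $\ell > d+m$). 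Applying multilinearity of the determinant in columns then yields
\[
\Wr(f_1,\dots,f_d)(z) \;=\; \sum_{k_1,\dots,k_d \geq 0} \frac{z^{k_1+\cdots+k_d}}{k_1!\cdots k_d!}\, \det\bigl(M_{i,\, j+k_j}\bigr)_{i,j=1}^d.
\]

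Next, I would organize this sum by partitions. The minor $\det(M_{i,\,j+k_j})$ vanishes unless the column indices $\{j+k_j\}_{j=1}^d$ are distinct, in which case they form an increasing sequence $1+\lambda_d < 2+\lambda_{d-1} < \cdots < d+\lambda_1$ for a unique partition $\lambda$ with $\ell := |\lambda| = \sum_j k_j$. Setting $c^*_i := i + \lambda_{d+1-i}$, the valid tuples $(k_1,\dots,k_d)$ in the $\lambda$-group are parameterized by permutations $\tau \in \symgroup_d$ via $k_j = c^*_{\tau(j)} - j$, subject to $k_j \geq 0$, and reordering columns shows the minor equals $\sgn(\tau)\,\rmx_\lambda$. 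Collecting contributions with $|\lambda|=\ell$, the coefficient of $\rmx_\lambda z^\ell$ becomes
\[
\sum_{\tau \in \symgroup_d} \frac{\sgn(\tau)}{\prod_j (c^*_{\tau(j)} - j)!},
\]
adopting the convention $1/n! := 0$ for $n < 0$ (which automatically kills the terms violating $k_j \geq 0$).

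Finally, this signed sum is precisely the permutation expansion of $\det\bigl(1/(c^*_i - j)!\bigr)_{i,j=1}^d$. Substituting $i \mapsto d+1-i$ and $j \mapsto d+1-j$ in both rows and columns (a transformation that preserves the determinant, since the same permutation is applied to both indices), and using $c^*_i = i + \lambda_{d+1-i}$, converts this into $\det\bigl(1/(\lambda_i - i + j)!\bigr)_{i,j=1}^d$. By the classical Aitken (or Frobenius--Young) determinantal formula for the number of standard Young tableaux, this equals $\numsyt\lambda/\ell!$, which yields the stated formula. The main delicacy is the sign bookkeeping in the second step --- verifying that the column reordering inside the minor produces exactly the factor $\sgn(\tau)$, and that the resulting signed sum assembles correctly into a determinant --- but once this is set up, the identification with the Aitken formula is immediate.
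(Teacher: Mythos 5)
Your argument is correct: the column-by-column Taylor expansion, the reindexing of minors by partitions and permutations (with the sign coming from sorting the columns), and the identification of the resulting alternating sum with the Aitken determinant $\ell!\det\bigl(1/(\lambda_i - i + j)!\bigr) = \numsyt\lambda$ all check out, including the vanishing conventions that handle $k_j<0$ and partitions not fitting in the $d\times m$ box. The paper omits the proof of this proposition (deferring to the cited references), and your computation is essentially the standard one found there, so there is nothing further to reconcile.
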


\subsection{The Schubert cell $\scell$}

The \defn{complementary partition} to $\lambda$ is 
$\lambda^\vee := (m -\lambda_d, \dots, m-\lambda_1)$.  The
Schubert cell $\schubertopposite^\circ(\infty) \subset \Gr(d,d+m)$ 
will play a special
role, and we denote it by $\scell$.  The use of a superscript is to 
indicate that $|\lambda|$ is the dimension of $\scell$, rather than 
the codimension.
Concretely, $\scell(\FF)$ is
the variety of $d$-planes in $\FF[z]$
that have a basis $(f_1, \dots, f_d)$,
with $\deg f_i = \lambda_i+d-i$.   We note that this characterization
is independent of $m$.  

Now, fix $\lambda \vdash n$, and suppose 
$\kappa = (\kappa_1, \dots, \kappa_d)$ is another partition.  We
write $\kappa \subset \lambda$ if $\kappa_i \leq \lambda_i$ for all $i$.
For $a \in \affinespace^1$, the Schubert variety $X_\kappa(a)$
intersects the Schubert cell $\scell$ non-trivially if and only if 
$\kappa \subset \lambda$.  The intersection
\[
  \rich(a) := X_\kappa(a) \cap \scell
\]
is a \defn{half-open Richardson variety}.

When we restrict the Wronski map to $\scell$, the properties of
the previous section translate into the following facts:
\begin{enumerate}[(i)]
\item The algebraic image of the Wronski map restricted to $\scell(\FF)$ 
is the subvariety of $\projspace(\FF_{dm}[z])$
of polynomials of degree exactly $n$.
 Rescaling so that the leading coefficient is $1$, we identify this image
with $\monics(\FF)$, the affine space of monic polynomials of degree $n$ 
in $\FF[z]$.
\item The subgroup $\borelgroup \subset \PGL_2(\FF)$ of upper triangular matrices 
acts on $\scell(\FF)$ and $\monics(\FF)$ by affine transformations, 
and the Wronski map is $\borelgroup$-equivariant.
\item If $\boldx \in \scell(\FF)$, and $g = \Wr(\boldx)$, then $(z+a)^\ell$
divides $g(z)$ if and only if $\boldx \in \rich(a)$ for
some $\kappa \vdash \ell$.
\item The map $\Wr : \scell(\FF) \to \monics(\FF)$ is a finite morphism
of affine varieties of degree $\numsyt\lambda$.
In this case, the degree computation follows from the fact that the fibre 
over $g(z) = \prod_{i=1}^n (z+a_i)$ is
\[
   \Wr^{-1}(g) = \schubertone(a_1) \cap \dots \cap \schubertone(a_n) \cap 
    \schubertopposite(\infty)
\,.
\]
\item For $\boldx \in \scell(\FF)$, the \Plucker coordinates 
satisfy $\rmx_\kappa = 0$ for $\kappa \not \subset \lambda$, and can be 
normalized so that $\rmx_\lambda = 1$.
In terms of normalized \Plucker coordinates, the Wronski map
$\Wr : \scell(\FF) \to \monics(\FF)$ is
\begin{equation}
\label{eqn:wronskimapcoords}
    \Wr(\boldx; z) = z^n + 
     \Big(\frac{\numsyt\lambda}{n!}\Big)^{-1} \sum_{\ell=0}^{n-1} \sum_{\kappa \vdash \ell}
       \numsyt\kappa \rmx_\kappa \frac{z^\ell}{\ell!}
\,.
\end{equation}
\end{enumerate}

The Schubert cell $\scell(\FF)$ is isomorphic to affine space 
$\affinespace^n(\FF)$.  Explicitly, a point
$\boldx \in \scell(\FF)$ has a unique basis of
polynomials $(f_1, \dots, f_d)$, of the form
\begin{equation}
\label{eqn:affinecoords}
   f_i(z) = \frac{z^{\lambda_i+d-i}}{(\lambda_i+d-i)!} + 
\sum_{j=1}^{\lambda_i}
   (-1)^{i + \conjugate{\lambda}_j} 
   \frac{z^{j-\conjugate{\lambda}_j+d-1}}{(j-\conjugate{\lambda}_j+d-1)!}
   \cdot x_{ij}
\,,
\end{equation}
where $\conjugate{\lambda}$ denotes the conjugate partition.
The coefficients $(x_{ij})_{(i,j) \in \lambda}$ 
of these polynomials give the \defn{affine coordinates} 
of the point $\boldx$.
The coordinate ring of $\scell(\FF)$ is
$\FF[\boldx] := \FF[x_{ij} \mid (i,j) \in \lambda]$.

\begin{example} For $\lambda = 532$, here are the polynomials specified in 
\eqref{eqn:affinecoords}.
\[
{\setlength\arraycolsep{1pt}
\begin{matrix}
f_1(z) &~=~
&+\ x_{11} 
&+\ x_{12} z 
& & -\ x_{13}\frac{z^3}{3!} 
& & +\ x_{14}\frac{z^5}{5!} 
& +\ x_{15}\frac{z^6}{6!} 
& +\ \frac{z^7}{7!} 
\\
f_2(z) &~=~
& -\ x_{21} 
&-\ x_{22} z 
& & +\ x_{23}\frac{z^3}{3!} 
& +\ \frac{z^4}{4!} & & &
\\
f_3(z) &~=~
& +\ x_{31} 
& +\ x_{32} z 
& +\ \frac{z^2}{2!} & &  & & &
\\
\end{matrix}}
\]
\end{example}

\begin{remark}
The precise signs and constants in \eqref{eqn:affinecoords} are 
not too important for most practical purposes.  They are
chosen so that our affine coordinates
are a subset of the normalized \Plucker coordininates.
This has the additional benefit that the coordinates are 
well-behaved under Grassmann duality.
The duality isomorphism $\dualityiso : \conjscell
\to \scell$ is simply defined by $x_{ij} \mapsto x_{ji}$ in
affine coordinates.  Using Proposition~\ref{prop:wronskiplucker}, one
can show that if $\Wr : \scell \to \monics$ is the Wronski
map on $\scell$, then 
$\Wr \circ \dualityiso : \conjscell \to \monics$ 
is the Wronski map on $\conjscell$. 
\end{remark}

\begin{proposition}
\label{prop:richaffine}
Suppose $\lambda/\kappa$ has at most one box in each column, or at most
one box in each row.  Then
$\rich(0)$ is the affine subspace of $\scell$ defined in affine coordinates
by $x_{ij} = 0$ for $(i,j) \in \kappa$.
\end{proposition}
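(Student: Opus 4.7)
The plan is to show $Z := \{\boldx \in \scell : x_{ij} = 0 \text{ for all } (i,j) \in \kappa\}$ equals $\rich(0)$ by reducing to a containment argument. Both are irreducible of dimension $|\lambda| - |\kappa|$: the former as an affine subspace of $\scell$ of codimension $|\kappa|$, and the latter as (an open subset of) the Richardson variety $X_\kappa(0) \cap X_{\lambda^\vee}(\infty)$. It will therefore suffice to show $Z \subseteq X_\kappa(0)$. For each $\boldx \in Z$, I plan to exhibit a basis $(h_1, \ldots, h_d)$ of $\boldx$ with $\ord_0 h_{i'} \ge \kappa_{i'} + d - i'$ for all $i'$, which (using $\kappa_{i'} - i' \ge \kappa_i - i$ for $i' \le i$) yields the required $i$-dimensional subspace $\vspan(h_1, \ldots, h_i) \subseteq F_{m+i-\kappa_i}(0)$ for each $i$. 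From \eqref{eqn:affinecoords}, for $\boldx \in Z$ each basis polynomial takes the reduced form
\[
  f_{i'}(z) = \frac{z^{\lambda_{i'} + d - i'}}{(\lambda_{i'} + d - i')!} + \sum_{(i',j) \in \lambda \setminus \kappa} \pm\, x_{i',j} \frac{z^{e_j}}{e_j!},
  \qquad e_j := j - \conjugate{\lambda}_j + d - 1.
\]

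In the first case, $\lambda/\kappa$ has at most one box per column, and I will simply take $h_{i'} = f_{i'}$. The leading exponent $\lambda_{i'} + d - i'$ dominates $\kappa_{i'} + d - i'$ trivially. For any surviving lower term, the box $(i',j)$ lies in $\lambda \setminus \kappa$, so $j > \kappa_{i'}$ and $\conjugate{\kappa}_j < i'$; the horizontal-strip hypothesis $\conjugate{\lambda}_j \le \conjugate{\kappa}_j + 1 \le i'$ then gives
\[
  e_j \ge (\kappa_{i'} + 1) - i' + d - 1 = \kappa_{i'} + d - i',
\]
as needed.

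In the second case, $\lambda/\kappa$ has at most one box per row, and the naive choice $h_{i'} = f_{i'}$ can fail when $\lambda_{i'} = \lambda_{i'+1}$, because then $\conjugate{\lambda}_{\lambda_{i'}} > i'$ and the surviving $z^{e_{\lambda_{i'}}}$ term drops below $z^{\kappa_{i'} + d - i'}$. The partition condition on $\kappa$ forces the free rows (those with $\kappa_{i'} = \lambda_{i'} - 1$) within each run of constant $\lambda$-value $\Lambda$ to form a tail segment $[a',b]$, and within this segment all free terms share the single exponent $e_\Lambda$. I will modify the basis by the triangular row operations $h_{i'} := f_{i'} - c_{i'} f_{i'+1}$ for $i' \in [a', b-1]$, with $c_{i'}$ chosen to cancel the $z^{e_\Lambda}$ coefficient of $f_{i'}$; the result is a combination of the two leading terms $z^{\Lambda + d - i'}$ and $z^{\Lambda + d - i' - 1}$, giving $\ord_0 h_{i'} = \Lambda + d - i' - 1 = \kappa_{i'} + d - i'$. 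For the last free row $i' = b$, the term $z^{e_\Lambda}$ already matches $\kappa_b + d - b$, and for non-free rows the claim is immediate. Since these operations are localized within each run and do not interact across runs, the construction assembles into a full basis $(h_1, \ldots, h_d)$ of the required form, completing the inclusion on the Zariski-dense open subset where the $c_{i'}$ are defined; closedness of $X_\kappa(0)$ then extends it to all of $Z$. The main obstacle is this second case, where one must replace the basis \eqref{eqn:affinecoords} by a delicate triangular correction, organized by runs of equal $\lambda$-value.
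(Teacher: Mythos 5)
Your proof is correct, but it runs in the opposite direction from the paper's. The paper observes that the affine coordinates $x_{ij}$ are (by the preceding Remark) a subset of the normalized \Plucker coordinates, and that under the strip hypothesis the equations $x_{ij}=0$, $(i,j)\in\kappa$, are among the standard \Plucker equations $\rmx_\alpha=0$ ($\kappa\not\subset\alpha$) defining $\rich(0)$; this gives the containment $\rich(0)\subseteq V$ essentially for free, and the dimension count against the irreducible affine space $V$ finishes in three lines. You instead prove the reverse containment $V\subseteq X_\kappa(0)$ by exhibiting, for each point of $V$, a basis adapted to the flag $F_\bullet(0)$ with the required vanishing orders, and then invoke irreducibility of $\rich(0)$ together with the dimension count. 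Your computation checks out in both cases: in the horizontal-strip case the bound $\conjugate{\lambda}_j\le\conjugate{\kappa}_j+1\le i'$ correctly forces $e_j\ge\kappa_{i'}+d-i'$, and in the vertical-strip case you rightly identify that the naive basis fails exactly when $\lambda_{i'}=\lambda_{i'+1}$, that the free rows form a tail of each constant-$\lambda$ run, and that a unipotent correction $h_{i'}=f_{i'}-c_{i'}f_{i'+1}$ repairs the order of vanishing; the passage to the dense open locus where the $c_{i'}$ are defined, followed by closedness of $X_\kappa(0)$, is a legitimate way to handle the degenerate points. The trade-off is that your argument is longer and more delicate but entirely elementary and makes visible exactly where the one-box-per-row/column hypothesis is used, whereas the paper's relies on the (unproved here, but standard) description of Schubert varieties by vanishing of \Plucker coordinates and the identification of the affine coordinates with \Plucker coordinates.
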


\begin{proof}
$\rich(0)$ is defined in \Plucker coordinates by $\rmx_\alpha = 0$
for all $\alpha \subset \lambda$ such that $\kappa \not \subset \alpha$.
In the case where $\lambda/\kappa$ has at most one box in each column
(or each row), the equations  $x_{ij} = 0$, $(i,j) \in \kappa$ are
a subset of these defining equations.  This subset cuts out an affine space
$V$ of dimension $|\lambda/\kappa|$, and so $\rich(0)$ is a closed
subvariety of $V$.
Since $\dim \rich(0) = |\lambda/\kappa|$, we have $\rich(0) = V$.
\end{proof}

By writing the Wronski map in affine coordinates, we can solve some
specific 
instances of the
inverse Wronskian problem by direct calculation. The most important
examples of this are given in the next two lemmas.

\begin{lemma}
\label{lem:calculation1}
Let $\kappa\subset \lambda$ be a partition such that $|\kappa| = n-1$.
If $g(z) = z^{n-1}(z+a) \in \monics(\RR)$, then there is a unique (reduced)
point 
$\boldx \in \Wr^{-1}(g) \cap \rich(0)$ 
and $\boldx$ is real.
\end{lemma}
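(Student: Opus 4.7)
The plan is to reduce the equation $\Wr(\boldx)=z^{n-1}(z+a)$ to a single linear equation in one variable, exploiting the one-dimensionality of $\rich(0)$.

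First I would observe that since $|\kappa|=n-1$ and $|\lambda|=n$, the skew shape $\lambda/\kappa$ is a single box. In particular it has at most one box per row and per column, so Proposition~\ref{prop:richaffine} applies and identifies $\rich(0)$ with the affine line $V\subset\scell$ cut out by $x_{ij}=0$ for $(i,j)\in\kappa$, parametrized by the one remaining affine coordinate.

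To compute $\Wr$ on $V$ via formula~\eqref{eqn:wronskimapcoords}, I would first pin down which \Plucker coordinates can be nonzero on $\rich(0)$. Combining the vanishing $\rmx_\alpha=0$ for $\alpha\not\subset\lambda$ on $\scell$ (together with the normalization $\rmx_\lambda=1$) with the standard \Plucker description $\rmx_\alpha=0$ for $\alpha\not\supset\kappa$ on $X_\kappa(0)$, we conclude that on $\rich(0)$ the only nonvanishing \Plucker coordinates with $|\alpha|\leq n$ are $\rmx_\kappa$ and $\rmx_\lambda=1$. Substituting into \eqref{eqn:wronskimapcoords} collapses the double sum to a single term, giving
\[
\Wr(\boldx;z)\;=\;z^n+\frac{n\,\numsyt\kappa}{\numsyt\lambda}\,\rmx_\kappa\,z^{n-1}.
\]
Matching this with $z^n+az^{n-1}$ forces the unique real value $\rmx_\kappa=\tfrac{\numsyt\lambda}{n\,\numsyt\kappa}\,a$.

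For reducedness, the coefficient $n\,\numsyt\kappa/\numsyt\lambda$ is strictly positive, so $\Wr|_V\colon V\to\{z^n+bz^{n-1}\mid b\in\affinespace^1\}$ is a linear isomorphism of affine lines, and the scheme-theoretic fibre over $g$ in $\rich(0)$ is automatically a single reduced point, which is real. I do not anticipate any genuine obstacle: the only step requiring mild care is the combinatorial identification of the surviving \Plucker coordinates on $\rich(0)$, after which everything is a one-line substitution into \eqref{eqn:wronskimapcoords}.
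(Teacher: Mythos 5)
Your proposal is correct and follows essentially the same route as the paper: use Proposition~\ref{prop:richaffine} to reduce $\rich(0)$ to a line, note that only $\rmx_\kappa$ and $\rmx_\lambda$ survive among the \Plucker coordinates, and solve the resulting linear equation from \eqref{eqn:wronskimapcoords}. Your explicit remark that $\Wr|_{\rich(0)}$ is a linear isomorphism onto the line $\{z^n+bz^{n-1}\}$ is a slightly more careful justification of reducedness than the paper gives, but it is the same computation.
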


\begin{proof}
Suppose the unique box of $\lambda/\kappa$ is in row $i_1$, and column $j_1$.
By Proposition~\ref{prop:richaffine}, $\rich(0)$ is defined by 
$x_{ij} = 0$, for $(i,j) \neq (i_1, j_1)$.  
Thus, \eqref{eqn:wronskimapcoords} simplifies to
\[
    \Wr(\boldx; z) = z^n + 
     \Big(\frac{\numsyt\lambda}{n!}\Big)^{-1} \cdot
       \numsyt\kappa x_{i_1j_1} \frac{z^{n-1}}{(n-1)!}
\,.
\]
Thus the unique solution to $\Wr(\boldx) = g$ is given in affine coordinates
by 
$x_{i_1j_1} = \frac{a \numsyt\lambda}{n \numsyt\kappa}$, $x_{ij} = 0$
for $(i,j) \neq (i_1, j_1)$.
\end{proof}

The \defn{distance} between two boxes $(i_1,j_1)$ and $(i_2,j_2)$ in the 
diagram of $\lambda$ is defined to be $|i_1-i_2|+|j_1-j_2|$.

\begin{lemma}
\label{lem:calculation2}
Let $\kappa\subset \lambda$ be a partition such that $|\kappa| = n-2$.
Let $L$ be the distance between the
two boxes of the skew shape $\lambda/\kappa$.
Let $g(z) = z^{n-2}(z+a_1)(z+a_2) \in \monics(\RR)$ (hence, 
$a_1, a_2 \in \RR$ or $a_1 = \overline{a_2} \in \CC$).
\begin{enumerate}[(i)]
\item 
If $L =1$, then there is a unique (reduced) point 
$\boldx \in \Wr^{-1}(g) \cap \rich(0)$
and $\boldx$ is real.
\item
If $L >1$, then $\Wr^{-1}(g) \cap \rich(0)$ is a finite scheme of length
two.  The two points $\boldx, \boldx' \in \Wr^{-1}(g) \cap \rich(0)$ are 
identified with solutions to to a quadratic
equation with discriminant $(a_1+a_2)^2 - 4(1-L^{-2})a_1a_2$. Hence:
\begin{packeditemize}\setlength{\parskip}{1ex}
\item
If $(a_1+a_2)^2 - 4(1-L^{-2})a_1a_2 > 0$, then $\boldx, \boldx'$ are distinct and real.  
\item
If $(a_1+a_2)^2 - 4(1-L^{-2})a_1a_2 = 0$, then
$\boldx=\boldx'$ is a double real point.
\item
If $(a_1+a_2)^2 - 4(1-L^{-2})a_1a_2 < 0$, then
$\boldx, \boldx'$ are not real.
\end{packeditemize}
\end{enumerate}
\end{lemma}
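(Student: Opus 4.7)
The plan is to restrict the Wronski map to the affine $2$-plane $\rich(0)$ and perform a direct calculation in affine coordinates.

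Since $|\lambda/\kappa| = 2$, the two boxes of $\lambda/\kappa$ always occupy at most one per row (or at most one per column), so Proposition~\ref{prop:richaffine} applies and identifies $\rich(0)$ with the affine subspace of $\scell$ cut out by $x_{ij}=0$ for $(i,j) \in \kappa$; the remaining free coordinates are $x_{i_1j_1}, x_{i_2j_2}$, where $(i_1,j_1),(i_2,j_2)$ are the boxes of $\lambda/\kappa$. The length of $\Wr^{-1}(g) \cap \rich(0)$ then follows from Schubert calculus: using Lemma~\ref{lem:schubertwronskian}, for $\boldx \in \rich(0)$ the condition $\Wr(\boldx) = g$ is equivalent to $\boldx \in X_\kappa^\circ(0) \cap \schubertone(-a_1) \cap \schubertone(-a_2) \cap X_{\lambda^\vee}^\circ(\infty)$, and Lemma~\ref{lem:generalschubertintersection} identifies the corresponding closed intersection as proper of length $\int [X_\kappa][X_\one]^2 [X_{\lambda^\vee}] = \numsyt{\lambda/\kappa}$ by the Pieri rule. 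For a $2$-box skew shape this count is $1$ when $L=1$ and $2$ when $L > 1$.

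To locate the solutions explicitly, I would restrict formula (\ref{eqn:wronskimapcoords}) to $\rich(0)$. The \Plucker coordinate $\rmx_\alpha$ vanishes on $\rich(0)$ unless $\kappa \subseteq \alpha \subseteq \lambda$, leaving only: $\alpha = \kappa$ (contributing to $z^{n-2}$), the intermediate shapes $\alpha_k := \kappa \cup \{(i_k,j_k)\}$ for those $k \in \{1,2\}$ that yield valid Young diagrams (contributing to $z^{n-1}$), and $\alpha = \lambda$ (normalized to $1$, giving the leading $z^n$). Hence
\[
\Wr(\boldx)\big|_{\rich(0)} \;=\; z^n + A_{n-1} z^{n-1} + A_{n-2} z^{n-2},
\]
and setting this equal to $g = z^n + (a_1+a_2) z^{n-1} + a_1 a_2 z^{n-2}$ yields two polynomial equations in $x_{i_1j_1}, x_{i_2j_2}$.

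In case (i), $L=1$ means only one of $\alpha_1, \alpha_2$ is a valid partition, so $A_{n-1}$ depends linearly on a single affine coordinate. Solving that equation first and substituting reduces the $A_{n-2}$ equation to a linear equation in the remaining coordinate, yielding a unique real solution. In case (ii), with $L>1$, both $\alpha_1, \alpha_2$ are valid, and a direct computation using basis (\ref{eqn:affinecoords}) gives $A_{n-1} = c_1 x_{i_1j_1} + c_2 x_{i_2j_2}$ and $A_{n-2} = c_3\, x_{i_1j_1} x_{i_2j_2}$ for explicit non-zero constants $c_1, c_2, c_3$ arising from ratios of $\numsyt{\cdot}$ values. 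Eliminating one variable yields a quadratic whose discriminant is a positive multiple of $(a_1+a_2)^2 - 4(c_3/c_1c_2)\, a_1 a_2$, and the three sub-cases of (ii) follow immediately from its sign.

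The main obstacle is verifying the combinatorial identity $c_3/(c_1 c_2) = 1 - L^{-2}$. Both the ambient partition $\lambda$ and the particular embedding of the two boxes into it should drop out, leaving a clean hook-length calculation depending only on the geometry of the $2$-box skew shape. I would expect to handle this by combinatorial identities relating $\numsyt\kappa, \numsyt{\alpha_1}, \numsyt{\alpha_2}, \numsyt\lambda$ to the positions of the two boxes at distance $L$; extracting the precise factor $1-L^{-2}$ is the technical heart of the lemma.
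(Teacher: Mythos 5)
Your proposal follows essentially the same route as the paper: restrict \eqref{eqn:wronskimapcoords} to $\rich(0)$ via Proposition~\ref{prop:richaffine}, reduce to a quadratic in one affine coordinate, and verify the discriminant constant by the hook-length formula (the paper finds $n(n-1)^{-1}\numsyt{\alpha^1}\numsyt{\alpha^2}/(\numsyt\lambda\numsyt\kappa) = 1-L^{-2}$, which in your notation is $c_1c_2/c_3$ rather than $c_3/(c_1c_2)$ --- a minor inversion worth fixing when you write out the elimination). The preliminary Schubert-calculus count of the length of $\Wr^{-1}(g)\cap\rich(0)$ is a correct supplement but not needed, since the length falls out of the explicit quadratic.
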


\begin{proof}
The proof of (i) is similar to Lemma~\ref{lem:calculation1}, and we omit it.
For (ii), suppose that the positions of the two boxes of $\lambda/\kappa$ are 
$(i_1,j_1)$ and $(i_2,j_2)$.  Since $L > 1$, these are both corners of
$\lambda$.  Let $\beta^1$ and $\beta^2$ denote the partitions
of size $n-1$, obtained by deleting corners $(i_1, j_1)$ 
and $(i_2,j_2)$ from $\lambda$ respectively.
Proceeding as in the proof Lemma~\ref{lem:calculation1},
$\rich(0)$ is defined by $x_{ij} = 0$ for $(i,j) \in \kappa$,
and \eqref{eqn:wronskimapcoords} simplifies to
\[
    \Wr(\boldx; z) = z^n + 
     \Big(\frac{\numsyt\lambda}{n!}\Big)^{-1} \cdot
    \left[
       \numsyt{\beta^1} x_{i_1j_1} \frac{z^{n-1}}{(n-1)!}
+
       \numsyt{\beta^2} x_{i_2j_2} \frac{z^{n-1}}{(n-1)!}
+
       \numsyt\kappa x_{i_1j_1}x_{i_2j_2} \frac{z^{n-2}}{(n-2)!}
    \right]
\,.
\]
Equating coefficients of $\Wr(x) = g$, and solving for $x_{i_1j_1}$ we obtain
\[
   n\frac{\numsyt{\beta^1}}{\numsyt\lambda} x_{i_1j_1}^2
   + (a_1 + a_2) x_{i_1j_1}
   + (n-1)^{-1}\frac{\numsyt{\beta^2}}{\numsyt\kappa} a_1a_2
   = 0
\,.
\]
The discriminant of this quadratic equation is
\[
   (a_1+a_2)^2 
  - 4 n(n-1)^{-1}\frac{\numsyt{\beta^1}\numsyt{\beta^2}}
     {\numsyt\lambda\numsyt\kappa} a_1a_2
\,.
\]
Using the hook-length formula \cite{FRT} 
for $\numsyt \lambda$, it is easy to check
that $n(n-1)^{-1}\frac{\numsyt{\beta^1}\numsyt{\beta^2}}
{\numsyt\lambda\numsyt\kappa} = (1-L^{-2})$, from which the result follows.
\end{proof}

\subsection{Tableaux}

Suppose $\mu = (\mu_1, \dots, \mu_k)$ is a composition of $n$, i.e.
an ordered list of positive integers summing to $n$.
There is a partition associated to $\mu$, obtained by sorting the parts 
of $\mu$ in decreasing order.  We adopt the convention that 
whenever we use notation of the form ``$\,\cdot\,(\mu)$'' in a context 
where $\mu$ is supposed to be a partition, we will implicitly mean to use 
this associated partition.
For example, $\sgchar(\mu)$ means $\sgchar$ 
evaluated at the partition associated to $\mu$.

\begin{definition}
A \defn{weakly increasing tableau} of shape $\lambda$ and content $\mu$
is a filling 
of the diagram of $\lambda$ with positive integer entries, weakly
increasing along rows and columns, such that $\mu_b$ of the entries are
equal to $b$, for $b =1, \dots, k$.
We denote the set of all such tableaux by $\Tab(\lambda;\mu)$.
\end{definition}

In particular, the set of standard Young tableaux of 
shape $\lambda$ is $\SYT(\lambda) = \Tab(\lambda; 1^n)$.

For $T \in \Tab(\lambda; \mu)$ let $T(i,j)$ denote the entry in row $i$
and column $j$.
Let $\shape(T|_{\leq b})$ be
the partition defined by the entries of $T$ less
than or equal to $b$.  
Let $\shape(T|_b) := \shape(T|_{\leq b})/\shape(T|_{\leq b-1})$ 
be the skew shape associated to the entries equal to $b$.  
We note the following identity.

\begin{proposition}
\label{prop:easyidentity}
\[
\sum_{T \in \Tab(\lambda; \mu)} \prod_{b=1}^k \#\SYT(\shape(T|_b)) 
= \numsyt\lambda
\,.
\]
\end{proposition}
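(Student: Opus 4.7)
The plan is to give a bijective proof. The right-hand side $\numsyt\lambda$ counts standard Young tableaux $U$ of shape $\lambda$, which we may view as maximal saturated chains $\emptyset = \lambda^{(0)} \subset \lambda^{(1)} \subset \dots \subset \lambda^{(n)} = \lambda$ where each successive pair differs by a single box. The left-hand side, I claim, counts exactly the same objects but with the chain regrouped into $k$ blocks of sizes $\mu_1, \dots, \mu_k$, with $T$ recording the intermediate shapes at the block boundaries and $S_b$ recording the internal order of the $b$\nth block.

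First I would construct the forward map. Given $U \in \SYT(\lambda)$, replace each entry $i$ of $U$ by the unique $b \in \{1, \dots, k\}$ satisfying $\mu_1 + \dots + \mu_{b-1} < i \leq \mu_1 + \dots + \mu_b$. Since $U$ is strictly increasing along rows and columns, and $i \mapsto b$ is weakly increasing, the resulting filling $T$ is weakly increasing along rows and columns, and has content $\mu$; hence $T \in \Tab(\lambda;\mu)$. Within the block labelled $b$, the entries of $U$ are exactly $\mu_1 + \dots + \mu_{b-1}+1, \dots, \mu_1 + \dots + \mu_b$, placed in a subset of boxes whose shape is precisely $\shape(T|_b)$; after subtracting $\mu_1 + \dots + \mu_{b-1}$ we obtain a standard Young tableau $S_b \in \SYT(\shape(T|_b))$. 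This defines a map from $\SYT(\lambda)$ to the set of pairs $(T, (S_b)_{b=1}^k)$ with $T \in \Tab(\lambda;\mu)$ and $S_b \in \SYT(\shape(T|_b))$.

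For the inverse, given such a pair $(T, (S_b))$, define $U$ by assigning to each box of $T$ labelled $b$ the value $\mu_1 + \dots + \mu_{b-1} + S_b(\cdot)$ read off at that box. One verifies directly that the row- and column-monotonicity conditions on $T$ and on each $S_b$ together force $U$ to be strictly increasing along rows and columns, hence $U \in \SYT(\lambda)$; the two constructions are clearly mutually inverse. Summing $\prod_b \#\SYT(\shape(T|_b))$ over $T \in \Tab(\lambda;\mu)$ therefore enumerates $\SYT(\lambda)$, proving the identity.

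There is no real obstacle: the proposition is simply the combinatorial statement that a standard tableau is determined by the intermediate shapes $\shape(U|_{\leq \mu_1 + \dots + \mu_b})$ together with the standard fillings of the interpolating skew shapes. The only item requiring a small check is that both the weakly increasing condition on $T$ and the standardness of each $S_b$ are respectively necessary and sufficient to assemble a standard tableau of shape $\lambda$, and this is immediate from comparing monotonicity conditions box by box.
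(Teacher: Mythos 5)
Your bijective argument is correct: the paper states this identity without proof, and your decomposition of a standard tableau into the weakly increasing tableau $T$ of block indices together with the standard fillings $S_b$ of the skew shapes $\shape(T|_b)$ is exactly the standard argument it implicitly relies on. All the needed checks (that $\shape(T|_{\leq b})$ is a partition because $T$ is weakly increasing, and that strictness of $U$ along rows and columns is equivalent to weak increase of $T$ plus standardness of each $S_b$) go through as you describe.
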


The Murnaghan--Nakayama rule computes characters of symmetric group 
representations in terms of tableaux.  We recall the statement.

\begin{definition}
$T \in \Tab(\lambda;\mu)$ is a \defn{Murnaghan--Nakayama tableau} if
$\shape(T|_b)$ is a connected shape containing no $2 \times 2$ square, 
for all $b = 1,\dots, k$.
We denote the set of Murnaghan--Nakayama tableaux of
shape $\lambda$ and content $\mu$ by $\MN(\lambda; \mu)$.
The \defn{sign} of a Murnaghan--Nakayama tableau is 
\[
\sgn(T) := \prod_{b=1}^k (-1)^{\rows(T|_b)-1}
\,.
\] 
where
$\rows(T|_b)$ the number of non-empty rows in $\shape(T|_b)$.
\end{definition}

\begin{theorem}[Murnaghan--Nakayama rule]
\label{thm:MNrule}
\[
  \sum_{T \in \MN(\lambda;\mu)} \sgn(T) = \sgchar(\mu)
\,.
\]
\end{theorem}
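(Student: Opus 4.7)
The plan is to prove Theorem~\ref{thm:MNrule} by induction on $k$, the length of $\mu$, using the standard border-strip recurrence for irreducible symmetric group characters, which can be taken as the input from representation theory. That recurrence states:
\[
  \sgchar(\mu_1, \dots, \mu_k) = \sum_{\nu} (-1)^{\rows(\lambda/\nu) - 1} \chi^\nu(\mu_1, \dots, \mu_{k-1}),
\]
where the sum ranges over all partitions $\nu \subset \lambda$ with $|\lambda/\nu| = \mu_k$ and $\lambda/\nu$ a border strip (equivalently, a connected skew shape containing no $2\times 2$ square).

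For the base case $k = 0$ (so $\lambda = \emptyset$) both sides equal $1$. For $k = 1$ (so $\mu = (n)$), a tableau $T \in \MN(\lambda; (n))$ exists only when $\lambda$ itself is a border strip, in which case $T$ is unique and $\sgn(T) = (-1)^{r-1}$, where $r$ is the number of rows of $\lambda$; this matches $\chi^\lambda((n))$, which vanishes on non-hooks and equals $(-1)^{r-1}$ on an $r$-row hook.

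For the inductive step, I would partition $\MN(\lambda; \mu)$ according to $\nu := \shape(T|_{\leq k-1})$. Since the entries of $T$ are weakly increasing, the cells labelled $k$ form the outermost ``layer'' of $\lambda/\nu$, and the restriction $T|_{\leq k-1}$ is itself a Murnaghan--Nakayama tableau of shape $\nu$ and content $(\mu_1, \dots, \mu_{k-1})$. The definition of $\sgn$ immediately factors as
\[
  \sgn(T) = \sgn(T|_{\leq k-1}) \cdot (-1)^{\rows(\lambda/\nu) - 1}.
\]
Summing first over $T$ with fixed $\nu$ and then over $\nu$, and applying the induction hypothesis to the inner sum, gives
\[
  \sum_{T \in \MN(\lambda;\mu)} \sgn(T)
  = \sum_\nu (-1)^{\rows(\lambda/\nu)-1} \chi^\nu(\mu_1, \dots, \mu_{k-1}),
\]
which is precisely the right-hand side of the border-strip recurrence, and hence equals $\sgchar(\mu)$.

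The main obstacle, and really the only non-bookkeeping step, is supplying the border-strip recurrence for $\sgchar$; this is a classical consequence of the Jacobi--Trudi identity or of the Frobenius character formula, and since Theorem~\ref{thm:MNrule} is invoked only as a known combinatorial identity in the rest of the paper, I would simply cite it rather than reprove it. The combinatorial half of the argument — checking that $\shape(T|_k)$ is automatically a border strip and that the sign factors correctly — is a direct unpacking of the definitions of $\MN(\lambda;\mu)$ and $\sgn(T)$.
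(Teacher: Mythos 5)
Your argument is correct, but it is worth noting that the paper does not prove this statement at all: Theorem~\ref{thm:MNrule} is quoted as the classical Murnaghan--Nakayama rule and used as a known combinatorial input, exactly as you suggest doing at the end of your proposal. What you have written out is the standard derivation of the iterated (tableau) form of the rule from the one-step border-strip recurrence: the bijection $T \leftrightarrow (T|_{\leq k-1}, \lambda/\nu)$ is exactly the right decomposition, the sign factors as you claim because $\sgn(T)$ is defined as a product over $b$, and the order-of-parts issue is harmless since $\sgchar$ is a class function (the paper's convention that $\sgchar(\mu)$ means evaluation at the sorted partition already accommodates this). The only content you are deferring is the single-step recurrence itself, which is classical (Jacobi--Trudi or Frobenius), so citing it is entirely appropriate; in effect your proposal does strictly more than the paper, which cites the full statement outright.
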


We now specialize to the case where 
$\mu_i \in \{1,2\}$, for all $i=1, \dots, k$.  
With this assumption, several things simplify.
For a tableau $T \in \Tab(\lambda; \mu)$, $\shape(T|_b)$ consists of
either 
\begin{packeditemize}
\item a single box (iff $\mu_b =1$);
\item two boxes forming a horizontal domino (i.e. in the same row);
\item two boxes forming a vertical domino (i.e. in the same column); or
\item two boxes that are non-adjacent.
\end{packeditemize}
Denote the number of $b$ such that $\shape(T|_b)$ falls into each of 
these cases by 
$\#_\one(T)$, $\#_\hdomino(T)$, $\#_\vdomino(T)$, and $\#_\twoskew(T)$ 
respectively.
Proposition \ref{prop:easyidentity} reduces to the statement
\[
   \sum_{T \in \Tab(\lambda;\mu)} 2^{\#_\twoskew(T)} = \numsyt\lambda
\,.
\]
$T$ is a Murnaghan--Nakayama tableau if and only if
$\#_\twoskew(T) = 0$, in which case we have
$\sgn(T) = (-1)^{\#_\vdomino(T)}$.

\begin{example} 
For $\lambda = 543$, $\mu=(1,1,2,2,1,1,1,2,1)$,
consider the tableau
\[
T =
\begin{young}[c]
1 & 2 & \ynobottom4 & 6 & 7 \\
]= 3 & 3 &]=]\ynotop 4 & 9 \\
5 & ]= 8 &=] 8
\end{young}
 \in \Tab(\lambda; \mu)
\,.
\]
We have 
$\#_\one(T) = 6$, $\#_\hdomino(T) = 2$, $\#_\vdomino(T) = 1$, and 
$\#_\twoskew(T) = 0$.  Hence, $T$ is a Murnaghan--Nakayama tableau, 
with $\sgn(T) = -1$.
\end{example}

\subsection{Special fibres of the Wronski map}
\label{sec:specialfibres-statement}

We continue to assume that $\mu = (\mu_1, \mu_2, \dots, \mu_k)$ 
is a composition of $n$, with $\mu_i \in \{1,2\}$ for $i=1, \dots, k$.   
We now assign points in $\scell$ to tableaux in $\Tab(\lambda; \mu)$.

We begin working over the field $\laurentC$ of formal Laurent series.
Note that if $\boldx(u) \in \Gr(d,\laurentC^{d+m})$, we can always take
$\lim_{u \to 0}\boldx(u)$ to obtain a point in $\Gr(d,\CC^{d+m})$.
Define polynomials
\[
   H_\mu(u,z) := 
   \prod_{b=1}^k \left(z^{\mu_b} + (\half u^{\overline\mu_b}+\half u^{\overline\mu_b + \mu_b-1})^{\mu_b}\right)
\,,
\]
where $\overline\mu_b := n+1-\sum_{i = 1}^b \mu_i$.
Note that if we evaluate at any $u \in (0,1)$, 
$H_\mu(u,z) \in \monics(\mu)$.
The roots of $H_\mu$ are either of the form $-u^j$ or
$\pm \tfrac{\imag}{2}(u^j+u^{j+1})$, where $\imag$ denotes the imaginary unit.

\begin{example}
\label{ex:Hpolynomial}
For $\mu=(2,1,2,2,1)$,
\[
 H_\mu(u,z) = 
\big(z^2 + (\half u^7+\half u^8)^2\big)
\big(z+u^6\big)
\big(z^2 + (\half u^4+\half u^5)^2\big)
\big(z^2 + (\half u^2+ \half u^3)^2\big)
\big(z+u^1\big)
\,.
\]
\end{example}

\begin{lemma}
\label{lem:specialfibres}
Consider the Wronski map $\Wr : \scell(\laurentC) \to \monics(\laurentC)$.
The fibre $\Wr^{-1}(H_\mu)$ consists of $\numsyt\lambda$ distinct points
in $\scell(\laurentC)$.  For each point 
$\boldx(u) \in \Wr^{-1}(H_\mu)$, the normalized \Plucker coordinates
$\big(\rmx_\kappa(u)\big)_{\kappa \subset \lambda}$ are power
series with a positive radius of convergence.
For each $T \in \Tab(\lambda;\mu)$, there is a set $W_T$ consisting
of $2^{\#_\twoskew(T)}$ distinct points in $\scell(\laurentC)$, 
with the following properties.
\begin{enumerate}[(a)]
\item $\bigcup_{T \in \Tab(\lambda; \mu)} W_T = \Wr^{-1}(H_\mu)$.
\item  For $\boldx(u) \in W_T$, and $b=1,\dots, k$,
\[
 \lim_{u \to 0} 
\left(\begin{smallmatrix} 1 & 0 \\[.4ex] 0 & u^{\overline\mu_b} \end{smallmatrix}\right)
\boldx(u)
\ \in\ \scellb{\shape(T|_{\leq b})}
\,,
\]
using the action of $\PGL_2$ on the Grassmannian.
\item For $\boldx(u) \in W_T$, $\boldx(u) \in \scell(\laurentR)$ 
if and only if $T$ is a 
Murnaghan--Nakayama tableau.
\end{enumerate}
\end{lemma}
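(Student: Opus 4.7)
The plan is to analyze $\Wr^{-1}(H_\mu)$ by degeneration as $u \to 0$. The roots of $H_\mu(u,z)$ cluster into $k$ groups: group $b$ is a single real root $-u^{\overline\mu_b}$ when $\mu_b = 1$, or a purely imaginary conjugate pair $\pm\tfrac{\imag}{2}(u^{\overline\mu_b} + u^{\overline\mu_b+1})$ when $\mu_b = 2$. The choice of exponents $\overline\mu_1 > \overline\mu_2 > \cdots > \overline\mu_k$ is precisely what makes the groups live on well-separated scales: group $b$ collapses to the origin at rate $u^{\overline\mu_b}$. The strategy is to use this hierarchy to reduce the fibre calculation to a sequence of $k$ smaller inverse-Wronskian subproblems, one per group, each of which is already solved by Lemma~\ref{lem:calculation1} or Lemma~\ref{lem:calculation2}.

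To make this rigorous I would work inside Speyer's model (Section~\ref{sec:stablecurves}), in which the Wronski map extends to a proper morphism $\mmap : \mfamily \to \mbase = \modsc{n+3}$. The configuration of the roots of $H_\mu(u,\cdot)$ together with the marked points $0$ and $\infty$ defines a one-parameter family in $\mbase$ whose $u \to 0$ limit is a boundary point $\mfpoint_0$ representing a chain of $k$ copies of $\projspace^1$, with components $C_1, \dots, C_k$, one per group. On each $C_b$, the only non-nodal marked points are the roots of group $b$, so the restricted Wronski problem on $C_b$ is exactly that of Lemma~\ref{lem:calculation1} (when $\mu_b = 1$) or Lemma~\ref{lem:calculation2} (when $\mu_b = 2$), enlarging the current partition by one or two boxes. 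Walking along the chain, a point of $\mmap^{-1}(\mfpoint_0)$ is therefore determined by a growth sequence of partitions $\emptyset \subset \shape(T|_{\leq 1}) \subset \cdots \subset \shape(T|_{\leq k}) = \lambda$ --- i.e.\ a tableau $T \in \Tab(\lambda;\mu)$ --- together with a binary choice at each step where $\shape(T|_b)$ consists of two non-adjacent boxes. This produces $2^{\#_\twoskew(T)}$ points per tableau, and by Proposition~\ref{prop:easyidentity} the total is $\numsyt\lambda$, matching the algebraic degree.

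Properness of $\mmap$ together with the fact that each boundary solution is reduced (the roots within each group are distinct, so Lemmas~\ref{lem:calculation1} and~\ref{lem:calculation2} return reduced solutions) then implies that each boundary point lifts uniquely to a section over a formal neighbourhood of $u = 0$; these sections are the $\numsyt\lambda$ distinct $\laurentC$-points of $\Wr^{-1}(H_\mu)$, with convergent \Plucker coordinates. Property (b) follows by chasing the effect of $\left(\begin{smallmatrix} 1 & 0 \\ 0 & u^{\overline\mu_b} \end{smallmatrix}\right)$: this rescales $z$ by $u^{\overline\mu_b}$ and thus zooms in on component $C_b$, so that in the limit groups $1, \dots, b$ are resolved while groups $b+1, \dots, k$ have already collapsed, leaving exactly the accumulated partition $\shape(T|_{\leq b})$ visible. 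For property (c), I would apply Lemma~\ref{lem:calculation2}(ii) group-by-group: when $\mu_b = 2$ the two roots are a purely imaginary conjugate pair, so $a_1 + a_2 = 0$ and $a_1 a_2 > 0$, making the discriminant $(a_1+a_2)^2 - 4(1-L^{-2}) a_1 a_2 = 4(L^{-2}-1)\, a_1 a_2$ negative exactly when $L > 1$. Combined with the unique real solution in the domino case (Lemma~\ref{lem:calculation2}(i), $L = 1$) and the automatic real solution when $\mu_b = 1$ (Lemma~\ref{lem:calculation1}), this shows that the global section is real precisely when every $\shape(T|_b)$ is a single box or a domino --- i.e.\ $T$ is a Murnaghan--Nakayama tableau.

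The main obstacle, I expect, is the unique-lifting step within Speyer's framework: one must verify that the boundary fibre $\mmap^{-1}(\mfpoint_0)$ really decomposes as the iterated product suggested by Lemmas~\ref{lem:calculation1} and~\ref{lem:calculation2}, and that $\mmap$ is unramified at each of these boundary points, so that the boundary count genuinely matches the generic count of $\numsyt\lambda$. This requires careful bookkeeping of how the $n$ moving Schubert conditions at the roots of $H_\mu$ distribute onto the components of the chain as $u \to 0$, but once it is in place, the remaining assertions follow from the group-by-group calculations above.
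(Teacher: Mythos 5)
Your proposal follows essentially the same route as the paper: degenerate the curve defined by the roots of $H_\mu$ to a $\projspace^1$-chain in Speyer's model, use the product decomposition of the boundary fibre (Theorems~\ref{thm:degeneratefibreiso} and~\ref{thm:finitefibreiso}) together with Lemmas~\ref{lem:calculation1} and~\ref{lem:calculation2} to enumerate and label the limit points by tableaux, lift by reducedness and finiteness, and read off reality from the sign of the discriminant in the two-nonadjacent-box case. The one ingredient you leave implicit is the twisted real structure $\xi^\sigma$ (Propositions~\ref{prop:realstructures} and~\ref{prop:realchains}), which is what rigorously converts componentwise reality of the $C$-coordinates into reality of the point in $\scell(\laurentR)$; otherwise this matches the paper's argument.
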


We prove Lemma~\ref{lem:specialfibres}
in Section \ref{sec:specialfibres-proof}. It follows that for fixed $\lambda$, and for all sufficient small $\varepsilon > 0$, the following are true:
\begin{packeditemize}
\item
For every $\mu$ and every point $\boldx \in \Wr^{-1}(H_\mu)$,
the series $\rmx_\kappa(\varepsilon)$ converges for all 
$\kappa \subset \lambda$.
\item
For $\boldx(u) \in W_T$,
the point $\boldx(\varepsilon) \in \scell(\CC)$, defined by \Plucker
coordinates $\big(\rmx_\kappa(\varepsilon)\big)_{\kappa \subset \lambda}$, 
is real if and only if $T$ is a
Murnaghan--Nakayama tableau.
\item
All of the points $\boldx(\varepsilon)$ are distinct.
\end{packeditemize}

Pick such a suitable $\varepsilon$, and put 
$h_\mu(z) := H_\mu(\varepsilon, z) \in \monics(\mu)$.
For $T \in \MN(\lambda; \mu)$ 
put $\MNpoint_T := \boldx(\varepsilon) \in \scell(\mu)$,
where $\boldx(u)$ is the unique point in $W_T$.

\begin{corollary}
\label{cor:realfibrepoints}
The fibre $\Wr^{-1}(h_\mu)$ is reduced, and the set of real points in
$\Wr^{-1}(h_\mu)$ is
\[
\{\MNpoint_T \mid T \in \MN(\lambda;\mu)\}
\,.
\]
\end{corollary}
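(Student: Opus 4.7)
The plan is to deduce the corollary directly from Lemma~\ref{lem:specialfibres} together with a counting argument based on the algebraic degree of the Wronski map.

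First, I would establish reducedness by a degree count. By Lemma~\ref{lem:specialfibres}, the sets $W_T$ partition $\Wr^{-1}(H_\mu)$ into distinct points in $\scell(\laurentC)$ with $|W_T| = 2^{\#_\twoskew(T)}$. By the choice of $\varepsilon$ preceding the corollary, the specialization $u \mapsto \varepsilon$ of these $\laurentC$-points yields a set of
\[
\sum_{T \in \Tab(\lambda;\mu)} 2^{\#_\twoskew(T)} = \numsyt\lambda
\]
distinct points in $\scell(\CC)$ lying in $\Wr^{-1}(h_\mu)$, where the equality is Proposition~\ref{prop:easyidentity}. Since $\Wr \colon \scell(\CC) \to \monics(\CC)$ is a finite morphism of degree $\numsyt\lambda$, any fibre has scheme-theoretic length at most $\numsyt\lambda$, with equality if and only if the fibre is reduced. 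Hence $\Wr^{-1}(h_\mu)$ is reduced, and these $\numsyt\lambda$ specializations account for every point of the fibre.

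Second, I would identify the real points. By the second bullet preceding the corollary, the specialization $\boldx(\varepsilon)$ is real precisely when $\boldx(u) \in W_T$ for some Murnaghan--Nakayama tableau $T$. For such $T$ we have $\#_\twoskew(T) = 0$, so $|W_T| = 1$, and the unique specialization is $\MNpoint_T$ by definition. Therefore
\[
\{\boldx \in \Wr^{-1}(h_\mu) \mid \boldx \text{ real}\} = \{\MNpoint_T \mid T \in \MN(\lambda;\mu)\},
\]
completing the proof. Since all assertions are direct consequences of Lemma~\ref{lem:specialfibres} and the listed bullets, there is no substantial obstacle; the only nontrivial step is recognizing that Proposition~\ref{prop:easyidentity} forces the counts to match the algebraic degree exactly, which is what upgrades the distinct-points statement to reducedness.
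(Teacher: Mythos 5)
Your proof is correct and follows essentially the same route the paper intends: the corollary is stated without a separate proof precisely because it is immediate from Lemma~\ref{lem:specialfibres}, the three bullet points, and the count $\sum_T 2^{\#_\twoskew(T)} = \numsyt\lambda$ against the algebraic degree of the finite Wronski map. (One minor phrasing point: since $\Wr$ is finite and flat between smooth affine spaces, every fibre has length \emph{exactly} $\numsyt\lambda$; what your argument really shows is that $\numsyt\lambda$ distinct points in a length-$\numsyt\lambda$ fibre forces all multiplicities to be $1$.)
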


\begin{remark}
Lemma~\ref{lem:specialfibres} could be stated much more generally:
essentially, the proof uses only
the asymptotic behaviour of the roots of $H_\mu$, as $u \to 0$.
As such, the definition of $H_\mu$ is somewhat arbitrary, in that there 
are other choices that would work equally well.
However, Lemma~\ref{lem:specialfibres} is not the only consideration.
The choices we have made here will be particularly convenient 
later on, for constructions involving paths in Sections~\ref{sec:mbasepaths} 
and~\ref{sec:mfamilypaths}.
\end{remark}

\begin{example}
\label{ex:specialfibres}
Consider $\lambda = \twoone$.
Here, $\varepsilon = \half$ is sufficiently small, and
we have
\begin{align*}
  h_{1^3}(z) &= (z+\tfrac{1}{8})(z+\tfrac{1}{4})(z+\tfrac{1}{2}) \\
  h_{21}(z) &= (z^2+(\tfrac{3}{16})^2)(z+\tfrac{1}{2}) \\
  h_{12}(z) &= (z+\tfrac{1}{8})(z+(\tfrac{3}{8})^2) \,.
\end{align*}
As explained in Section~\ref{sec:example}, when $\lambda$ is of the
form $(n-1,1)$, the points of the fibre $\Wr^{-1}(h_\mu)$
correspond to the critical points of $h_\mu$.  
The polynomials
$h_{1^3}$ and $h_{21}$ each have two real critical points, and $h_{12}$ has
zero real critical points.  
According to Corollary~\ref{cor:realfibrepoints}, the real points
of $\Wr^{-1}(h_\mu)$ (hence the real critical points of $h_\mu$)
are in bijection with tableaux in $\MN(\lambda, \mu)$.  Indeed, we
have $\#\MN(\twoone, 1^3) = \#\MN(\twoone, 21) = 2$, 
and $\#\MN(\twoone,12) = 0$.  

The precise identification between
critical points of $h_\mu$ and Murnaghan--Nakayama tableaux is as shown in 
Figure~\ref{fig:21plots}.  
\begin{figure}[t]
\centering
\newcommand{\xmin}{-.75}
\newcommand{\xmax}{.75}
\newcommand{\ymin}{-1.1}
\newcommand{\ymax}{1.1}
\begin{tikzpicture}[x=6.25em,y=6.25em]
   \draw[violet] (\xmin,0) -- (\xmax,0);
   \draw[violet] (0,\ymax) -- (0,\ymin) 
       node[below,black] {$h_{1^3}$};
   \draw[<-] (-.42,.1) -- (-.5,.6) node[above] {{\scriptsize \begin{young}[2.3ex] _^1 & _^2 \\ _^3 \end{young}}};
   \draw[<-] (-.16,-.05) -- (.2,-.5) node[right] {{\scriptsize \begin{young}[2.3ex] _^1 & _^3 \\ _^2 \end{young}}};
   \clip  (\xmin,\ymin) rectangle (\xmax,\ymax);
   \draw[scale=1,domain=\xmin:\xmax,smooth,variable=\z,blue,thick] 
     plot ({\z},{20*(\z+1/8)*(\z+1/4)*(\z+1/2)});
\end{tikzpicture}
\qquad\quad
\begin{tikzpicture}[x=6.25em,y=6.25em]
   \draw[violet] (\xmin,0) -- (\xmax,0);
   \draw[violet] (0,\ymax) -- (0,\ymin) 
     node[below,black] {$h_{21}$};
   \draw[<-] (-.3,.52) -- (-.4,.75) node[above] {{\scriptsize \begin{young}[2.3ex]]=_^1 & _^1 \\ _^2 \end{young}}};
   \draw[<-] (-.03,.32) -- (.3,-.3) node[below] {{\scriptsize \begin{young}[2.3ex] _^1\ynobottom & _^2 \\ _^1\ynotop \end{young}}};
   \clip  (\xmin,\ymin) rectangle (\xmax,\ymax);
   \draw[scale=1,domain=\xmin:\xmax,smooth,variable=\z,blue,thick] 
     plot ({\z},{20*(\z*\z+9/256)*(\z+1/2)});
\end{tikzpicture}
\qquad\quad
\begin{tikzpicture}[x=6.25em,y=6.25em]
   \draw[violet] (\xmin,0) -- (\xmax,0);
   \draw[violet] (0,\ymax) -- (0,\ymin) 
      node[below,black] {$h_{12}$};
   \clip  (\xmin,\ymin) rectangle (\xmax,\ymax);
   \draw[scale=1,domain=\xmin:\xmax,smooth,variable=\z,blue,thick] 
     plot ({\z},{20*(\z*\z+9/64)*(\z+1/8)});
\end{tikzpicture}
\caption{Plots of the polynomials $h_\mu$, for $\lambda = \twoone$, and the Murnaghan--Nakayama tableaux corresponding to each critical point.}
\label{fig:21plots}
\end{figure}
We can verify this by computing 
of $\Wr^{-1}(H_\mu)$ directly.  For example, for $\mu = 1^3$,
$H_\mu(u,z) = (z+u^3)(z+u^2)(z+u)$, and the two points 
$\boldx_0(u)$, $\boldx_1(u)$ of $\Wr^{-1}(H_\mu)$ are 
spanned by the following polynomials in $\fpsR[z]$:
\begin{align*}
   \boldx_0(u) = \langle z^3 + \tfrac{3}{2}u^2 z^2 + 3u^5z + \dotsb\,,\,  z+\tfrac{2}{3}u + \dotsb\rangle \\
   \boldx_1(u) = \langle z^3 + 2uz^2 + 4u^4z + \dotsb\,,\, z+\tfrac{1}{2}u^2 + \dotsb \rangle \,. 
\end{align*}
Here we have only explicitly written the leading term in $u$ for each
coefficient of $z$.  Recall that the critical point associated to 
$\boldx_i$ is the root of the linear polynomial in the basis 
for $\boldx_i$; as $u \to 0$, these roots are asymptotically
$-\tfrac{2}{3}u < -\tfrac{1}{2}u^2$, so $\boldx_0$ corresponds to the 
smaller of the two critical points.  
To see which point corresponds to which 
tableau, we use part (b) of Lemma~\ref{lem:specialfibres}.
\begin{align*}
\lim_{u \to 0} \big(\begin{smallmatrix} 1 & 0 \\ 0 & u^3 \end{smallmatrix}\big)
\boldx_0(u) &= \langle \tfrac{3}{2}z^2+3z,\tfrac{2}{3} \rangle \in \scellb{\onesmall}
&
\lim_{u \to 0} \big(\begin{smallmatrix} 1 & 0 \\ 0 & u^3 \end{smallmatrix}\big)
\boldx_1(u) &= \langle 2z^2+4z,\tfrac{1}{2} \rangle \in \scellb{\onesmall}
\\
\lim_{u \to 0} \big(\begin{smallmatrix} 1 & 0 \\ 0 & u^2 \end{smallmatrix}\big)
\boldx_0(u) &= \langle z^3+ \tfrac{3}{2}z^2,\tfrac{2}{3}\rangle  \in \scellb{\hdomino}
&
\lim_{u \to 0} \big(\begin{smallmatrix} 1 & 0 \\ 0 & u^2 \end{smallmatrix}\big)
\boldx_1(u) &= \langle 2z^2,z+\tfrac{1}{2} \rangle \in \scellb{\vdomino}
\\
\lim_{u \to 0} \big(\begin{smallmatrix} 1 & 0 \\ 0 & u^1 \end{smallmatrix}\big)
\boldx_0(u) &= \langle z^3,z+\tfrac{2}{3} \rangle \in \scellb{\twoone}
\qquad\qquad&
\lim_{u \to 0} \big(\begin{smallmatrix} 1 & 0 \\ 0 & u^1 \end{smallmatrix}\big)
\boldx_1(u) &= \langle z^3+2z^2,z \rangle \in \scellb{\twoone}
\end{align*}
This shows that $\boldx_0$ corresponds to 
{\scriptsize \begin{young}[c][2ex] _^1&_^2\\_^3 \end{young}}
and $\boldx_1$ corresponds 
to {\scriptsize \begin{young}[c][2ex] _^1&_^3\\_^2 \end{young}}\,.
\end{example}


\section{Orientations}
\label{sec:orientation}

\subsection{Ambient orientations}

We begin by fixing orientations on affine spaces 
$\monics(\RR)$, and $\scell(\RR)$, which we will refer to as the
\defn{ambient orientations}.  These will serve as a point
of reference for defining orientations 
of $\monics(\mu)$ and $\scell(\mu)$.
Each component of these will either be oriented the same, or opposite 
to the ambient space in which it lies.

The degree of a map depends only on the relative orientations
of the spaces: reversing the orientations of both the domain and codomain
leaves the degree unchanged.  
Thus we can begin by making one choice without loss 
of generality.  We select either orientation as the ambient orientation
for $\monics(\RR)$.  Everything else will be defined relative to
this choice.  

Let $T_0 \in \SYT(\lambda)$ be the standard Young tableau with
entries $1, \dots, n$ in order, from left to right and  top to
bottom (i.e. the unique tableau such that $T_0(i,j) < T_0(i',j')$ 
whenever $i < i'$).
We define the ambient orientation of $\scell(\RR)$ to be the orientation
for which the Wronski map is locally orientation preserving in a 
neighbourhood of $\MNpoint_{T_0}$.  

As stated in the introduction, the orientation on 
$\monics(\mu)$ will simply be the restriction of the ambient orientation.
The character orientation of $\scell(\mu)$ is the complicated one, and
will be defined next.

\subsection{The character orientation}

Let $\kappa \subset \lambda$ be a partition.
The half-open Richardson varieties
$\rich(a)$, $a \in \affinespace^1$ define
a flat family of affine subvarieties of $\scell$ over $\affinespace^1$.
Let $\richfamily \subset \scell \times \affinespace^1$
be the total space of this family. 
Let $\pi_1 : \scell \times \affinespace^1 \to \scell$ be the 
projection onto the first factor, and define
$\dominovar := \pi_1(\richfamily)$ to be the
algebraic image of $\richfamily$ under this projection.
Informally, $\dominovar \subset \scell$ 
is the union of all $\rich(a)$,
$a \in \affinespace^1$.

\begin{proposition}
\label{prop:Zclosed}
$\dominovar$ is a closed subvariety of $\scell$.
\end{proposition}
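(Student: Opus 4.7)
The plan is to show that the restriction of the projection $\pi_1$ to $\richfamily$ is proper; then $\dominovar = \pi_1(\richfamily)$ is closed automatically as the image of a proper morphism. To begin, observe that $\richfamily$ is closed in $\scell \times \affinespace^1$, being the intersection of this open set with the total space $\{(\boldx,a)\in\Gr(d,d+m)\times\projspace^1:\boldx\in X_\kappa(a)\}$ of the Schubert family, which is closed in $\Gr(d,d+m)\times\projspace^1$. Hence $\pi_1|_{\richfamily}$ is the restriction of a (non-proper) projection to a closed subvariety, and the question reduces to whether the parameter $a$ is forced to stay bounded on $\richfamily$.

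I would verify the valuative criterion of properness. Let $R$ be a DVR with fraction field $K$, and suppose $(\boldx_K, a_K) \in \richfamily(K)$ with $\boldx_K$ extending to $\boldx_R \in \scell(R)$. By Lemma~\ref{lem:schubertwronskian}(i), $-a_K$ is a root of $\Wr(\boldx_K) \in K[z]$, which is the generic fibre of $g_R := \Wr(\boldx_R) \in R[z]$, a monic polynomial of degree exactly $n$ by the defining property of $\scell$. A standard ultrametric argument --- if $\val(a_K)<0$, then the valuation of $a_K^n$ strictly beats that of each $c_i a_K^i$ for $i<n$, contradicting $g_R(a_K)=0$ --- forces $a_K \in R$. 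The resulting $R$-point $(\boldx_R, a_K)$ lies in $\richfamily(R)$ because $\richfamily$ is closed in $\scell\times\affinespace^1$ and its generic fibre does; uniqueness of the extension is automatic from separatedness. This verifies the valuative criterion, so $\pi_1|_{\richfamily}$ is proper and $\dominovar$ is closed.

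Equivalently, over $\RR$ or $\CC$ one can argue topologically: if $\boldx_t \in \dominovar$ with $\boldx_t \to \boldx \in \scell$, choose $a_t$ with $\boldx_t \in \rich(a_t)$; the Wronskians $\Wr(\boldx_t)$ converge to $\Wr(\boldx)$, all of them monic of degree $n$, so the roots $-a_t$ lie in a compact set, and after passing to a subsequence $a_t \to a_\infty \in \affinespace^1$, giving $\boldx \in \rich(a_\infty) \subset \dominovar$. The key input in either formulation is that the Wronski map lands in monic polynomials of degree exactly $n$, which constrains the roots. I do not anticipate any serious obstacle; the only real insight is recognizing that the closedness question reduces to properness, which in turn reduces to this elementary boundedness observation about roots of monic polynomials.
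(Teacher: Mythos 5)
Your proof is correct and follows essentially the same strategy as the paper: both arguments reduce the claim to properness of $\pi_1|_{\richfamily}$, and both rest on the same key fact, namely that $-a$ is a root of the monic degree-$n$ Wronskian of $\boldx \in \scell$, so that $a$ cannot escape to infinity while $\boldx$ stays in the cell. The only difference is in how properness is packaged --- the paper compactifies the base inside $\scellclosure \times \projspace^1$ and observes that the fibre over $a=\infty$ lands in $\partial\scellclosure$ (away from $\scell$), whereas you check the valuative criterion directly via integrality of roots of monic polynomials over a DVR; these are interchangeable, and your one small slip ($\scell \times \affinespace^1$ is locally closed, not open, in $\Gr(d,d+m)\times\projspace^1$) does not affect the conclusion that $\richfamily$ is closed in it.
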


\begin{proof}
Let $\richfamilyclosure$ be the closure of $\richfamily$ in 
$\scellclosure \times \mathbb{P}^1$, where 
$\scellclosure = \schubertopposite(\infty)$ 
is the Schubert variety. 
Note that $\richfamilyclosure \to \scellclosure$ is proper and that
\[
\richfamilyclosure 
\subset \overline{\bigcup_{a \in \affinespace^1} 
   (\schubertone(a) \cap \scellclosure)
   \times \{a\} }
\,,
\]
so in particular, 
the fibre $\richfamilyclosure_\infty = \pi_1(\pi_2^{-1}(\infty))$ is 
contained in the limiting fibre
\[
\lim_{a \to \infty} \schubertone(a) \cap \scellclosure = \partial \scellclosure
\,.
\]
Therefore $\richfamilyclosure_\infty$ does not intersect the 
Schubert cell $\scell$. Let $\richfamilyclosure|_{\scell} = 
\richfamilyclosure \cap (\scell \times \projspace^1)$.
Then $\pi_1 : \richfamilyclosure|_{\scell} \to \scell$ is again proper, 
so $\pi_1(\richfamilyclosure|_{\scell})$ is closed. By the above,
\[
  \pi_1(\richfamilyclosure|_{\scell}) = 
   \pi_1(\richfamilyclosure \cap (\scell \times \affinespace^1)) = 
   \pi_1(\richfamily) = \dominovar
\,.
\qedhere\]
\end{proof}

\begin{proposition}
\label{prop:Zdim}
$\dominovar$ is irreducible, and 
$\dim \dominovar = |\lambda|-|\kappa|+1$.
\end{proposition}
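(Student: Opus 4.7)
The plan is to first show that the total space $\richfamily \subset \scell \times \affinespace^1$ is itself irreducible of dimension $|\lambda|-|\kappa|+1$, and then transfer these properties to $\dominovar = \pi_1(\richfamily)$ by verifying that $\pi_1$ is generically finite on $\richfamily$. (I will implicitly assume $|\kappa| \geq 1$, as otherwise $\rich(a) = \scell$ for all $a$ and the stated formula fails.)

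For the first step, the key observation is that the family $\richfamily \to \affinespace^1$ is trivialized by the translation subgroup of $\borelgroup$. The element $\phi_a := \left(\begin{smallmatrix} 1 & a \\ 0 & 1 \end{smallmatrix}\right) \in \borelgroup$ acts on $\projspace^1$ by $z \mapsto z + a$, fixes $\infty$, and carries $F_\bullet(0)$ to $F_\bullet(a)$. By $\PGL_2$-equivariance, $\phi_a$ preserves $\scell = \schubertopposite^\circ(\infty)$ setwise and sends $X_\kappa(0)$ to $X_\kappa(a)$, so it restricts to an isomorphism $\rich(0) \xrightarrow{\sim} \rich(a)$. Hence $(\boldx_0, a) \mapsto (\phi_a \boldx_0, a)$ defines an isomorphism $\rich(0) \times \affinespace^1 \xrightarrow{\sim} \richfamily$. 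The half-open Richardson variety $\rich(0)$ is a dense open subset of the closed Richardson variety $X_\kappa(0) \cap \scellclosure$, which is irreducible of dimension $|\lambda|-|\kappa|$, so $\richfamily$ is irreducible of dimension $|\lambda|-|\kappa|+1$. Consequently, $\dominovar = \pi_1(\richfamily)$ is irreducible and $\dim \dominovar \leq |\lambda|-|\kappa|+1$.

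For the reverse inequality, it suffices to check that $\pi_1$ is generically finite, i.e.\ that for generic $\boldx \in \dominovar$ only finitely many values of $a \in \affinespace^1$ satisfy $\boldx \in \rich(a)$. By Lemma~\ref{lem:schubertwronskian}(i), if $\boldx \in \rich(a) \subset X_\kappa(a)$ then $(z+a)^{|\kappa|}$ divides $\Wr(\boldx)$. Since $\Wr(\boldx)$ is a nonzero polynomial of degree $n$ (every point of $\scell$ has a nonzero Wronskian) and $|\kappa| \geq 1$, there are at most $n/|\kappa|$ such values of $a$, so the fibre is finite, and hence $\dim \dominovar = \dim \richfamily = |\lambda|-|\kappa|+1$. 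The substantive content of the argument is the trivialization of $\richfamily$ via the translation subgroup; once this is in place, the dimension count and the generic finiteness both follow immediately from tools already assembled (Lemma~\ref{lem:schubertwronskian} together with standard irreducibility and dimension properties of Richardson varieties), and there is no genuine obstacle.
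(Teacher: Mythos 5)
Your proof is correct and follows essentially the same route as the paper: establish that $\richfamily$ is irreducible of dimension $|\lambda|-|\kappa|+1$ (the paper asserts this from the constancy of the fibres over $\affinespace^1$, while you make the $\borelgroup$-translation trivialization explicit), then deduce the result for $\dominovar=\pi_1(\richfamily)$ using finiteness of the fibres of $\pi_1$. Your justification of that finiteness via Lemma~\ref{lem:schubertwronskian} fills in a detail the paper leaves unstated, and your caveat that $|\kappa|\geq 1$ is needed is accurate (the paper only ever uses $|\kappa|=2$).
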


\begin{proof}
The fibres of $\richfamily$ over $\affinespace^1$ are all isomorphic to
the irreducible variety $\rich(0)$, so $\richfamily$ is integral and 
has dimension $|\lambda| - |\kappa| + 1$. 
Therefore $\dominovar = \pi_1(\richfamily)$ is irreducible and has 
dimension at most $|\lambda| - |\kappa| + 1$. It has dimension at 
least $|\lambda| - |\kappa| + 1$ since a point of $\dominovar$ lies on 
only finitely-many varieties $\rich(a)$, and each of these 
has dimension $|\lambda| - |\kappa|$.
\end{proof}

\begin{proposition}
\label{prop:Ztest}
If $\boldx \in \dominovar$, then $\Wr(\boldx)$ has a root of
multiplicity $|\kappa|$.  Conversely, if $\Wr(\boldx)$ has a root of
multiplicity $\ell$, then $\boldx \in \dominovar$ for some partition
$\kappa \vdash \ell$.
\end{proposition}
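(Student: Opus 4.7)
The plan is to reduce Proposition~\ref{prop:Ztest} directly to Lemma~\ref{lem:schubertwronskian}(i), which characterizes when $(z+a)^\ell$ divides $\Wr(\boldx)$ in terms of membership of $\boldx$ in a Schubert variety $X_\alpha(a)$ with $\alpha \vdash \ell$. Both implications should follow almost immediately from that lemma together with the basic fact (noted in the subsection on $\scell$) that $X_\alpha(a) \cap \scell$ is nonempty only when $\alpha \subset \lambda$. Since $\Wr(\boldx) \in \monics$ is a monic polynomial of degree exactly $n$, all of its roots lie in $\affinespace^1$, so we never need to worry about the point at infinity and can always apply Lemma~\ref{lem:schubertwronskian}(i) with $a \in \affinespace^1$.

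For the forward direction, I would unwind the definition: if $\boldx \in \dominovar = \pi_1(\richfamily)$, then there exists $a \in \affinespace^1$ with $\boldx \in \rich(a) = X_\kappa(a) \cap \scell$. In particular $\boldx \in X_\kappa(a)$, so Lemma~\ref{lem:schubertwronskian}(i) applied with $\alpha = \kappa$ and $\ell = |\kappa|$ implies $(z+a)^{|\kappa|}$ divides $\Wr(\boldx)$. Thus $\Wr(\boldx)$ has a root of multiplicity (at least) $|\kappa|$ at $-a$.

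For the converse, suppose $\Wr(\boldx)$ has a root of multiplicity $\ell$ at $-a \in \affinespace^1$, so $(z+a)^\ell \mid \Wr(\boldx)$. The other half of Lemma~\ref{lem:schubertwronskian}(i) then yields some partition $\alpha \vdash \ell$ with $\boldx \in X_\alpha(a)$. Combined with $\boldx \in \scell$, this gives $\boldx \in X_\alpha(a) \cap \scell$, a nonempty intersection which forces $\alpha \subset \lambda$. Setting $\kappa := \alpha$, we have $\kappa \vdash \ell$ and $\boldx \in \rich(a) \subset \dominovar$.

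There isn't really a hard step here: the substance of the proposition is packaged inside Lemma~\ref{lem:schubertwronskian}(i), and the only extra ingredient is the containment $\alpha \subset \lambda$ needed to land inside the Schubert cell $\scell$. The two-line argument in each direction is essentially bookkeeping, so I would present the proof as a short paragraph citing Lemma~\ref{lem:schubertwronskian}(i) for each implication.
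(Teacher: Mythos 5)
Your proof is correct and is exactly the argument the paper intends: the paper's own proof consists of the single line ``This follows from Lemma~\ref{lem:schubertwronskian},'' and your write-up just makes explicit the bookkeeping (unwinding $\dominovar = \pi_1(\richfamily)$, and using $\alpha \subset \lambda$ to land in $\rich(a)$) that the authors leave implicit. No issues.
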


\begin{proof}
This follows from Lemma~\ref{lem:schubertwronskian}.
\end{proof}

Now suppose that $|\kappa| = 2$.  In this case, 
by Proposition~\ref{prop:Zdim}, $\dominovar$ is a 
closed hypersurface in $\scell \cong \affinespace^n$. Therefore the 
defining ideal of $\dominovar(\FF)$ is
generated by a single polynomial
$\dominofcn(\boldx) \in \FF[\boldx]$.  Since 
$\dominovar$ is defined over $\QQ$, this polynomial has rational 
coefficients; in particular we can think of $\dominofcn$ as a real
valued function on $\scell(\RR)$.

The \defn{discriminant variety} $\discrimvar \subset \monics$ is the 
hypersurface defined the vanishing of the discriminant function
$g \mapsto \discriminant_z(g(z))$.
We have $g \in \discrimvar$ if and only if $g$ has a repeated root.
By Proposition~\ref{prop:Ztest}, 
\[
   \Wr^{-1}(\discrimvar) = \hdominovar \cup \vdominovar
\,.
\]
Since the polynomials $h_\mu$ are not in $\discrimvar$, the points $\MNpoint_T$, $T\in \MN(\lambda;\mu)$
are not in $\dominovar$ for either $\kappa \vdash 2$.  
In particular, $\dominofcn(\MNpoint_{T_0}) \neq 0$,
and we will assume that $\dominofcn(\MNpoint_{T_0}) > 0$.

We call the two functions
$\vdominofcn(\boldx)$ and $\hdominofcn(\boldx)$ the
\defn{character orientation function} and the
\defn{dual character orientation function}, respectively.

\begin{lemma}
Let $\component$ be a component of $\scell(\mu)$, and $\kappa \vdash 2$.  
Then exactly one of the following must be true:
\begin{enumerate}[(a)]
\item
$\dominofcn$ is globally non-negative on $\component$; or
\item 
$\dominofcn$ is globally non-positive on $\component$.
\end{enumerate}
\end{lemma}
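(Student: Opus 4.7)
The plan is to prove a stronger statement: the zero locus of $\dominofcn$ inside $\mathcal{C}$ has real codimension at least $2$. Granting this, $\mathcal{C} \setminus \{\dominofcn = 0\}$ remains connected (removing a semi-algebraic subset of codimension $\geq 2$ from a connected real-analytic manifold does not disconnect it: any two points may be joined by a piecewise-smooth path, and such a path can be perturbed generically to miss a set of codimension $\geq 2$). Since $\dominofcn$ is a polynomial in affine coordinates, hence continuous, the intermediate value theorem forces $\dominofcn$ to have constant nonzero sign on $\mathcal{C}\setminus\{\dominofcn = 0\}$; by continuity, this sign extends to make $\dominofcn$ either globally non-negative or globally non-positive on all of $\mathcal{C}$. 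The two alternatives are mutually exclusive because $\dominofcn$ is a nonzero polynomial (assuming $\kappa\subset\lambda$; otherwise $Z^\lambda_\kappa=\emptyset$ and $\dominofcn$ is a nonzero constant, trivially giving one of the two cases), and therefore cannot vanish identically on the nonempty open set $\mathcal{C}\subset\scell(\RR)\cong\affinespace^n(\RR)$.

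The bulk of the work is the codimension estimate. Suppose $\boldx\in\mathcal{C}$ is a real point with $\dominofcn(\boldx) = 0$, i.e.\ $\boldx\in Z^\lambda_\kappa(\RR)\cap\scell(\mu)$. By Proposition~\ref{prop:Ztest}, the polynomial $\Wr(\boldx)$ has a root of multiplicity at least $|\kappa| = 2$. On the other hand, $\Wr(\boldx)\in\monics(\mu)$ has its $n_1$ real roots distinct by the very definition of $\monics(\mu)$, so the double root is forced to be non-real, and must arise from the coincidence of two of the $n_2$ conjugate pairs of non-real roots. Parametrising $\monics(\mu)$ by an open chamber of the symmetric product $\Sym^{n_1}(\RR)$ for the real roots and by $\Sym^{n_2}(\mathbb{H})$ (with $\mathbb{H}$ the open upper half-plane) for the conjugate pairs, the coincidence condition is cut out by the diagonal in $\mathbb{H}^{n_2}$, which has real codimension $2$ since $\mathbb{H}$ has real dimension $2$. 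Hence $\monics(\mu)\cap\discriminant_n(\RR)$ has real codimension $2$ inside $\monics(\mu)$.

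Now $\Wr\colon\scell(\mu)\to\monics(\mu)$ is a finite, surjective, semi-algebraic map between smooth real manifolds of the same dimension $n$, so the preimage of a semi-algebraic set of real dimension $d$ has real dimension at most $d$. Applying this to $\monics(\mu)\cap\discriminant_n(\RR)$ yields that $Z^\lambda_\kappa(\RR)\cap\scell(\mu)$ has real codimension at least $2$ in $\scell(\mu)$, as required. The main subtlety I expect will be tracking signs and conventions carefully enough to ensure the parametrisation of $\monics(\mu)$ by symmetric products really does give real codimension $2$ for the discriminant (rather than, for instance, codimension $1$ due to some unexpected real degeneracy); this boils down to the fact that in $\monics(\mu)$ the only possible repeated roots are among the strictly non-real ones, exactly because $\monics(\mu)$ keeps the real roots distinct. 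With that in hand, the topological fact about removing codimension-$2$ semi-algebraic subsets is standard, and the proof is complete.
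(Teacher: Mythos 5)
Your proposal is correct and follows essentially the same route as the paper: both reduce the claim to showing that $\dominovar(\RR)\cap\component$ has real codimension at least $2$, by noting that any such point maps under $\Wr$ into the locus of real polynomials with a repeated non-real root (which has real codimension $2$ in $\monics(\RR)$) and pulling this back along the finite proper map $\Wr$. The only difference is cosmetic: you make the codimension count explicit via a symmetric-product parametrisation of $\monics(\mu)$, whereas the paper simply asserts it.
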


\begin{proof}
Since $\dim \dominovar = n-1$, and $\dim \component = n$, 
$\dominofcn$ cannot be identically zero on $\component$.  
Therefore at most one of (a) and (b) holds.

To see that at least one of these holds, we show that 
$\component \setminus \dominovar(\RR)$ is connected.
Since $\dominofcn$ is by definition non-vanishing on
$\component \setminus \dominovar(\RR)$ this implies either (a)
or (b) holds.

Suppose that 
$\boldx \in \dominovar(\RR) \cap \component$.
Since $\boldx \in \dominovar(\RR)$,
$\Wr(\boldx) \in \discrimvar$, i.e. $\Wr(\boldx)$ 
must have a repeated root.  On the other hand,
by definition if $\boldx \in \scell(\mu)$ then $\Wr(\boldx)$ cannot
have a repeated real root.  Therefore, $\Wr(\boldx)$ must have a repeated
non-real root.

Let $K = \{g \in \monics(\RR) \mid \text{$g$ has a repeated non-real
root}\}$.  Then $K$ has real codimension $2$ in $\monics(\RR)$. 
Since $\Wr$
is a proper map with finite fibres, $\Wr^{-1}(K)$ has codimension $2$ in
$\scell(\RR)$.  Since we just showed
$\dominovar(\RR) \cap \component \subset \Wr^{-1}(K)$, it follows
that 
$\dominovar(\RR) \cap \component$ has codimension at least $2$,
and hence $\component \setminus \dominovar(\RR)$ is connected.
\end{proof}

\begin{definition}
The \defn{character orientation} of $\scell(\mu)$ is defined as follows.  For
each component $\component$ of $\scell(\mu)$ we orient according to
the sign of the character orientation function.
\begin{enumerate}[(a)]
\item
If $\vdominofcn$ is globally non-negative on $\component$, then
the character orientation of $\component$ 
is the same as the ambient orientation.
\item
If $\vdominofcn$ is globally non-positive on $\component$, then
the character orientation of $\component$ is opposite to the ambient 
orientation.
\end{enumerate}
The \defn{dual character orientation} of $\scell(\mu)$ is the orientation
obtained similarly, using the dual character orientation function.
\end{definition}

\begin{remark} 
A common way to define an orientation on a manifold $X$ is to specify
a global non-vanishing volume form on $X$.  The non-vanishing condition
can be relaxed slightly to allow the form to vanish on a set of 
codimension $2$.  From this perspective, the character orientation 
on $\scell(\mu)$ is simply the ambient orientation multiplied by the
character orientation function.
\end{remark}

\begin{example}
\label{ex:orientation}
Recall, from Section~\ref{sec:example}, that for $\lambda = (n-1,1)$, 
$\scell$ is identified with the set of pairs $(g,c)$, 
where $g \in \monics$ and $g'(c) = 0$. On $\monics$, the variety defined by the discriminant
is irreducible, but working 
on $\scell$, we have the additional
data of a specified critical point, and we can identify
two components of $\Wr^{-1}(\discrimvar)$.  Specifically
$\vdominovar$ consists pairs $(g,c)$ such that $c$ is a double (or higher
order) root of $g$;
this is cut out by the additional equation $g(c) = 0$.
$\hdominovar$ is the closure of the set of pairs $(g,c)$ 
such $g$ has a double root at some point other than $c$.
In this case, the character orientation function is
$\vdominofcn(g,c) = (-1)^{n-1} g(c)$.  
(The sign is explained by the fact 
that the point $\MNpoint_{T_0}$ corresponds to the pair $(h_{1^n}, c_0)$,
where $c_0$ is the smallest critical point of $h_{1^n}$, and
$(-1)^{n-1}h_{1^n}(c_0) > 0$.)
As noted in Section~\ref{sec:example}, this function is in fact globally 
positive or globally negative on any component of $\scell(\mu)$.
The dual character orientation function is
$\hdominofcn(g,c) = (-1)^{n-1}\discriminant_z(g(z))/g(c)$.
\end{example}

\subsection{Signs of points in $\scell(\RR)$}

Let $\jacobian{\Wr}: \scell(\FF) \to \FF$ 
denote the Jacobian of the Wronski map
$\Wr : \scell(\FF) \to \monics(\FF)$ (with respect to affine coordinates).
The \defn{ramification divisor} of $\Wr : \scell \to \monics$, 
is the hypersurface in $\scell$ on which 
$\jacobian{\Wr}$ vanishes.  We denote it by $\critvar$.

For $\boldx \in \scell(\mu) \setminus \critvar(\RR)$, 
define the \defn{sign}
of $\boldx$ to be $\sgn(\boldx) = 1$ if the Wronski map is orientation
preserving with respect to the character orientation 
in a neighbourhood of $\boldx$, and $\sgn(\boldx) = -1$ otherwise.
Similarly, define the \defn{dual sign} of $\boldx$, denoted
$\dualsgn(\boldx)$, using the dual character orientation.
Define the \defn{ambient sign} of $\boldx$, denoted $\ambsgn(\boldx)$
using the ambient orientation.

Our main goal is to prove the following theorem.

\begin{theorem}
\label{thm:signs}
For every $T \in \MN(\lambda; \mu)$, 
\[
\sgn(\MNpoint_T) = \sgn(T)
\,.
\]
\end{theorem}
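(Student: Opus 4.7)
The plan is to propagate the known character sign at a single reference point through a network of paths in $\scell(\RR)$, counting sign changes along each connecting path. As the reference take $\MNpoint_{T_0}$: by the defining choice of the ambient orientation, the Wronski map is locally orientation preserving at $\MNpoint_{T_0}$, and by the normalization $\vdominofcn(\MNpoint_{T_0}) > 0$ the character orientation agrees with the ambient orientation there, so $\sgn(\MNpoint_{T_0}) = +1$. Combinatorially, $T_0 \in \MN(\lambda; 1^n)$ has only singleton blocks, so $\#_\vdomino(T_0) = 0$ and $\sgn(T_0) = +1$. This establishes the base case.

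The analytic key is the identity, valid up to a global reference sign on $\scell(\RR) \setminus (\vdominovar \cup \critvar)(\RR)$,
\[
\sgn(\boldx) \;=\; \sign\bigl(\vdominofcn(\boldx) \cdot \jacobian\Wr(\boldx)\bigr),
\]
which follows from the definition of the character orientation as the ambient orientation scaled by the sign of $\vdominofcn$, together with the fact that the ambient sign is controlled by the Jacobian of $\Wr$. Consequently, if $\gamma$ is a generic smooth path in $\scell(\RR)$ joining two points of $\scell(\RR) \setminus (\vdominovar \cup \critvar)(\RR)$, then the character sign at the two endpoints differs by $(-1)^k$, where $k$ counts the transverse crossings of $\vdominovar(\RR) \cup \critvar(\RR)$ along $\gamma$. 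To prove the theorem it therefore suffices to produce, for each $T \in \MN(\lambda;\mu)$, a path from $\MNpoint_{T_0}$ to $\MNpoint_T$ whose sign-change parity equals $\#_\vdomino(T)$.

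The two forthcoming lemmas supply such paths. Lemma~\ref{lem:SYTsigns} provides paths inside $\scell(1^n)$ connecting $\MNpoint_{T_0}$ to $\MNpoint_{T'}$ for every $T' \in \SYT(\lambda)$, along which the total number of sign changes is even; this yields $\sgn(\MNpoint_{T'}) = +1 = \sgn(T')$ throughout $\MN(\lambda; 1^n)$. Lemma~\ref{lem:MNsigns} then connects each $\MNpoint_T$ for $T \in \MN(\lambda;\mu)$ with some $\MNpoint_{T'}$ in $\scell(1^n)$ via a path that splits the domino blocks of $T$ one at a time, by deforming the corresponding complex conjugate pair of roots of $h_\mu$ through a real double root into two distinct real roots. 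The lemma must arrange that each horizontal domino of $T$ contributes an even number of crossings of $\vdominovar \cup \critvar$, while each vertical domino contributes an odd number; accumulating these contributions gives $(-1)^{\#_\vdomino(T)} = \sgn(T)$, as required.

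The main obstacle, deferred to Section~\ref{sec:stablecurves}, is the construction of these paths and the verification of the parity count. Working directly in $\scell(\RR)$ is impractical because its real topology is not easily accessible, so the plan is to replace the Wronski map by Speyer's family over $\modsc{n+3}$, whose boundary is stratified by $\projspace^1$-chain degenerations on which fibres decompose combinatorially via the framework of Lemma~\ref{lem:specialfibres}. The paths are then built as near-boundary arcs, and their intersections with $\vdominovar$ and $\critvar$ can be tracked by local calculations on each component of the $\projspace^1$-chain. The delicate technical point, and the real substance of Lemmas~\ref{lem:MNsigns} and~\ref{lem:SYTsigns}, is showing that a single domino-splitting degeneration contributes an odd number of sign changes precisely when the domino is vertical — this parity reflects an orientation discrepancy between splittings in rows versus columns of $\lambda$, and it is where the combinatorial factor $(-1)^{\rows(T|_b) - 1}$ of the Murnaghan--Nakayama sign enters the geometry.
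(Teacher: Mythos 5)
Your proposal is correct and follows essentially the same route as the paper: anchor the sign at $\MNpoint_{T_0}$, observe that $\sgn$ changes exactly at simple crossings of $\vdominovar(\RR)$ and $\critvar(\RR)$, and then use the paths of Lemmas~\ref{lem:SYTsigns} (even number of relevant crossings, propagating $+1$ across all of $\SYT(\lambda)$) and~\ref{lem:MNsigns} (zero relevant crossings for a horizontal domino, one for a vertical domino) to accumulate $(-1)^{\#_\vdomino(T)} = \sgn(T)$. The deferral of the path constructions to the stable-curve degeneration arguments matches the paper's structure exactly.
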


Our strategy is to join the points $\MNpoint_T$ together by paths,
and count the number of times the sign changes along a path.
Let $\gamma: [0,1] \to \scell(\RR)$, $t \mapsto \gamma_t$
be a path, and assume the following:
\begin{packeditemize}
\item $\gamma_0, \gamma_1 \notin \critvar(\RR) \cup \hdominovar(\RR) \cup \vdominovar(\RR)$;
\item $\gamma_t \in \critvar(\RR) \cup \hdominovar(\RR) \cup \vdominovar(\RR)$ for only finitely many values of $t$.
\end{packeditemize}
These conditions ensure that $\sgn(\gamma_t)$, $\dualsgn(\gamma_t)$ 
and $\ambsgn(\gamma_t)$ 
are defined at
all but finitely many points, including
$\gamma_0$ and $\gamma_1$.

We first establish the main properties of interest in detecting sign changes along paths in $\scell(\RR)$. In  Section~\ref{sec:nicepaths}, we then discuss how the existence of sufficiently nice paths allows us to prove Theorem \ref{thm:signs}.

\begin{proposition}
If $\sgn(\gamma_t)$ changes at the point $t$, then either
$\gamma_t \in \critvar(\RR)$, or $\gamma_t \in \vdominovar(\RR)$.
If $\dualsgn(\gamma_t)$ changes at the point $t$, then either
$\gamma_t \in \critvar(\RR)$, or $\gamma_t \in \hdominovar(\RR)$.
If $\ambsgn(\gamma_t)$ changes at the point $t$, then 
$\gamma_t \in \critvar(\RR)$.
\end{proposition}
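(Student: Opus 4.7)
The plan is to express each of the three sign functions as $\pm 1$ times the sign of an explicit real regular function on $\scell$, after which the proposition reduces to the observation that the sign of a regular function along a path can change only where the function vanishes along the path.

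For the ambient sign, I would note that with respect to affine coordinates and the chosen ambient orientations on $\scell(\RR)$ and $\monics(\RR)$, there is a single global $\epsilon \in \{\pm 1\}$ (pinned down by the convention $\ambsgn(\MNpoint_{T_0}) = +1$) such that $\ambsgn(\boldx) = \epsilon \cdot \mathrm{sign}(\jacobian{\Wr}(\boldx))$ for all $\boldx \in \scell(\RR) \setminus \critvar(\RR)$. Since $\jacobian{\Wr}$ is a polynomial on $\scell$ whose real zero locus is $\critvar(\RR)$, any jump of $t \mapsto \ambsgn(\gamma_t)$ forces $\jacobian{\Wr}$ to vanish somewhere on $\gamma$ arbitrarily near that time, and the finiteness hypothesis on $\gamma$ localizes such a zero at the jump point itself. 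This settles the third assertion.

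For the character sign, I would invoke the lemma just proved: on each connected component $\component$ of $\scell(\mu)$, $\vdominofcn$ has constant sign, and that sign records whether the character orientation of $\component$ agrees with or opposes the ambient one. So on $\scell(\mu) \setminus \critvar(\RR)$ I would write
\[
\sgn(\boldx) \;=\; \ambsgn(\boldx) \cdot \mathrm{sign}\big(\vdominofcn(\boldx)\big) \;=\; \epsilon \cdot \mathrm{sign}\big(\jacobian{\Wr}(\boldx) \cdot \vdominofcn(\boldx)\big),
\]
and observe that the right-hand side is the sign of a globally-defined regular function on $\scell$. A jump of $t \mapsto \sgn(\gamma_t)$ at $t_0$ therefore requires $\jacobian{\Wr}$ or $\vdominofcn$ to vanish somewhere on $\gamma$ arbitrarily close to $t_0$; the finiteness hypothesis then forces such a zero to occur at $t_0$ itself, so $\gamma_{t_0} \in \critvar(\RR) \cup \vdominovar(\RR)$. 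The identical argument with $\hdominofcn$ in place of $\vdominofcn$ handles the dual character sign.

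The only point requiring a moment of care is that $\sgn$ is defined only component-by-component inside each $\scell(\mu)$, while the product formula above is global; I would verify that the formula continues to agree with $\sgn$ across crossings of $\hdominovar(\RR)$, where $\gamma$ may pass from a component of some $\scell(\mu)$ into a component of a different $\scell(\mu')$. This is immediate from the fact that $\vdominofcn$ is regular on all of $\scell$ with zero locus $\vdominovar$, so the product formula provides a consistent global extension of the component-wise definition of $\sgn$, and all sign comparisons along $\gamma$ can be carried out with this single formula.
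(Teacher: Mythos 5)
Your argument is correct and is essentially the paper's own proof: the paper likewise observes that $\sgn$ can change only if the sign of $\jacobian{\Wr}$ reverses (forcing $\gamma_t\in\critvar(\RR)$) or the orientation reverses (forcing $\vdominofcn(\gamma_t)=0$, i.e.\ $\gamma_t\in\vdominovar(\RR)$), with the analogous statements for $\dualsgn$ and $\ambsgn$. Your explicit product formula $\sgn = \epsilon\cdot\mathrm{sign}(\jacobian{\Wr}\cdot\vdominofcn)$ and the remark about its global consistency across components are just a slightly more detailed write-up of the same idea.
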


\begin{proof}
If $\sgn(\gamma_t)$ changes, then either the sign of the Jacobian
of $\Wr$ reverses, or the orientation of the space reverses.  The
former can happen only when $\jacobian{\Wr}(\gamma_t) = 0$, i.e. 
the path crosses $\critvar(\RR)$, and the latter
can happen only when only when $\vdominofcn(\gamma_t) = 0$, i.e.
the path crosses $\vdominovar(\RR)$.
The other statements are similar.
\end{proof}

For the converse, we need a slightly stronger condition.

\begin{definition}
Let $V \subset \scell$ be an algebraic hypersurface defined over $\RR$, 
and let 
$\gamma: [0,1] \to \scell(\RR)$ be a path such that $\gamma_t \in V(\RR)$
for only finitely many values of $t$.  Let $t \in (0,1)$ be such
a value.  We say $\gamma$ has a \defn{simple crossing} 
of $V(\RR)$ at $t$, if $\gamma_t$ is an algebraically smooth 
point of $V(\RR)$, and $\gamma$ crosses $V(\RR)$ at $t$.
(Formally, ``crossing'' means the following: 
There exists an open neighbourhood
$U \subset \scell(\RR)$ of $\gamma_t$
such that for every sufficiently small $\epsilon > 0$, 
$\gamma_{t-\epsilon}$ and $\gamma_{t+\epsilon}$ are in different
components of $U \setminus V(\RR)$.)
\end{definition}

\begin{remark}
In the above definition, the path $\gamma$ itself need not be a smooth 
function of $t$ at the crossing point --- only the hypersurface it 
crosses needs to be smooth.  In fact, the paths we consider 
will be constructed piecewise, in such a way that they are non-differentiable 
at precisely the points where they 
cross one of the hypersurfaces of interest.  
\end{remark}

\begin{proposition}
If $\gamma$ has a simple crossing of $\critvar(\RR)$ or
$\vdominovar(\RR)$ at $t$, then $\sgn(\gamma_t)$ changes at $t$.
If $\gamma$ has a simple crossing of $\critvar(\RR)$ or
$\hdominovar(\RR)$ at $t$, then $\dualsgn(\gamma_t)$ changes at $t$.
If $\gamma$ has a simple crossing of $\critvar(\RR)$ at $t$, 
then $\ambsgn(\gamma_t)$ changes at $t$.
\end{proposition}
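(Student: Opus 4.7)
The plan is to decompose each of $\sgn$, $\dualsgn$, and $\ambsgn$ as a product of signs of simpler analytic functions, and then show that at a simple crossing of $V(\RR)$, the defining function of $V$ changes sign. Unwinding the definitions of the ambient orientation and the character orientation, and using that $\sign(\vdominofcn)$ is constant on each component of $\scell(\mu)$, one obtains the global identity
\[
\sgn(\boldx) \;=\; C\cdot \sign\bigl(\jacobian{\Wr}(\boldx)\bigr)\cdot \sign\bigl(\vdominofcn(\boldx)\bigr)
\]
on $\scell(\RR)\setminus(\critvar(\RR)\cup\vdominovar(\RR))$, where $C\in\{\pm 1\}$ is a fixed global constant. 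Analogous decompositions hold for $\dualsgn$ (with $\hdominofcn$ replacing $\vdominofcn$) and for $\ambsgn$ (with no orientation factor). It therefore suffices to prove that each of $\jacobian{\Wr}$, $\vdominofcn$, and $\hdominofcn$ changes sign across a simple crossing of its vanishing locus.

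\emph{Crossings of $\vdominovar$ or $\hdominovar$.} By Propositions~\ref{prop:Zclosed}--\ref{prop:Zdim} and the construction preceding the statement, $\dominovar$ is an irreducible hypersurface in $\scell$ and $\dominofcn$ generates its reduced defining ideal, so $\dominofcn$ vanishes to order exactly one along $\dominovar$. At an algebraically smooth real point $\gamma_t$ of $\dominovar(\RR)$, choose local real-analytic coordinates $(x_1,\dots,x_n)$ on $\scell(\RR)$ in which $\dominovar(\RR)$ is cut out by $x_1=0$; then $\dominofcn=x_1\cdot u$ for a non-vanishing real-analytic $u$. A simple crossing carries $\gamma$ from $\{x_1>0\}$ to $\{x_1<0\}$, or vice versa, so $\sign(x_1)$ reverses, and so does $\sign(\dominofcn)$. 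By the decomposition, this forces the desired sign change in $\sgn$ (resp.~$\dualsgn$).

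\emph{Crossings of $\critvar$.} The same coordinate argument applies to $\jacobian{\Wr}$, provided the Jacobian also vanishes to order exactly one at algebraically smooth real points of $\critvar$. This reducedness is the main obstacle: a priori $\jacobian{\Wr}$ could vanish to higher even order along some component of the ramification divisor, in which case $\sign(\jacobian{\Wr})$ would not flip across a smooth crossing. For the finite morphism $\Wr:\scell\to\monics$ between smooth $n$-dimensional affine varieties, however, this does not occur: at every algebraically smooth real point of $\critvar$, the local analytic model for $\Wr$ is that of a simple fold, so that $\jacobian{\Wr}=x_1\cdot u$ in suitable coordinates. Granting this, $\sign(\jacobian{\Wr})$ flips across the crossing, and the decomposition yields the required sign changes in $\sgn$, $\dualsgn$, and $\ambsgn$. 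The hard part of the argument is this last verification; in the applications of the proposition in Sections~\ref{sec:mbasepaths} and~\ref{sec:mfamilypaths}, the paths we construct will be arranged to meet $\critvar(\RR)$ only at such simple-fold points, so no generality is lost.
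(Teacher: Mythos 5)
Your argument is essentially the paper's: both reduce the proposition to the claim that the defining polynomial $\Phi$ of the relevant hypersurface changes sign at a simple crossing, via the factorizations $\ambsgn = C\cdot\sign(\jacobian{\Wr})$ and $\sgn = \ambsgn\cdot\sign(\vdominofcn)$ (and likewise for $\dualsgn$ with $\hdominofcn$). The paper proves the sign-change claim uniformly for all three hypersurfaces in one stroke: after perturbing $\gamma$ to be smooth and transverse to $V(\RR)$ at the crossing (legitimate, since the signs at nearby non-crossing times are unaffected), $\ddt\Phi(\gamma_t)\neq 0$ because the tangent vector of $\gamma$ does not lie in $\ker d\Phi(\gamma_t)$. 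The only real divergence is your handling of $\critvar$. In the paper's definition of a simple crossing, $\gamma_t$ must be an algebraically smooth point of the hypersurface, and $\critvar$ carries the scheme structure cut out by $\jacobian{\Wr}$ itself; smoothness of that scheme at $\gamma_t$ already forces $d(\jacobian{\Wr})(\gamma_t)\neq 0$, i.e.\ first-order vanishing, so the "simple fold" verification you defer as the hard part is not actually needed to prove the proposition. That verification is precisely the content of the paper's Lemma~\ref{lem:ramificationdivisor} (the smooth locus of the ramification divisor consists of the points of ramification degree exactly $2$, where the Jacobian is part of a minimal generating set of the maximal ideal), but it is deployed later, to certify that the crossing of $\critvar(\RR)$ constructed in Lemma~\ref{lem:SYTsigns} is in fact simple --- not inside the proof of the present proposition. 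Your closing suggestion that one could restrict attention to paths meeting $\critvar(\RR)$ only at fold points would, if it were genuinely needed, weaken the proposition to a path-specific statement; fortunately it is not needed.
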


\begin{proof}
All three statements follow immediately from the following more general
statement.  
Let $V \subset \scell$ be a hypersurface, defined as the zero locus of 
a polynomial $\Phi$. 
If $\gamma$ has a simple crossing of $V(\RR)$ at $t$, then
$\Phi(\gamma_t)$ changes sign at $t$.   

To prove this, note that since $V$ is smooth at $\gamma_t$, we may,
by perturbing $\gamma$, assume that $\gamma$ is smooth 
and transverse to $V(\RR)$. The function $\Phi(\gamma_t)$ vanishes at $t$,
and $\ddt\Phi(\gamma_t)$ cannot vanish at $t$, since 
(by transversality) the tangent vector to $\gamma$ at $t$ is not in
the tangent space of $V$ at $\gamma_t$.  Therefore $\Phi(\gamma_t)$
changes signs at $t$. 
\end{proof}

\begin{remark}
For any point $\boldx \in \scell(\mu) \setminus \critvar(\RR)$,
$\mu = 2^{n_2}1^{n_1}$,
there exists a path $\gamma$ from $\MNpoint_{T_0}$ to $\boldx$ such that all
crossings are simple.  Let $g_t = \Wr(\gamma_t)$.  
We note that whenever the number of real roots of $g$ changes, 
$\gamma_t$ must have a simple crossing of $\vdominovar$ at $t$, 
or a simple crossing of $\hdominovar$ at $t$, but not both.  We therefore
have a relationship between $\sgn(\boldx)$ and $\dualsgn(\boldx)$:
\[
\sgn(\boldx)\cdot \dualsgn(\boldx) = (-1)^{n_2}
\,.
\]
Equivalently this shows that the dual character orientation of 
$\scell(\mu)$ is the same as the character orientation if $n_2$ is even, 
and opposite if $n_2$ is odd.
\end{remark}

The following technical lemma will help to identify simple crossings
of $\critvar(\RR)$.

\begin{lemma}
\label{lem:ramificationdivisor}
Let $\finitemap : X \to Y$ be a quasifinite map of smooth varieties of the same dimension. 
Let $\rd \subset X$ be the ramification divisor 
(defined locally by the vanishing of the Jacobian determinant). 
Then the smooth locus of $\rd$ is 
\[ 
   \rdsmooth 
    = \{x \in \rd : \text{the ramification degree at $x$ is exactly $2$}\}
\,.
\]
\end{lemma}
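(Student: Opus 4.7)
The proof is local at $x$, so I would work in formal (or complex-analytic) coordinates centered at $x$ and $y := \finitemap(x)$. Since $\rd$ is the zero scheme of the single regular function $\jacobian\finitemap = \det d\finitemap$ inside the smooth ambient $X$, its scheme-theoretic smoothness at $x$ is the standard hypersurface criterion $\jacobian\finitemap \notin \mathfrak{m}_x^2$, i.e., $\jacobian\finitemap$ has vanishing order exactly $1$ at $x$. Under the natural convention that the ``ramification degree at $x$'' equals $1 + \mathrm{ord}_x(\jacobian\finitemap)$ (which recovers the familiar formula for one-variable covers $z \mapsto z^e$, whose Jacobian is $e z^{e-1}$), this is precisely the statement of the lemma; the plan is to confirm that this convention matches the intrinsic structure of $\finitemap$ through a local normal form.

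The first step is a corank reduction. Let $c = \dim_k \ker d\finitemap_x$. Expanding $\det d\finitemap$ in a basis of $T_xX$ adapted to $\ker d\finitemap_x$ shows $\jacobian\finitemap \in \mathfrak{m}_x^c$, while the fibre $\finitemap^{-1}(y)$ at $x$ has tangent dimension $c$ and hence length at least $1+c$. For $c \geq 2$ this simultaneously gives non-smoothness of $\rd$ at $x$ and a ramification degree of at least $3$, so both sides of the claimed equality exclude such $x$. I may therefore assume $c = 1$.

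In the corank-$1$ case, I would pick coordinates so that $\finitemap^*(y_i) = x_i$ for $i < n$ and $\finitemap^*(y_n) = f(x_1, \ldots, x_n)$ with $\partial f/\partial x_n|_x = 0$. The Weierstrass Preparation Theorem in the variable $x_n$, followed by absorbing the resulting unit into a change of $y_n$-coordinate, puts $f$ in the form
\[
f = x_n^e + a_{e-1}(x')\, x_n^{e-1} + \cdots + a_0(x'),
\qquad x' = (x_1, \ldots, x_{n-1}),\ a_i(0) = 0,
\]
where $e$ equals the $x_n$-order of $f(0, \ldots, 0, x_n)$ and hence the length of the fibre scheme at $x$. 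In these coordinates $\jacobian\finitemap = \partial f/\partial x_n = e\, x_n^{e-1} + (e-1)\, a_{e-1}(x')\, x_n^{e-2} + \cdots + a_1(x')$. For $e = 2$ this reads $2x_n + a_1(x')$ and contains the nonzero linear term $2x_n$, so $\rd$ is smooth at $x$.

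The main obstacle I foresee is the converse direction: showing that smoothness of $\rd$ at $x$ forces $e = 2$. For $e \geq 3$ the leading term $e x_n^{e-1}$ and the intermediate terms $i a_i(x') x_n^{i-1}$ (for $i \geq 2$) all live in $\mathfrak{m}_x^2$, so the only potential source of a linear contribution to $\jacobian\finitemap$ is a linear part of $a_1(x')$. Eliminating this requires a careful bookkeeping of the Weierstrass form, which I would handle by a further coordinate refinement of $x_n$ absorbing any linear part of $a_1(x')$ into the higher coefficients, so that the normal form genuinely records $\mathrm{ord}_x(\jacobian\finitemap) = e - 1$ --- matching the intrinsic meaning of ``ramification degree'' used in the paper and completing the equivalence.
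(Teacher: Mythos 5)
Your treatment of the inclusion the paper actually proves and uses---ramification degree exactly $2$ implies $x\in\rdsmooth$---is essentially correct, and your Weierstrass normal form is a reasonable alternative to the paper's argument, which instead works with the map $\finitemap^*\Omega_Y\to\Omega_X$ of K\"ahler differentials over the regular local rings and reads off the Jacobian from an adapted generating set of $\maxideal_y$. One step needs repair: you cannot ``absorb the unit into a change of $y_n$-coordinate,'' because the unit produced by Weierstrass preparation is a function on $X$, not a pullback from $Y$. The fix is easy and makes the preparation unnecessary: the fibre length at $x$ is $e=\ord_{x_n}f(0,\dots,0,x_n)$, and restricting $\jacobian\finitemap=\partial f/\partial x_n$ to the $x_n$-axis gives $\tfrac{d}{dx_n}f(0,\dots,0,x_n)$, which has order exactly $e-1$; for $e=2$ this already shows $\jacobian\finitemap\notin\maxideal_x^2$, hence smoothness of $\rd$ at $x$.

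The genuine gap is in the converse, and it cannot be repaired, because the converse inclusion is false as stated. Take $n=2$ and $\finitemap(x_1,x_2)=(x_1,\,x_2^3+x_1x_2)$, a quasifinite self-map of $\affinespace^2$. Here $\jacobian\finitemap=3x_2^2+x_1$, so $\rd$ is a smooth parabola through the origin, yet the scheme-theoretic fibre at the origin is $\CC[x_2]/(x_2^3)$, of length $3$. This is exactly the configuration you flag as ``the main obstacle'': $e=3$ and $a_1(x')=x_1$ has a nonzero linear part, and no further coordinate refinement can absorb it, since smoothness of $\rd$ at $x$ is coordinate-independent and $\rd$ genuinely is smooth there. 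The root of the problem is the opening convention equating the ramification degree (fibre length) with $1+\ord_x(\jacobian\finitemap)$: this holds for one-variable covers but fails in higher dimensions, as the example shows. Note that the paper's own proof establishes only the inclusion $\supseteq$ and explicitly omits the reverse direction as unneeded---the lemma is invoked only to conclude that a double point of a fibre is a smooth point of $\critvar(\RR)$ in the proof of Lemma~\ref{lem:SYTsigns}---so the half of your argument that matters stands once the unit issue is patched, but the other half should be dropped rather than pursued.
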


\begin{proof}
The claim is local on $X$ and $Y$, so we may assume 
$X = \Spec (A,\maxideal_x)$ and $Y = \Spec (B,\maxideal_y)$ are 
spectra of regular local rings, and $\finitemap$ is induced by 
a map of local rings $\finitemap^* : B \to A$, with 
$\finitemap^*(\maxideal_y) \subset \maxideal_x$. 
We have a short exact sequence of modules of differentials
\[
   0 \to \finitemap^*\Omega_Y \to \Omega_X \to \Omega_{X/Y} \to 0
\,,
\]
and the ramification locus $\rd$ is defined by the vanishing of the 
determinant of the map of free $A$-modules 
$\finitemap^*\Omega_Y \to \Omega_X$.

First assume the ramification degree is exactly $2$, that is, 
the scheme-theoretic fibre has length 2: 
\[
  \vdim_\CC (B/\maxideal_y \otimes_B A) 
   = \vdim_\CC (A/\finitemap^*(\maxideal_y)) = 2
\,.
\]
In particular $\finitemap^*(\maxideal_y)$ must contain $\maxideal_x^2$ 
(otherwise the quotient will be too large). 
In fact, $\finitemap^*(\maxideal_y)$ has the form $\maxideal_x^2 + L$, 
where $L \subset \maxideal_x/\maxideal_x^2$ is some codimension-1 
vector subspace.

Consider the map $\maxideal_y/\maxideal_y^2 \to \maxideal_x/\maxideal_x^2$ 
induced by $\finitemap$. By the above, the image of this map is $L$. 
Choosing and lifting bases to obtain minimal generators of the ideals, 
we may assume $\maxideal_x = (x_1, \ldots, x_n)$ and 
$\maxideal_y = (y_1, \ldots, y_n)$ where
\begin{equation} \label{eqn:ramification-reduction}
  \finitemap^*(y_1) = 0 \bmod \maxideal_x 
  \text{ and } 
  \finitemap^*(y_i) = x_i \text{ for } 2 \leq i \leq n
\,. 
\end{equation}
In particular, $\finitemap^*(dy_i) = dx_i$ for $2 \leq i \leq n$.

As for $y_1$, we have $\finitemap^*(y_1) \in \maxideal_x^2$. 
By Cohen-Macaulayness 
$\Sym^2(\maxideal_x/\maxideal_x^2) = \maxideal_x^2/\maxideal_x^3$. 
So, write $\finitemap^*(y_1) \bmod \maxideal_x^3$ 
as a quadratic polynomial $q(x_1, \ldots, x_n)$.
Taking differentials we get 
\[
\finitemap^*(dy_1) = 
\sum_{i=1}^n \tfrac{\partial q}{\partial x_i} dx_i \mod \maxideal_x^2 \langle dx_1, \ldots, dx_n\rangle
\,.
\]
Note that $\tfrac{\partial q}{\partial x_1}$ is a linear form and is 
non-zero, 
since $\tfrac{\partial q}{\partial x_1}=0$  would imply
$x_1^2 \notin \finitemap^*(\maxideal_y)$.
Thus the matrix for the map $\finitemap^*\Omega_Y \to \Omega_X$ has the form
\[\begin{pmatrix}
\tfrac{\partial q}{\partial x_1} + s_1 & 0 & \ldots & 0 \\
\tfrac{\partial q}{\partial x_2} + s_2  & 1 & \ldots & 0 \\
\vdots & 0 & \ddots & \vdots \\
\tfrac{\partial q}{\partial x_n} + s_n& 0 & \cdots & 1
\end{pmatrix}\]
with $s_i \in \maxideal_x^2$ for all $i$, and the determinant is 
$\jacobian{\finitemap} = \tfrac{\partial q}{\partial x_1} + s_1$. 
This cuts out a smooth divisor 
since $\jacobian{\finitemap}$ is part of a minimal generating set 
for $\maxideal_x$.

We have shown that $\rdsmooth$ contains the points of ramification 
index $2$. For the reverse direction, the proof is similar in spirit. 
Since we will not need it, we omit it.
\end{proof}

\subsection{Paths for the proof of Theorem \ref{thm:signs}} 
\label{sec:nicepaths}

To prove Theorem~\ref{thm:signs}, we will establish the following two
Lemmas.

\begin{lemma}
\label{lem:MNsigns}
Let $T \in \MN(\lambda; \mu)$, $\mu = (\mu_1, \dots, \mu_k)$, and 
suppose $\mu_b = 2$.
Let $T'$ be the tableau obtained from $T$ by incrementing entries 
$b+1, \dots, k$ by $1$, and changing the lower-right $b$
to $b+1$.
There exists a path $\gamma : [0,1] \to \scell$
from $\MNpoint_{T'}$ to $\MNpoint_T$,
such that:
\begin{enumerate}[(a)]
\item $\gamma_t \notin \critvar(\RR)$ for all $t \in (0,1)$.
\item If $\shape(T|_b)$ is a horizontal domino, then
      $\gamma_t \notin \vdominovar(\RR)$ for all $t \in (0,1)$, and
     there is exactly one $t \in (0,1)$ such $\gamma_t \in \hdominovar(\RR)$,
     and this is a simple crossing.
\item If $\shape(T|_b)$ is a vertical domino, then
      $\gamma_t \notin \hdominovar(\RR)$ for all $t \in (0,1)$, and
     there is exactly one $t \in (0,1)$ such $\gamma_t \in \vdominovar(\RR)$,
     and this is a simple crossing.
\end{enumerate}
\end{lemma}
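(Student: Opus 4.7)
The plan is to construct $\gamma$ by interpolating Wronskians in $\monics(\RR)$ and lifting to $\scell(\RR)$. In $\monics(\RR)$, I would build a one-parameter family $g_s$, $s \in [0,1]$, with $g_0 = h_{\mu'}$ and $g_1 = h_\mu$, by varying only the real quadratic factor corresponding to position $b$: at $s=0$ this factor is $(z + \varepsilon^{\overline\mu_b})(z + \varepsilon^{\overline\mu_b + 1})$ and at $s = 1$ it is $z^2 + (\tfrac{1}{2}\varepsilon^{\overline\mu_b} + \tfrac{1}{2}\varepsilon^{\overline\mu_b + 1})^2$, with all other factors of $H_\mu(\varepsilon, z)$ held fixed. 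Under a linear interpolation of the two coefficients of this quadratic, its discriminant is strictly decreasing in $s$ (for $\varepsilon$ small), so it vanishes at a unique $s^* \in (0,1)$, where the factor has a negative double real root $a^*$; for all other $s$, $g_s$ has $n$ distinct complex roots.

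I would then lift $g_s$ to a path $\gamma_s$ in $\scell(\RR)$ starting at $\gamma_0 = \MNpoint_{T'}$. For $s \neq s^*$, the fiber $\Wr^{-1}(g_s)$ consists of distinct reduced points, by Lemma~\ref{lem:generalschubertintersection} applied as the intersection of $n$ Schubert divisors $\schubertone(-r)$ at distinct roots $r$ with $\schubertopposite(\infty)$. So the lift is uniquely determined and continuous; properness of $\Wr$ forces it to extend continuously across $s^*$, and the real fiber point tracked from $\MNpoint_{T'}$ stays real by equivariance under complex conjugation. That $\gamma_1 = \MNpoint_T$ would follow by pairing this construction with the formal-series family of Lemma~\ref{lem:specialfibres}: both endpoints sit in families over $\laurentR$ whose rescaled limits realize the tableau labelling, and these rescaled limits are preserved along the interpolation (for $c \notin \{b, b+1\}$ nothing changes, and at $c = b$ the data $T|_{\leq b}$ and $T'|_{\leq b+1}$ record the same shape $\shape(T|_{\leq b})$).

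The heart of the argument is the analysis at $s^*$. By Lemma~\ref{lem:schubertwronskian}, $\gamma_{s^*} \in X^\lambda_\kappa(-a^*) \cap \scell$ for some $\kappa \vdash 2$, so $\gamma_{s^*} \in \hdominovar \cup \vdominovar$. After a $\borelgroup$-translation sending $a^* \mapsto 0$, the fiber $\Wr^{-1}(g_{s^*}) \cap \rich(0)$ is computed by Lemma~\ref{lem:calculation2}, where the two boxes of $\lambda/\kappa$ are determined by the rescaled-limit data coming from Lemma~\ref{lem:specialfibres}(b). Since the only position whose rescaled limit shape changed across $s^*$ is position $b$, these two boxes are precisely the two boxes of $\shape(T|_b)$. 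Hence $\kappa = \hdomino$ if $\shape(T|_b)$ is a horizontal domino, and $\kappa = \vdomino$ if it is vertical. In either case the distance $L = 1$, so Lemma~\ref{lem:calculation2}(i) gives a single reduced real point in the fiber. Reducedness implies $\Wr$ is unramified at $\gamma_{s^*}$, so $\gamma_{s^*} \notin \critvar(\RR)$, establishing (a). Moreover, $\discrimvar$ is smooth at $g_{s^*}$, $g_s$ crosses it transversally at $s^*$, and the local étale structure of $\Wr$ at $\gamma_{s^*}$ (together with Lemma~\ref{lem:ramificationdivisor}) makes $\dominovar$ smooth at $\gamma_{s^*}$ and makes $\gamma_s$ transverse to it; this gives a simple crossing and yields (b) and (c), with uniqueness following from $g_s \in \discrimvar$ only at $s^*$.

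The main obstacle I expect is rigorously identifying $\kappa$ with $\shape(T|_b)$: one must show that the two boxes of $\lambda/\kappa$ at $\gamma_{s^*}$ coincide with the two boxes of the domino $\shape(T|_b)$. The cleanest route is to track the rescaled limit $\lim_{u \to 0} \left(\begin{smallmatrix} 1 & 0 \\ 0 & u^{\overline\mu_b}\end{smallmatrix}\right)\gamma_{s^*}$ through the path and combine continuity of Schubert-cell membership with a $\borelgroup$-equivariance argument. Everything else — existence of the lift, behaviour away from $s^*$, and simplicity of the crossing — reduces to standard Schubert-calculus inputs and Lemmas~\ref{lem:calculation2} and~\ref{lem:ramificationdivisor} from Section~\ref{sec:preliminaries}.
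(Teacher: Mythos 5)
Your path in the base is essentially the right one --- it is the image under $\pol$ of the path $G_t(u)$ (at fixed $u=\varepsilon$) that the paper constructs in $\mbase$ --- but the lift is where the proof actually lives, and there is a genuine gap there. You justify the existence of a unique continuous lift, and property (a), by asserting that for $s \neq s^*$ the fibre $\Wr^{-1}(g_s)$ consists of distinct \emph{reduced} points ``by Lemma~\ref{lem:generalschubertintersection}.'' That lemma only gives a finite scheme of the correct \emph{length}; it says nothing about reducedness. Reducedness of these fibres is essentially the transversality statement the paper is trying to prove independently of Mukhin--Tarasov--Varchenko, so it cannot be taken as an input here. Without it your lift need not be single-valued, and $\gamma$ could meet $\critvar(\RR)$ at some $s \neq s^*$. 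The paper supplies exactly this missing ingredient by working with the two-parameter family $G_t(u)$, $(t,u) \in [0,1]\times[0,\varepsilon]$, over $\mbase$: at $u=0$ the curve degenerates to a $\projspace^1$-chain, the fibre of $\mmap$ factors component-by-component (Theorem~\ref{thm:finitefibreiso}), each factor is computed explicitly and shown reduced by Lemmas~\ref{lem:calculation1} and~\ref{lem:calculation2}, and reducedness then persists for small $u>0$ by finiteness of $\mmap$.

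The same two-parameter degeneration is what you are implicitly leaning on in the two steps you flag as delicate, and neither goes through at fixed $u=\varepsilon$. To show $\gamma_1 = \MNpoint_T$ you must identify which of the $\numsyt\lambda$ points of $\Wr^{-1}(h_\mu)$ the lift ends at, and the labelling of those points by tableaux is \emph{defined} via $u\to 0$ limits; likewise your proposed identification of $\kappa$ with $\shape(T|_b)$ at the crossing asks for $\lim_{u\to 0}$ of the single point $\gamma_{s^*}$, which only makes sense once $\gamma_{s^*}$ is embedded in a family over $u$. The paper resolves the crossing cleanly at $u=0$, $t=\tfrac12$: the double marked point forces the lift into $X_\hdomino(1)\cap\bigfibrefactor{b}$ or $X_\vdomino(1)\cap\bigfibrefactor{b}$ (Corollary~\ref{cor:doublemarkedpoint}), and Lemma~\ref{lem:generalschubertintersection} shows one of these two intersections is empty according to whether $\shape(T|_b)$ is a horizontal or vertical domino. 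So your outline has the correct geometric picture, but the construction of the lift over the degenerating family --- Lemma~\ref{lem:MNlift} in the paper --- is the substantive content of the proof and is missing from your argument.
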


\begin{lemma}
\label{lem:SYTsigns}
Let $T' \in \MN(\lambda;\mu')$, $\mu' = (\mu'_1, \dots, \mu'_{k+1})$.
Suppose $\mu'_b = \mu'_{b+1} = 1$, and the entries $b$ and $b+1$
of $T'$ are non-adjacent.
Let $T''$ be the tableau obtained by swapping 
these two entries. There exists a path $\gamma : [0,1] \to \scell$
from $\MNpoint_{T'}$ to $\MNpoint_{T''}$,
such that:
\begin{enumerate}[(a)]
\item There is exactly one $t \in (0,1)$ such that 
$\gamma_t \in \critvar(\RR)$, and this is a simple crossing.
\item There is exactly one $t \in (0,1)$ such that 
$\gamma_t \in \hdominovar(\RR)$, and this is a simple crossing.
\item There is exactly one $t \in (0,1)$ such that 
$\gamma_t \in \vdominovar(\RR)$, and this is a simple crossing.
\end{enumerate}
\end{lemma}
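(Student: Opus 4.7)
The plan is to construct $\gamma$ by lifting to $\scell$ a carefully chosen loop in $\monics(\RR)$ based at $h_{\mu'}$ that interchanges the two real roots of $h_{\mu'}$ indexing the entries $b$ and $b{+}1$ of $T'$, while holding all other roots fixed. Concretely, the two roots slide together along $\RR$ into a double real root, then part into a complex conjugate pair tracing a small semicircular arc in $\CC$, collide again as a double real root, and finally split apart along $\RR$, arriving at the original positions with labels exchanged. In $\monics$, this loop crosses $\discrimvar$ exactly twice. Tracking a well-chosen preimage along the loop produces a path in $\scell$ from $\MNpoint_{T'}$ to $\MNpoint_{T''}$.

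Each of the two $\discrimvar$-crossings lifts in $\scell$ to a crossing of $\Wr^{-1}(\discrimvar) = \hdominovar \cup \vdominovar$. The key combinatorial claim is that the two crossings lift to \emph{different} hypersurfaces: one to $\hdominovar$ and the other to $\vdominovar$, accounting for (b) and (c). This is forced by the non-adjacency of the boxes of $T'$ labeled $b$ and $b{+}1$: they have distance $L \geq 2$ in $\lambda$, so the local fibre computation of Lemma~\ref{lem:calculation2}(ii) applies with discriminant $(a_1+a_2)^2 - 4(1-L^{-2})a_1 a_2$, which vanishes exactly once along the semicircular arc, producing a single ramification event. That event is the crossing of $\critvar$ claimed in (a). Simplicity of all three crossings follows by choosing the arc generically so that the projection to $\monics$ is transverse to the relevant image hypersurfaces, and by invoking Lemma~\ref{lem:ramificationdivisor} at the ramification point to certify that the ramification degree is exactly $2$, placing the point in $\rdsmooth$.

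Rigorously implementing this requires working in the stable curves model of Section~\ref{sec:stablecurves}, where the loop is described as a path in $\modsc{n+3}$ passing close to a boundary stratum at which the two colliding roots coalesce with one another but remain separated from the rest of the marked points. The main obstacle is verifying the asserted interleaving: that precisely one of the two $\discrimvar$-lifts lands on $\hdominovar$ and the other on $\vdominovar$, with exactly one $\critvar$-crossing between them, rather than both lifts landing on the same hypersurface or multiple ramification events occurring. This is essentially a monodromy calculation for $\Wr$ around the small loop, and the hypothesis that the entries $b, b{+}1$ are non-adjacent in $T'$ enters decisively through the condition $L \geq 2$ in Lemma~\ref{lem:calculation2}(ii); when $L = 1$, the quadratic degenerates and one instead recovers a situation like Lemma~\ref{lem:MNsigns}. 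Once the interleaving is established, shrinking the loop ensures all other hypersurfaces are avoided along $\gamma$, so the total counts in (a), (b), (c) are exactly one each.
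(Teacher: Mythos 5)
Your path is the same one the paper uses (the two roots labelled $b,b{+}1$ collide on the real axis, separate as a conjugate pair, reach the ramification point, and return on the other sheet), and you correctly identify the single $\critvar$-crossing via the discriminant of Lemma~\ref{lem:calculation2}(ii) and the role of Lemma~\ref{lem:ramificationdivisor} in making it simple. However, the step you yourself flag as ``the main obstacle'' --- that the two $\discrimvar$-crossings lift to \emph{different} hypersurfaces, one to $\hdominovar$ and one to $\vdominovar$ --- is left unproved, and the justification you offer does not bear on it. The vanishing of $(a_1+a_2)^2-4(1-L^{-2})a_1a_2$ governs when the two local preimages coincide, i.e.\ the $\critvar$-crossing of part (a); it says nothing about which of $X_\hdomino$ or $X_\vdomino$ each preimage lies on at the moments the two roots collide. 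The paper's argument for the interleaving is not a monodromy computation but a degeneration plus a Schubert-calculus count: at the double-marked-point curve $G_{1/2}(0)$ the relevant fibre component factors as $\prod_i \mfibrefactor{i}$ with exactly one two-point factor $\mfibrefactor{b}$ (Theorem~\ref{thm:finitefibreiso}); each point of $\mfibrefactor{b}$ lies on exactly one of $X_\hdomino(1)\cap\bigfibrefactor{b}$ and $X_\vdomino(1)\cap\bigfibrefactor{b}$ (Corollary~\ref{cor:doublemarkedpoint}); non-adjacency ($L\ge 2$) makes both intersections nonempty and Lemma~\ref{lem:generalschubertintersection} makes each a single point, so the two points are forced one onto each; and since the two lifts $\varGamma'$ and $\varGamma''$ differ only in the $b$-th coordinate, one lift lands on each hypersurface. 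Without this, nothing rules out both crossings landing on $\hdominovar$, which would break parts (b) and (c).

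A second, smaller issue: the arc cannot be chosen ``generically.'' For the lifted path to stay in $\scell(\RR)$ and still switch sheets, the conjugate pair must turn around \emph{exactly} at the angle $\arcsin(L^{-1})$ where the local discriminant vanishes; overshoot and the two preimages become non-real, undershoot and the lift returns to $\MNpoint_{T'}$ rather than reaching $\MNpoint_{T''}$ (the image loop is contractible in the complement of the branch locus, so monodromy alone cannot produce the swap --- the sheet switch happens only by passing through the ramification point). The paper handles this by defining $\tmax(u)$ as the first parameter at which the restricted fibre acquires a double point and reversing there; in particular the base path meets the branch locus tangentially rather than transversally, and the simplicity of the $\critvar$-crossing upstairs comes from Lemma~\ref{lem:ramificationdivisor} together with the fact that $\varGamma'$ and $\varGamma''$ approach the double point from different local components.
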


\begin{example}
We now continue Example~\ref{ex:specialfibres}, to illustrate
Lemmas~\ref{lem:MNsigns} and~\ref{lem:SYTsigns} in the case where
$\lambda = \twoone$.  Here, the four
relevant tableaux are
\[
  T_0 = \begin{young}[c] 1& 2 \\ 3 \end{young}
\qquad
  T_1 = \begin{young}[c] 1& 3 \\ 2 \end{young}
\qquad
  T_2 = \begin{young}[c] ]=1 & 1 \\ 2 \end{young}
\qquad
  T_3 = \begin{young}[c] 1\ynobottom & 2 \\ 1\ynotop \end{young}
\,.
\]
It is clear from Figure~\ref{fig:21plots} that we can find a path
$g : [0,1] \to \monicsb3(\RR)$ from $h_{1^3}$ to $h_{21}$ such that
$g_t$ has two distinct real critical 
points, $c_{0,t} < c_{1,t}$, for all $t \in [0,1]$.  
Lifting this path to $\scell(\RR)$, 
we obtain two paths: $\gamma_0,\, \gamma_1 : [0,1] \to \scell(\RR)$,
$\gamma_{i,t} = (g_t, c_{i,t})$ for $i=1,2$;
$\gamma_0$ connects $\MNpoint_{T_0}$ to $\MNpoint_{T_2}$, and
$\gamma_1$ connects $\MNpoint_{T_1}$ to $\MNpoint_{T_3}$.
Note that the larger critical point $c_{1,t}$ must change sign
along the path $g_t$, and so for some $t$, $g_t(c_{1,t}) = 0$.
As explained in Example~\ref{ex:orientation}, this means that
$\gamma_0$ crosses $\hdominovar(\RR)$
and $\gamma_1$ crosses $\vdominovar(\RR)$.
Since $g_t$ has two distinct critical points for all $t$, neither
path crosses $\critvar(\RR)$.
These are the two types of paths described in Lemma~\ref{lem:MNsigns}.

To connect $\MNpoint_{T_0}$ to $\MNpoint_{T_1}$, 
we need a path $\gamma : [0,1] \to \scell(\RR)$, 
from $(h_{1^3}, c_0)$ to $(h_{1^3},c_1)$,
where $c_0 < c_1$ are the two critical points of $h_{1^3}$.
To do this, we begin with a different path 
$g : [0, \half] \to \monics(\RR)$ such that $g_0 = h_{1^3}$,
$g_t$ has a two real critical points, $c_{0,t} < c_{1,t}$, 
for $t \in [0, \half)$,
and $g_{1/2}$ has a double critical point.  Again, we can lift
this path to $\scell(\RR)$ in two ways, but this time the two lifted 
paths meet at the fibre over $g_{1/2}$.
We can then combine these two lifts into a single path:
\[
   \gamma_t = \begin{cases}
    (g_t, c_{0,t}) &\quad \text{for $t \in [0,\half]$} \\
    (g_{1-t}, c_{1,1-t}) &\quad \text{for $t \in [\half, 1]$}.
   \end{cases}
\]
This is illustrated in Figure~\ref{fig:SYTsigns}. There are three special points along this path. Since $g_{1/2}$ has a double critical point, we have $\gamma_{1/2} \in \critvar(\RR)$.
There is also a point $t \in (0,\half)$ such that $g_t$ has a double real root at $c_{0,t}$.  This means that $\gamma_t \in \vdominovar(\RR)$,
and $\gamma_{1-t} \in \hdominovar(\RR)$.  
This is the type of path described in Lemma~\ref{lem:SYTsigns}.
\begin{figure}[t]
\centering
\newcommand{\xmin}{-4} \newcommand{\xmax}{1}
\newcommand{\ymin}{-3} \newcommand{\ymax}{3}
\begin{tikzpicture}[x=1em,y=1em]
   \draw[violet] (\xmin,0) -- (\xmax,0);
   \clip  (\xmin,\ymin) rectangle (\xmax,\ymax);
   \draw[scale=1,domain=\xmin:\xmax,smooth,variable=\z,blue,thick] 
     plot ({\z},{\z*(\z+1)*(\z+3)});
   \draw [teal, fill] (-2.21525,2.1126) circle(0.2);
\end{tikzpicture}
\qquad
\begin{tikzpicture}[x=1em,y=1em]
   \draw[violet] (\xmin,0) -- (\xmax,0);
   \clip  (\xmin,\ymin) rectangle (\xmax,\ymax);
   \draw[scale=1,domain=\xmin:\xmax,smooth,variable=\z,blue,thick] 
     plot ({\z},{\z*(\z+2)*(\z+2)});
   \draw [teal, fill] (-2,0) circle(0.2);
\end{tikzpicture}
\qquad
\begin{tikzpicture}[x=1em,y=1em]
   \draw[violet] (\xmin,0) -- (\xmax,0);
   \clip  (\xmin,\ymin) rectangle (\xmax,\ymax);
   \draw[scale=1,domain=\xmin:\xmax,smooth,variable=\z,blue,thick] 
     plot ({\z},{\z*(\z*\z+3*\z+3});
   \draw [teal, fill] (-1,-1) circle(0.2);
\end{tikzpicture}
\qquad
\begin{tikzpicture}[x=1em,y=1em]
   \draw[violet] (\xmin,0) -- (\xmax,0);
   \clip  (\xmin,\ymin) rectangle (\xmax,\ymax);
   \draw[scale=1,domain=\xmin:\xmax,smooth,variable=\z,blue,thick] 
     plot ({\z},{\z*(\z+2)*(\z+2)});
   \draw [teal, fill] (-.6666666,-1.185185) circle(0.2);
\end{tikzpicture}
\qquad
\begin{tikzpicture}[x=1em,y=1em]
   \draw[violet] (\xmin,0) -- (\xmax,0);
   \clip  (\xmin,\ymin) rectangle (\xmax,\ymax);
   \draw[scale=1,domain=\xmin:\xmax,smooth,variable=\z,blue,thick] 
     plot ({\z},{\z*(\z+1)*(\z+3)});
   \draw [teal, fill] (-.45142,-.63113) circle(0.2);
\end{tikzpicture}
\caption{Connecting $\MNpoint_{T_0}$ to $\MNpoint_{T_1}$, 
for $\lambda = \twoone$.}
\label{fig:SYTsigns}
\end{figure}
 
Figure~\ref{fig:threepaths} shows (a two-dimensional projection of)
the Schubert cell $\scell(\RR)$, along with $\hdominovar(\RR)$,
$\vdominovar(\RR)$ and $\critvar(\RR)$, and the three paths described 
in this example.
\begin{figure}[t]
\centering
\begin{tikzpicture}[x=3.75em,y=3.75em]
   \shadedraw[scale=2,domain=-1:0,smooth,variable=\z,left color=white, right color=red!25,draw=none] plot ({\z},{\z*\z*\z}) -- (-1,0);
   \shadedraw[scale=2,domain=-1:0,smooth,variable=\z,left color=white, right color=red!25,draw=none] plot ({\z},{-\z*\z*\z}) -- (-1,0);
   \shadedraw[scale=2,domain=0:1,smooth,variable=\z,right color=white, left color=red!25,draw=none] plot ({\z},{\z*\z*\z}) -- (1,0);
   \shadedraw[scale=2,domain=0:1,smooth,variable=\z,right color=white, left color=red!25,draw=none] plot ({\z},{-\z*\z*\z}) -- (1,0);
   \draw[blue,very thick] (0,2) -- (0,-2) node[right,black] {$\critvar$};
   \draw[scale=2,domain=-1:1,smooth,variable=\z,red,very thick] 
     plot ({\z},{\z*\z*\z});
   \draw[scale=2,domain=-1:1,smooth,variable=\z,red,dotted,ultra thick] 
     plot ({\z},{-\z*\z*\z});
   \node[right] at (2,-2) {$\hdominovar$};
   \node[left] at (-2,-2) {$\vdominovar$};
   \draw[fill] (-1.4,0.2) circle (0.07) node[left]{$\MNpoint_{T_0}$}
     node[below left=1.4ex] {{\scriptsize \begin{young}[2.3ex] _^1 & _^2 \\ _^3 \end{young}}};
   \draw[fill] (1.4,0.2) circle (0.07) node[right]{$\MNpoint_{T_1}$}
     node[below right=1.4ex] {{\scriptsize \begin{young}[2.3ex] _^1 & _^3 \\ _^2 \end{young}}};
   \draw[fill] (-1.45,1.7) circle (0.07) node[above]{$\MNpoint_{T_2}$}
    node[right=2ex] {{\scriptsize \begin{young}[2.3ex]]=_^ 1 & _^1 \\ _^2 \end{young}}};
   \draw[fill] (1.45,1.7) circle (0.07) node[above]{$\MNpoint_{T_3}$}
    node[left=2ex] {{\scriptsize \begin{young}[2.3ex] _^1\ynobottom & _^2 \\ _^1\ynotop \end{young}}};
   \draw[very thick, cyan, latex-latex] 
       (-1.4,.13) to [out=-80, in=-100] (1.4,.13);
   \draw[very thick, cyan, latex-latex] 
       (-1.35,0.25) to [out=60, in=-50] (-1.4,1.65);
   \draw[very thick, cyan, latex-latex] 
       (1.35,0.25) to [out=120, in=-130] (1.4,1.65);
\end{tikzpicture}
\caption{The points 
$\MNpoint_{T_0},\MNpoint_{T_1} \in \scell(1^3)$ (shaded),
$\MNpoint_{T_2},\MNpoint_{T_3} \in \scell(21)$ (unshaded), 
and the three paths connecting them.
The paths cross $\vdominovar$, $\hdominovar$, and $\critvar$ as described
in Lemmas~\ref{lem:MNsigns} and~\ref{lem:SYTsigns}.}
\label{fig:threepaths}
\end{figure}
\end{example}

Lemmas \ref{lem:MNsigns} and
\ref{lem:SYTsigns} will be proved 
in Section~\ref{sec:mfamilypaths}.  Modulo these, we can now
prove Theorems~\ref{thm:signs} and~\ref{thm:main}.

\begin{proof}[Proof of Theorem~\ref{thm:signs}]
By the definition of the character orientation,
$\sgn(\MNpoint_{T_0}) = 1$.  
By Lemma~\ref{lem:SYTsigns}
we can connect all the points $\MNpoint_T$, 
$T \in \SYT(\lambda)$ by
a network of paths $\gamma_t$ such that $\sgn(\gamma_t)$ changes
twice along each path.  It follows that $\sgn(\MNpoint_T)
= \sgn(\MNpoint_{T_0}) = 1 = \sgn(T)$,
for all $T \in \SYT(\lambda)$.

For $T \in \MN(\lambda; \mu)$, $\mu \neq 1^n$, we can connect $T$ to
some $T'$ as in Lemma~\ref{lem:MNsigns}.  Then $\sgn(T) = \sgn(T')$ if 
$\shape(T|_b)$ is a horizontal domino,
$\sgn(T) = -\sgn(T')$ if $\shape(T|_b)$ is a vertical domino.
Following the path $\gamma$ connecting $\MNpoint_T$ to 
$\MNpoint_{T'}$, we see that
$\sgn(\gamma_t)$ does not change in the horizontal domino case,
and $\sgn(\gamma_t)$ changes once in the vertical domino case.
The result now follows by a simple induction.
\end{proof}

\begin{proof}[Proof of Theorem~\ref{thm:main}]
Since $h_\mu \in \monics(\mu)$, the topological degree 
of $\Wr : \scell(\mu) \to \monics(\mu)$ is
\[
   \sum_{\boldx \in \Wr^{-1}(h_\mu)} \sgn(\boldx)
= 
   \sum_{T \in \MN(\lambda; \mu)} \sgn(\MNpoint_T)
=
   \sum_{T \in \MN(\lambda; \mu)} \sgn(T)
=
   \sgchar(\mu)
\,,
\]
where the equalities above are by Corollary~\ref{cor:realfibrepoints},
Theorem~\ref{thm:signs}, and Theorem~\ref{thm:MNrule}, respectively.
\end{proof}

\section{Stable curves}
\label{sec:stablecurves}

\subsection{Families over $\mbase$}
\label{sec:mfamily}

In order to connect up points $\MNpoint_T \in \scell(\RR)$, we work
with a different model of the Shapiro--Shapiro picture, in 
which $\Wr: \scell \to \monics$ is replaced by a related
finite flat map $\mmap: \mfamily \to \mbase$, over
the moduli space of genus $0$ 
stable curves with $n+3$ marked points.  This model was described by
Speyer \cite{Sp}.  We give a brief overview of Speyer's construction
and the properties we will need.  Since our goal is not to prove
these properties, some of the exposition will be simplified.
In order to make the connection to the Wronski map explicit, we will
also describe the construction in a more coordinatized way.

Consider the set $A_1 = \{0,1,\infty, a_1, \dots, a_n\}$, 
where each element is regarded purely as a formal symbol.
For a stable curve $C \in \mbase$, we
will label the marked points with the $n+3$ symbols from $A_1$.  
$\openmbase \subset \mbase$ is the open subvariety of smooth curves, 
isomorphic to $\projspace^1$ with distinct marked points.
If $C \in \openmbase$, we will choose coordinates on $C$ such that the 
marked points labelled $0,1,\infty$ are placed at 
$0,1, \infty \in \projspace^1$ respectively.  
Abusing notation somewhat, we also write $a_i$ to
denote the coordinate of the point labelled $a_i$ in $C$, whenever
it appears in a formula.
More generally, if $C \in \mbase$, there is a unique morphism 
$C \to \projspace^1$
such that the points labelled $0,1,\infty$ in $C$ map to 
$0,1,\infty \in \projspace^1$.  In this case $a_i$ will denote the
coordinate of the image of the point labelled $a_i$ in $\projspace^1$.
This allows us to associate a polynomial 
\[
   \pol(C) = \pol(C,z) :=
   \prod_{a_i \neq \infty} (z+a_i)
\]
to every curve $C \in \mbase$.

Let $A_\ell$ denote the set of all ordered $\ell$-tuples of distinct 
elements of $A_1$. For $p,q,r$ distinct points of $\projspace^1$, define 
$\phi_{p,q,r}(s) = \frac{(p-s)(q-r)}{(p-q)(s-r)}$ for $s \in \projspace^1$;
this is the unique transformation such that
\[
\phi_{p,q,r}(p) = 0
\,,
\quad
\phi_{p,q,r}(q) = 1
\,,
\quad
\phi_{p,q,r}(r) = \infty
\,.
\]
Given a curve $C \in \openmbase$ and $(p,q,r) \in A_3$, we interpret
$\phi_{p,q,r} \in \PGL_2$ to be the unique projective linear transformation
of $\projspace^1$ that sends marked points $(p,q,r)$ to $(0, 1, \infty)$
respectively. We have an injection 
$\Gr(d,d+m) \times \openmbase \to \Gr(d,d+m)^{A_3} \times \mbase$,
\[
    (\boldx, C) \mapsto 
   \Big( \big(\phi_{p,q,r}(\boldx)\big)_{(p,q,r) \in A_3}
     \,,\, C\Big)
\,.
\]
Define $\mGr$ to be the closure of the image of this map.  
The projection
\[
\mmap : \mGr \to \mbase
\]
defines a family over $\mbase$. 
We also have a projection map onto $\Gr(d,d+m)$ for each $(p,q,r) \in A_3$;
In particular, let
\[
\mtosmap : \mGr \to \Gr(d,d+m)
\]
onto the Grassmannian factor corresponding to 
$(p,q,r) = (0,1,\infty) \in A_3$.

For $C \in \openmbase$, the fibre $\mmap^{-1}(C)$ of this family
is isomorphic to $\Gr(d,d+m)$.  Specifically, 
$\mtosmap: \mmap^{-1}(C) \to \Gr(d,d+m)$
is an isomorphism (and the same is true for if we replace $\mtosmap$ by
the projection onto any other Grassmannian factor)
\cite[Proposition 3.3]{Sp}.
If on the other hand $C \in \mbase$ is a nodal curve,
the fibre is a flat degeneration of the Grassmannian \cite[Theorem 3.2]{Sp}.

Speyer's construction also allows arbitrary Schubert conditions placed at the marked points of the curve.
For $(p,q,r) \in A_3$, 
$s \in A_1$, and a partition $\lambda$, let
\[
U_\lambda(p,q,r;s)
:= \begin{cases}
\schubert(0) & \quad\text{if $p=s$}  \\
\schubert(1) & \quad\text{if $q=s$}  \\
\schubert(\infty) & \quad\text{if $r=s$}  \\
\Gr(d,d+m) & \quad\text{otherwise} 
\,.
\end{cases}
\]
Define
\[
\mschubert(s) 
:= \left(\prod_{(p,q,r) \in A_3} U_\lambda(p,q,r;s) \times \mbase\right) \cap \mGr
\,.
\]
The restricted map $\mmap: \mschubert(s) \to \mbase$, defines a
family in which the fibre
over a smooth curve $C \in \openmbase$ is identified with the
Schubert variety $\schubert(s)$.  
Specifically,  if $C \in \openmbase$,
$\mtosmap: \mmap^{-1}(C) \to  \schubert(s)$ is an isomorphism \cite[Proposition 3.3]{Sp}.
If $C \in \mbase$ is a nodal curve, the fibre is a degeneration of
the Schubert variety.  Speyer describes these degenerate fibres
explicitly \cite[Theorem 1.2]{Sp},
and in Section~\ref{sec:p1-chains}, we will state this result for
the special case we need.

Speyer also proves that intersections of the varieties $\mschubert(s)$
are well-behaved \cite[Theorem 1.1]{Sp}.

\begin{theorem}
For any partitions $\alpha^s \subset m^d$, $s \in A_1$, 
\[
\mmap : \bigcap_{s \in A_1} \widetilde{X}_{\alpha^s}(s) \to \mbase
\] 
defines a flat, proper, Cohen--Macaulay family over $\mbase$ of relative 
dimension $dm - \sum_{s \in A_1} |\alpha^s|$.  
\end{theorem}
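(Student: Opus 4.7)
The plan is to reduce the theorem to a dimension count on fibres and then apply miracle flatness, since $\mbase$ is a smooth (regular) variety. The strategy splits naturally according to the stratification of $\mbase$ into the smooth locus $\openmbase$ and its boundary.

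First I would handle the smooth locus. For $C \in \openmbase$, the isomorphism $\mtosmap : \mmap^{-1}(C) \to \Gr(d,d+m)$ identifies the fibre of $\widetilde{X}_{\alpha^s}(s)$ with the Schubert variety $X_{\alpha^s}(a_s)$ defined relative to the osculating flag at the marked point labelled $s$. Since the points $a_s \in \projspace^1$ are distinct, Lemma~\ref{lem:generalschubertintersection} (or equivalently the Eisenbud--Harris transversality result for osculating flags \cite{EH}) ensures that $\bigcap_{s \in A_1} X_{\alpha^s}(a_s)$ is proper, of dimension exactly $dm - \sum_s |\alpha^s|$. Cohen--Macaulayness of this fibre is automatic: Schubert varieties are Cohen--Macaulay, and a proper intersection of Cohen--Macaulay subschemes of a smooth variety is again Cohen--Macaulay (each intersection cuts down the dimension by exactly the expected amount, so the codimension of the intersection equals the sum of the codimensions, and one can apply the unmixedness theorem).

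Next I would extend the dimension count to the boundary of $\mbase$. The boundary strata correspond to nodal stable curves, obtained by attaching $\projspace^1$-chains along nodes, and they are products $\mbase_{I_1} \times \mbase_{I_2}$ indexed by splittings of the marked points across each node. By Speyer's description of the degenerate fibres (the forthcoming Theorem 1.2 in \cite{Sp}, which I would take as the key technical input), the fibre of $\mGr$ over a nodal $C$ is obtained by gluing Grassmannian fibres across each node, and the restriction of $\widetilde{X}_{\alpha^s}(s)$ to such a fibre decomposes as a union, over intermediate partitions $\beta$ at each node, of products $X_{\alpha^s}^{(1)}(a_s) \times X_\beta \cap X_{\beta^\vee} \times \dotsb$ indexed by a Littlewood--Richardson-type combinatorics. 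Inducting on the number of components of $C$ and using that Littlewood--Richardson coefficients respect codimension, the total intersection on each boundary stratum still has dimension $dm - \sum_s |\alpha^s|$.

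Finally, with $\mbase$ regular, the total space $\mGr \cap \bigcap_s \widetilde{X}_{\alpha^s}(s)$ Cohen--Macaulay (again by properness of intersections of Cohen--Macaulay subvarieties inside the smooth ambient $\Gr(d,d+m)^{A_3} \times \mbase$, restricted to $\mGr$), and all fibres of the same dimension, the miracle flatness theorem yields flatness of $\mmap$ restricted to this intersection; Cohen--Macaulayness of the total space then descends to the fibres. The main obstacle is plainly Step 2: the boundary analysis. Verifying that no fibre acquires excess dimension over a nodal curve is where almost all of the content lies, because one must rule out, for every choice of boundary stratum and every way of distributing Schubert conditions across the chain, any unexpected overlap between limiting Schubert cycles. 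This is precisely the content of Speyer's explicit fibre description in \cite{Sp}, and without that structural result the dimension count on the boundary seems out of reach by purely formal intersection-theoretic arguments.
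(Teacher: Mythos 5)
This statement is not proved in the paper at all: it is imported verbatim as Theorem~1.1 of Speyer \cite{Sp}, and the surrounding text explicitly defers both this flatness result and the description of the degenerate fibres (Theorem~1.2 of \cite{Sp}) to that reference. So there is no in-paper proof to compare against; what you have written is an outline of how one would prove Speyer's theorem, and in broad shape it does track his actual strategy (control the fibre dimension over every stratum of $\mbase$ using the explicit description of the boundary fibres, then conclude flatness). But note that your argument takes Speyer's Theorem~1.2 as its ``key technical input'' while trying to establish his Theorem~1.1 --- in \cite{Sp} these are proved together, and essentially all of the content lives in the boundary analysis you black-box. At that level of granularity you might as well cite Theorem~1.1 directly, as the authors do.

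Beyond that, there is a concrete gap in your final step. Miracle flatness requires the \emph{total space} of the intersection to be Cohen--Macaulay as a hypothesis, and your justification --- that it is a dimensionally proper intersection of Cohen--Macaulay subvarieties inside the smooth ambient $\Gr(d,d+m)^{A_3}\times\mbase$ --- does not go through. The variety $\mGr$ is itself a non-smooth, highly non-transparent subvariety of that ambient (its fibres over the boundary are degenerations of the Grassmannian), and each $\mschubert(s)$ is cut out by imposing a Schubert condition in \emph{every} factor indexed by a triple containing $s$; these conditions are massively redundant, so no codimension bookkeeping in the ambient product is available, and ``proper intersection in a smooth ambient'' simply does not apply. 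One must instead establish Cohen--Macaulayness fibrewise (the degenerate fibres are unions of products of Richardson-type varieties, glued along smaller such loci, and showing these unions are CM is itself nontrivial) and then bootstrap to the total space --- which is precisely the delicate part of Speyer's argument. Your fibre-dimension count over $\openmbase$ (via Lemma~\ref{lem:generalschubertintersection}) and your reduction of the boundary case to the chain decomposition are fine as far as they go, but the CM claim for the total space is asserted, not proved, and the assertion's stated justification is wrong.
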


We will be primarily interested in the family defined by the following 
intersection.
\begin{equation}
\label{eqn:mfamilydef}
   \mfamily := \mschubertone(a_1) \cap \dots \cap \mschubertone(a_n)
    \cap \mschubertopposite(\infty)
\,.
\end{equation}
In this case, $\mmap: \mfamily \to \mbase$ is a finite morphism.
Let $\openmfamily := \mmap^{-1}(\openmbase)$ denote the
restriction of this family to $\openmbase$.

\begin{remark}
\label{rmk:superfluousmarkedpoints}
The marked points $0$ and $1$ do not appear in the
definition of $\mfamily$, and it is possible to define a similar
finite family over $\modsc{n+1}$, with only marked 
points $\infty, a_1, \dots, a_n$. 
The reason for including the two additional marked points is to provide 
the following connection to the Wronski map.
\end{remark}

\begin{proposition}
\label{prop:familydiagram}
The diagrams below commute.
\[
\begin{CD}
    \mfamily  @>{\mtosmap}>> \scellclosure \\
    @V{\mmap}VV        @VV{\Wr}V \\
    \mbase  @>>{\pol}> \monicsclosure \\
\end{CD}
\qquad\qquad\qquad\qquad
\begin{CD}
    \openmfamily  @>{\mtosmap}>> \scell \\
    @V{\mmap}VV        @VV{\Wr}V \\
    \openmbase  @>>{\pol}> \monics \\
\end{CD}
\qquad
\]
In the first diagram, $\scellclosure = \schubertopposite(\infty)$
is the closure of $\scell$ in $\Gr(d,d+m)$, 
and $\monicsclosure = \projspace^n$ is the closure of
$\monics$ in $\projspace^{dm}$.
The second diagram is a fibre product: 
$\openmfamily = \openmbase \times_{\monics} \scell$.
\end{proposition}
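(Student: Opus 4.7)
The plan is to verify commutativity first on the open part where fibres of $\mmap$ admit a concrete description as Schubert intersections, and then to extend to $\mfamily$ by density. For a smooth curve $C \in \openmbase$, I would normalize so that the marked points labelled $0, 1, \infty$ lie at the corresponding points of $\projspace^1$; the remaining marked points then have coordinates $a_1, \ldots, a_n$. Restricted to the fibre $\mmap^{-1}(C)$, the projection $\mtosmap$ is an isomorphism onto $\Gr(d,d+m)$. Under this isomorphism, the Schubert condition in each $(p,q,r)$-factor imposed by $\mschubertone(a_i)$ translates, via the $\PGL_2$-equivariance $\phi_{p,q,r}^{-1}(\schubert(s_0)) = \schubert(\phi_{p,q,r}^{-1}(s_0))$, to a condition at $a_i$ on the $(0,1,\infty)$-factor: every triple $(p,q,r) \in A_3$ containing the symbol $a_i$ yields the same condition $\schubertone(a_i)$, while all other triples impose nothing. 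Similarly $\mschubertopposite(\infty)$ translates to $\schubertopposite(\infty)$, so
\[
\mtosmap\bigl(\mmap^{-1}(C) \cap \mfamily\bigr)
\,=\, \schubertone(a_1) \cap \dots \cap \schubertone(a_n) \cap \schubertopposite(\infty)
\,.
\]

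By Lemma~\ref{lem:schubertwronskian}, this Schubert intersection equals $\Wr^{-1}(g) \cap \scellclosure$ for $g(z) = \prod_i(z+a_i) = \pol(C)$; since $\deg g = n$, part (ii) of the lemma places it inside the open cell $\scell$. In particular $\Wr(\mtosmap(y)) = \pol(\mmap(y))$ for every $y \in \openmfamily$ and $\mtosmap(\openmfamily) \subset \scell$, which establishes commutativity of the second diagram. For the fibre-product claim, I would exhibit the explicit inverse $(C,\boldx) \mapsto \bigl((\phi_{p,q,r}(\boldx))_{(p,q,r) \in A_3}, C\bigr)$ to the natural morphism $\openmfamily \to \openmbase \times_\monics \scell$; the image lies in $\mGr$ by construction, and the Schubert conditions defining $\mfamily$ are satisfied because, for each $i$, Lemma~\ref{lem:schubertwronskian}(i) applied to the factorization of $\Wr(\boldx) = \pol(C)$ guarantees $\boldx \in \schubertone(a_i)$, which unwinds through the same $\phi_{p,q,r}$-equivariance into $\mschubertone(a_i)$.

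For the first diagram, the family $\mmap : \mfamily \to \mbase$ is flat and finite, so $\openmfamily = \mmap^{-1}(\openmbase)$ is a dense open subscheme of $\mfamily$. The two morphisms $\Wr \circ \mtosmap$ and $\pol \circ \mmap$ from $\mfamily$ to the separated scheme $\monicsclosure$ agree on this dense open, hence agree on all of $\mfamily$. The main obstacle in this proof is the first step: carefully tracking how the Schubert conditions distributed across all the $(p,q,r)$-factors of $\mGr$, after being unwound through the M\"obius transformations $\phi_{p,q,r}$, combine into the single Schubert condition $\schubertone(a_i)$ on the distinguished $(0,1,\infty)$-factor. Once this bookkeeping is done, the rest reduces to an application of Lemma~\ref{lem:schubertwronskian} and a standard density argument.
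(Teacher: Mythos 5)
Your proposal is correct, and it follows the route the paper leaves implicit: the proposition is stated without proof as a direct consequence of the construction of $\mGr$ and $\mschubert(s)$, and your argument is exactly the expected unwinding of those definitions (the $\PGL_2$-equivariance of the osculating flags collapsing all $(p,q,r)$-factor conditions to $\schubertone(a_i)$ on the $(0,1,\infty)$-factor, followed by Lemma~\ref{lem:schubertwronskian} and a density/separatedness argument for the closure). The only refinement worth noting is that, since $\mfamily$ is not obviously reduced, the extension to the first diagram should invoke \emph{schematic} density of $\openmfamily$ — which your appeal to flatness over the integral base $\mbase$ already delivers, as all associated points of $\mfamily$ then lie over the generic point of $\mbase$.
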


\begin{proof}
Over a smooth curve $C \in \openmbase$, $\pi$ restricts to an
isomorphism $\Psi^{-1}(C) \to 
\schubertone(a_1) \cap \dots \cap \schubertone(a_n) \cap \schubertopposite(\infty)$ 
\cite[Proposition 3.3]{Sp}.
By Lemma~\ref{lem:schubertwronskian}, $\schubertone(a_1) \cap \dots \cap 
\schubertone(a_n) \cap \schubertopposite(\infty) =
\Wr^{-1}(\pol(C))$;
hence the second diagram is a fibre product.  Taking closures
everywhere in the second diagram gives the first diagram, which 
therefore also commutes.
\end{proof}

\subsection{$\projspace^1$-chains}
\label{sec:p1-chains}

The families $\mschubertopposite(\infty)$ and $\mfamily$ are defined as
subvarieties of $\Gr(d,d+m)^{A_3} \times \mbase$, but
this encoding is highly redundant --- locally,
we only need a small number of factors to get a 
faithful image.
The only nodal curves we will need in this paper are
curves that are $\projspace^1$-chains, and if we restrict to these,
the families $\mschubertopposite(\infty)$ and $\mfamily$ have a simpler
description.

Suppose $C \in \mbase$ has components
$C_1, C_2, \dots, C_{k+1}$.
Choose an isomorphism 
$C_i \to \projspace^1$, and for each
point $p \in C$, let $p^{(i)} \in \projspace^1$
denote the image of $p$ under the contraction map 
$C \to C_i \to \projspace^1$; we call 
$(p^{(1)}, \dots, p^{(k+1)}) \in (\projspace^1)^{k+1}$
the \defn{$C$-coordinates} of $p \in C$.
We thereby associate a polynomial to each component:
\[
\pol^{(i)}(C)
= \pol^{(i)}(C,z)
:= \prod_{a_j^{(i)} \neq \infty} (z+a_j^{(i)})
\,.
\]
By a \defn{$\projspace^1$-chain}, we mean that the nodes and marked
points are arranged as follows:
\begin{enumerate}[(1)]
\item
There is a node $o_i \in C$, joining 
$C_i$ to $C_{i+1}$, $i=1, \dots, k$.
We assume our coordinates are such that
$o_i^{(i)} = \infty$ and $o_i^{(i+1)}= 0$.

\item
The marked points $0, 1, \infty$ are on $C_{1}, C_{b_0}, C_{k+1}$ 
respectively, where $1 \leq b_0 \leq k+1$.
We assume our coordinates are such that
$0^{(1)} = 0$, $1^{(b_0)}=1$, and $\infty^{(k+1)}= \infty$.

\item
For $j=1, \dots, n$, put $b_j := i$ if $a_j \in C_{i}$.
For $i=1, \dots, k+1$, let $\mu_i := \#\{j \mid b_j=i\}$.
We require $\mu_i \geq 1$ for $i \neq b_0$, but $\mu_{b_0} =0$ is
allowed.  We write $\mu = (\mu_1, \dots, \mu_{k+1})$.
(This is equivalent to the stability condition on the curve.)

\item We allow the possibility a \defn{double marked point},
where two of the marked points $a_1, \dots, a_n$ 
are at the same point of $C$.  
In the usual way of thinking about $\mbase$, when two marked points
collide, the curve gains a new 
$\projspace^1$-component (sometimes called a ``bubble'') containing
the two marked points, attached to the original curve at the collision 
point.  But since this resolution is canonical,
it is also fine to think of this as a double marked point on $C$.
We will take this perspective, and we will not count these extra bubbles 
when counting the components of $C$.
The marked points must still be distinct from the nodes
$o_1, \dots, o_k$, and we do not allow triple marked points, 
or any other configuration normally forbidden by the definition
of a stable curve.
\end{enumerate}

\begin{remark}
We can specify a $\projspace^1$-chain by 
specifying values for $k$, $b_0, b_1, \dots, b_n$,
and $a_1^{(b_1)}, \dots, a_n^{(b_n)}$.  Note, however, that this
is also specifying $C$-coordinate isomorphisms
$C_{i} \to \projspace^1$, which are not
part of the actual data of the curve, and are not
canonical if $k \geq 1$. Each of the coordinate maps 
$C_i \to \projspace^1$, $i \neq b_0$
can be rescaled by any non-zero scalar, without changing the
curve.  (The map $C_{b_0} \to \projspace^1$, however, is 
pinned down by conditions 1 and 2.)
In particular, there is more than one way to 
specify the same nodal curve in $\mbase$.  For example, there
is a unique curve with $n+1$ components and $b_j = j+1$ for all
$j$; the values of $a_j^{(b_j)}$ specify coordinates on 
this curve, but do not help specify the curve itself.
\end{remark}

\begin{example}
An example of $\projspace^1$-chain is shown in Figure~\ref{fig:P1chain}.
\begin{figure}
\centering
\begin{tikzpicture}[x=2.5em,y=2.5em]
   \draw[violet] (1,0) circle(1);
   \draw[violet,dashed] (1,0) ellipse (1 and 0.3);
   \draw[violet] (3,0) circle(1);
   \draw[violet,dashed] (3,0) ellipse (1 and 0.3);
   \draw[violet] (5,0) circle(1);
   \draw[violet,dashed] (5,0) ellipse (1 and 0.3);
   \draw[violet] (7,0) circle(1);
   \draw[violet,dashed] (7,0) ellipse (1 and 0.3);
   \draw[fill] (0,0) circle (0.07) node[left]{$0$};
   \draw[fill] (8,0) circle (0.07) node[right]{$\infty$};
   \draw[fill] (5,-.3) circle (0.07) node[below]{$1$};
   \draw[fill] (1,.3) circle (0.07) node[above]{$a_5$};
   \draw[fill] (2.4,-.24) circle (0.07) node[below]{$a_8$};
   \draw[fill] (3,-.3) circle (0.07) node[below]{$a_2$};
   \draw[fill] (3,1) circle (0.07) node[above]{$~~~~~~~~a_3=a_4$};
   \draw[fill] (4.4,.24) circle (0.07) node[right]{$a_1$};
   \draw[fill] (6.4,.8) circle (0.07) node[above]{$a_6$};
   \draw[fill] (6.4,-.8) circle (0.07) node[below]{$a_7$};
\end{tikzpicture}
\caption{A $\projspace^1$-chain in $\modsc{11}$ with four components.}
\label{fig:P1chain}
\end{figure}
In this example we have $b_5 = 1$, $b_2=b_3=b_4=b_8 =2$, $b_0 = b_1 = 3$,
and $b_6 = b_7=4$.  
The dashed circle on each $\projspace^1$ represents points with 
real $C$-coordinates, and the solid outer circle represents points with 
pure imaginary $C$-coordinates.  In this case,
\[
a_1^{(3)} = -\half
\,, ~~~
a_2^{(2)} = 1 
\,, ~~~
a_3^{(2)} = a_4^{(2)} = \imag 
\,, ~~~
a_5^{(1)} = -1
\,, ~~~
a_6^{(4)} = \tfrac{\imag}{2}
\,, ~~~
a_7^{(4)} = -\tfrac{\imag}{2}
\,, ~~~
a_8^{(2)} = \half
\,.
\]
Note that $a_3 = a_4$ is a double marked point.
The associated polynomials are:
\begin{align*}
\pol^{(1)}(z) &= (z-1) \\
\pol^{(2)}(z) &= z(z+1)(z+\half)(z+\imag)^2 \\
\pol^{(3)}(z) &= z^5(z-\half) \\
\pol^{(4)}(z) &= z^6(z+\tfrac{\imag}{2})(z-\tfrac{\imag}{2})
\,,
\end{align*}
and since $b_0= 3$, $\pol(z) = \pol^{(3)}(z)$.
\end{example}

For $j=1, \dots, n$, let 
$\mtosmap_j : \mGr \to \Gr(d,d+m)$ denote the projection onto the Grassmannian
factor corresponding to $(0, a_j, \infty) \in A_3$.  When we restrict our 
base to the open subvariety of $\projspace^1$-chains in $\mbase$, 
\[
    \mschubertopposite(\infty) \xrightarrow{(\mtosmap_1, \dots, \mtosmap_n)}
     \Gr(d,d+m)^n \times \mbase
\]
restricts to an injective map.  Since all calculations in this paper
take place within this restriction, we will henceforth identify
$\mschubertopposite(\infty)$ with its image under this map.

Let $C$ be a $\projspace^1$-chain, and let
$\bigfibre(C)$ denote the fibre
of the map $\mmap: \mschubertopposite(\infty) \to \mbase$ over $C$
(regarded as a subvariety of $\Gr(d,d+m)^n$).
If $C$ has more than one component, then $\bigfibre(C)$ is a reducible scheme,
and its components are indexed by $\Tab(\lambda; \mu)$, 
where $\mu = (\mu_1, \dots, \mu_{k+1})$ is as above.
Write $\bigfibre_T(C)$ for the component indexed by $T \in \Tab(\lambda; \mu)$.
Write $Q^{(b)}_T$ for the closure of 
$X^{\shape(T|_{\leq b})}_{\shape(T|_{\leq b-1})}(0)$.

\begin{theorem}
\label{thm:degeneratefibreiso}
We have an isomorphism 
$\bigfibre_T(C) \to \prod_{b=1}^{k+1} Q^{(b)}_T$, 
\begin{equation}
\label{eqn:C-coords}
    \mfpoint \mapsto 
  (\mfpoint^{(1)}, \dots, \mfpoint^{(k+1)})
\,,
\end{equation}
characterized by
\begin{equation}
\label{eqn:degeneratefibreiso}
\mtosmap_j(\mfpoint) = \phi_{0,a_j^{(b_j)},\infty}(\mfpoint^{(b_j)})
	 \,,
\qquad \text{for }j=1, \dots, n \,.
\end{equation}
\end{theorem}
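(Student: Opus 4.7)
This should follow by specializing Speyer's description of the degenerate fibres of $\mmap: \mschubertopposite(\infty) \to \mbase$ (his Theorem~1.2) to the case where $C$ is a $\projspace^1$-chain. The main intuition is that the Schubert condition $\lambda^\vee$ imposed at $\infty$ cannot be localized to a single component once $C$ degenerates; instead, it splits across the chain along a filtration $\emptyset = \kappa_0 \subset \kappa_1 \subset \dots \subset \kappa_{k+1} = \lambda$ with $|\kappa_b/\kappa_{b-1}| = \mu_b$. Such filtrations are precisely the data of weakly increasing tableaux $T \in \Tab(\lambda;\mu)$ via $\kappa_b = \shape(T|_{\leq b})$, which accounts for the indexing of the components $\bigfibre_T(C)$.

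Next I would identify the $b$-th factor of each component $\bigfibre_T(C)$. On $C_b$, the only Schubert data that survive in the degeneration come from the nodes and from the original condition at $\infty$: an incoming condition $X_{\kappa_{b-1}}(0)$ at the node from $C_{b-1}$ (coordinate $0$ on $C_b$), and an outgoing condition forcing membership in $\scellb{\kappa_b}$ at the node to $C_{b+1}$ (coordinate $\infty$ on $C_b$). No conditions are imposed at the marked points $a_j^{(b_j)} \in C_b$, since $\mschubertopposite(\infty)$ is defined using only the Schubert datum at $\infty$. The resulting locus in the associated Grassmannian factor is thus $\bigfibrefactor{b}$, the closure of $X^{\kappa_b}_{\kappa_{b-1}}(0)$, and a dimension count gives $\sum_b \dim \bigfibrefactor{b} = \sum_b \mu_b = n = \dim \scellb{\lambda}$, as expected.

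The characterization \eqref{eqn:degeneratefibreiso} should then arise by interpreting $\phi_{0, a_j^{(b_j)}, \infty}$ as the change of coordinates between the $(0, a_j, \infty)$-chart on the original smooth curve and the $(0^{(b)}, a_j^{(b)}, \infty^{(b)})$-chart on $C_b$. As $C$ degenerates, the projection $\mtosmap_j$ factors as $\phi_{0, a_j^{(b_j)}, \infty}$ composed with the projection to the $C_b$-factor, which defines $\mfpoint^{(b)}$. One must check that this relation is independent of the choice of $a_j \in C_b$ (using the $\PGL_2$-equivariance of the construction and the fact that rescaling the coordinate map $C_b \to \projspace^1$ rescales all the $a_j^{(b)}$ consistently), so $\mfpoint^{(b)}$ is well defined, and the tuple recovers $\mfpoint$.

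The main obstacle will be upgrading this from a set-theoretic bijection to a scheme-theoretic isomorphism, in particular ruling out embedded components or length defects in the degenerate fibre. For this I would invoke Speyer's flatness and Cohen--Macaulayness theorem (Theorem~1.1): the fibre $\bigfibre(C)$ is Cohen--Macaulay of pure dimension $n$, matching the dimension of $\prod_b \bigfibrefactor{b}$; since the target product is manifestly reduced, and \eqref{eqn:degeneratefibreiso} defines a morphism which is bijective on points, it must be an isomorphism.
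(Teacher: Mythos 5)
Your proposal takes essentially the same route as the paper: the paper's entire proof of this theorem is a citation of Speyer's Theorem~1.2, specialized to $\projspace^1$-chains with a single Schubert condition, together with the explicit description of the isomorphism in Section~3 of Speyer's paper, and your heuristic account of the tableau-indexed components, the Richardson factors, and the coordinate changes $\phi_{0,a_j^{(b_j)},\infty}$ matches that. The only caveat is your final step --- a morphism that is bijective on points with reduced target need not be an isomorphism in general --- but this step is unnecessary, since Speyer's theorem already supplies the scheme-theoretic isomorphism.
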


\begin{proof}
The existence of such an isomorphism is the content 
of \cite[Theorem 1.2]{Sp}, in the case where
the curve is $\projspace^1$-chain, and there is only one Schubert 
condition.  The details of the isomorphism, in general, are described in
\cite[Section 3]{Sp}.
\end{proof}

We call $(\mfpoint^{(1)}, \dots, \mfpoint^{(k+1)}) 
\in \prod_{b=1}^{k+1} Q^{(b)}_T$ the \defn{$C$-coordinates} of the 
corresponding point $\mfpoint \in \bigfibre_T(C)$.

Let $\mfibre(C)$ denote the fibre of the map $\mmap : \mfamily \to \mbase$ over
$C$.  Then $\mfibre(C) \subset \bigfibre(C)$, so every point in $\mfibre(C)$ is in
some component $\bigfibre_T(C)$.
Write $\mfibre_T(C) := \mfibre(C) \cap \bigfibre_T(C)$.

\begin{theorem}
\label{thm:finitefibreiso}
Every point of $\mfibre(C)$ is in $\mfibre_T(C)$ for some unique 
$T \in \Tab(\lambda; \mu)$.
Under the isomorphism \eqref{eqn:C-coords},
$\mfibre_T(C)$ is identified with $\prod_{b=1}^{k+1} \mfibrefactor{b}(C)$, 
where 
\[
  \mfibrefactor{b}(C) = 
    X^{\shape(T|_{\leq b})}_{\shape(T|_{\leq b-1})}(0)
    \ \cap\ \Wr^{-1}(\pol^{(b)}(C))\,.
\]
\end{theorem}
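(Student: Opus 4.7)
The plan is to combine Theorem~\ref{thm:degeneratefibreiso} with a translation of the additional Schubert conditions $\mschubertone(a_1), \dots, \mschubertone(a_n)$ into conditions on $C$-coordinates. First I would observe that $\mschubertone(a_j)$ is cut out within $\mGr$ by the single requirement that, on the Grassmannian factor indexed by $(0, a_j, \infty) \in A_3$, the point lies in $\schubertone(1)$. Via the identity $\mtosmap_j(\mfpoint) = \phi_{0, a_j^{(b_j)}, \infty}(\mfpoint^{(b_j)})$ from \eqref{eqn:degeneratefibreiso} and the equivariance $\phi F_\bullet(a) = F_\bullet(\phi(a))$, this becomes the single condition $\mfpoint^{(b_j)} \in \schubertone(a_j^{(b_j)})$ on the $b_j$\nth $C$-coordinate. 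Crucially, this constraint does not couple different components of $C$.

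Next I would collect, for each component index $b$, all the constraints coming from indices $j$ with $b_j = b$. A point $\mfpoint \in \bigfibre_T(C)$ lies in $\mfibre(C)$ if and only if for every $b$,
\[
\mfpoint^{(b)} \in Q_T^{(b)} \cap \bigcap_{j : b_j = b} \schubertone(a_j^{(b)})\,.
\]
Combined with the product decomposition $\bigfibre_T(C) \cong \prod_b Q_T^{(b)}$ of Theorem~\ref{thm:degeneratefibreiso}, this gives $\mfibre_T(C) \cong \prod_b W^{(b)}$ with $W^{(b)} = Q_T^{(b)} \cap \bigcap_{j : b_j = b} \schubertone(a_j^{(b)})$. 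To identify $W^{(b)}$ with $\mfibrefactor{b}(C)$, I would apply Lemma~\ref{lem:schubertwronskian}: the Schubert conditions $\schubertone(a_j^{(b)})$ for $b_j = b$ translate to divisibility of $\Wr(\mfpoint^{(b)})$ by $\prod_{j : b_j = b}(z + a_j^{(b)})$, while membership in $X^{\shape(T|_{\leq b})}_{\shape(T|_{\leq b-1})}(0) \subset Q_T^{(b)}$ forces $\Wr(\mfpoint^{(b)})$ to vanish to order exactly $|\shape(T|_{\leq b-1})| = \sum_{i<b}\mu_i$ at $z = 0$ and to have total degree $|\shape(T|_{\leq b})| = \sum_{i \leq b}\mu_i$. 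Matching these against the explicit factorization $\pol^{(b)}(C) = z^{\sum_{i<b}\mu_i}\prod_{j : b_j = b}(z + a_j^{(b)})$ (noting that $a_j^{(b)} = 0$ for $b_j < b$ and $a_j^{(b)} = \infty$ for $b_j > b$, so those points contribute only the $z^{\sum_{i<b}\mu_i}$ factor) identifies $W^{(b)}$ with $X^{\shape(T|_{\leq b})}_{\shape(T|_{\leq b-1})}(0) \cap \Wr^{-1}(\pol^{(b)}(C)) = \mfibrefactor{b}(C)$.

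Finally I would argue that every point of $\mfibre(C)$ lies in $\mfibre_T(C)$ for a \emph{unique} $T$. For any $\mfpoint \in \mfibre(C)$, the Wronskian $\Wr(\mfpoint^{(b)}) = \pol^{(b)}(C)$ has prescribed multiplicities of vanishing at $0$ and at each $-a_j^{(b)}$, together with a prescribed total degree. By Lemma~\ref{lem:schubertwronskian}, these data pin down the unique Richardson cell $X^{\alpha^b}_{\beta^b}(0)$ containing $\mfpoint^{(b)}$, and in particular determine $\shape(T|_{\leq b}) = \alpha^b$ and $\shape(T|_{\leq b-1}) = \beta^b$ for every $b$. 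Consistency across $b$ then forces a unique $T \in \Tab(\lambda;\mu)$.

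The main obstacle is purely bookkeeping: one must keep straight the two coordinate systems on each $C_b$, track what $a_j^{(b)}$ means for $j$ with $b_j \neq b$ (namely $0$ or $\infty$ depending on the side of the node), and verify that the degrees and vanishing orders dictated by the Richardson cell structure line up with the factorization of $\pol^{(b)}(C)$. Once this dictionary is in place, the argument reduces to repeated application of Lemma~\ref{lem:schubertwronskian} on each component of $C$.
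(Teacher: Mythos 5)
Your argument for the case where all marked points of $C$ are distinct is essentially the paper's own proof: translate each condition $\mschubertone(a_j)$ through \eqref{eqn:degeneratefibreiso} into the condition $\mfpoint^{(b_j)} \in \schubertone(a_j^{(b_j)})$ on a single $C$-coordinate, observe that the conditions decouple across components, and finish with Lemma~\ref{lem:schubertwronskian}. Your uniqueness argument (reading off $\shape(T|_{\leq b})$ from the degree and vanishing orders of $\Wr(\mfpoint^{(b)})$) is also fine, and is in fact spelled out more explicitly than in the paper.

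There is, however, a genuine gap: you never treat the case where $C$ has a double marked point, which the theorem covers (item (d) of the definition of a $\projspace^1$-chain explicitly allows $a_j = a_{j'}$) and which is exactly the case needed downstream, in Corollary~\ref{cor:doublemarkedpoint} and at $t=\half$ in the path-lifting lemmas. Your argument breaks there: if $a_j = a_{j'}$ on $C_b$, the two conditions $\schubertone(a_j^{(b)})$ and $\schubertone(a_{j'}^{(b)})$ coincide, so the naive intersection $\bigcap_{j:\,b_j=b}\schubertone(a_j^{(b)})$ imposes divisibility of $\Wr(\mfpoint^{(b)})$ by $(z+a_j^{(b)})$ only to the first order, not the second, and your claimed identification of $W^{(b)}$ with $\Wr^{-1}(\pol^{(b)}(C))$ fails. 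The correct fibre is the flat limit
\[
\lim_{a' \to a} \schubertone(a) \cap \schubertone(a') = \hdominoschubert(a) \cup \vdominoschubert(a)\,,
\]
a proper closed subvariety of $\schubertone(a)$, and the paper deduces the double-marked-point case from the generic case precisely by this limiting argument (which is also consistent with Lemma~\ref{lem:schubertwronskian}, since divisibility by $(z+a)^2$ corresponds to membership in $X_\kappa(a)$ for some $\kappa \vdash 2$). You should add this limit step, or otherwise justify why the scheme-theoretic fibre of the flat family $\mmap: \mfamily \to \mbase$ over a curve with a double marked point is the one you describe.
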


\begin{proof}
First, suppose $C$ has no double marked points.  Then by definition,
\[
    \mfibre(C) = \bigfibre(C)\ \cap\ %
      \bigcap_{j=1}^n \mtosmap_j^{-1} \schubertone(1)
\,.
\]
By \eqref{eqn:degeneratefibreiso}, intersecting $\bigfibre_T(C)$ with
$\mtosmap_j^{-1} \schubertone(1)$  corresponds to intersecting
the factor $\bigfibrefactor{b_j}$ with $\schubertone(a_j^{(b_j)})$.
Thus $\mfibre_T(C)$ is identified with 
\[
   \prod_{b=1}^{k+1} \left(\bigfibrefactor{b}\ \cap\ %
      \bigcap \schubertone(a_j^{(b_j)}) \right)
\,,
\]
where the intersection is taken over all $j$ such that $b_j = b$.
The result now follows from  Lemma~\ref{lem:schubertwronskian}.
In the case where $C$ has a double marked point, we deduce the result by taking limits and observing that 
\[
   \lim_{a' \to a} \schubertone(a) \cap \schubertone(a') 
    = \hdominoschubert(a) \cup \vdominoschubert(a)
\,.
\qedhere
\]
\end{proof}

In the case where the curve has a double marked point, we distinguish
the following two cases.

\begin{corollary}
\label{cor:doublemarkedpoint}
If $C$ has a double marked point, $a_j=a_{j'}$ on component $C_b$, 
and $\mfpoint \in \mfibre_T(C)$, then we have either 
$\mtosmap_j(\mfpoint) \in \hdominoschubert(1) \cap \bigfibrefactor{b}$ or
$\mtosmap_j(\mfpoint) \in \vdominoschubert(1) \cap \bigfibrefactor{b}$ but not both.
\end{corollary}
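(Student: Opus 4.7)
The plan is to translate the claim into a statement about the $C$-coordinate $\mfpoint^{(b)}$, then apply Lemma~\ref{lem:schubertwronskian}(i) to the polynomial $\pol^{(b)}(C)$, and finally invoke the disjointness of open Schubert cells to get ``not both''.

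First I would observe that the M\"obius transformation $\phi = \phi_{0,a_j^{(b)},\infty}$ fixes $0$ and $\infty$ and sends $a_j^{(b)}$ to $1$. Consequently $\phi$ identifies $\schubertone(a_j^{(b)})$, $\hdominoschubert(a_j^{(b)})$, $\vdominoschubert(a_j^{(b)})$ with $\schubertone(1)$, $\hdominoschubert(1)$, $\vdominoschubert(1)$ respectively, and preserves $\bigfibrefactor{b}$, which is cut out by Schubert conditions only at $0$ and $\infty$. By \eqref{eqn:degeneratefibreiso}, $\mtosmap_j(\mfpoint) = \phi(\mfpoint^{(b)})$, so the two options in the corollary are equivalent to $\mfpoint^{(b)} \in \hdominoschubert(a_j^{(b)}) \cap \bigfibrefactor{b}$ or $\mfpoint^{(b)} \in \vdominoschubert(a_j^{(b)}) \cap \bigfibrefactor{b}$.

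Next, by Theorem~\ref{thm:finitefibreiso}, $\mfpoint^{(b)} \in \Wr^{-1}(\pol^{(b)}(C))$. The double marked point $a_j = a_{j'}$ on $C_b$ contributes the factor $(z+a_j^{(b)})^2$ to $\pol^{(b)}(C)$, and I would argue that this is the \emph{exact} multiplicity of the root $-a_j^{(b)}$: all other marked points on $C_b$ are distinct from $a_j$ (stable curves forbid triple marked points), and by the conventions set up for $\projspace^1$-chains the nodes on $C_b$ are placed at $0$ and $\infty$, so they do not contribute any further factor of $(z+a_j^{(b)})$. Applying Lemma~\ref{lem:schubertwronskian}(i), $\mfpoint^{(b)}$ therefore lies in $X_\alpha^\circ(a_j^{(b)})$ for a unique partition $\alpha \vdash 2$; thus $\alpha$ is exactly one of $\hdomino$ or $\vdomino$.

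Finally, to upgrade from open cells to the closed Schubert varieties and deduce ``not both'', I would use the Schubert cell decomposition $\vdominoschubert(a) = \bigsqcup_{\mu \supseteq \vdomino} X_\mu^\circ(a)$ (and symmetrically for $\hdomino$). Since $\hdomino \not\supseteq \vdomino$ componentwise, the open cell $X_\hdomino^\circ(a)$ does not appear in the decomposition of $\vdominoschubert(a)$, so $\mfpoint^{(b)} \in X_\hdomino^\circ(a_j^{(b)})$ forces $\mfpoint^{(b)} \notin \vdominoschubert(a_j^{(b)})$; the reverse implication is analogous. The only mild obstacle is the verification of the exact root multiplicity in $\pol^{(b)}(C)$, which just requires carefully unpacking the stable-curve conditions on a $\projspace^1$-chain; everything else is a direct application of results already established.
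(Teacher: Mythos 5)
Your proof is correct and follows essentially the same route as the paper's: use Theorem~\ref{thm:finitefibreiso} to place $\mfpoint^{(b)}$ in $\Wr^{-1}(\pol^{(b)}(C))$, apply Lemma~\ref{lem:schubertwronskian} to the double root at $-a_j^{(b)}$, and transport back via \eqref{eqn:degeneratefibreiso}. You merely make explicit two points the paper leaves implicit — that the root multiplicity is exactly $2$ and that the resulting open cells are disjoint — both of which are verified correctly.
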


\begin{proof}
By definition $\mfpoint^{(b)} \in \bigfibrefactor{b}$, and
by Theorem~\ref{thm:finitefibreiso}, $\mfpoint^{(b)} \in
\Wr^{-1}(\pol^{(b)}(C))$.  Since $\pol^{(b)}(C)$ has a double root
at $a_j^{(b)}$, by Lemma~\ref{lem:schubertwronskian}
we have $\mfpoint^{(b)} \in X_\hdomino(a_j^{(b)})$
or $\mfpoint^{(b)} \in X_\vdomino(a_j^{(b)})$ but not both.
Applying \eqref{eqn:degeneratefibreiso} gives the result.
\end{proof}
\begin{remark}
For many of the nodal curves we consider, we will have 
$b_0 = k+1$ and $\mu_{k+1} = 0$, i.e. $1$ and $\infty$ are the only
marked points $C_{k+1}$.
When this happens, $\bigfibrefactor{k+1}$ is a point, and 
$\mfpoint^{(k+1)}$ does not appear on the right side 
of \eqref{eqn:degeneratefibreiso}.
We will therefore sometimes drop the $(k+1)$-term from 
our notation, e.g. writing $\mu = (\mu_1, \dots, \mu_k)$, or
$\mfibre_T(C) = \prod_{b=1}^k \mfibrefactor{b}(C)$, etc.
\end{remark}

\subsection{Real structures}

For each quadruple $(p,q,r,s) \in A_4$ of distinct elements of $A_1$, 
there is a natural
map $\theta_{p,q,r,s} : \mbase \to \modsc{4}$, 
which forgets all marked points except for
$p,q,r,s$, and contracts any unstable components of the curve.
Using the canonical identification of $\modsc{4}$ with $\projspace^1$
(in which $(p,q,r) \mapsto (0, 1, \infty)$),
we rewrite this as $\theta_{p,q,r,s} : \mbase \to \projspace^1$,
\[
    \theta_{p,q,r,s}(C) 
     = \frac{(\hat p -\hat s)(\hat q - \hat r)}
       {(\hat p -\hat q)(\hat r - \hat r)}
\]
where $\hat p, \hat q, \hat r, \hat s$ denote the images of points
$p,q,r,s$ in $\modsc{4}$, in any coordinates.
The product of all such maps gives an embedding
\begin{equation}
\label{eqn:projectiveembedding}
   \mbase \hookrightarrow (\projspace^1)^{A_4}
\,.
\end{equation}

From this construction we obtain the standard real structure 
on $\mbase$, which is inherited from the standard real structure 
on $\projspace^1$: the complex conjugate
of a stable curve $C \in \mbase$ is obtained by conjugating each of the
nodes and marked points.  
Similarly, there is a standard real structure on
on $\mGr$, defined via its embedding in
$\Gr(d,d+m)^{A_3} \times \mbase$.
For either of these spaces, we will denote this complex 
conjugation map by $\xi$.

The standard notion of a real point
of $\mbase$ and $\mfamily$ does not precisely correspond to the 
notion of a real point of $\monics$ or $\scell$.  Informally a point
of $\mbase$ is real iff all nodes and marked points are real, whereas a 
point of $\monics$ is real whenever its roots are a mixture of real points and
complex conjugate pairs.  Thus the real points of $\mbase$ map to the closure of $\monics(1^n) \subset \monicsclosure(\RR)$.
To study curves such that $\pol(C) \in \monics(\mu)$ for $\mu \neq 1^n$, we need to
consider other real structures on $\mbase$, in which specified pairs of 
marked points are required to be complex conjugates of each other.

Let $\symgroup_n$ denote the symmetric group of permutations 
of $\{1, \dots, n\}$.  $\symgroup_n$ acts
on $\mbase$ by permuting the marked points $a_1, \dots, a_n$, and on
$\mGr$ by also permuting the corresponding factors in $\Gr(d,d+m)^{A_3}$.
If $\sigma \in \symgroup_n$ is an involution, we define 
$\xi^\sigma := \sigma \circ \xi$ acting on either
$\mbase$ or $\mGr$ (or any of its $\xi$- and $\symgroup_n$-invariant 
subvarieties, e.g. $\mschubertopposite(\infty)$ or $\mfamily$).
If $\sigma \in \symgroup_n$ is the identity element, then $\xi^\sigma = \xi$;
otherwise, $\xi^\sigma$ is the complex conjugation for a different real
structure on these spaces.  
The real points with respect to
this real structure are the $\xi^\sigma$-fixed points, and
for any $\xi$- and $\symgroup_n$-invariant subvariety $V$ of $\mGr$ or $\mbase$, we denote
the $\xi^\sigma$-fixed points of $V$ by $V(\RR^\sigma)$.

\begin{proposition}
\label{prop:realstructures}
Let $\sigma \in \symgroup_n$ be an involution. We have the following commutative diagram.
\[
\begin{CD}
    \mfamily(\RR^\sigma)  @>{\mtosmap}>> \scellclosure(\RR) \\
    @V{\mmap}VV        @VV{\Wr}V \\
    \mbase(\RR^\sigma) @>>{\pol}> \monicsclosure(\RR) \\
\end{CD}
\]
If $C \in \openmbase$ and $\pol(C) \in \monics(\RR)$, then there is
a unique involution $\sigma \in \symgroup_n$ such that 
$C \in \openmbase(\RR^\sigma)$.
If $\mfpoint \in \mfibre(C)$ and $\mtosmap(\mfpoint) \in \scell(\RR)$,
then $\sigma$ is also the unique
involution such that $\mfpoint \in \mfamily(\RR^\sigma)$.
\end{proposition}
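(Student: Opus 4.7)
The plan is to verify the three assertions in turn, exploiting two equivariance properties of the projection $\mtosmap : \mGr \to \Gr(d,d+m)$. First, $\mtosmap$ commutes with complex conjugation $\xi$, since $\xi$ acts factorwise on $\mGr \subset \Gr(d,d+m)^{A_3} \times \mbase$. Second, because $\sigma \in \symgroup_n$ permutes the $A_3$-indexed Grassmannian factors and fixes the triple $(0,1,\infty) \in A_3$ (as $\sigma$ fixes $0,1,\infty \in A_1$ by definition), we have $\mtosmap \circ \sigma = \mtosmap$. Combining these gives the key equivariance $\mtosmap \circ \xi^\sigma = \xi \circ \mtosmap$. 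A third ingredient, used in the third assertion, is that for $C \in \openmbase$ the projection $\mtosmap$ restricts to an isomorphism $\mmap^{-1}(C) \to \Gr(d,d+m)$.

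For the commutative diagram, commutativity over $\CC$ is Proposition~\ref{prop:familydiagram}, so only the real versions need checking. For $C \in \mbase(\RR^\sigma)$, the condition $\sigma \xi(C) = C$ translates to $\overline{a_i} = a_{\sigma(i)}$, so the multiset of finite coordinates $\{a_i\}$ is closed under conjugation and $\pol(C) = \prod_{a_i \neq \infty}(z+a_i) \in \monicsclosure(\RR)$. For $\mfpoint \in \mfamily(\RR^\sigma)$, the equivariance yields $\xi(\mtosmap(\mfpoint)) = \mtosmap(\xi^\sigma(\mfpoint)) = \mtosmap(\mfpoint)$, so $\mtosmap(\mfpoint) \in \scellclosure(\RR)$.

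For the second assertion, smoothness of $C \in \openmbase$ guarantees that $a_1, \dots, a_n$ are pairwise distinct and all finite (since $\infty$ is reserved for a separate marked point). The assumption $\pol(C) \in \monics(\RR)$ means $\{a_i\}$ is conjugation-closed, and distinctness singles out a unique involution $\sigma \in \symgroup_n$ with $\overline{a_i} = a_{\sigma(i)}$; this is precisely the $\sigma$ for which $\xi^\sigma(C) = C$. For the third assertion, let $\boldx = \mtosmap(\mfpoint) \in \scell(\RR)$. The equivariance gives $\mtosmap(\xi^\sigma(\mfpoint)) = \xi(\boldx) = \boldx$, and $\mmap(\xi^\sigma(\mfpoint)) = \xi^\sigma(C) = C$, so $\xi^\sigma(\mfpoint)$ and $\mfpoint$ both lie in $\mmap^{-1}(C)$ with the same image under $\mtosmap$. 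Injectivity of $\mtosmap$ on this fibre forces $\xi^\sigma(\mfpoint) = \mfpoint$. Uniqueness of such $\sigma$ for $\mfpoint$ then reduces to the uniqueness for $C = \mmap(\mfpoint)$ already proved.

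The main obstacle is really only notational bookkeeping: one must carefully justify the equivariance $\mtosmap \circ \xi^\sigma = \xi \circ \mtosmap$ by unwinding the definitions of the $\xi$- and $\symgroup_n$-actions on $\mGr$, and verify that the action of $\sigma$ genuinely fixes (rather than permutes away from) the $(0,1,\infty)$ factor. Once this is checked, the three statements become straightforward diagram chases combined with the standard characterization of real polynomials via conjugation-closed root sets.
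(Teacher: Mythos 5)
Your proposal is correct and follows essentially the same route as the paper: the key equivariance $\mtosmap \circ \sigma = \mtosmap$ (because $\sigma$ fixes the $(0,1,\infty)$ factor) together with $\pol \circ \sigma = \pol$, uniqueness of $\sigma$ from distinctness of the marked points on a smooth curve, and the observation that $\mfpoint$ and $\xi^\sigma\mfpoint$ have the same images under both $\mmap$ and $\mtosmap$, hence coincide. The only cosmetic difference is that you invoke injectivity of $\mtosmap$ on the fibre $\mmap^{-1}(C)$ where the paper cites the fibre-product statement of Proposition~\ref{prop:familydiagram}; these are interchangeable.
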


Note that, when $\sigma$ has cycle type $\mu$, the images of $\mfamily(\RR^\sigma)$ and $\mbase(\RR^\sigma)$ are the closures of $\scell(\mu) \subset \scellclosure(\RR)$ and $\monics(\mu) \subset \monicsclosure(\RR)$.

\begin{proof}
We have $\pol \circ \sigma = \pol$, so for $C \in \mbase$,
$\xi^\sigma C = C$ implies that $\pol(C)$ is real.
Since $\sigma$ does not permute $0,1,\infty$, it fixes the 
$(0,1, \infty)$-factor in $\Gr(d,d+m)^{A_3}$, so
$\mtosmap \circ \sigma= \mtosmap$,
and so a similar argument applies for the map $\mtosmap$.

If $C \in \openmbase$ and $\pol(C)$ is real, then 
$\pol(\xi C) = \overline{\pol(C)} = \pol(C)$.  It follows that
$\xi$ is just permuting the marked points of $C$, i.e.
$\xi C = \sigma C$ for some $\sigma \in \symgroup_n$, or equivalently
$C \in \openmbase(\RR^\sigma)$ ($\sigma$ must therefore be
an involution, since $\xi$ is an involution).
Finally, suppose $\mfpoint \in \mfibre(C)$ and $\mtosmap(\mfpoint)$ 
is real.
Then 
\[
\mtosmap(\sigma \mfpoint) = \mtosmap(\mfpoint) = 
\overline{\mtosmap(\mfpoint)} = \mtosmap(\xi \mfpoint)
\]
and 
\[
\mmap(\sigma \mfpoint) = \sigma C = \xi C = \mmap(\xi \mfpoint)
\,,
\]
so
by the last part of Proposition~\ref{prop:familydiagram}, it follows that
$\sigma \mfpoint = \xi \mfpoint$.
\end{proof}

If $C$ is a $\projspace^1$-chain, then we have the following
characterization.
We say that the $C$-coordinates
are \defn{$\xi^\sigma$-compatible}, if 
$a_j^{(i)}$ is the complex conjugate of $a_{\sigma(j)}^{(i)}$ for all 
$i=1,\dots, k+1$, $j=1, \dots, n$ (this requires
$a_j$ and $a_{\sigma(j)}$ to be on the same component of $C$).
Note that this implies that
$C \in \mbase(\RR^\sigma)$ is $\xi^\sigma$-fixed.
Conversely if $C$ is $\xi^\sigma$-fixed
there exists a choice of $\xi^\sigma$-compatible
$C$-coordinates (though not every choice of $C$-coordinates is
$\xi^\sigma$-compatible).

\begin{proposition}
\label{prop:realchains}
Let $\sigma \in \symgroup_n$ be an involution, and let
$C$ be a $\projspace^1$-chain with $\xi^\sigma$-compatible 
$C$-coordinates.  For $\mfpoint \in \mfibre(C)$, we have 
$\xi^\sigma \mfpoint = \mfpoint$ if and only if 
$(\mfpoint^{(1)}, \dots, \mfpoint^{(k+1)})$ are real.
\end{proposition}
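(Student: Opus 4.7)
The plan is to transfer the assertion from $\mfpoint$ itself to its Grassmannian projections $\mtosmap_j(\mfpoint)$, and then to its $C$-coordinates $\mfpoint^{(i)}$. The key input from Section~\ref{sec:p1-chains} is that, on the $\projspace^1$-chain locus, the tuple $(\mtosmap_1,\dots,\mtosmap_n)$ embeds $\mschubertopposite(\infty)$ injectively into $\Gr(d,d+m)^n \times \mbase$. Since $C$ is $\xi^\sigma$-fixed by hypothesis, the equation $\xi^\sigma\mfpoint = \mfpoint$ is equivalent to $\xi\mfpoint = \sigma\mfpoint$ (both sides lying in the fibre over $C$), and by injectivity this is in turn equivalent to $\mtosmap_j(\xi\mfpoint) = \mtosmap_j(\sigma\mfpoint)$ holding for every $j = 1,\dots,n$.

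I would then unpack both sides. Complex conjugation acts componentwise on $\mGr$, so $\mtosmap_j(\xi\mfpoint) = \overline{\mtosmap_j(\mfpoint)}$. The action of $\sigma$ on $\mGr$ permutes the $A_3$-factors compatibly with its action on $A_1$, and unwinding this (using that $\sigma$ is an involution fixing $0$ and $\infty$) yields $\mtosmap_j(\sigma\mfpoint) = \mtosmap_{\sigma(j)}(\mfpoint)$. The condition becomes
\[
   \overline{\mtosmap_j(\mfpoint)} \;=\; \mtosmap_{\sigma(j)}(\mfpoint)
\quad \text{for all } j.
\]
Substituting $\mtosmap_j(\mfpoint) = \phi_{0,a_j^{(b_j)},\infty}(\mfpoint^{(b_j)})$ from Theorem~\ref{thm:degeneratefibreiso}, and applying the $\xi^\sigma$-compatibility identities $b_{\sigma(j)} = b_j$ and $\overline{a_j^{(b_j)}} = a_{\sigma(j)}^{(b_j)}$, both sides become the image of $\overline{\mfpoint^{(b_j)}}$, respectively $\mfpoint^{(b_j)}$, under the same M\"obius isomorphism $\phi_{0,a_{\sigma(j)}^{(b_j)},\infty}$. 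The equality therefore collapses to $\overline{\mfpoint^{(b_j)}} = \mfpoint^{(b_j)}$, the reality of the corresponding $C$-coordinate.

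To close, I would verify that this reality condition reaches every component $C_i$. For $i \neq b_0$, stability forces $\mu_i \geq 1$, so some $j$ satisfies $b_j = i$; when $\mu_{b_0} \geq 1$, the case $i = b_0$ is handled the same way. In the degenerate case $\mu_{b_0} = 0$, the factor $\bigfibrefactor{b_0}$ is a zero-dimensional Richardson variety defined over $\QQ$, so $\mfpoint^{(b_0)}$ is automatically a real point. Since every step above is an equivalence, this simultaneously yields the converse. I do not expect a serious obstacle: the main work is bookkeeping --- distinguishing carefully between how $\sigma$ reindexes the $A_3$-factors of $\mGr$ and how it permutes the marked points of $C$, and verifying that complex conjugation commutes with the M\"obius transformations $\phi_{p,q,r}$ in the expected way.
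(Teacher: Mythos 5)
Your proposal is correct and follows essentially the same route as the paper's own proof: reduce $\xi^\sigma\mfpoint=\mfpoint$ to the componentwise conditions $\overline{\mtosmap_j(\mfpoint)}=\mtosmap_{\sigma(j)}(\mfpoint)$, then use \eqref{eqn:degeneratefibreiso} together with $\xi^\sigma$-compatibility to identify each such condition with the reality of $\mfpoint^{(b_j)}$, handling the marked-point-free component (a single rational point) separately. No gaps.
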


\begin{proof}
First suppose $\xi^\sigma \mfpoint = \mfpoint$.  
This means that 
\begin{equation}
\label{eqn:realchains}
\mtosmap_j(\mfpoint) = \xi \mtosmap_{\sigma(j)}(\sigma \mfpoint)
\,.
\end{equation}
for $j=1, \dots, n$.
Let $b \in \{1, \dots, k\}$.
If the component $C_b$ in the chain does not contain any of the 
marked points $a_1, \ldots, a_n$, then
$\bigfibrefactor{b}$ 
consists of a single real point, so $\mfpoint^{(b)}$ is real.  
Otherwise, let $a_j$ be a marked point on $C_b$,
and let $a = a_j^{(b)}$ be its coordinate.  
Then $a_{\sigma(j)} \in C_b$ and
since the coordinates are $\xi^\sigma$-compatible,
$a_{\sigma(j)}^{(b)} = \overline{a}$.
By \eqref{eqn:degeneratefibreiso}, we have
\[
\mtosmap_j(\mfpoint)
=
\phi_{0,a,\infty}(\mfpoint^{(b)})
\]
and
\[
\xi \mtosmap_{\sigma(j)}(\sigma \mfpoint) 
= \xi \phi_{0,\overline{a},\infty}(\sigma \mfpoint^{(b)})
= \xi \phi_{0,\overline{a},\infty}(\mfpoint^{(b)})
= \phi_{0,a,\infty}(\xi \mfpoint^{(b)})
\,,
\]
which implies that $\mfpoint^{(b)}$ is real.  

Conversely, if $\mfpoint^{(b)}$
is real, the calculation above shows that \eqref{eqn:realchains} holds
for all $j$ such that $a_j$ is on $C_b$.
If $\mfpoint^{(b)}$ is real all $b$,
then \eqref{eqn:realchains} holds for all $j$, so 
$\xi^\sigma\mfpoint = \mfpoint$.
\end{proof}

\begin{remark}
There is a well-known CW-complex description of $\modsc{n}(\RR)$ in terms of associahedra \cite{Dev}. It would be interesting to see an analogous description of $\modsc{n}(\RR^\sigma)$ and of the attachments between the twisted structures for each $\sigma$.
\end{remark}

\subsection{Special fibres}
\label{sec:specialfibres-proof}

Let $\mu = (\mu_1, \dots, \mu_k)$, be a composition with 
$\mu_i \in \{1,2\}$.  Recall that 
$\overline\mu_b = n+1 - \sum_{i=1}^b \mu_i$.

Working over $\laurentC$, we
define $C_\mu(u) \in \openmbase(\laurentC)$ to be the curve
whose marked points $a_1, \dots, a_n$ are specified
as follows:
\begin{equation}
\label{eqn:specialcurve}
 a_j = \begin{cases}
   u^j & \quad
    \text{if $j = \overline\mu_i$ for some $i$, $\mu_i = 1$} \\
   \tfrac{\imag}{2}(u^j+u^{j+1}) & \quad
    \text{if $j = \overline\mu_i$ for some $i$, $\mu_i = 2$} \\
   - \tfrac{\imag}{2}(u^{j-1}+u^{j}) & \quad
    \text{if $j-1 = \overline\mu_i$ for some $i$, $\mu_i = 2$} \\
  \end{cases}
\end{equation}
Note that $\pol(C_\mu(u),z) = H_\mu(u,z)$.

\begin{proposition}
\label{prop:limitcurve}
The limit curve $\lim_{u \to 0}C_\mu(u)$ is the $\projspace^1$-chain
$C_\mu(0)$, with $k+1$ components, $b_0 = k+1$, specified by the following 
coordinate data:
\begin{packeditemize}
\item if $j=\overline\mu_i$, $\mu_i = 1$,
 then $b_j = i$ and $a_j^{(b_j)} = 1$;
\item if $j=\overline\mu_i$, $\mu_i = 2$,
 then $b_j =b_{j+1} = i$ and 
$(a_j^{(b_j)},
a_{j+1}^{(b_{j+1})}) =
(\tfrac{\imag}{2},
-\tfrac{\imag}{2})$.
\end{packeditemize}
\end{proposition}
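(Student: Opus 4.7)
The plan is to identify $\lim_{u \to 0} C_\mu(u)$ with the central fibre of an explicit semistable degeneration, then conclude by the separatedness of $\mbase$.

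Identify $C_\mu(u) \cong \projspace^1$ with coordinate $w$, so that $0, 1, \infty$ lie at $w = 0, 1, \infty$ and $a_j$ at $w = a_j(u)$ per \eqref{eqn:specialcurve}. For each $i = 1, \dots, k+1$, setting $\overline\mu_{k+1} := 0$, let $\tilde w^{(i)} := w/u^{\overline\mu_i}$; each is an alternative $\projspace^1$-coordinate on $C_\mu(u)$ over $\Spec\laurentC$. A direct substitution into \eqref{eqn:specialcurve} shows that, for $1 \leq i \leq k$,
\[
\lim_{u\to 0}\tilde w^{(i)}(a_j) = \begin{cases}
0 & j > \overline\mu_i,\\
\infty & j < \overline\mu_i,\\
1 & j = \overline\mu_i,\ \mu_i = 1,\\
\tfrac{\imag}{2} & j = \overline\mu_i,\ \mu_i = 2,\\
-\tfrac{\imag}{2} & j = \overline\mu_i+1,\ \mu_i = 2,
\end{cases}
\]
while on chart $k{+}1$ (i.e.\ the original $w$-chart) the sections $0, a_1, \dots, a_n$ all tend to $w = 0$ and $1, \infty$ stay at $1, \infty$. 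The signs and half-integer coefficients in \eqref{eqn:specialcurve} are precisely tuned so that each nonzero limit is exactly $1$ or $\pm\imag/2$.

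Next, construct a family $\widetilde{C} \to \Spec\fpsC$ by iteratively blowing up $\projspace^1 \times \Spec\fpsC$ at the collision points of the marked sections in the central fibre, introducing one new exceptional $\projspace^1$-component $C_i$ at each scale $u^{\overline\mu_i}$ ($1 \leq i \leq k$). The natural affine coordinate on $C_i$ in this blowup sequence is exactly $\tilde w^{(i)}$, and by the limit computation the marked sections separate onto the correct components: $s_{a_j}$ lands at a smooth point of $C_{b_j}$ with the coordinate claimed in the proposition, $s_0$ lands at $\tilde w^{(1)} = 0 \in C_1$, and $s_1, s_\infty$ land at $1, \infty$ on the strict transform $C_{k+1}$ of the original $\projspace^1$, so that $b_0 = k+1$. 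The nodes are arranged in a chain with $o_i$ identifying $\infty^{(i)} \in C_i$ with $0^{(i+1)} \in C_{i+1}$, matching the conventions of Section~\ref{sec:p1-chains}.

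Stability of the central fibre is immediate: each intermediate $C_i$ ($1 < i < k+1$) carries two nodes and $\mu_i \geq 1$ marked $a_j$'s; $C_1$ carries the marker $0$, the node $o_1$, and at least one $a_j$; $C_{k+1}$ carries $1, \infty$, and $o_k$. Separatedness of $\mbase$ therefore forces $\lim_{u\to 0} C_\mu(u)$ to coincide with this central fibre, which is precisely the $\projspace^1$-chain described in the proposition. The main technical point is to verify that the iterated blowup construction genuinely produces a flat family with pairwise-disjoint marked sections over $\Spec\fpsC$, i.e.\ that each step resolves exactly the collisions at its scale without introducing new ones; all numerical content reduces to the direct substitution displayed above.
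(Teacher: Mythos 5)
Your proof is correct, but it takes a genuinely different route from the paper's. The paper computes the limit via the cross-ratio embedding $\mbase \hookrightarrow (\projspace^1)^{A_4}$ of \eqref{eqn:projectiveembedding}: one checks that $\lim_{u\to 0}\theta_{p,q,r,s}(C_\mu(u)) = \theta_{p,q,r,s}(C_\mu(0))$ for every quadruple, which reduces the whole proposition to elementary limits of rational functions of $u$, with the key observation (shared by your argument) that marked points from distinct blocks $\mu_i$ have distinct leading orders in $u$. You instead build an explicit semistable model over $\Spec\fpsC$ by iterated blowups and invoke separatedness of $\mbase$ to identify its central fibre with the limit. The paper's route is shorter and verifies the answer directly against the claimed nodal curve without constructing any family; yours is more constructive and has the virtue of making the rescalings $w \mapsto w/u^{\overline\mu_b}$ explicit --- these are exactly the matrices $\left(\begin{smallmatrix}1 & 0\\ 0 & u^{\overline\mu_b}\end{smallmatrix}\right)$ appearing in Lemma~\ref{lem:specialfibres}(b), so your proof also explains \emph{why} those rescalings recover the components of the limit curve. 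The cost is the technical verification you flag at the end (that the iterated blowups yield a flat stable family with disjoint sections), which is standard but is genuine extra work that the cross-ratio argument avoids entirely. Two small points: your displayed case list for $\lim_{u\to 0}\tilde w^{(i)}(a_j)$ has overlapping cases (when $\mu_i = 2$ and $j = \overline\mu_i + 1$ one has $j > \overline\mu_i$ but the limit is $-\imag/2$, not $0$), so the first two cases should be read as applying only when $j$ lies outside block $i$; and since only the coordinate on $C_{b_0}=C_{k+1}$ is canonically pinned down, the assertions $a_j^{(b_j)} = 1$, $\pm\imag/2$ are claims about the existence of a compatible coordinate choice on the other components --- your choice $\tilde w^{(i)}$ realizes it, which is all that is needed.
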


\begin{proof}
Using the embedding~\eqref{eqn:projectiveembedding},
it suffices to show that for all $(p,q,r,s)$,
we have 
$\lim_{u \to 0} \theta_{p,q,r,s}(C_\mu(u))
= \theta_{p,q,r,s}(C_\mu(0))$.
There are several cases, but this is straightforward. Note that $a_j$'s from distinct $\mu_i$'s have distinct leading orders as $u \to 0$, which greatly simplifies the calculation.
\end{proof}

\begin{example}
For $\mu=(2,1,2,2,1)$, the curve $C_\mu(u)$ has marked points $0,1,\infty$
and
\[
\begin{aligned}
a_7 &= \tfrac{\imag}{2}(u^7+u^8)\,,  \\
a_8 &= -\tfrac{\imag}{2}(u^7+u^8) \,, 
\end{aligned} 
\quad~
a_6 = u^6\,, 
\quad~
\begin{aligned}
a_4 &= \tfrac{\imag}{2}(u^4+u^5)\,,  \\
a_5 &= -\tfrac{\imag}{2}(u^4+u^5) \,, 
\end{aligned} 
\quad~
\begin{aligned}
a_2 &= \tfrac{\imag}{2}(u^2+u^3)\,,  \\
a_3 &= -\tfrac{\imag}{2}(u^2+u^3) \,, 
\end{aligned}
\quad~
a_1 = u^1\,.
\]
The polynomial $\pol(C_\mu(u),z) = H_\mu(u,z)$ is given in
Example~\ref{ex:Hpolynomial}.
The limit curve $C_\mu(0)$ is shown in Figure~\ref{fig:limitcurve}.
\begin{figure}
\centering
\begin{tikzpicture}[x=2.5em,y=2.5em]
   \draw[violet] (1,0) circle(1);
   \draw[violet,dashed] (1,0) ellipse (1 and 0.3);
   \draw[violet] (3,0) circle(1);
   \draw[violet,dashed] (3,0) ellipse (1 and 0.3);
   \draw[violet] (5,0) circle(1);
   \draw[violet,dashed] (5,0) ellipse (1 and 0.3);
   \draw[violet] (7,0) circle(1);
   \draw[violet,dashed] (7,0) ellipse (1 and 0.3);
   \draw[violet] (9,0) circle(1);
   \draw[violet,dashed] (9,0) ellipse (1 and 0.3);
   \draw[violet] (11,0) circle(1);
   \draw[violet,dashed] (11,0) ellipse (1 and 0.3);
   \draw[fill] (0,0) circle (0.07) node[left]{$0$};
   \draw[fill] (12,0) circle (0.07) node[right]{$\infty$};
   \draw[fill] (11,-.3) circle (0.07) node[below]{$1$};
   \draw[fill] (9,-.3) circle (0.07) node[below]{$a_1$};
   \draw[fill] (6.4,.8) circle (0.07) node[above]{$a_2$};
   \draw[fill] (6.4,-.8) circle (0.07) node[below]{$a_3$};
   \draw[fill] (4.4,.8) circle (0.07) node[above]{$a_4$};
   \draw[fill] (4.4,-.8) circle (0.07) node[below]{$a_5$};
   \draw[fill] (3,-.3) circle (0.07) node[below]{$a_6$};
   \draw[fill] (.4,.8) circle (0.07) node[above]{$a_7$};
   \draw[fill] (.4,-.8) circle (0.07) node[below]{$a_8$};
\end{tikzpicture}
\caption{The nodal curve $C_\mu(0)$ for $\mu = (2,1,2,2,1)$.}
\label{fig:limitcurve}
\end{figure}
\end{example}

Let $\sigma \in \symgroup_n$ be the involution 
\begin{equation}
\label{eqn:involution}
\sigma(j) = 
\begin{cases}
j &\quad\text{if $j= \overline{\mu}_i$, $\mu_i=1$} \\
j+1 &\quad\text{if $j= \overline{\mu}_i$, $\mu_i=2$} \\
j-1 &\quad\text{if $j-1= \overline{\mu}_i$, $\mu_i=2$.} \\
\end{cases}
\end{equation}
This is the unique involution such that the curve 
$C_\mu(u)$ is $\xi^\sigma$-fixed.  Moreover,
the $C_\mu(0)$-coordinates in Proposition~\ref{prop:limitcurve}
are $\xi^\sigma$-compatible.

We can now prove Lemma~\ref{lem:specialfibres}.  First, we
establish the analogous result for the limit fibre $\mfibre(C_\mu(0))$.

\begin{lemma}
\label{lem:degeneratespecialfibres}
For $T \in \Tab(\lambda;\mu)$, 
$\mfibre_T(C_\mu(0))$ consists of $2^{\#_\twoskew(T)}$ reduced points. 
If $\#_\twoskew(T) = 0$, the unique point in $\mfibre_T(C_\mu(0))$ is
$\xi^\sigma$-fixed.  If $\#_\twoskew(T) > 0$, then none of the
points in $\mfibre_T(C_\mu(0))$ are $\xi^\sigma$-fixed.
\end{lemma}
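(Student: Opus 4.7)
The plan is to apply Theorem~\ref{thm:finitefibreiso} to decompose $\mfibre_T(C_\mu(0))$ as a product of factors indexed by the components of the chain, and then analyze each factor using the explicit fibre calculations of Lemmas~\ref{lem:calculation1} and~\ref{lem:calculation2}. Since $b_0 = k+1$ and no $a_j$ lies on $C_{k+1}$, we have $\shape(T|_{\leq k}) = \shape(T|_{\leq k+1}) = \lambda$ and $\pol^{(k+1)}$ is trivial, so the top factor $\mfibrefactor{k+1}$ reduces to a single real point; attention thus reduces to $\mfibrefactor{b}$ for $b = 1, \dots, k$.

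For each such $b$, Theorem~\ref{thm:finitefibreiso} gives
\[
\mfibrefactor{b}(C_\mu(0)) = X^{\shape(T|_{\leq b})}_{\shape(T|_{\leq b-1})}(0) \cap \Wr^{-1}(\pol^{(b)}(C_\mu(0))),
\]
and by Proposition~\ref{prop:limitcurve}, the skew shape $\shape(T|_b)$ encodes the configuration of marked points on $C_b$. When $\mu_b = 1$, we have $\pol^{(b)}(z) = z+1$, and Lemma~\ref{lem:calculation1} (with $a=1$) produces exactly one reduced real point. When $\mu_b = 2$, we have $\pol^{(b)}(z) = z^2 + \tfrac{1}{4}$, and Lemma~\ref{lem:calculation2} applies with $a_1 = \tfrac{\imag}{2}$, $a_2 = -\tfrac{\imag}{2}$ (so $a_1+a_2 = 0$ and $a_1 a_2 = \tfrac{1}{4}$): if the two boxes of $\shape(T|_b)$ are adjacent ($L=1$, the domino cases), part~(i) gives a unique reduced real point; if they are non-adjacent ($L>1$, the $\twoskew$ case), the discriminant in part~(ii) equals $-(1 - L^{-2}) < 0$, yielding two reduced complex-conjugate non-real points. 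Multiplying factor counts yields $\#\mfibre_T(C_\mu(0)) = 2^{\#_\twoskew(T)}$ reduced points, which gives the first claim.

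For the $\xi^\sigma$-structure, observe that the $C_\mu(0)$-coordinates prescribed in Proposition~\ref{prop:limitcurve} are $\xi^\sigma$-compatible: when $\mu_b = 1$, $a_j^{(b)} = 1$ is real and $\sigma(j) = j$; when $\mu_b = 2$, $\sigma$ swaps $j$ and $j+1$ while $\overline{a_j^{(b)}} = -\tfrac{\imag}{2} = a_{j+1}^{(b)}$. Proposition~\ref{prop:realchains} therefore applies: a point $\mfpoint$ is $\xi^\sigma$-fixed iff every coordinate $\mfpoint^{(b)}$ is real. If $\#_\twoskew(T) = 0$, every factor contributes its unique real point, so the single product point is $\xi^\sigma$-fixed; if $\#_\twoskew(T) > 0$, at least one factor is entirely non-real, so no product point can be $\xi^\sigma$-fixed. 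The only mild obstacle is matching conventions between the abstract factor description of Theorem~\ref{thm:finitefibreiso} and the concrete hypotheses of Lemmas~\ref{lem:calculation1} and~\ref{lem:calculation2}; otherwise the argument is a direct combination of Section~\ref{sec:preliminaries}'s calculations with the chain decomposition of Section~\ref{sec:mfamily}.
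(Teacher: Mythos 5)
Your proposal is correct and follows essentially the same route as the paper: decompose $\mfibre_T(C_\mu(0))$ via Theorem~\ref{thm:finitefibreiso}, apply Lemmas~\ref{lem:calculation1} and~\ref{lem:calculation2} factor by factor (with the discriminant $-(1-L^{-2})<0$ in the non-adjacent case), and conclude the reality statement from the $\xi^\sigma$-compatibility of the $C_\mu(0)$-coordinates together with Proposition~\ref{prop:realchains}. The only nitpick is that $\pol^{(b)}$ should carry the factor $z^{\mu_1+\dots+\mu_{b-1}}$ coming from the marked points on earlier components (this is exactly what puts you in the setting $g(z)=z^{n-1}(z+a)$ or $z^{n-2}(z+a_1)(z+a_2)$ of those lemmas), but your application of them is otherwise as intended.
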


\begin{proof}
By Theorem \ref{thm:finitefibreiso}, this is identified with
$\prod_{b =1}^n \mfibrefactor{b}(C_\mu(0))$, where
\[
    \mfibrefactor{b}(C_\mu(0)) = 
   X^{\shape(T|_{\leq b})}_{\shape(T|_{\leq b-1})}(0)
    \cap \Wr^{-1}(\pol^{(b)}(C))\,.
\]
Now,
\[
   \pol^{(b)}(C) =
\begin{cases}
  z^{\mu_1 + \dots +\mu_{b-1}} (z +1) 
   &\quad\text{if $\mu_b = 1$} \\
  z^{\mu_1 + \dots +\mu_{b-1}} (z - \imag)(z+\imag)
   &\quad\text{if $\mu_b = 2$}. \\
\end{cases}
\]
By Lemmas~\ref{lem:calculation1} and~\ref{lem:calculation2}, there is
a unique point in $\mfibrefactor{b}(C_\mu(0))$ when $\shape(T|_b)$ is either 
a single box, or two adjacent boxes.
If $\shape(T|_b)$ has two boxes that
are non-adjacent, of distance $L > 1$ apart, then the discriminant in
Lemma~\ref{lem:calculation2} is
$- (1-L^{-2}) < 0$, so both solutions are non-real.  Thus we see that
$\prod_{b=1}^k \mfibrefactor{b}(C_\mu(0))$ consists of $2^{\#_\twoskew(T)}$
points, which are all non-real, unless $\#_\twoskew(T) = 0$ (in which
case $\prod_{b=1}^k \mfibrefactor{b}(C_\mu(0))$ consists of a single real
point).  

By Proposition~\ref{prop:realchains}, it follows that
$\mfibre_T(C_\mu(0))$ consists of $2^{\#_\twoskew(T)}$ points; 
none are $\xi^\sigma$-fixed, unless $\#_\twoskew(T) = 0$.
\end{proof}

Note that by Lemma~\ref{lem:degeneratespecialfibres}, 
$\mfibre(C_\mu(0)) = \bigcup_{T \in \Tab(\lambda;\mu)} \mfibre_T(C_\mu(0))$ 
consists of $\numsyt\lambda$ distinct (reduced) points.  
Abusing notation slightly, we define
\[
   \mfibre_T(C_\mu(u)) :=
\{ \mfpoint(u) \in \mfibre(C_\mu(u)) \mid
 \lim_{u \to 0} \mfpoint(u) \in \mfibre_T(C_\mu(0))
\}
\,.
\]

\begin{proof}[Proof of Lemma~\ref{lem:specialfibres}]
The curves $C_\mu(u)$ are actually defined over $\CC(u)$,
and hence the points of $\mfibre(C_\mu(u))$ are defined over some algebraic
extension of $\CC(u)$.  This, together with the fact the limit points
over $C_\mu(0)$ are distinct implies that the $\numsyt\lambda$
points of $\mfibre(C_\mu(u))$ are distinct and their coordinates are defined
by power series with a positive radius of convergence.

If $\mfpoint(u) \in \mfamily(\laurentC)$ is $\xi^\sigma$-fixed,
then $\lim_{u \to 0}\mfpoint(u)$ is $\xi^\sigma$-fixed.  Conversely,
if $\mfpoint(u)$ is not $\xi^\sigma$-fixed, then either 
$\lim_{u \to 0}\mfpoint(u)$ is also not $\xi^\sigma$-fixed, or
$\lim_{u \to 0}\mfpoint(u) = \lim_{u \to 0} \xi^\sigma \mfpoint(u)$.
The latter does not occur for points in $\mfibre(C_\mu(u))$, since the points 
of $\mfibre(C_\mu(0))$ are distinct.
Therefore $\mfpoint(u) \in \mfibre(C_\mu(u))$ is $\xi^\sigma$-fixed if and only
if $\lim_{u \to 0} \in \mfibre(C)$ is $\xi^\sigma$-fixed.

Now, define 
\[
  W_T := 
   \{\mtosmap(\mfpoint(u)) \mid 
     \mfpoint(u) \in \mfibre_T(C_\mu(u))\}
\,.
\]
Since $\mtosmap : \mfibre(C) \to \Wr^{-1}(H_\mu)$ is an isomorphism, 
$W_T$ consists of $2^{\#_\twoskew(T)}$ points, defined by power
series with a positive radius of convergence.
To see that the \emph{normalized} \Plucker coordinates are power series, 
note that $\lim_{u \to 0} \boldx(u) \in \scell$ for 
$\boldx(u) \in \Wr^{-1}(H_\mu)$,
which would be false if some normalized \Plucker coordinate involved
a negative power of $u$.
Moreover, this isomorphism establishes property (a).
To see that property (b) holds, let
$\boldx(u) \in W_T$, and write
$\boldx(u) = \mtosmap(\mfpoint(u))$, $\mfpoint(u) \in \mfibre_T(C_\mu(u))$.  Then
\[
\lim_{u \to 0} 
\left(\begin{smallmatrix} 1 & 0 \\[.4ex] 0 & u^{\overline\mu_b} \end{smallmatrix}\right)
\boldx(u)
= \mfpoint(0)^{(b)} 
\,,
\]
which is in $\scellb{\shape(T|_{\leq b})}$
by Theorem~\ref{thm:finitefibreiso}.  
By Proposition~\ref{prop:realstructures},
none of the points of $W_T$ are real except when 
$\#_\twoskew(T) = 0$, so property (c) holds.
\end{proof}

\subsection{Paths in $\mbase$}
\label{sec:mbasepaths}

Fix a composition $\mu = (\mu_1, \dots, \mu_k)$, with $\mu_i \in \{1,2\}$,
and an index $b$ such that $\mu_b = 2$.  Let 
$\mu' = (\mu_1, \dots, \mu_{b-1}, 1,1, \mu_{b+1}, \dots, \mu_k)$.
We now define curves
$G_t(u) \in \mbase$, for each $t \in [0,1]$, $u \in [0, \varepsilon]$,
where $\varepsilon$ is a (sufficiently small) positive real number.
These curves have the property that $G_0(u) = C_{\mu'}(u)$
and $G_1(t) = C_{\mu}(u)$, so for fixed $u$, $t \mapsto G_t(u)$ 
is a path from $C_{\mu'}(u)$ to $C_\mu(u)$ in $\mbase$.

First, suppose $u> 0$.  In this case, we can specify $G_t(u)$ by 
specifying the marked points $a_1, \dots, a_n$.
Let $c := \overline\mu_b$.
For $j \notin \{c,c+1\}$, $a_j$ is independent of $t$, and is
defined by \eqref{eqn:specialcurve}.
The marked points $a_c$ and $a_{c+1}$ depend on $t$, and are defined
as follows.
\begin{align*}
a_c &= \begin{cases}
    (1-t)u^{c} + tu^{c+1} & \quad
    \text{if $0 \leq t \leq \half$} \\
    e^{\pi \imag (t-\half)} (\half u^{c} + \half u^{c+1}) & \quad
    \text{if $\half \leq t \leq 1$}
 \end{cases}
\\[1ex]
a_{c+1} &= \begin{cases}
    tu^{c} + (1-t)u^{c+1} & \quad
    \text{if $0 \leq t \leq \half$} \\
    e^{-\pi \imag (t-\half)} (\half u^{c} + \half u^{c+1}) & \quad
    \text{if $\half \leq t \leq 1$}.
\end{cases}
\end{align*}
For $t \neq \frac{1}{2}$, $a_1, \dots, a_n$ are distinct and distinct from
$\{0,1,\infty\}$ so this uniquely specifies a curve in $\openmbase$.
For $t = \frac{1}{2}$, we have a double marked point, $a_c= a_{c-1}$ 
but all other marked points are distinct, so this is still a 
$\projspace^1$-chain (though not in $\openmbase$).

Note that the marked points $a_c$ and $a_{c+1}$ begin 
at $u^{c}$ and $u^{c+1}$ respectively (when $t=0$).  They come 
together along the real axis and collide 
(when $t = \half$) to produce
a double marked point at $\half(u^c+u^{c+1})$.  Then
they move apart as a conjugate pair along a circle in the complex plane, to end up
at $\pm\tfrac{\imag}{2}(u^c+u^{c+1})$ (when $t=1$).  See 
Figure~\ref{fig:mbasepath}.
\begin{figure}
\centering
\begin{tikzpicture}[x=2.5em,y=2.5em]
   \draw[violet] (0,0) circle(4);
   \draw[violet,dashed] (0,0) ellipse (4 and .5);
   \draw[fill] (-4,0) circle (0.07) node[left]{$0$};
   \draw[fill] (4,0) circle (0.07) node[right]{$\infty$};
   \draw[fill] (0,-.5) circle (0.07) node[below]{$1$};
   \draw[fill] (-.65,-.493) circle (0.04);
   \draw[fill] (-.5,-.496) circle (0.04);
   \draw[fill] (-.35,-.498) circle (0.04);
   \draw[fill] (-3.65,-.205) circle (0.04);
   \draw[fill] (-3.5,-.242) circle (0.04);
   \draw[fill] (-3.35,-.273) circle (0.04);
   \draw[very thick, cyan, -latex] (-105:4 and .5) arc (-105:-120:4 and .5);
   \draw[very thick, cyan, -latex] (-139:4 and .5) arc (-139:-120:4 and .5);
   \draw[very thick, cyan, -latex] (-4,0)++(-7.3:2.02 and 3) arc (-7.3:61:2.02 and 3) 
         node[left,black]{$\tfrac{\imag}{2}(u^c+u^{c+1})$};
   \draw[very thick, cyan, -latex] (-4,0)++(-9.3:2.02 and 3) arc (-9.3:-61:2.02 and 3)
         node[left,black]{$-\tfrac{\imag}{2}(u^c+u^{c+1})$};
   \draw[fill] (-3,-.331) circle (0.07) node[below]{$u^{c+1}$};
   \draw[fill] (-1,-.484) circle (0.07) node[below]{$u^c$};
\end{tikzpicture}
\caption{The path $G_t(u)$ in $\mbase$, for $u > 0$.}
\label{fig:mbasepath}
\end{figure}

For $u=0$ we define the curve $G_t(0) := \lim_{u \to 0} G_t(u)$, for
every $t \in [0,1]$.  For $t=0,1$ we have $G_0(0) = C_{\mu'}(0)$
and $G_1(0) = C_\mu(0)$, as described in Proposition~\ref{prop:limitcurve}.
These curves have $k+2$ and $k+1$ components respectively.
For $t \in (0,1)$, $G_t(0)$ is 
a $\projspace^1$-chain with $k+1$ components, $b_0 = k+1$,
specified by the following coordinate data:
\begin{packeditemize}
\item for $j \notin \{c,c+1\}$,
$b_j$ and $a_j^{(b_j)}$ are the same as for $C_\mu(0)$;
\item $b_c = b_{c+1} = b$, and
\begin{align*}
a_c^{(b)} &= \begin{cases}
    1-t & \quad
    \text{if $0 < t \leq \half$} \\
    \half e^{\pi \imag (t-\half)}  & \quad
    \text{if $\half \leq t \leq 1$}
 \end{cases}
\\[1ex]
a_{c+1}^{(b)} &= \begin{cases}
    t & \quad
    \text{if $0 < t \leq \half$} \\
    \half e^{-\pi \imag (t-\half)}  & \quad
    \text{if $\half \leq t \leq 1$}.
\end{cases}
\end{align*}
\end{packeditemize}
See Figure~\ref{fig:limitmbasepath}.
Note that as $t \to 0$, $a_{c+1}$ approaches a node; hence at $t=0$, 
a new component forms such that $a_{c}$ and $a_{c+1}$ are on different 
components.  
\begin{figure}
\centering
\begin{tikzpicture}[x=2.5em,y=2.5em]
   \draw[violet] (1,0) circle(1);
   \draw[violet,dashed] (1,0) ellipse (1 and 0.2);
   \draw[violet] (12,0) circle(1);
   \draw[violet,dashed] (12,0) ellipse (1 and 0.2);
   \draw[fill] (0,0) circle (0.07) node[left]{$0$};
   \draw[fill] (12,-.2) circle (0.07) node[below]{$1$};
   \draw[fill] (13,0) circle (0.07) node[right]{$\infty$};
   \draw[violet] (4,0) circle(1);
   \draw[violet,dashed] (4,0) ellipse (1 and 0.2);
   \draw[violet] (6.5,0) circle(1.5);
   \draw[violet,dashed] (6.5,0) ellipse (1.5 and 0.3);
   \draw[violet] (9,0) circle(1);
   \draw[violet,dashed] (9,0) ellipse (1 and 0.2);
   \draw[very thick, cyan, -latex] (6.5,0)++(-180:1.5 and .3) arc (-180:-120:1.5 and .3);
   \draw[very thick, cyan, -latex] (6.5,0)++(-90:1.5 and .3) arc (-90:-120:1.5 and .3);
   \draw[very thick, cyan, -latex] (5,0)++(-8:.77 and 1.5) arc (-8:50:.77 and 1.5);
   \draw[very thick, cyan, -latex] (5,0)++(-12:.77 and 1.5) arc (-11:-50:.77 and 1.5);
   \draw[fill] (5,0) circle (0.07) node[left]{$a_{c+1}$};
   \draw[fill] (6.5,-.3) circle (0.07) node[below]{$a_c$};
   \draw (2.5,0) node {$\cdots$};
   \draw (10.5,0) node{$\cdots$};
\end{tikzpicture}
\caption{The limiting path $G_t(0)$ in $\mbase$.}
\label{fig:limitmbasepath}
\end{figure}

\begin{proposition}
The map 
\begin{gather*}
G: [0,1] \times [0, \varepsilon] \to \mbase\\
(t,u) \mapsto G_t(u)
\end{gather*}
is continuous.
\end{proposition}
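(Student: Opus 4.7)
The plan is to reduce continuity of $G$ to a question about $\projspace^1$-valued coordinates, via the embedding $\mbase \hookrightarrow (\projspace^1)^{A_4}$ from \eqref{eqn:projectiveembedding}. It suffices to check, for each quadruple $(p,q,r,s) \in A_4$, that the composition $(t,u) \mapsto \theta_{p,q,r,s}(G_t(u))$ is continuous on $[0,1] \times [0,\varepsilon]$.

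On the open region $u > 0$ this will be immediate: the marked points $a_1, \dots, a_n$ of $G_t(u)$ are given by explicit formulas that are jointly continuous in $(t,u)$, and $\theta_{p,q,r,s}$ is continuous on all of $\mbase$ (including at $G_{\half}(u)$, which is a stable curve with a double marked point). So the restriction of $G$ to $[0,1] \times (0,\varepsilon]$ is automatically continuous.

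The substantive step will be continuity at each boundary point $(t_0, 0)$. Here I would fix $t_0$ and a quadruple $(p,q,r,s)$ and compute $\lim_{(t,u) \to (t_0,0),\, u > 0} \theta_{p,q,r,s}(G_t(u))$ from the leading-order asymptotics of the $a_j(t,u)$. For $j \notin \{c, c+1\}$, $a_j(t,u)$ agrees with the corresponding marked point of $C_\mu(u)$, whose leading order in $u$ is prescribed by \eqref{eqn:specialcurve}; Proposition~\ref{prop:limitcurve} already records that those asymptotics produce the nodal limit $C_\mu(0)$. For $j \in \{c, c+1\}$, the leading order of $a_j(t,u)$ depends on $t$ via our formulas for $G_t(u)$, and a direct cross-ratio computation matches these asymptotics with the data of $G_{t_0}(0)$ on the components $C_b$ and (when $t_0 = 0$) the new component of the limit chain. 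Running through the finitely many cases, indexed by how $\{p,q,r,s\}$ intersects $\{a_c, a_{c+1}\}$ and on which components of $G_{t_0}(0)$ the remaining marked points sit, will complete the verification.

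The main obstacle will be joint continuity at the corner $(t_0, u) = (0, 0)$, where the topological type of the limit jumps: $G_0(0) = C_{\mu'}(0)$ has one more component than $G_t(0)$ for any $t > 0$, because as $t \to 0$ the marked point $a_{c+1}$ bubbles off onto a new component inserted between $C_{b-1}$ and $C_b$. What makes this work uniformly is the estimate
\[
\frac{a_{c+1}(t,u)}{a_c(t,u)} = \frac{tu^c + (1-t)u^{c+1}}{(1-t)u^c + tu^{c+1}} = \frac{t + (1-t)u}{(1-t) + tu} \longrightarrow 0
\]
as $(t,u) \to (0,0)$ with $t, u > 0$, regardless of the relative rates: this forces the bubble containing $a_{c+1}$ to appear in every sequential limit, with the same topological placement. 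By contrast, the corner $(1, 0)$ will be benign, since the topological type of $G_t(0)$ does not change for $t \in (0, 1]$; the rotational formulas for $a_c, a_{c+1}$ on $t \in [\half,1]$ give the $C_\mu(0)$ values by direct substitution. Once both corners are handled, joint continuity on all of $[0, 1] \times [0, \varepsilon]$ follows by combining continuity at each boundary point with continuity on the open region.
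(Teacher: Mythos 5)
Your proposal is correct and follows essentially the same route as the paper: both reduce continuity to the coordinates $\theta_{p,q,r,s}$ via the embedding \eqref{eqn:projectiveembedding} and then verify the finitely many cross-ratio limits, with the only delicate point being the corner $(t,u)=(0,0)$ where the extra component forms. The ratio $a_{c+1}/a_c$ that you analyze there is exactly the example the paper singles out, and your joint-limit computation correctly fills in the detail the paper leaves as ``straightforward.''
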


\begin{proof}
Using the embedding
\eqref{eqn:projectiveembedding},
we must show that for
each $(p,q,r,s)$, the map $(t,u) \mapsto \theta_{p,q,r,s}(G_t(u))$
is continuous.  Since $G_t(u)$ is a $\projspace^1$-chain for all
$(t,u)$, it suffices to show this for 
$(p,q,r,s) = (0,a_{j_1}, \infty, a_{j_2})$, $j_1 < j_2$.  
This is straightforward.
For example, in the case $(p,q,r,s) = (0,a_c,\infty,a_{c+1})$,
we find that $\theta_{p,q,r,s}(G_t(u)) = \frac{a_{c+1}}{a_c}$,
which is a ratio of two continuous functions, and the denominator
is never zero.
\end{proof}

All of the curves $G_t(u)$ are real, but not all with respect to
the same real structure.
Define $\sigma \in \symgroup_n$ as in~\eqref{eqn:involution}, and 
define $\sigma' \in \symgroup_n$ analogously, with $\mu'$ in place of $\mu$.
For $(t,u) \in [0, \half] \times [0, \varepsilon]$ we have 
$G_t(u) \in \mbase(\RR^{\sigma'})$,
and for $(t,u) \in [\half,1] \times [0, \varepsilon]$ we have 
$G_t(u) \in \mbase(\RR^{\sigma})$.
If $t = \half$, the curve is real with respect to both real
structures, which is possible because $G_{1/2}(u)$ has a double marked
point, and is therefore not in $\openmbase$.

\subsection{Paths in $\mfamily$}
\label{sec:mfamilypaths}

We now lift the family of paths $G_t(u)$ in $\mbase$ to a family
of paths in $\mfamily$.  Projecting to $\scell$ will give the paths
we need for Lemmas~\ref{lem:SYTsigns} and~\ref{lem:MNsigns},
thereby allowing us to prove these statements.

Let $\mu, \mu'$ be as in the previous section. Suppose $T' \in \MN(\lambda;\mu')$.  
Let $T \in \Tab(\lambda; \mu)$
be the tableau obtained by decrementing all entries $b+1, \dots, k+1$.
Then $T$ may or may not be a Murnaghan--Nakayama tableau: we have
$T \in \MN(\lambda; \mu)$ if and only if $b$ and $b+1$
are adjacent in $T'$.  We consider these two cases separately.

First, suppose $T \in \MN(\lambda; \mu)$.  Then 
$T$ and $T'$ are as in the
statement of Lemma~\ref{lem:MNsigns}. In this case, $G_t(u)$ lifts 
isomorphically to a family of curves in $\mfamily$, as shown 
in Figure~\ref{fig:onelift}.
\begin{figure}
    \centering
    \begin{tikzpicture}[x=1em,y=1em]
    \node[inner sep=0] at (0,0) 
    {\includegraphics[width=28em]{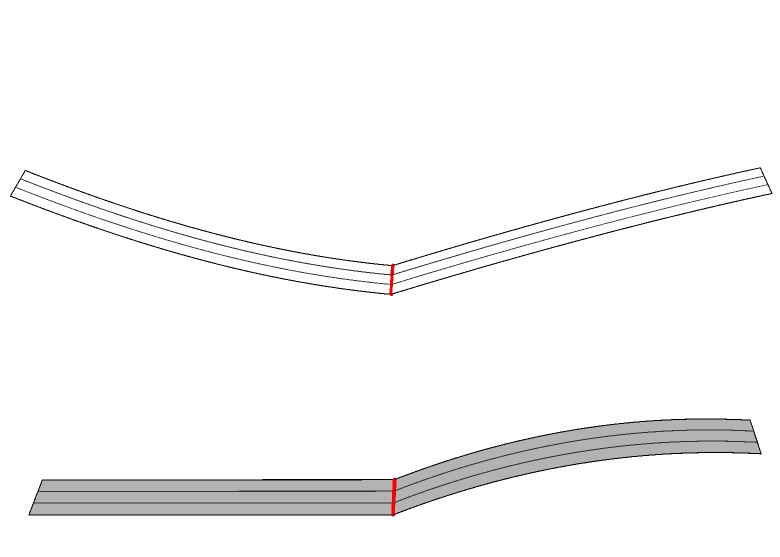}};
    \node at (-14.9,-7.7) {$G_t(u)$};
    \node at (-14.9,3.7) {$\Gamma_t(u)$};
	\node at (-12.6,-6.2) {\small $t=0$};
	\node at (-0.2,-5.6) {\small $t=\half$};
    \node at (13,-4.2) {\small $t=1$};
    \node at (-13,-9.4) {\small $C_{\mu'}(u)$};
    \node at (13.3,-7.2){\small $C_{\mu}(u)$};
	\node at (-14,1.3) {\small $\mfibre_{T'}(C_{\mu'}(u))$};
    \node at (13.3,1.7){\small $\mfibre_{T}(C_{\mu}(u))$};
    \draw[thick,-latex] (-6,-1.7) -- (-6,-5.7) node [midway,right] {$\Psi$};
    \end{tikzpicture}
    \caption{The lift $\Gamma$ of $G_t(u)$ constructed in Lemma \ref{lem:MNlift}. The endpoints of $G_t(u)$ are $C_{\mu'}(u)$ (at $t=0$) and $C_\mu(u)$ (at $t=1$), which lift to $\mfibre_{T'}(C_{\mu'}(u))$ and $\mfibre_{T}(C_\mu(u))$ respectively. 
The fibre over $t=\tfrac{1}{2}$ maps to $\hdominovar$ or $\vdominovar$ 
in $\scell(\RR)$, depending on $\shape(T|_b)$.}
    \label{fig:onelift}
\end{figure}

\begin{lemma}[Path lifting, $\vdomino/\hdomino$ case]
\label{lem:MNlift}
For sufficiently small $\varepsilon > 0$, there exists a continuous 
map
\begin{gather*}
\varGamma : [0,1] \times [0, \varepsilon] \to \mfamily \\
\qquad (t,u) \mapsto 
\varGamma_t(u)
\,,
\end{gather*}
with the following properties.
\begin{enumerate}[(a)]
\item \textbf{(Path lifting.)}
For all $(t,u)$, $\mmap(\varGamma_t(u)) = G_t(u)$.  
\item \textbf{(Connecting $T$ to $T'$.)}
$\varGamma_0(u) \in \mfibre_{T'}(C_{\mu'}(u))$
and $\varGamma_1(u) \in \mfibre_T(C_{\mu}(u))$.
\item \textbf{(No ramification.)}
For all $(t,u)$, 
$\varGamma_t(u)$ is a reduced point of
the fibre $\mfibre(G_t(u))$.
\item \textbf{(Compatibility with real structure.)}
$\varGamma_t(u) \in \mfamily(\RR^{\sigma'})$ for $t \in [0, \half]$,
and $\varGamma_t(u) \in \mfamily(\RR^\sigma)$ for $t \in [\half,1]$.
\item \textbf{(Crossing $\vdominovar$ or $\hdominovar$.)}
For $u >0$, 
$\mtosmap(\varGamma_{1/2}(u)) \in \hdominovar$ if $\shape(T|_b)$
is a horizontal domino,
and $\mtosmap(\varGamma_{1/2}(u)) \in \vdominovar$ if $\shape(T|_b)$
is a vertical domino.
\end{enumerate}
\end{lemma}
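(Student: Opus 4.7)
The strategy is to first construct the lift at $u=0$, where the degenerate fibres are accessible via Theorem~\ref{thm:finitefibreiso}, and then to extend to small $u > 0$ using étaleness of $\mmap$ at reduced fibre points. For $t \in (0,1) \setminus \{1/2\}$, the curve $G_t(0)$ is a $\projspace^1$-chain with the same component structure as $C_\mu(0)$; only the positions of $a_c^{(b)}$ and $a_{c+1}^{(b)}$ on $C_b$ change with $t$. The hypothesis $T \in \MN(\lambda;\mu)$ forces $\shape(T|_b)$ to be an adjacent domino, i.e.\ distance $L = 1$, so Lemma~\ref{lem:calculation2}(i) supplies a unique reduced real point in the $b$th factor of $\mfibre_T(G_t(0))$. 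Combined with the other factors (unchanged from $C_\mu(0)$ and real by Lemma~\ref{lem:degeneratespecialfibres}), this defines $\varGamma_t(0)$ as a real point of $\mfibre_T(G_t(0))$. At the boundary values $t \in \{0,1\}$, take $\varGamma_t(0)$ to be the unique real points of $\mfibre_{T'}(C_{\mu'}(0))$ and $\mfibre_T(C_\mu(0))$ provided by Lemma~\ref{lem:degeneratespecialfibres}. At $t = 1/2$, interpret the double marked point as producing an additional bubble; the same fibre-factor analysis (via Lemmas~\ref{lem:calculation1} and~\ref{lem:calculation2}) yields a unique reduced real point on the bubble, while the other factors remain unchanged.

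Continuity of $t \mapsto \varGamma_t(0)$ then follows from flatness of $\mmap : \mfamily \to \mbase$: limits of real fibre points under the finite, flat, proper map $\mmap$ remain real, and the uniqueness of the real lift on each side of the exceptional values $t \in \{0, 1/2, 1\}$ pins down the limit. Since each $\varGamma_t(0)$ is reduced, $\mmap$ is étale at $\varGamma_t(0)$, and by compactness of $[0,1]$ there is an open neighbourhood $U \subset \mfamily$ of the compact arc $\{\varGamma_t(0) : t \in [0,1]\}$ on which $\mmap|_U$ is étale onto $\mmap(U)$. For $\varepsilon > 0$ small enough that $G_t(u) \in \mmap(U)$ for all $(t,u) \in [0,1]\times [0,\varepsilon]$, the $u=0$ lift extends uniquely and continuously to $\varGamma : [0,1]\times[0,\varepsilon] \to U$.

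Properties (a)--(d) then follow routinely: (a) holds by construction; (c) by étaleness; and (b), (d) by uniqueness of the étale lift together with the real-structure matching at $u = 0$ (using Proposition~\ref{prop:realchains}, and noting that the path $G_t(u)$ lies in $\mbase(\RR^{\sigma'})$ for $t \in [0, \half]$ and in $\mbase(\RR^{\sigma})$ for $t \in [\half, 1]$). Property (e) is the substantive one. At $(t,u) = (1/2, u)$ with $u > 0$, the curve has a double real marked point at $a_c = a_{c+1}$, so Corollary~\ref{cor:doublemarkedpoint} places $\mtosmap_c(\varGamma_{1/2}(u))$ in exactly one of $\hdominoschubert(1) \cap \bigfibrefactor{b}$ or $\vdominoschubert(1) \cap \bigfibrefactor{b}$; by Lemma~\ref{lem:schubertwronskian} and Proposition~\ref{prop:Ztest}, the image $\mtosmap(\varGamma_{1/2}(u))$ correspondingly lies in $\hdominovar$ or $\vdominovar$. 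The two cases are distinguished by $\shape(T|_b)$: the affine-coordinate calculation underlying Lemma~\ref{lem:calculation2}(i) places the fibre point in the Richardson variety $X^{\shape(T|_{\leq b})}_{\shape(T|_{\leq b-1})}(0)$, whose associated two-box partition is $\shape(T|_b) \in \{\hdomino, \vdomino\}$, and this Schubert condition persists at the merged point.

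The main obstacle is the continuity analysis at $t \in \{0, 1/2, 1\}$, where $G_t(0)$ changes component structure: at $t=0$ a new component splits off $C_b$ (sending $a_{c+1}$ to a node), and at $t=1/2$ a bubble forms on $C_b$ (from the collision $a_c = a_{c+1}$). One has to verify that the asymptotic behaviour of the unique real solution of Lemma~\ref{lem:calculation2}(i), as one root approaches the node or as the two roots collide, produces exactly the prescribed fibre point on the degenerated chain (dictated by $T'$ at $t=0$, and by the bubble structure at $t=1/2$). This reduces to a local computation in affine coordinates, but carrying out the match transparently — and distinguishing the $\hdomino$ versus $\vdomino$ case at $t=1/2$ — is where the symmetric form of $G_t(u)$ around $t = 1/2$ is essential.
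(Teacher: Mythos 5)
Your construction follows the paper's proof closely: build the lift over the $u=0$ boundary fibre-by-fibre using the explicit degenerate-fibre computations (Lemmas~\ref{lem:calculation1} and~\ref{lem:calculation2} via Theorem~\ref{thm:finitefibreiso}), then extend to small $u>0$ by finiteness and unramifiedness at reduced fibre points; properties (a)--(d) go through essentially as you describe, and your \'etale-neighbourhood argument for the extension is a reasonable expansion of the paper's terser statement.

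The gap is in your justification of (e). You claim the two cases are distinguished because the calculation "places the fibre point in the Richardson variety $X^{\shape(T|_{\leq b})}_{\shape(T|_{\leq b-1})}(0)$, whose associated two-box partition is $\shape(T|_b)$, and this Schubert condition persists at the merged point." This does not work as stated: the Richardson condition imposes Schubert conditions at the points $0$ and $\infty$ of the component $C_b$, whereas membership in $\hdominoschubert$ versus $\vdominoschubert$ is a Schubert condition at the collision point of $a_c$ and $a_{c+1}$, a different point of $\projspace^1$ --- there is nothing to ``persist.'' Corollary~\ref{cor:doublemarkedpoint} only tells you the point lies in exactly one of the two. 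To determine which, the paper invokes Lemma~\ref{lem:generalschubertintersection}: the factor $Q^{(b)}_T$ is a two-dimensional Richardson variety whose class pairs with $[X_\kappa]$, $\kappa\vdash 2$, according to the Pieri rule, so $X_\vdomino(1)\cap Q^{(b)}_T$ is \emph{empty} when $\shape(T|_b)$ is a horizontal domino, and $X_\hdomino(1)\cap Q^{(b)}_T$ is empty when it is vertical. Emptiness of one intersection forces the point into the other; continuity in $u$ and the fact that $\mtosmap_c$ and $\mtosmap$ differ by a M\"obius transformation in $\borelgroup$ then give (e). Your closing suggestion that the symmetric form of $G_t(u)$ around $t=\half$ is what distinguishes the two cases is a red herring: the distinction is this intersection-number vanishing, not any symmetry of the path.
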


\begin{proof}
By Lemma~\ref{lem:degeneratespecialfibres}, there is a unique reduced
point $\varGamma_0(0) \in \mfibre_{T'}(G_0(0))$ and by a similar argument,
there is a unique reduced point $\varGamma_t(0) \in \mfibre_T(G_0(0))$
for all $t \in (0,1]$.  This defines a continuous path 
$\varGamma_t(0)$, $t\in[0,1]$. 
As in the proof of Lemma~\ref{lem:degeneratespecialfibres} we see
that $\varGamma_t(0)$ is $\xi^{\sigma'}$-fixed for
$0 \leq t \leq \half$, and $\xi^\sigma$-fixed for $\half \leq t \leq 1$.
When $t =\half$, the curve $G_{1/2}(0)$ has a double marked point
$a_c = a_{c+1}$, $c= \overline\mu_b$. 
By Corollary~\ref{cor:doublemarkedpoint},
$\mtosmap_c(\varGamma_{1/2}(0)) \in X_\hdomino(1) \cap \bigfibrefactor{b}$ or 
$\mtosmap_c(\varGamma_{1/2}(0)) \in X_\vdomino(1) \cap \bigfibrefactor{b}$.
But by Lemma~\ref{lem:generalschubertintersection},
if $\shape(T|_b)$ is a vertical domino then
$X_\hdomino(1) \cap \bigfibrefactor{b}$ is empty, and if $\shape(T|_b)$ is
a vertical domino, then $X_\vdomino(1) \cap \bigfibrefactor{b}$ is empty.
Thus we must have $\mtosmap_c(\varGamma_{1/2}(0)) \in X_\hdomino(1)$ if
$\shape(T|_b)$ is a horizontal domino, and 
$\mtosmap_c(\varGamma_{1/2}(0)) \in X_\vdomino(1)$ if
$\shape(T|_b)$ is a vertical domino.

Since all points of these fibres are reduced
and the map $\mmap : \mfamily \to \mbase$ is finite,
this extends uniquely to a continuous 
family $\varGamma: [0,1] \times [0, \varepsilon] \to \mfamily$, 
satisfying (a) and (c), for some sufficiently small $\varepsilon$.  
By construction, (b) is also satisfied.
For fixed $t \in [0,\half]$, $G_t(u)$ is $\xi^{\sigma'}$-fixed for all 
$u$.  Thus, $\varGamma_t(u)$ can only cease to be $\xi^{\sigma'}$-fixed 
at a double point of the fibre $\mfibre(G_t(u))$.  Since
property (c) ensures that there are no such points for
$u \in [0, \varepsilon]$, $\varGamma_t(u)$ is $\xi^{\sigma'}$-fixed
for all such $u$.  A similar argument holds for $t \in [\half, 1]$,
which establishes (d).
When $t =\half$, $a_c = a_{c+1}$ is a double marked point of the curve
$G_{1/2}(u)$; 
by continuity, $\mtosmap_c(\varGamma_{1/2}(u)) \in X_\hdomino(1)$ if 
$\shape(T|_b)$ is a horizontal domino, and 
$\mtosmap_c(\varGamma_{1/2}(u)) \in X_\vdomino(1)$ if 
$\shape(T|_b)$ is a vertical domino.
But on $\openmfamily$, $\mtosmap_c$ and $\mtosmap$ are related by
a transformation $\phi \in \borelgroup \subset \PGL_2(\CC)$, so the previous
statement implies (e).
\end{proof}

\begin{proof}[Proof of Lemma~\ref{lem:MNsigns}]
Let $\varGamma_t(u)$ be as in Lemma~\ref{lem:MNlift}.
Consider the path $\gamma : [0,1] \to \scell(\CC)$, defined by
$\gamma_t := \mtosmap(\varGamma_t(\varepsilon))$, $t \in [0,1]$.
First note that $\gamma_t$ is in fact a path in $\scell(\RR)$, by
property (d) and Proposition~\ref{prop:realstructures}.  
Next note that if follows from property (b) and the definition of 
$W_T$ (see Proof of Lemma~\ref{lem:specialfibres}),
$\mtosmap(\varGamma_0(u)) \in W_T$; therefore
$\gamma_0 = \MNpoint_{T'}$.  Similarly $\gamma_1 = \MNpoint_T$.

Let $g_t := \Wr(\gamma_t)$ the image of the path $\gamma_t$ in
$\monics(\RR)$. Note that we also have $g_t = \pol(G_t(\varepsilon))$ 
By property (c) and
Proposition~\ref{prop:familydiagram}, $\gamma_t$ is a reduced point 
of $\Wr^{-1}(g_t)$.  In particular,
this means $\gamma_t \notin \critvar(\RR)$, for all $t \in (0,1)$.

By property (e), 
$\gamma_{1/2} \in \hdominovar(\RR)$ 
if $\shape(T|_b)$
is a horizontal domino, and 
$\gamma_{1/2} \in \vdominovar(\RR)$ 
if $\shape(T|_b)$ is a vertical domino.
Since $t= \half$ is the only value of $t$ for which $g_t$ has a repeated
root, there are no other crossings of either of these varieties.
To see that the crossing at $t= \half$ is a simple crossing, 
note that since $\gamma_{1/2} \notin \critvar(\RR)$, 
$\Wr : \scell(\RR) \to \monics(\RR)$ is a diffeomorphism in a neighbourhood
of $\gamma_{1/2}$.  Thus $\gamma_t$ has a simple crossing of 
$\hdominovar(\RR)$ or $\vdominovar(\RR)$ at $t = \half$
if and only if $g_t$ has a
a simple crossing of the discriminant variety $\discrimvar(\RR)$.
Since $g_t = \pol(G_t(\varepsilon))$ is given completely explicitly,
it is straightforward to check that this is a simple crossing.
\end{proof}

Now, suppose $T \notin \MN(\lambda;\mu)$. This means that 
$b$ and $b+1$ are non-adjacent in $T'$.  Therefore switching the
positions of these entries results in another 
tableau $T''$.  Note that both $T', T'' \in \MN(\lambda;\mu')$,
and are related as in the statement of Lemma~\ref{lem:SYTsigns}.
Moreover, this relation is symmetrical, and both are related to $T$ 
in the same way. 

Note that since $T$ is not a Murnaghan--Nakayama tableau, 
by Lemma \ref{lem:degeneratespecialfibres}
the fibre $\mfibre_T(G_1(0)) = \mfibre_T(C_\mu(0))$ has no
$\xi^\sigma$-fixed points.  We will therefore not be able to 
lift all $G_t(u)$ to $\mfamily$, in a way that satisfies properties
(a)--(d) of Lemma~\ref{lem:MNlift}, because we already know (d) must be
false at $(t,u) = (1,0)$.  Instead, we restrict the domain
from $[0,1] \times [0,\varepsilon]$ to the subset over which (d)
will hold.  We then obtain two different lifts of $G_t(u)$
over this domain, which are associated to the two tableaux 
$T'$ and $T''$. See Figure \ref{fig:twolifts}.
\begin{figure}
    \centering
    \begin{tikzpicture}[x=1em,y=1em]
    \node[inner sep=0] at (0,0) 
    {\includegraphics[width=28em]{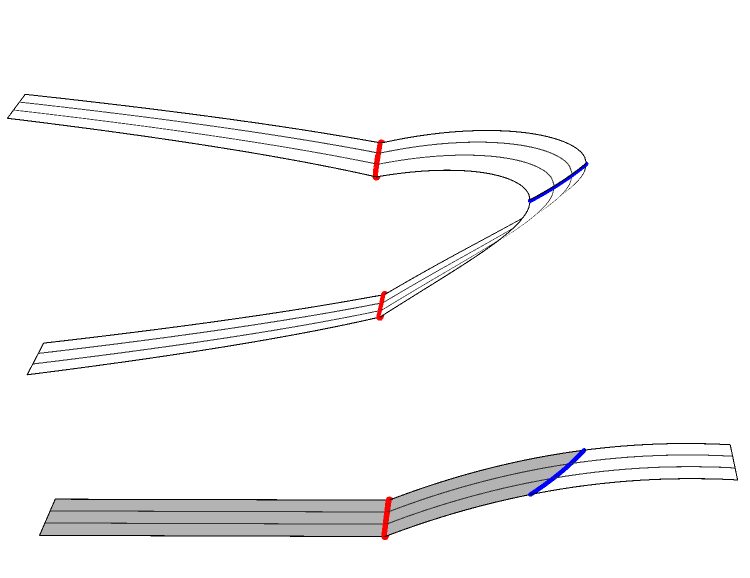}};
    \node at (-15.1,-8.5) {$G_t(u)$};
    \node at (-15.1,-2.8) {$\Gamma''_t(u)$};
    \node at (-15.1,7) {$\Gamma'_t(u)$};
	\node at (-11.6,-7) {\small $t=0$};
	\node at (.5,-6.6) {\small $t=\half$};
	\node at (7.7,-5.2) {\small $\tmax$};
	\node at (13,-5) {\small $t=1$};
    \node at (-12,-10) {\small $C_{\mu'}(u)$};
    \node at (13.3,-8){\small $C_{\mu}(u)$};
	\draw[thick,-latex] (-5,-3) -- (-5,-7) node [midway,right] {$\Psi$};
    \end{tikzpicture}
    \caption{The two lifts $\Gamma', \Gamma''$ of $G_t(u)$ constructed in Lemma \ref{lem:SYTlift}. The fibres over $t=\tfrac{1}{2}$ map to $\hdominovar$ and $\vdominovar$ in $\scell(\RR)$, while the fibre over $t = \tmax(u)$ maps to $\critvar(\RR)$.}
    \label{fig:twolifts}
\end{figure}

\begin{lemma}[Path lifting, $\twoskew$ case]
\label{lem:SYTlift}
For sufficiently small $\varepsilon > 0$,
there exists a subset $K \subset [0,1] \times [0, \varepsilon]$
and continuous maps 
\[
\begin{gathered}
\varGamma' : K \to \mfamily \\
(t,u) \mapsto \varGamma'_t(u)
\end{gathered}
\qquad\qquad\qquad\qquad
\begin{gathered}
\varGamma'' : K \to \mfamily \\
(t,u) \mapsto \varGamma''_t(u)\,,
\end{gathered}
\]
with the following properties.
\begin{enumerate}[(a)]
\item \textbf{(Path lifting.)}
For all $(t,u) \in K$,
$\mmap(\varGamma'_t(u)) = \mmap(\varGamma''_t(u)) = G_t(u)$.
\item \textbf{(Shape of $K$.)}
$K$ is of the form
\[
   K = \{(t,u) \mid u \in [0,\varepsilon], t \in [0, \tmax(u)]\}
\,.
\]
where $\tmax : [0, \varepsilon] \to (\half, 1)$ is a function, in particular $\tmax > \half$.
\item \textbf{(Starting from $T'$ and $T''$.)}
$\varGamma'_0(u) \in \mfibre_{T'}(C_{\mu'}(u))$
and
$\varGamma''_0(u) \in \mfibre_{T''}(C_{\mu'}(u))$.
\item \textbf{(No ramification until $\tmax$.)}
For $(t,u) \in K$ with $t < \tmax(u)$,
$\varGamma'_t(u)$ and $\varGamma''_t(u)$ are reduced points of
the fibre $\mfibre(G_t(u))$.
\item \textbf{(Paths join at $\tmax$.)}
For $t = \tmax(u)$, $\varGamma'_t(u) = \varGamma''_t(u)$ is a double 
point of the fibre $\mfibre(G_t(u))$.
\item \textbf{(Compatibility with real structure.)}
$\varGamma'_t(u) \in \mfamily(\RR^{\sigma'})$ for $t \in [0, \half]$,
and $\varGamma'_t(u) \in \mfamily(\RR^\sigma)$ for $t \in [\half, \tmax(u)]$.  
The
same holds for $\varGamma''$.
\item \textbf{(Crossing $\vdominovar$ and $\hdominovar$.)}
Either $\mtosmap(\varGamma'_{1/2}(u)) \in \hdominovar$ and
$\mtosmap(\varGamma''_{1/2}(u)) \in \vdominovar$ for all $u > 0$, 
or vice-versa.
\end{enumerate}
\end{lemma}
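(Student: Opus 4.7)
The plan is to follow the structure of the proof of Lemma~\ref{lem:MNlift}, but now track two lifts simultaneously, both governed by a single quadratic equation whose discriminant controls when the two branches merge. For $u = 0$ and $t \in (0,1)$, the curve $G_t(0)$ is a $\projspace^1$-chain with $b_c = b_{c+1} = b$, so by Theorem~\ref{thm:finitefibreiso} the fibre $\mfibre_T(G_t(0))$ factors as a product over components. For every $b' \neq b$, the skew shape $\shape(T|_{b'})$ agrees with a shape of $T'$ and is therefore either a single box or an adjacent domino, giving a unique reduced real point on that factor by Lemma~\ref{lem:calculation1} or Lemma~\ref{lem:calculation2}(i). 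The $b$-th factor is the interesting one: here $\shape(T|_b)$ is a pair of non-adjacent boxes at distance $L > 1$, so Lemma~\ref{lem:calculation2}(ii) applies and the factor $\mfibrefactor{b}(G_t(0))$ is cut out by a quadratic whose discriminant $D(t)$ can be written down explicitly from $a_c^{(b)}$ and $a_{c+1}^{(b)}$.

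Using the formulas from Section~\ref{sec:mbasepaths}, I obtain $D(t) = 1 - 4(1-L^{-2})t(1-t)$ for $t \in (0,\half]$ (real $a_c^{(b)}, a_{c+1}^{(b)}$) and $D(t) = \cos^2(\pi(t-\half)) - (1-L^{-2})$ for $t \in [\half,1)$ (complex conjugate $a_c^{(b)}, a_{c+1}^{(b)}$). Then $D > 0$ on $[0,\half]$, $D(\half) = L^{-2}$, and $D$ decreases monotonically to $-(1-L^{-2})$ at $t = 1$, vanishing at a unique $\tmax(0) \in (\half,1)$. The two real roots of the quadratic give continuous real branches on $[0, \tmax(0)]$ which coalesce into a double root at $\tmax(0)$; matching them to $T'$ or $T''$ is done by comparing the $t \to 0^+$ limit with the explicit degenerate fibres $\mfibre_{T'}(C_{\mu'}(0))$ and $\mfibre_{T''}(C_{\mu'}(0))$ from Lemma~\ref{lem:degeneratespecialfibres}.

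For $u > 0$, I define $\tmax(u)$ to be the smallest $t$ at which the fibre $\mfibre(G_t(u))$ fails to be reduced along the continuation of either branch. Since $\mmap$ is finite and the lifts at $u = 0$ are reduced on $[0, \tmax(0))$, standard unique path lifting extends $\varGamma'_t(0)$ and $\varGamma''_t(0)$ to continuous maps $\varGamma'_t(u), \varGamma''_t(u)$ on a neighbourhood of $[0,\tmax(0)] \times \{0\}$, with $\tmax(u)$ continuous and (for $\varepsilon$ small) taking values in $(\half,1)$. Properties (a)--(e) are immediate from this construction, and (f) follows exactly as in Lemma~\ref{lem:MNlift}: on each of $[0,\half]$ and $[\half,\tmax(u)]$ the base $G_t(u)$ lies in a fixed real structure, and the lift cannot leave the real locus without crossing a double fibre point, which is excluded off of $\tmax(u)$.

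The main technical obstacle, and the crux of property (g), is distinguishing which lift lands in $\hdominovar$ and which in $\vdominovar$ at $t = \half$. By Corollary~\ref{cor:doublemarkedpoint} each of $\varGamma'_{1/2}(u), \varGamma''_{1/2}(u)$ lies in $\hdominoschubert$ or $\vdominoschubert$, and because $D(\half) = L^{-2} > 0$ the two lifts are distinct, so they split between the two conditions. The pairing of $T'$ versus $T''$ with $\hdomino$ versus $\vdomino$ is determined by tracing the two roots of the quadratic from $t = \half$ back to $t = 0$ using the explicit formula for $D(t)$; since this assignment is locally constant in $u$, the same dichotomy holds uniformly on $[0,\varepsilon]$, completing the proof.
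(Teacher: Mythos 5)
Your proposal follows essentially the same route as the paper's proof: analyze the degenerate fibre $\mfibre_T(G_t(0))$ via the product decomposition of Theorem~\ref{thm:finitefibreiso} and the quadratic of Lemma~\ref{lem:calculation2}, locate $\tmax(0)$ as the zero of the discriminant, lift to $u>0$ using finiteness of $\mmap$, and use the double-marked-point dichotomy at $t=\half$ for (g). Your explicit discriminant formulas are correct and recover the paper's value $\tmax(0)=\half+\tfrac{1}{\pi}\arcsin(L^{-1})$.

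Two steps are under-justified as written. First, the existence of $\tmax(u)\in(\half,1)$ for $u>0$ --- which you need for (b) and (e) --- does not follow from ``standard unique path lifting'': path lifting works only away from the branch locus and says nothing about whether the two branches actually collide once $u>0$. The paper derives this from the real-structure dichotomy: for small $u>0$ the two points of the length-two scheme are distinct and real at $t=\half$ but non-real at $t=1$ (by Lemma~\ref{lem:degeneratespecialfibres}, since $T\notin\MN(\lambda;\mu)$), so a double point must occur in between. Alternatively, your own formula shows $D$ has a \emph{simple} zero at $\tmax(0)$, so the collision persists for small $u$ by the implicit function theorem --- but you should say one of these things explicitly. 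Second, in (g), the inference ``the two lifts are distinct, so they split between the two conditions'' is incomplete: a priori both distinct points could lie in $\hdominoschubert(1)$. You need that each of $X_\hdomino(1)\cap\bigfibrefactor{b}$ and $X_\vdomino(1)\cap\bigfibrefactor{b}$ is a single point (Lemma~\ref{lem:generalschubertintersection}, using that the relevant Littlewood--Richardson numbers equal $1$ for a non-adjacent pair of boxes), together with the observation that the two points of $\mfibre_T(G_{1/2}(0))$ can differ only in their $b$-th $C$-coordinate. With those two repairs your argument matches the paper's. (Also note that for (g) as stated you do not need to determine \emph{which} of $T',T''$ goes with $\hdomino$; the paper deliberately leaves that to a remark.)
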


\begin{proof}
First, we compute $\mfibre_T(G_t(0))$, for $t \in (0,1]$.  Proceeding as
in Lemma~\ref{lem:degeneratespecialfibres}, we find that $\mfibre_T(G_t(0))$
consists of two distinct $\xi^{\sigma'}$-fixed points if $t \in (0,\half]$.
The limit fibre $\lim_{t \to 0} \mfibre_T(G_t(0)) =
\mfibre_{T'}(C_{\mu'}(0)) \cup \mfibre_{T''}(C_{\mu'}(0))$; again we have two distinct
$\xi^{\sigma'}$-fixed points.
For $t \in [\half,1]$, 
the discriminant in Lemma~\ref{lem:calculation2} for computing
$\mfibrefactor{b}(G_t(0))$ 
is equal to 
$L^{-2} - \sin^2 \big(\pi(t - \half)\big)$;
hence $\mfibre_T(G_t(0))$ consists of:
\begin{packeditemize}
\item two distinct $\xi^{\sigma}$-fixed points 
if $t < \half + \tfrac{1}{\pi}\arcsin(L^{-1})$;
\item a double $\xi^{\sigma}$-fixed point 
if $t = \half + \tfrac{1}{\pi}\arcsin(L^{-1})$;
\item two distinct $\xi^{\sigma}$-conjugate points 
if $t > \half + \tfrac{1}{\pi}\arcsin(L^{-1})$.
\end{packeditemize}
where $L$ is the distance between the entries $b$ and $b+1$ in $T'$. 

Let $J \subset \mbase$ denote the image of the map
$[0,1] \to \mbase$, $t \mapsto G_t(0)$.
Then $\mmap^{-1}(J) \cap \bigfibre_T$ is the union of
all $\mfibre_T(G_t(0))$, $t \in (0,1]$, together with 
$\lim_{t \to 0} \mfibre_T(G_t(0)) = 
\mfibre_{T'}(C_{\mu'}(0)) \cup \mfibre_{T''}(C_{\mu'}(0))$.
Since $\mmap^{-1}(J) \cap \bigfibre_T$ is 
a closed subset of $\mmap^{-1}(J)$, we can find an open
subset  $\calU \subset \mfamily$, containing 
$\mmap^{-1}(J) \cap \bigfibre_T$ such that
$\calU$ is a component on $\mmap(\mmap^{-1}(\calU))$.

Assuming $\varepsilon > 0$ is sufficiently small, 
$\mfibre(G_t(u)) \cap \calU$ is then
a finite scheme of length $2$, for all 
$(t,u) \in [0,1] \times [0, \varepsilon]$.
Hence this is either two distinct points, or a 
(non-reduced) double point.  
By the same argument as in Lemma~\ref{lem:MNlift},
$\mfibre(G_t(u)) \cap \calU$ consists of two distinct points in 
$\mfamily(\RR^{\sigma'})$ for $t \in [0,\half]$, $u>0$.
Moreover, for $t = \half$, $u > 0$, the two points of $\mfibre(G_t(u)) \cap \calU$  
are also in $\mfamily(\RR^{\sigma'})$, and for $t = 1$ they are not in 
$\mfamily(\RR^{\sigma'})$.  This implies that for all $u$ there exists
a $t \in (\half, 1)$ such that $\mfibre(G_t(u)) \cap \calU$ is a double
point.  We put
\[
\tmax(u) := \min\{t \in [0,1] \mid \text{$\mfibre(G_t(u)) \cap \calU$ 
     is a double point}\}
\,,
\]
and take (b) to be the definition of $K$.  Note that $K$ is simply
connected, and therefore $\mmap : \mmap^{-1}(K) \cap \calU \to K$ is a 
topologicially trivial two-to-one covering map away from $t = \tmax(u)$.

We can therefore define $\varGamma' : K \to \mfamily$ to be the unique 
map such that $\varGamma'_0(0)$ is the unique point in 
$\mfibre_{T'}(C_{\mu'}(0))$, and $\varGamma'_t(u) \in P(G_t(u)) \cap \calU$
for all $(t,u) \in K$.  Similarly we define
$\varGamma'' : K \to \mfamily$, with $T''$ in place of $T'$.
Properties (a), (c), (d), (e) are immediate, and the proof of (f)
is identical to the proof of (d) in Lemma~\ref{lem:MNlift}.

When $t =\half$, 
the curve $G_{1/2}(0)$ has a double marked point
$a_c = a_{c+1}$, $c = \overline\mu_b$. 
By Corollary~\ref{cor:doublemarkedpoint}, either
$\mtosmap_c(\varGamma'_{1/2}(0)) \in X_\hdomino(1) \cap \bigfibrefactor{b}$ or 
$\mtosmap_c(\varGamma'_{1/2}(0)) \in X_\vdomino(1) \cap \bigfibrefactor{b}$,
and the same is true for $\varGamma''$.
By Lemma~\ref{lem:generalschubertintersection}, both $X_\hdomino(1) \cap \bigfibrefactor{b}$ and
$X_\vdomino(1) \cap \bigfibrefactor{b}$ consist of a single point.
Recall that $\mfibre_T(G_{1/2}(0))$ is identified with 
$\prod_{i=1}^k \mfibrefactor{i}(G_{1/2}(0))$.  By 
Lemmas~\ref{lem:calculation1} and~\ref{lem:calculation2},
every term in this product
consists of a single point, except $\mfibrefactor{b}(G_{1/2}(0))$ which has two points.
$\varGamma'_{1/2}(0)$ and $\varGamma''_{1/2}(0)$ are distinct points of
$\mfibre_T(G_{1/2}(0))$, and the only coordinate
on which they can differ is
$\varGamma'_{1/2}(0)^{(b)} \neq \varGamma''_{1/2}(0)^{(b)}$.
Equivalently $\mtosmap_c(\varGamma'_{1/2}(0)) \neq 
\mtosmap_c(\varGamma''_{1/2}(0))$.
It follows that we have either 
$\mtosmap_{\overline\mu_b}(\varGamma'_{1/2}(0)) 
\in X_\hdomino(1) \cap \bigfibrefactor{b}$ 
and $\mtosmap_{\overline\mu_b}(\varGamma''_{1/2}(0)) 
\in X_\vdomino(1) \cap \bigfibrefactor{b}$, or the other way around.
Arguing now as in the proof of Lemma~\ref{lem:MNlift}, we deduce (g).
\end{proof}

\begin{remark}
With a more thorough analysis of $\mfibre_T(G_t(0))$, $t \in [0, \half]$,
property (g) in Lemma~\ref{lem:MNlift} 
can be replaced by the following
stronger statement.  If $b$ is left of $b+1$ in $T'$ then 
$\mtosmap(\varGamma'_{1/2}(u)) \in \hdominovar$ and 
$\mtosmap(\varGamma''_{1/2}(u)) \in \vdominovar$.  If $b$ is right
of $b+1$, the identification is the other way around.  We omit the proof,
since we do not actually need this fact here. Note that in each case, the two boxes $b, b+1$ rectify to the corresponding domino shape (compare with \cite[Theorem 6.4]{Pur-Gr} and \cite[Theorem 4.4]{Sp}).
\end{remark}

\begin{proof}[Proof of Lemma~\ref{lem:SYTsigns}]
Let $K$, $\tmax$, $\varGamma'_t(u)$ and $\varGamma''_t(u)$ be as in 
Lemma~\ref{lem:SYTlift}.  
We define the path $\gamma: [0,1] \to \scell(\CC)$ by joining together
the paths $\mtosmap(\varGamma'_t(\varepsilon))$ and 
$\mtosmap(\varGamma''_t(\varepsilon))$:
\[
 \gamma_t :=
\begin{cases}
\mtosmap(\varGamma'_{2t}(\varepsilon))
&\quad\text{for $t \in [0,\tfrac{1}{4}]$} \\
\mtosmap(\varGamma'_{1/2+(\tmax(\varepsilon)-1/2)(4t-1)}(\varepsilon))
&\quad\text{for $t \in [\tfrac{1}{4},\half]$} \\
\mtosmap(\varGamma''_{1/2+(\tmax(\varepsilon)-1/2)(3-4t)}(\varepsilon))
&\quad\text{for $t \in [\half, \tfrac{3}{4}]$} \\
\mtosmap(\varGamma''_{2-2t}(\varepsilon))
&\quad\text{for $t \in [\tfrac{3}{4},1]$}. \\
\end{cases}
\]
Let $g_t = \Wr(\gamma_t)$; note that $g_t = g_{1-t}$.

Many pieces of the argument are essentially the same as in 
the proof of Lemma~\ref{lem:MNsigns}.
By property (f), $\gamma_t \in \scell(\RR)$ for all $t \in [0,1]$.
By property (c),
$\gamma_0 = \mtosmap(\varGamma'_0(\varepsilon))= \MNpoint_{T'}$ and
$\gamma_1 = \mtosmap(\varGamma''_0(\varepsilon))= \MNpoint_{T''}$.
By property (g), one of 
$\gamma_{1/4} = \mtosmap(\varGamma'_{1/2}(\varepsilon))$ and 
$\gamma_{3/4} = \mtosmap(\varGamma''_{1/2}(\varepsilon))$ 
is in $\hdominovar(\RR)$ and the other is in $\vdominovar(\RR)$, and 
these are the only points $\gamma_t$ in either of these varieties.
These are simple crossings of $\hdominovar(\RR)$
and $\vdominovar(\RR)$, by the same reasoning as in the proof
of Lemma~\ref{lem:MNsigns}.  

We now consider crossings of $\critvar(\RR)$.
By property (e),
$\gamma_{1/2} = \mtosmap(\varGamma'_{1/2}(\varepsilon)) = 
\mtosmap(\varGamma'_{1/2}(\varepsilon))$ is a double point
of $\Wr^{-1}(g_{1/2})$; thus $\gamma_{1/2} \in \critvar(\RR)$.
This
is certainly a crossing, because 
$\gamma_{1/2-\epsilon}$ and $\gamma_{1/2+\epsilon}$ are two different
points of the fibre over $g_{1/2-\epsilon}= g_{1/2+\epsilon}$,
and therefore in different components of some neighbourhood of 
$\Wr^{-1}(g_{1/2}) \setminus \critvar(\RR)$.  
By Lemma~\ref{lem:ramificationdivisor}, $\gamma_{1/2}$ is a smooth
point of $\critvar(\RR)$, and therefore this is a simple crossing.
By property (d), $\gamma_t \notin \critvar$ for $t \neq \half$.
\end{proof}

\section{Upper and lower bounds}
\label{sec:bounds}

\subsection{Bounds from Theorem~\ref{thm:main}}

For $g \in \monics(\RR)$, let $N_g$ denote the number of real points 
in the fibre $\Wr^{-1}(g)$, counted with algebraic multiplicity.
One consequence of Theorem~\ref{thm:main} is that we
obtain bounds on $N_g$.  When the lower bound is non-zero, this
gives a proof of the existence of real solutions to the equation
$\Wr(f_1, \dots, f_d) = g$. 

\begin{restatetheorem}{Corollary~\ref{cor:bounds}}
If $g \in \overline{\monics(\mu)}$, then 
\[
|\sgchar(\mu)| \leq N_g \leq \numsyt\lambda
\,.
\]
\end{restatetheorem}

\begin{proof}
If $g \in \monics(\mu)$ and $g$ is a regular value of the Wronski map, 
the the multiplicities of the points in $\Wr^{-1}(g)$ are all $1$, and
the topological degree of the map 
$\Wr:\scell(\mu) \to \monics(\mu)$ is a signed count of the points
in $\Wr^{-1}(g)$.  Thus in this case, the lower bound 
$N_g \geq |\sgchar(\mu)|$ holds.
Since $N_g$ is an upper semi-continuous function of $g$, 
the result remains true if we pass to the closure.

For the upper bound, note that $\numsyt\lambda$ is the number of complex 
points in any fibre of the map $\Wr : \scell(\CC) \to \monics(\CC)$,
and the real points in the fibre are subset of the complex points.
\end{proof}

If $g \in \overline{\monics(\mu)}$ for more than one
$\mu$ (which occurs when $g$ has repeated real roots), then we get 
more than one lower bound.  
In general, there does not appear to be any
simple rule as to which $\mu$ will give the best lower bound.
Of course, if all the roots of $g$ are all 
real, the best lower bound comes from $\mu=1^n$, and
$N_g = \numsyt\lambda$, regardless of whether 
$g$ has repeated roots.
In particular, the upper bound 
in Corollary~\ref{cor:bounds} is tight for all $\mu$, since it
is achieved when all roots of $g$ are real, and such $g$ exist
in $\overline{\monics(\mu)}$.  Moreover, we can find such $g$
such that $\Wr^{-1}(g)$ is reduced, which implies that the upper bound 
can also be achieved by polynomials properly in $\monics(\mu)$,
by starting with a polynomial with only real roots in the closure,
and perturbing.  

In some cases, we can also see that the lower bound is tight.

\begin{theorem}
\label{thm:tight}
Let $\mu =(\mu_1, \dots, \mu_k)$ be a composition of $n$, 
with $\mu_i \in \{1,2\}$.  
If all tableaux in $\MN(\lambda; \mu)$ have the same sign, then
the lower bound in Corollary~\ref{cor:bounds} is tight.  That is, there
exists a polynomial $g \in \monics(\mu)$ such that 
$N_g = |\sgchar(\mu)|$.
\end{theorem}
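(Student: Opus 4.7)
The plan is to take $g = h_\mu$, the explicit polynomial from Section~\ref{sec:specialfibres-statement}, which lies in $\monics(\mu)$ by construction. This is the natural candidate because Corollary~\ref{cor:realfibrepoints} tells us exactly what its real fibre looks like: the fibre $\Wr^{-1}(h_\mu)$ is reduced, and its real points are precisely $\{\MNpoint_T : T \in \MN(\lambda;\mu)\}$. Since the fibre is reduced, each real point has algebraic multiplicity $1$, so
\[
N_{h_\mu} = \#\MN(\lambda;\mu)\,.
\]

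Next, I would invoke Theorem~\ref{thm:signs}, which says that with respect to the character orientation, $\sgn(\MNpoint_T) = \sgn(T)$ for every $T \in \MN(\lambda;\mu)$. Under the hypothesis that all elements of $\MN(\lambda;\mu)$ share a common sign $\varepsilon \in \{-1,+1\}$, the Murnaghan--Nakayama rule (Theorem~\ref{thm:MNrule}) yields
\[
\sgchar(\mu) \;=\; \sum_{T \in \MN(\lambda;\mu)} \sgn(T) \;=\; \varepsilon \cdot \#\MN(\lambda;\mu)\,.
\]
Taking absolute values and combining with the previous display gives $|\sgchar(\mu)| = \#\MN(\lambda;\mu) = N_{h_\mu}$, so the lower bound of Corollary~\ref{cor:bounds} is achieved at $g = h_\mu$. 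The degenerate case $\MN(\lambda;\mu) = \emptyset$ is handled in the same way: then $\sgchar(\mu) = 0$ by the Murnaghan--Nakayama rule, and $N_{h_\mu} = 0$ since $h_\mu$ has no real preimages, so the bound is again tight.

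There is no substantial obstacle here; the theorem is essentially an immediate packaging of Corollary~\ref{cor:realfibrepoints} together with Theorem~\ref{thm:signs}. The only point worth emphasizing in the write-up is that the sign-cancellation-free hypothesis converts the signed count (the topological degree) into an unsigned count (the number of real fibre points), which matches $N_{h_\mu}$ precisely because the special fibre $\Wr^{-1}(h_\mu)$ was arranged to be reduced.
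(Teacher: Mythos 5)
Your proposal is correct and matches the paper's proof essentially verbatim: take $g = h_\mu$, apply Corollary~\ref{cor:realfibrepoints} to get $N_{h_\mu} = \#\MN(\lambda;\mu)$, and use the equal-sign hypothesis with the Murnaghan--Nakayama rule to conclude $\#\MN(\lambda;\mu) = |\sgchar(\mu)|$. (The appeal to Theorem~\ref{thm:signs} is not actually needed for this step, but it does no harm.)
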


\begin{proof}
Consider $h_\mu \in \monics(\mu)$.  
By Corollary~\ref{cor:realfibrepoints},
$N_{h_\mu} = \#\MN(\lambda; \mu)$. But if all tableaux in $\MN(\lambda; \mu)$
have the same sign, then $\#\MN(\lambda; \mu) = |\sgchar(\mu)|$.
\end{proof}

Note that for a given partition, there are many compositions
with that associated partition.  If any one of these compositions 
satisfies the hypothesis of Theorem~\ref{thm:tight}, then we
conclude the lower bound is tight.

\begin{example}
\label{ex:tightcases}
Here a few noteworthy cases where all tableaux in $\MN(\lambda; \mu)$
have the same sign, and hence Theorem~\ref{thm:tight} tells us that
the lower bound is tight.
\begin{enumerate}[(i)]
\item If $\lambda \vdash n$ is any partition, and $\mu = (1^n)$.  This 
is the all real roots
case, where the lower bound is exact.

\item 
If $|\lambda|$ is even, and $\mu = (2^{n_2})$.

\item 
If $|\lambda|$ is odd, and $\mu = (1,2^{n_2})$.  Cases
(ii) and (iii) are where we have the maximal number of complex roots.

\item 
If $\lambda = (\lambda_1, \lambda_2) \vdash n$ is any $2$-part partition,
and $\mu = (1, 2^l, 1^{n-2l-1})$, $0 \leq l < n/2$.
This, together with (ii), shows the lower bound is always tight if
$\lambda$ has two parts.

\item 
If $\lambda = m^d$ is a rectangle with $dm$ even, 
 and $\mu = (1,2^{dm/2-1},1)$. 

\item 
If $\lambda = (d, d-1, \dots, 2,1)$ is a staircase, 
 and $\mu = (\mu_1, \dots, \mu_k)$ is any composition 
such that $\mu_k =2$.
This, together with (i), shows that the lower bound is always tight if
$\lambda$ is a staircase.

\item 
If $\lambda = (\lambda_1, \dots, \lambda_d) \vdash n$ 
is any partition such that $\lambda_i-\lambda_{i+1}$ is odd for
all $i=1, \dots, d-1$, and $\mu = (1^{n_1}, 2^{n_2})$, where $n_1+2n_2 = n$.
This generalizes (vi): the lower bound is tight for all such 
partitions $\lambda$.
\end{enumerate}
In case (vi), $\sgchar(\mu) = \#\MN(\lambda;\mu) = 0$, and
Theorem~\ref{thm:tight} is asserting that the Wronski 
map $\Wr: \scell(\mu) \to \monics(\mu)$ is not surjective.
This also happens in (v), in the case where $m$ and $d$ are both even,
as was first shown in~\cite{EG-pole}.
\end{example}

\subsection{Eremenko and Gabrielov's lower bound}
\label{sec:EG-deg}

Eremenko and Gabrielov computed degrees of real Wronski maps 
in~\cite{EG-deg}.  Their result is stated for the projective real Wronski
map $\Wr : \Gr(d, \RR^{d+m}) \to \projspace^{dm}(\RR)$,
but as is pointed out in \cite{SS}, the
computation can be done on any Schubert cell 
and we state the result as such here.
We include a concise proof, based on Lemma~\ref{lem:SYTsigns}.

\begin{definition}
Let $T \in \SYT(\lambda)$.  An \defn{inversion} in $T$ is a pair of cells
$(i,j),\ (i',j') \in \lambda$ such that $i<i'$ and $T(i,j) > T(i',j')$.
Let $\inv(T)$ denote the number of inversions in $T$.
\end{definition}

\begin{theorem}[Eremenko--Gabrielov]
\label{thm:EG-deg}
With respect to the ambient orientation, the topological degree of
the Wronski map $\Wr : \scell(\RR) \to \monics(\RR)$ is
\[
    I_\lambda := \sum_{T \in \SYT(\lambda)} (-1)^{\inv(T)}
\,.
\]
\end{theorem}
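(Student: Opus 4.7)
The plan is to evaluate the topological degree by counting signed preimages in the fibre $\Wr^{-1}(h_{1^n})$. By Corollary~\ref{cor:realfibrepoints} this fibre is reduced and its real points are precisely $\{\MNpoint_T : T \in \SYT(\lambda)\}$ (note that $\MN(\lambda;1^n) = \SYT(\lambda)$), so the topological degree equals $\sum_{T \in \SYT(\lambda)} \ambsgn(\MNpoint_T)$. Thus the theorem reduces to proving
\[
\ambsgn(\MNpoint_T) = (-1)^{\inv(T)} \quad \text{for every } T \in \SYT(\lambda).
\]

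By construction of the ambient orientation $\ambsgn(\MNpoint_{T_0}) = +1$, and clearly $\inv(T_0) = 0$, so the identity holds at $T_0$. I would compare a general $T$ to $T_0$ using Lemma~\ref{lem:SYTsigns}: whenever $T''$ is obtained from $T'$ by interchanging two non-adjacent consecutive entries $b, b+1$, the lemma supplies a path from $\MNpoint_{T'}$ to $\MNpoint_{T''}$ in $\scell(\RR)$ with exactly one simple crossing of each of $\critvar(\RR)$, $\hdominovar(\RR)$, and $\vdominovar(\RR)$. Only the crossing of $\critvar(\RR)$ can change the ambient sign, so $\ambsgn(\MNpoint_{T''}) = -\ambsgn(\MNpoint_{T'})$. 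A direct check shows that $\inv$ also flips parity under such a swap: every entry other than $b, b+1$ is either strictly less than both or strictly greater than both, so among all pairs of cells only the pair occupied by $b$ and $b+1$ can change its status as an inversion, and it does so by exactly $\pm 1$.

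It remains to check that $\SYT(\lambda)$ is connected under these non-adjacent swaps. I would argue by descent on $\inv(T)$: if $\inv(T) > 0$ then the row indices of the entries $1,2,\dots,n$ cannot form a weakly increasing sequence (else $T = T_0$), so there exists some $b$ with $\mathrm{row}(b) > \mathrm{row}(b+1)$ in $T$. The cells of $b$ and $b+1$ are then in distinct rows, and they must also be in distinct columns: if they shared a column, column-strictness would force the strictly lower row to contain the larger entry, contradicting our choice. Hence the swap is legal and reduces $\inv$ by exactly one; iterating this reaches $T_0$. Combined with the sign-change rule above, this gives $\ambsgn(\MNpoint_T) = (-1)^{\inv(T)}$ for every $T \in \SYT(\lambda)$, and summing yields $I_\lambda$. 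The main work has already been done in the construction of the paths from Lemma~\ref{lem:SYTsigns}; the remaining inversion-parity bookkeeping and the descent argument are entirely combinatorial, so no serious obstacle is anticipated.
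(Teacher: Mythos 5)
Your proposal is correct and follows essentially the same route as the paper's proof: reduce to the fibre over $h_{1^n}$, establish $\ambsgn(\MNpoint_{T_0}) = 1 = (-1)^{\inv(T_0)}$, and propagate via the paths of Lemma~\ref{lem:SYTsigns}, whose single simple crossing of $\critvar(\RR)$ flips the ambient sign in step with the parity of $\inv$. The only difference is that you spell out the connectivity of $\SYT(\lambda)$ under non-adjacent swaps of consecutive entries (which the paper leaves implicit), and that argument is sound.
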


\begin{proof}
We claim that for every tableau $T \in \SYT(\lambda)$, 
$\ambsgn(\MNpoint_T) = (-1)^{\inv(T)}$.
If $T = T_0$, this is certainly true.  
Suppose $T, T' \in \SYT(\lambda)$ are two tableaux related as in 
Lemma~\ref{lem:SYTsigns}.  There is path joining
$\MNpoint_T$ to $\MNpoint_{T'}$, 
along which the ambient sign changes exactly once,
at the simple crossing of $\critvar(\RR)$.
Therefore $\ambsgn(\MNpoint_T) = -\ambsgn(\MNpoint_{T'})$.
We also have $\inv(T) = \inv(T') \pm 1$, and the claim follows.
Therefore, the topological degree of the Wronski map is
\[
   \sum_{\boldx \in \Wr^{-1}(h_{1^n})} \ambsgn(\boldx)
    = \sum_{T \in \SYT(\lambda)} \ambsgn(\MNpoint_T)
    = I_\lambda
\,.
\qedhere
\]
\end{proof}

This also yields a lower bound for the number of real points in
the fibre of the Wronski map.

\begin{corollary}[Eremenko--Gabrielov]
\label{cor:EG-bound}
For every $g \in \monics(\RR)$, we have
\[
   |I_\lambda| \leq N_g
\,.
\]
\end{corollary}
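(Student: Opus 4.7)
The plan is to deduce this bound immediately from the degree computation in Theorem~\ref{thm:EG-deg}, via the standard principle that the absolute value of the topological degree of a proper map bounds the number of real preimages of any value from below. First, I would note that, under the ambient orientations, $\Wr : \scell(\RR) \to \monics(\RR)$ is a proper map of oriented real $n$-manifolds (properness follows because $\Wr$ is a finite morphism of affine varieties), so its topological degree is a well-defined integer equal to $I_\lambda$ by Theorem~\ref{thm:EG-deg}.

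Next, I would fix a regular value $g \in \monics(\RR)$ of $\Wr$. Then $\Wr^{-1}(g) \subset \scell(\CC)$ consists of $\numsyt\lambda$ reduced points, permuted by complex conjugation: the real ones form a set of $N_g$ points in $\scell(\RR) \setminus \critvar(\RR)$, and the non-real ones come in conjugate pairs. At each real point $\boldx$, the Wronski map is a local diffeomorphism contributing $\ambsgn(\boldx) \in \{\pm 1\}$ to the topological degree, while conjugate pairs contribute nothing. Hence
\[
  I_\lambda \;=\; \sum_{\boldx \in \Wr^{-1}(g) \cap \scell(\RR)} \ambsgn(\boldx)
\,,
\]
and the triangle inequality gives $|I_\lambda| \leq N_g$ for every regular $g$.

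Finally, I would extend the bound to every $g \in \monics(\RR)$ by upper semicontinuity of $g \mapsto N_g$, exactly as in the proof of Corollary~\ref{cor:bounds}. Because $\Wr$ is a finite morphism and $N_g$ is counted with algebraic multiplicity, each real preimage of $g$ of multiplicity $m$ persists as real preimages of nearby $g'$ of total multiplicity at least $m$; hence $N_g \leq \liminf_{g' \to g} N_{g'}$ is false and in fact $N_g \geq \limsup_{g' \to g} N_{g'}$ in the usual sense. Since regular values of $\Wr$ are dense in $\monics(\RR)$ (the critical locus is a proper subvariety), we can approximate any $g$ by regular values $g'$ satisfying $|I_\lambda| \leq N_{g'}$, and the bound passes to $g$ in the limit.

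The only substantive input is Theorem~\ref{thm:EG-deg}; everything else is a routine application of degree theory together with upper semicontinuity of fibre lengths under finite morphisms, so there is no real obstacle to this argument.
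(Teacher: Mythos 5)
Your approach is the intended one: the paper states this corollary without a separate proof, as an immediate consequence of Theorem~\ref{thm:EG-deg} in exactly the way Corollary~\ref{cor:bounds} follows from Theorem~\ref{thm:main} (signed count over a regular value, then upper semicontinuity of $N_g$), and your first two paragraphs carry that out correctly. The one muddled step is your justification of semicontinuity: the claim that ``each real preimage of $g$ of multiplicity $m$ persists as real preimages of nearby $g'$ of total multiplicity at least $m$'' is false (a double real point can split into a complex conjugate pair), and if it were true it would prove the wrong inequality, as your own sentence half-acknowledges. The correct reason for $N_g \geq \limsup_{g' \to g} N_{g'}$ runs the other way: real points of $\Wr^{-1}(g')$ must limit to \emph{real} points of $\Wr^{-1}(g)$, and by conservation of number for the finite morphism $\Wr$, the total multiplicity of points of $\Wr^{-1}(g')$ clustering near a given $\boldx \in \Wr^{-1}(g)$ equals $\mult(\boldx)$; summing over the real $\boldx$ gives $N_{g'} \leq N_g$ for $g'$ near $g$. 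With that fix the argument is complete.
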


In the case of a rectangle, $\lambda = m^d$, the Schubert cell
$\scell(\RR)$ is an open dense subset of $\Gr(d,\RR^{d+m})$.  If
$m+d$ is odd, then both $\Gr(d,\RR^{d+m})$ and $\projspace^{dm}(\RR)$
are non-orientable.  Eremenko and Gabrielov showed that the real
projective Wronski
map lifts to a map between the oriented double covers of these
spaces, and hence has a well-defined degree up to sign, which is
$\pm I_\lambda$. In this case, $I_\lambda \neq 0$ \cite{Whi}.
However, if $m+d$ is even, $m, d \geq 2$, then $I_\lambda = 0$.
If $m$ and $d$ are both even, this can be attributed to the fact
that $\Gr(d, \RR^{d+m})$ is orientable but $\projspace^{dm}(\RR)$ is not, 
and the degree of the real projective Wronski map cannot be defined.
Example~\ref{ex:tightcases} also shows that the real Wronski map
is not surjective in this case.  But if $m$ and $d$ are both odd,
then both spaces are orientable, and there is no obvious geometric
reason why the topological degree is $0$.

By comparing the lower bounds in Corollaries~\ref{cor:bounds} 
and~\ref{cor:EG-bound}, one can see that in general, neither lower
bound is tight.  We include two examples, which first appeared in
\cite{MT-bound}.

\begin{example}
Let $\lambda = 3^5$.  Then $|\sgchar(\mu)| \geq 6$ for every partition
$\mu$ of the form $2^{n_2}1^{n_1}$. (The minimum value of $|\sgchar(\mu)|$
occurs for $\mu = 2^41^7$.)  Thus, $N_g \geq 6$ for
every $g \in \monicsb{15}(\RR)$.  
As noted above, $I_\lambda = 0$ in this case, which shows that 
the Eremenko--Gabrielov lower bound is not tight.
\end{example}

\begin{example}
Let $\lambda = 3^6$.  Then $I_\lambda = 12$, which means $N_g \geq 12$
for all $g \in \monicsb{18}(\RR)$.  On the other hand,
if $\mu = 2^61^6$ or $\mu=2^71^4$, then $\sgchar(\mu) = 0$, so the
bound from Corollary~\ref{cor:bounds} is not tight.
\end{example}

\subsection{Mukhin and Tarasov's lower bound}
\label{sec:MT-bound}

As mentioned in the introduction, the lower bound in
Corollary~\ref{cor:bounds} is 
a special case of a more general inequality.
Let  $a_1, \dots, a_k$ be distinct real numbers, and let
$b_1, \overline{b_1}, \dots, b_l, \overline{b_l}$ be distinct complex
numbers.
Consider a $0$-dimensional intersection of the form
\begin{equation}
\label{eqn:complexintersection}
   X_{\alpha^1}(a_1) \cap \dots \cap X_{\alpha^k}(a_k)
   \ \cap\  X_{\beta^1}(b_1) \cap \dots \cap X_{\beta^l}(b_l)
   \ \cap\  X_{\beta^1}(\overline{b_1})
   \cap \dots \cap
   X_{\beta^l}(\overline{b_l})
\end{equation}
inside the Grassmannian $\Gr(d,\CC_{d+m-1}[z])$.
(Here, $\alpha^1, \dots, \alpha^k, \beta^1, \dots, \beta^l$ are 
partitions.)
In \cite{MT-bound},
Mukhin and Tarasov gave a lower bound for the number of real points
of such an intersection, which depends only on the discrete data
$(k,l, \alpha^1, \dots, \alpha^k, \beta^1, \dots, \beta^l)$.

In the case where,
$\alpha^1 = \lambda^\vee$, 
$\alpha^2 = \dots = \alpha^k = \beta^1 = \dots = \beta^l = \one$,
and $a_1 = \infty$, the
intersection~\eqref{eqn:complexintersection} is precisely 
$\Wr^{-1}(g)$, where
\[
  g(z) = \prod_{i=2}^k (z+a_i) \cdot 
   \prod_{j=1}^l(z+b_j)(z+\overline{b_j})
\,.
\]
The lower bound in Corollary~\ref{cor:bounds} 
coincides with the Mukhin--Tarasov lower bound in this special case.  
(This equivalence is not completely obvious the way things are 
stated in \cite{MT-bound}, but it is not hard to show using standard 
results in symmetric function theory.)  
The two proofs, however, are completely
different.  Mukhin and Tarasov obtain their inequality using the fact
that the number of real eigenvalues of an operator that is
self-adjoint with respect to an indefinite Hermitian form is
at least the absolute value of the signature of the form.  
The machinery in \cite{MTV2}
identifies the points of intersection \eqref{eqn:complexintersection}
with one-dimensional eigenspaces of an algebra commuting
self-adjoint operators; the point is real if and only if
the associated eigenvalues are real.  Hence, they obtain
a lower bound by computing the signature of the associated
form.  Since the signature of the form depends only on the discrete
data, it is an invariant for the problem.
We do not know why this invariant coincides with the topological
degree of the restricted Wronski map with the character orientation.
It is fairly natural to conjecture that the sign of a real intersection 
point is equal to the signature of the Hermitian form 
restricted to the associated eigenspace.

In the general case, by Lemma \ref{lem:schubertwronskian},
intersection~\eqref{eqn:complexintersection} is contained in the
fibre $\Wr^{-1}(g)$ 
\[
g(z) = 
\prod_{i=1}^k(z+a_i)^{|\alpha^i|} \cdot
\prod_{j=1}^l\big((z+b_j)(z+\overline{b_j})\big)^{|\beta^j|}
\,,
\]
and we might try to use this to count the points of
\eqref{eqn:complexintersection} with signs.
Unfortunately, these signs will not always be defined,
either because $g \notin \monics(\mu)$ for any $\mu$, or because
$g$ is a critical value of the Wronski map.
It is nevertheless possible to give a topological 
proof of the Mukhin--Tarasov lower bound in its full generality,
using Theorem~\ref{thm:main} in combination with results from \cite{Pur-Gr, Pur-ribbon}. 
Although the argument is too long to include here in full detail, we
give a brief sketch of how this can be done.

Let $\lambda = m^d$, so $\scell(\RR)$ is an open dense 
subset of $\Gr(d,\RR^{d+m})$.  The real points of the 
intersection~\eqref{eqn:complexintersection} will be $\scell(\RR)$,
and as noted above, specifically contained in the fibre $\Wr^{-1}(g)$.
Perturb $g \in \monics(\RR)$ to a nearby polynomial $g'$ which is a regular value of the Wronski map with distinct roots, in such a way that the number of real roots does not change. For any point $\boldx$ in the intersection~\eqref{eqn:complexintersection},
we can consider all points $\boldx' \in \Wr^{-1}(g)$ that perturbations
of $\boldx$.   By properties of $g'$ each such $\boldx'$ has a well-defined
sign.  We define the \defn{weight} of $\boldx$ to be the average of the signs
of all $\boldx' \in \Wr^{-1}(g')$ that are perturbations of $\boldx$.
One can show the sum of the weights of the points in the intersection~\eqref{eqn:complexintersection} is an invariant (i.e. it depends only on the discrete data), by relating to the topological degree of a restriction of the Wronski map. Moreover, since the weight of $\boldx$ is a rational number in the interval $[-1,1]$, this invariant gives a lower bound on the number of points in the intersection.

It remains to compute the invariant.  It suffices to compute
it for a single tuple of parameters $(a_1, \dots, a_k, b_1, \dots b_l)$,
and a specific perturbation $g$.
We take $a_i = u^i$, and $b_j = \imag u^{k+j}$, where (at the end of the day)
$u$ will be evaluated at some small parameter $\varepsilon > 0$,
and we perturb the roots of $g$ in such a way that all roots remain
either real or pure imaginary.
Using \cite[Theorem 3.15]{Pur-ribbon} and degeneration arguments 
similar to Section~\ref{sec:stablecurves}, we can label the points
of the intersection by certain equivalence classes of tableaux, and
we can say which of these points are real.  The
tableaux themselves label the perturbed points, and the equivalence
classes tell us which points of $\Wr^{-1}(g')$ are perturbations of
of the same point in $\Wr^{-1}(g)$.  As in Section~\ref{sec:orientation},
we can compute the signs of the signs of the points in $\Wr^{-1}(g')$ from 
the associated tableaux, and hence compute the weights of the points
in~\eqref{eqn:complexintersection}.  (It turns out that, for this fibre, all perturbations
of the same point have the same sign, so the weights are in fact $\pm 1$.)
This gives a combinatorial formula for the sum of the weights of the
points in the intersection.  We note that when there are no complex points
$(l=0)$, this argument reproduces the proof of the Littlewood-Richardson
rule in \cite{Pur-Gr}.  Finally, using known results from symmetric
function theory, one can identify this combinatorial formula as equivalent
to the formula given by Mukhin and Tarasov.

We note that proof of \cite[Theorem 3.15]{Pur-ribbon} referenced above uses 
Theorem~\ref{thm:MTV} in an essential way.  As such, to prove the stronger 
theorem independently of Mukhin--Tarasov--Varchenko's work, one needs to 
establish the weaker Theorem~\ref{thm:main} first.

\section{Concluding remarks}
\label{sec:conclusion}

\subsection{Generalization to Richardson varieties}
\label{sec:richgeneralization}

With a few small changes, Theorem~\ref{thm:main} generalizes to 
the open Richardson variety.

Let $\lambda/\lambda'$ be a skew shape such that $|\lambda/\lambda'| = n$.
The \defn{open Richardson variety} is the intersection of Schubert
cells
$\openrich := \schubertopposite^\circ(\infty) \cap X_{\lambda'}^\circ(0)$.
This is a smooth affine variety.
Let $\richmonics := \{g \in \monics \mid g(0) \neq 0\}$.
The Wronski map induces a finite proper map 
\begin{gather*}
    \Wrich : \openrich \to \richmonics \\
   \boldx \mapsto z^{-|\lambda'|}\,\Wr(\boldx, z)
\,.
\end{gather*}
For a partition $\mu = 2^{n_2}1^{n_1} \vdash n$, 
we define $\richmonics(\mu) := \monics(\mu) \cap \richmonics(\RR)$,
and $\openrich(\mu) := \Wrich^{-1}(\richmonics(\mu))$.  As before,
$\Wrich$ restricts to a proper map 
$\Wrich : \openrich(\mu) \to \richmonics(\mu)$
for each $\mu$.

To proceed further, we use the following technical fact 
about $\openrich$ (see \cite{LP-openrich}).

\begin{lemma}
\label{lem:UFD}
The divisor class group of $\openrich(\RR)$ is trivial.
\end{lemma}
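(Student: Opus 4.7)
My plan is to use the Schubert cell embedding of $\openrich$ together with the localization exact sequence for divisor class groups.

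\emph{Setup.} The open Richardson $\openrich$ sits as a locally closed subvariety of the Schubert cell $\scell$, and $\scell(\RR)$ is isomorphic to affine space $\affinespace^{|\lambda|}(\RR)$ via the affine coordinates of \eqref{eqn:affinecoords}; its coordinate ring is a polynomial ring over $\RR$, so $\mathrm{Cl}(\scell(\RR)) = 0$. However, $\openrich$ is generally not open in $\scell$; rather, it is open in the half-open Richardson $X^\lambda_{\lambda'}(0) = \scell \cap X_{\lambda'}(0)$. The plan therefore has two main ingredients: first, show that this half-open Richardson has trivial divisor class group over $\RR$; second, pass from the half-open Richardson to $\openrich$ by localization.

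\emph{Main step.} The key technical task is to establish $\mathrm{Cl}(X^\lambda_{\lambda'}(0)(\RR)) = 0$. When $\lambda/\lambda'$ is a rook strip (no two boxes in the same row, or no two boxes in the same column), this is immediate from Proposition~\ref{prop:richaffine}: the half-open Richardson is itself an affine subspace of $\scell$, with polynomial coordinate ring. For general skew shapes, I would use an explicit parameterization---a Deodhar-type chart, or a birational map to an open subvariety of affine space given by a judicious choice of \Plucker coordinates---to exhibit the coordinate ring of $X^\lambda_{\lambda'}(0)(\RR)$ as a unique factorization domain.

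\emph{Conclusion via localization.} Once $\mathrm{Cl}(X^\lambda_{\lambda'}(0)(\RR)) = 0$ is in hand, observe that $\openrich(\RR)$ is open in $X^\lambda_{\lambda'}(0)(\RR)$ with complement a finite union of codimension-one half-open Richardsons $X^\lambda_\kappa(0)$, indexed by partitions $\kappa \supsetneq \lambda'$ with $|\kappa/\lambda'| = 1$. The localization exact sequence
\[
\bigoplus_\kappa \ZZ \cdot [X^\lambda_\kappa(0)(\RR)] \to \mathrm{Cl}(X^\lambda_{\lambda'}(0)(\RR)) \to \mathrm{Cl}(\openrich(\RR)) \to 0
\]
then forces $\mathrm{Cl}(\openrich(\RR))$ to be a quotient of $0$, hence trivial.

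\emph{Main obstacle.} The hard part is the main step in the general (non-rook-strip) case. This requires either a careful inductive argument using repeated localization and \Plucker relations, or else exploiting a cluster-algebra structure on the coordinate ring to obtain the UFD property. Full details appear in~\cite{LP-openrich}.
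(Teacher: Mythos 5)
The paper does not actually prove this lemma: it is cited to \cite{LP-openrich}, listed as ``in preparation,'' so there is no internal argument to compare yours against. Within that constraint, your reduction is set up correctly: for an open immersion $U \subset X$ of normal varieties the sequence $\bigoplus_i \ZZ\cdot[Z_i] \to \mathrm{Cl}(X) \to \mathrm{Cl}(U) \to 0$ is exact, Richardson varieties are normal, and the codimension-one components of $X^\lambda_{\lambda'}(0) \setminus \openrich$ are indeed the $X^\lambda_\kappa(0)$ with $|\kappa/\lambda'|=1$. In the rook-strip case Proposition~\ref{prop:richaffine} does identify $X^\lambda_{\lambda'}(0)$ with a real affine space, so your argument is genuinely complete there.

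For general $\lambda/\lambda'$, however, the proposal has a gap exactly where the lemma has its content: the claim that the real coordinate ring of $X^\lambda_{\lambda'}(0)$ is a UFD is asserted, not proved. Two points make this more than a routine omission. First, you are aiming at a statement strictly stronger than the lemma, about a variety that is in general singular; the more flexible use of the localization sequence would only require the boundary classes $[X^\lambda_\kappa(0)(\RR)]$ to \emph{generate} $\mathrm{Cl}(X^\lambda_{\lambda'}(0)(\RR))$, not that this group vanish, and either version still demands an actual computation. Second, the paper's own closing remark in Section~\ref{sec:richgeneralization} (the circle $x^2+y^2=1$ over $\RR$, where a real point is locally but not globally principal) is there precisely to warn that triviality of the class group over $\RR$ is not formal and is sensitive to the real form: a Deodhar-type chart or a cluster structure identifies a dense open piece of $\openrich$ with a torus or affine space, but the divisors one needs to control are exactly those supported on the complement of such a chart, which a birational identification says nothing about. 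As it stands, then, your proposal is a correct reduction plus a deferral to the same forthcoming reference the paper itself leans on; the essential step remains to be supplied.
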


We can now define a character 
orientation function on $\openrich(\RR)$, with the following 
modifications to the discussion in Section~\ref{sec:orientation}. 
We work over $\tspace^1 = \affinespace^1 \setminus\{0\}$
instead of $\affinespace^1$.
For a partition $\kappa$, let $Y^{\lambda/\lambda'}_\kappa
\subset \openrich \times \tspace^1$
be the total space of
the family of 
$X_\kappa(a) \cap \openrich$, $a \in \tspace^1$.
Then define $Z^{\lambda/\lambda'}_\kappa \subset \openrich$ 
to the image of the
projection onto the first factor.  For $|\kappa| = 2$, 
$Z^{\lambda/\lambda'}_\kappa \subset \openrich$ is a hypersurface. 
By Lemma~\ref{lem:UFD}, it is the zero locus of a single real polynomial
function 
$\Phi^{\lambda/\lambda'}_\kappa$.  From here, we can proceed exactly
as in Section~\ref{sec:orientation}, using 
$\Phi^{\lambda/\lambda'}_\vdomino$ to define
the character orientation of $\openrich(\mu)$.

One can then compute the topological degree 
of $\Wrich : \openrich(\mu) \to \richmonics(\mu)$ with respect to
the character orientation.
The entire proof of Theorem~\ref{thm:main} works almost
verbatim if we simply replace $\lambda$ by $\lambda/\lambda'$ throughout.
The biggest modification is in Section~\ref{sec:mfamily}, where
we need to replace the family \eqref{eqn:mfamilydef}
by
\[
   \richmfamily := \mschubertone(a_1) \cap \dots \cap \mschubertone(a_n)
    \cap \mschubertopposite(\infty) \cap \mschubertlambdaprime(0)
\,.
\]
In this way, we obtain the following theorem.

\begin{theorem}
\label{thm:openrichdegree}
With the character orientation,
the topological degree of the map
$\Wrich : \openrich(\mu) \to \richmonics(\mu)$ is equal to
$\skewsgchar(\mu)$. 
\end{theorem}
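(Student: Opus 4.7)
The plan is to mirror the proof of Theorem~\ref{thm:main}, replacing the partition $\lambda$ by the skew shape $\lambda/\lambda'$ throughout. First I would set up the character orientation on $\openrich(\mu)$: fix an ambient orientation on $\openrich(\RR)$ and $\richmonics(\RR)$, and then on each connected component of $\openrich(\mu)$ twist by the sign of the defining function $\Phi^{\lambda/\lambda'}_\vdomino$. Lemma~\ref{lem:UFD} is used essentially here, since it guarantees that the hypersurface $Z^{\lambda/\lambda'}_\vdomino$ is globally cut out by a single regular function; without this, the componentwise sign analysis of Section~\ref{sec:orientation} would require a more delicate local treatment. The dimension count showing that $Z^{\lambda/\lambda'}_\vdomino \subset \openrich$ is a hypersurface proceeds as in Propositions~\ref{prop:Zclosed} and~\ref{prop:Zdim}, with $\scell$ replaced by $\openrich$ and $\affinespace^1$ by $\tspace^1$.

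The bulk of the work is to re-establish Lemmas~\ref{lem:specialfibres}, \ref{lem:MNsigns}, and~\ref{lem:SYTsigns} in the Richardson setting. For this I would work with the refined Speyer family
\[
\richmfamily = \mschubertone(a_1) \cap \dots \cap \mschubertone(a_n) \cap \mschubertopposite(\infty) \cap \mschubertlambdaprime(0),
\]
which is finite, flat, and Cohen--Macaulay over $\mbase$ by \cite[Theorem 1.1]{Sp}. The extra Schubert condition at the marked point $0$ propagates to the $\projspace^1$-chain analysis of Section~\ref{sec:p1-chains}: Theorem~\ref{thm:degeneratefibreiso} now labels fibre points by weakly increasing fillings of the skew shape $\lambda/\lambda'$ with content $\mu$, and the quadratic discriminant arguments of Lemmas~\ref{lem:calculation1} and~\ref{lem:calculation2} go through unchanged, because those calculations are sensitive only to the shape of $\lambda/\lambda'$, not to its embedding in $\lambda$. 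The polynomials $\pol^{(b)}(C)$ appearing in Lemma~\ref{lem:degeneratespecialfibres} are unchanged away from the component containing $0$, while the component containing $0$ contributes a single reduced point (corresponding to the prescribed initial shape $\lambda'$) to the product decomposition.

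The degenerating curves $C_\mu(u)$ and the two-parameter family $G_t(u)$ from Section~\ref{sec:mbasepaths} can then be reused without modification, with the lift constructions in Section~\ref{sec:mfamilypaths} carried out inside $\richmfamily$ rather than $\mfamily$. The main technical obstacle I anticipate is verifying the two-sheet covering analysis of Lemma~\ref{lem:SYTlift}, specifically that $\tmax(u)$ still exists and produces a simple crossing of $\critvar(\RR)$ once the boundary condition at $0$ is in effect. This reduces to checking that the extra condition does not cause the local length-$2$ subscheme $\mmap^{-1}(\calU) \cap \calU$ of Lemma~\ref{lem:SYTlift} to pick up extra ramification at the node above $0$; since the condition at $0$ prescribes a fixed shape $\lambda'$ on both sheets, the two sheets agree identically in the factor $\bigfibrefactor{1}$ and the analysis of the remaining factors is identical to that of Theorem~\ref{thm:main}.

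Given these lemmas, the proof of Theorem~\ref{thm:signs} carries over verbatim with skew Murnaghan--Nakayama tableaux replacing straight ones, and the final signed fibre count reads
\[
\sum_{\boldx \in \Wrich^{-1}(h_\mu)} \sgn(\boldx)
= \sum_{T \in \MN(\lambda/\lambda';\,\mu)} \sgn(T)
= \skewsgchar(\mu),
\]
where the last equality is the skew Murnaghan--Nakayama rule, a standard generalization of Theorem~\ref{thm:MNrule}.
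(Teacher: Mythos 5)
Your proposal is correct and follows essentially the same route as the paper: the paper proves Theorem~\ref{thm:openrichdegree} exactly by setting up the character orientation via Lemma~\ref{lem:UFD} and the family over $\tspace^1$, replacing the Speyer family by $\richmfamily$, and asserting that the proof of Theorem~\ref{thm:main} then carries over with $\lambda$ replaced by $\lambda/\lambda'$ throughout (with the skew Murnaghan--Nakayama rule supplying the final character evaluation). The paper gives no more detail than this sketch, so your additional verifications --- in particular that Lemmas~\ref{lem:calculation1} and~\ref{lem:calculation2} still apply because each factor $\mfibrefactor{b}$ is a Richardson variety between two consecutive straight partitions in the chain from $\lambda'$ to $\lambda$ --- are consistent with, and somewhat more explicit than, what is written there.
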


Here $\skewsgchar$ denotes the skew symmetric
group character (see \cite{GW}).
We therefore also obtain analogues of
Corollaries~\ref{cor:bounds} and~\ref{cor:cover}.  

\begin{corollary}
For $g \in \richmonics(\RR)$, 
let $N_g$ be the number of real points in the 
fibre $\Wrich^{-1}(g)$, counted with algebraic multiplicity.
If $g \in \overline{\richmonics(\mu)}$, then
\[
   |\skewsgchar(\mu)| \leq N_g \leq \numsyt{\lambda/\lambda'}
\,.
\]
\end{corollary}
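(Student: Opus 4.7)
The plan is to imitate the proof of Corollary~\ref{cor:bounds} essentially verbatim, with Theorem~\ref{thm:openrichdegree} playing the role there played by Theorem~\ref{thm:main}. First, I would establish the lower bound under the generic assumption that $g \in \richmonics(\mu)$ is a regular value of $\Wrich$. Then $\Wrich^{-1}(g)$ is a reduced finite subscheme of $\openrich$ consisting of $\numsyt{\lambda/\lambda'}$ complex points, which split into real points and complex conjugate pairs. The conjugate pairs contribute nothing to the topological degree of $\Wrich : \openrich(\mu) \to \richmonics(\mu)$ with respect to the character orientation, so that degree equals the signed count $\sum_{\boldx \in \Wrich^{-1}(g) \cap \openrich(\RR)} \sgn(\boldx)$, which is bounded in absolute value by $N_g$. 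By Theorem~\ref{thm:openrichdegree} this degree is $\skewsgchar(\mu)$, whence $N_g \geq |\skewsgchar(\mu)|$ for regular values.

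To pass from regular values to an arbitrary $g_0 \in \overline{\richmonics(\mu)}$, I would invoke upper semi-continuity of $N_g$ as a function of $g$. Since $g_0$ has at least $n_1$ real roots (counted with multiplicity) and at least $n_2$ conjugate pairs of roots, sufficiently small perturbations of $g_0$ can be arranged to land inside $\richmonics(\mu)$ at regular values of $\Wrich$; such perturbations are dense in a neighborhood of $g_0$. Under such a perturbation, a real root of $g_0$ of multiplicity $m$ can only contribute at most $m$ to the real point count of the perturbed fibre (a real multiple point may split off a complex conjugate pair, but no new real points appear from nowhere), giving $N_g \leq N_{g_0}$. Combining, $|\skewsgchar(\mu)| \leq N_g \leq N_{g_0}$ as required.

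The upper bound is immediate: since $\Wrich : \openrich \to \richmonics$ is a finite morphism of algebraic degree $\numsyt{\lambda/\lambda'}$, every complex fibre has at most $\numsyt{\lambda/\lambda'}$ points counted with multiplicity, and the real points form a subset of the complex points.

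The proof contains no essential new difficulty beyond that of Corollary~\ref{cor:bounds}; the real content has already been encoded in Theorem~\ref{thm:openrichdegree}, whose derivation via the skew-shape analogue of the arguments in Sections~\ref{sec:orientation} and~\ref{sec:stablecurves} is the genuinely substantive step. If any obstacle lies hidden, it is in verifying that the character orientation function $\Phi^{\lambda/\lambda'}_\vdomino$ is well-defined globally on $\openrich$, which relies on Lemma~\ref{lem:UFD} (the triviality of the divisor class group of $\openrich(\RR)$). This however has already been arranged in the setup preceding Theorem~\ref{thm:openrichdegree}, so the present corollary requires no further input.
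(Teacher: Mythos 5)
Your proposal is correct and follows essentially the same route as the paper: the paper deduces this corollary exactly as Corollary~\ref{cor:bounds}, i.e.\ the signed count of real fibre points over a regular value in $\richmonics(\mu)$ equals the topological degree $\skewsgchar(\mu)$ from Theorem~\ref{thm:openrichdegree} and is bounded by $N_g$, with upper semi-continuity of $N_g$ handling the closure and the algebraic degree giving the upper bound. Your added remarks on Lemma~\ref{lem:UFD} concern the well-definedness of the character orientation, which is indeed already settled before Theorem~\ref{thm:openrichdegree} and requires no further input here.
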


\begin{corollary}
\label{cor:richcover}
$\Wrich : \openrich(1^n) \to \richmonics(1^n)$ is a topologically trivial 
covering map of degree $\numsyt{\lambda/\lambda'}$.  
\end{corollary}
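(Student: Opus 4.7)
The plan is to derive Corollary~\ref{cor:richcover} from the preceding corollary in this section, in direct analogy to how Corollary~\ref{cor:cover} follows from Corollary~\ref{cor:bounds}. First I would invoke the identity $\skewsgchar(1^n) = \numsyt{\lambda/\lambda'}$, which is the standard statement that a skew Specht module has dimension equal to the number of standard skew tableaux of the corresponding shape. The preceding corollary then yields
\[
\numsyt{\lambda/\lambda'} \ =\  |\skewsgchar(1^n)|\ \leq\ N_g\ \leq\ \numsyt{\lambda/\lambda'}
\]
for every $g \in \richmonics(1^n)$, so both inequalities are equalities.

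Since the algebraic degree of $\Wrich$ equals $\numsyt{\lambda/\lambda'}$, the equality $N_g = \numsyt{\lambda/\lambda'}$ forces every complex fibre point to be real. To promote this algebraic statement to the assertion that $\Wrich : \openrich(1^n) \to \richmonics(1^n)$ is a topological covering, I would invoke the argument of Eremenko and Gabrielov referenced in the derivation of Corollary~\ref{cor:cover} (see Section~\ref{sec:transversality}). That argument is essentially local and relies only on smoothness of the domain and codomain, properness of the map, and equality of topological and algebraic degrees on the real locus -- all inputs that carry over verbatim to the Richardson setting, since $\openrich(\RR)$ is smooth and $\Wrich$ is proper as a restriction of a finite morphism.

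Finally, topological triviality of the covering follows from the simple connectivity of each component of $\richmonics(1^n)$. Identifying $\monics(1^n)$ with the open cell $\{r_1 < \cdots < r_n\} \subset \RR^n$ of ordered tuples of roots and imposing the additional condition $r_i \neq 0$ partitions $\richmonics(1^n)$ into $n+1$ convex (hence contractible) components, indexed by the number of negative roots. A finite covering over a simply connected base is automatically trivial, so the restriction to each component is a disjoint union of $\numsyt{\lambda/\lambda'}$ copies of the component. The only step I expect to require any care is confirming that the Eremenko--Gabrielov unramification argument ports cleanly to the open Richardson variety; the character identity and the component analysis are standard.
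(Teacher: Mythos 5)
Your proposal is correct and matches the paper's argument: reality of every fibre point follows from the Richardson analogue of Corollary~\ref{cor:bounds} together with $\skewsgchar(1^n)=\numsyt{\lambda/\lambda'}$, the covering property is then exactly Lemma~\ref{lem:realunramified} applied to $\finitemap=\Wrich$ and $\calU=\richmonics(1^n)$ (the map is finite and $\openrich$ is smooth, so that lemma ports over with no issue), and triviality follows since each of the $n+1$ components of $\richmonics(1^n)$ is contractible. The paper additionally remarks that one can bypass Theorem~\ref{thm:openrichdegree} entirely by noting $\Wrich^{-1}(g)\subset\Wr^{-1}(z^{|\lambda'|}g)$ and quoting reality from the Schubert-cell case, but your route via the skew character bound is the one the paper presents first.
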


Corollary~\ref{cor:richcover} can also be 
deduced without Theorem~\ref{thm:openrichdegree}, as 
we explain next, in Section~\ref{sec:transversality}.

\begin{remark}
It is important that $\vdominovar \subset \openrich$ 
is a principal divisor
over $\RR$.  To see why, recall that our goal is to construct an 
orientation of $X(\RR) \setminus V(\RR)$, where $X$ is a smooth affine 
variety
over $\RR$ and $V \subset X$ is a hypersurface, such that the orientation 
reverses along $V(\RR)$.  
If $X(\RR)$ is orientable, this implies the real line bundle 
associated to $V(\RR)$ must also be orientable, which, in our case,
is guaranteed by Lemma~\ref{lem:UFD}.
As an example of what could go wrong, 
consider the circle $X = \Spec \FF[x,y]/ \langle x^2 +y^2-1\rangle$,
and let $V$ be the point $(1,0)$.  
If $\FF = \RR$, $V(\RR)$ is not a principal divisor on $X(\RR)$, 
and clearly we cannot orient 
$X(\RR) \setminus V(\RR)$ in such a way that the orientation reverses 
along $V(\RR)$.  Nevertheless $V(\RR)$ is locally principal, and
its complexification $V(\CC)$ is a principal divisor of $X(\CC)$,
so neither of these criteria is sufficient.
\end{remark}

\subsection{Transversality}
\label{sec:transversality}

Since some of the known applications of the Shapiro--Shapiro conjecture 
rely on some form of transversality or reducedness property, such as 
Corollary~\ref{cor:cover}, we give a brief account of those 
that are known to follow from Theorem~\ref{thm:MTV}.  The strongest 
known transversality theorem for Shapiro-type Schubert intersections 
is Mukhin, Tarasov and Varchenko's theorem in \cite{MTV2}, which 
does not appear to follow easily from Theorem~\ref{thm:MTV}.
However, we do get some important special cases.  These
are deduced via the following lemma (see \cite[Theorem 13.2]{Sot-RSEG}).

\begin{lemma}
\label{lem:realunramified}
Let $\finitemap : X \to Y$ be a finite morphism of smooth 
varieties defined over $\RR$.  
If $\calU \subset Y(\RR)$ is an analytic open subset 
such that every point of $\finitemap^{-1}(\calU)$ is real, 
then $\finitemap$ is unramified over $\calU$.
\end{lemma}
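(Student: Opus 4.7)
I would argue by contradiction. Suppose $\finitemap$ is ramified at some $x_0 \in X$ with $\finitemap(x_0) = y_0 \in \calU$; since $x_0 \in \finitemap^{-1}(\calU)$, the hypothesis forces $x_0 \in X(\RR)$. The strategy is to exploit the ramification to produce a non-real point of $X$ arbitrarily close to $x_0$ whose image under $\finitemap$ lies in $\calU$, contradicting the hypothesis.

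First I would produce a nonzero real vector in the kernel $K := \ker(d\finitemap_{x_0})$. The ramification hypothesis gives $K \neq 0$, and since $\finitemap$ is defined over $\RR$ and $x_0, y_0$ are real, $d\finitemap_{x_0}$ commutes with complex conjugation on tangent spaces; hence $K$ is a conjugation-invariant complex subspace of $T_{x_0}X$, so it contains a nonzero real vector $v \in T_{x_0}X(\RR)$.

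Next, I would pass to real-analytic local coordinates near $x_0$ and $y_0$, which are available since $X$ and $Y$ are smooth and defined over $\RR$. Identify neighborhoods with open sets in $\CC^n$ on which complex conjugation is the standard one; take $x_0 = y_0 = 0$, $v \in \RR^n$, and write $\finitemap$ as a convergent power series with real coefficients. Consider the complex analytic curve $t \mapsto tv$, $t \in \CC$ near $0$. Since $\finitemap$ is finite, the curve $\{tv : t \in \CC\}$ is not contained in the zero-dimensional fibre over $y_0$, so
\[
\finitemap(tv) = \alpha\,t^e + O(t^{e+1})
\]
for some integer $e \geq 2$ (with $e \geq 2$ because $d\finitemap_{x_0}(v) = 0$) and some nonzero vector $\alpha$; moreover $\alpha \in \RR^n$, since the power series of $\finitemap$ and the vector $v$ are real.

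Finally, set $\zeta := e^{\imag\pi/e}$, so that $\zeta^e = -1 \in \RR$ while $\zeta \notin \RR$ (since $\sin(\pi/e) \neq 0$ for $e \geq 2$). For small $s > 0$, put $x := s\zeta v$. Then $x \notin X(\RR)$, because $v$ is a nonzero real vector and $s\zeta$ is non-real; on the other hand,
\[
\finitemap(x) = -s^e \alpha + O(s^{e+1})
\]
lies in $\RR^n$ and tends to $y_0$ as $s \to 0^+$, so $\finitemap(x) \in \calU$ for all sufficiently small $s > 0$. This gives $x \in \finitemap^{-1}(\calU) \setminus X(\RR)$, contradicting the hypothesis. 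The main subtlety is establishing that $K$ contains a nonzero real vector; once that is in hand, the contradiction is produced by a single well-chosen root of unity, and the argument is uniform in the ramification index $e$ --- no analysis of the structure of the ramification divisor or of the singularities of the scheme-theoretic fibre is required.
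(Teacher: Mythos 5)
The paper does not actually prove this lemma; it cites \cite[Theorem 13.2]{Sot-RSEG}, so your argument has to stand on its own. Your setup (reality of $x_0$, a real kernel vector $v$, real-analytic coordinates, the expansion $\finitemap(tv)=\alpha t^e+O(t^{e+1})$ with $\alpha\in\RR^n$ and $e\geq 2$) is all correct, but the final step contains a genuine gap: the point $\finitemap(s\zeta v)$ is \emph{not} real. Writing $\finitemap(tv)=\sum_{j\geq e}\beta_j t^j$ with $\beta_j\in\RR^n$, you get $\finitemap(s\zeta v)=-s^e\alpha+\sum_{j>e}\beta_j s^j\zeta^j$, and $\zeta^j=e^{\imag j\pi/e}$ is real only when $e\mid j$; the imaginary part of $\finitemap(s\zeta v)$ is therefore of genuine size $O(s^{e+1})$ and nonzero in general. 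Concretely, for $\finitemap(x)=x^2+x^3$ on $\CC$, $v=1$, $e=2$, $\zeta=\imag$, you get $\finitemap(s\imag)=-s^2-\imag s^3\notin\RR$. So $x=s\zeta v$ need not lie in $\finitemap^{-1}(\calU)$ and no contradiction is reached. A point that is $O(s^{e+1})$-close to $Y(\RR)$ cannot be promoted to an exactly real image without further work, and replacing $s\zeta v$ by an honest preimage of the real point $-s^e\alpha$ requires controlling $\finitemap$ near $s\zeta v$ (where it may again be critical), which is precisely the content you are trying to avoid.

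The gap is structural rather than cosmetic. When $n>1$, the image of the complex line $\CC v$ meets $Y(\RR)$, generically, only along the image of $\RR v$: the condition $\operatorname{Im}\finitemap(tv)=0$ is $n$ real equations in the two real unknowns $\operatorname{Re}t,\operatorname{Im}t$, so for $n\geq 2$ one should not expect any non-real $t$ with $\finitemap(tv)$ real. The non-real preimages of real points that witness ramification therefore do not live on $\CC v$ at all. The standard repair is to go the other way: take a real curve germ $C\subset Y$ through $y_0$ (e.g.\ $r\mapsto -r\alpha$), pull it back to the complex curve germ $D=\finitemap^{-1}(C)$ through $x_0$ (one-dimensional since $\finitemap$ is finite, and of local degree $e\geq2$ over $C$ at $x_0$), and analyze the branches of $D$ via Puiseux parametrizations together with the conjugation action on them; a branch of degree $\geq 2$ over $C$, or a conjugate pair of branches, produces a genuinely non-real point of $\finitemap^{-1}(\calU)$. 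This is essentially the argument in Sottile's Theorem 13.2, which is what the paper invokes; as written, your shortcut does not substitute for it.
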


For example, taking $\psi$ to be the Wronski map 
$\Wr : \scell \to \monics$, and
$\calU = \monics(1^n)$ yields Corollary~\ref{cor:cover}.  

We can also take $\psi$ to be the map 
$\Wrich : \openrich \to \richmonics$, and $\calU = \richmonics(1^n)$,
which gives Corollary~\ref{cor:richcover}.
Restated in terms of Schubert intersections, this says the following.

\begin{corollary}
\label{cor:transverse}
Let $\alpha, \beta$ be partitions 
and let $a_1, a_2, \dots, a_{n+2} \in \projspace^1(\RR)$ be distinct,
where $|\alpha| + |\beta| + n = dm$.
Then the intersection
\[
   \schubertone(a_1) \cap \dots \cap \schubertone(a_n)
    \cap
    X_\alpha(a_{n+1}) \cap X_\beta(a_{n+2}) 
\]
in $\Gr(d,d+m)$ is transverse.
\end{corollary}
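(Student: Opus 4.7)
The plan is to reduce the Schubert intersection to a fibre of the Richardson Wronski map $\Wrich$, where Corollary~\ref{cor:richcover} can be applied directly. First, by the $\PGL_2(\RR)$-equivariance of the Wronski map and the transitive action of $\PGL_2(\RR)$ on ordered pairs of distinct points in $\projspace^1(\RR)$, I would apply a real M\"obius transformation sending $(a_{n+1}, a_{n+2})$ to $(\infty, 0)$; after relabelling, $a_1, \ldots, a_n$ become distinct nonzero real numbers. If $\beta \not\subset \alpha^\vee$ then $X_\alpha(\infty) \cap X_\beta(0)$ is already empty and there is nothing to prove, so I set $\lambda := \alpha^\vee$ and $\lambda' := \beta$; then $\lambda' \subset \lambda$ and $|\lambda/\lambda'| = n$.

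Next, I would identify
\[
I := \schubertone(a_1) \cap \cdots \cap \schubertone(a_n) \cap X_\alpha(\infty) \cap X_\beta(0)
\]
with $\Wrich^{-1}(g)$, where $g(z) := \prod_{i=1}^n (z + a_i) \in \richmonics(1^n)$. For $\boldx \in I$, Lemma~\ref{lem:schubertwronskian} forces $\Wr(\boldx)$ to be divisible by each $(z + a_i)$ and by $z^{|\beta|}$, while having degree at most $dm - |\alpha|$. Since $n + |\beta| = dm - |\alpha|$ and the $a_i$ are distinct and nonzero, equality must hold throughout: $\Wr(\boldx) = z^{|\beta|} \prod_{i=1}^n (z + a_i)$, with degree exactly $dm - |\alpha|$. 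Applying parts (i) and (ii) of the same lemma, $\boldx$ lies in the open Schubert cells $X^\circ_\alpha(\infty) = \scell$ and $X^\circ_\beta(0)$, so $\boldx \in \openrich$ and $\Wrich(\boldx) = g$. Conversely every point of $\Wrich^{-1}(g)$ lies in $I$, so the two schemes coincide.

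Finally, I would compare algebraic and topological counts. The algebraic length of $I$ equals the Schubert intersection number, which is the algebraic degree of $\Wrich$ (equivalently, the upper bound $\numsyt{\lambda/\lambda'}$ in the Richardson analogue of Corollary~\ref{cor:bounds}). On the other hand, Corollary~\ref{cor:richcover} says $\Wrich : \openrich(1^n) \to \richmonics(1^n)$ is a topologically trivial covering of degree $\numsyt{\lambda/\lambda'}$, so $g$ has at least $\numsyt{\lambda/\lambda'}$ distinct real preimages. These counts coincide, forcing $I$ to be a reduced scheme of $\numsyt{\lambda/\lambda'}$ real points, which is precisely the statement that the Schubert intersection is transverse. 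The main obstacle is essentially nil once Corollary~\ref{cor:richcover} is in hand: the argument reduces entirely to degree and divisibility bookkeeping via Lemma~\ref{lem:schubertwronskian}.
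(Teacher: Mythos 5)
Your proposal is correct and follows essentially the same route as the paper: the paper presents Corollary~\ref{cor:transverse} as precisely the restatement of Corollary~\ref{cor:richcover} in Schubert-intersection language, which is the translation you carry out explicitly (moving $(a_{n+1},a_{n+2})$ to $(\infty,0)$ by $\PGL_2(\RR)$, identifying the intersection with $\Wrich^{-1}(g)$ via Lemma~\ref{lem:schubertwronskian}, and matching the algebraic length $\numsyt{\lambda/\lambda'}$ against the $\numsyt{\lambda/\lambda'}$ distinct real points supplied by the covering-map statement). Your bookkeeping is accurate, so nothing further is needed.
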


For the orthogonal Grassmannian $\OG$, there is a finite map to 
projective space which is an 
analogue of the Wronski map.
In~\cite{Pur-OG}, the second author showed that Theorem~\ref{thm:MTV} 
implies an analogous reality theorem for $\OG$, and thus 
Lemma~\ref{lem:realunramified} can also be applied to $\OG$.  
In particular, Theorem~\ref{thm:MTV} 
implies the analogue of Corollary~\ref{cor:transverse} for $\OG$.
It is interesting to note that we do not have an obvious analogue of 
Theorem~\ref{thm:main} for $\OG$, as there is only a single
codimension $2$ Schubert variety in $\OG$.

\subsection{Open questions}

Quite a few generalizations of Theorem~\ref{thm:MTV} have been conjectured,
supported by large quantities of computational evidence.  We will not
give a complete overview of these here, but instead refer the reader
to the discussion in \cite[Ch. 13 \& 14]{Sot-RSEG}.
Although it is not immediately obvious how to generalize 
Theorem~\ref{thm:main} to these other settings, we hope that it
will lead to new ideas toward solving these problems.

Since a large part of the proof of Theorem~\ref{thm:main}
involved the moduli space of stable curves,
it is natural to wonder if there is an a statement analogous to
Theorem~\ref{thm:main} involving the finite family
$\mmap : \mfamily \to \modsc{n+1}$,
instead of the Wronski map.
The semialgebraic sets $\scell(\mu)$ and $\monics(\mu)$ would be replaced
by actual real loci 
$\mfamily(\RR^\sigma)$ and $\modsc{n+1}(\RR^\sigma)$ respectively.
One would then expect the character orientation function
to be replaced by a section of the anticanonical bundle,
which naturally defines an orientation.  This would be very nice, but
there are some difficulties with this idea, not the least of 
which is the fact that $\modsc{n+1}(\RR^\sigma)$ is not always orientable. 

Another possible approach to proving Theorem~\ref{thm:main} would be to
determine the character orientation function $\vdominofcn$ explicitly.
With this, it should be possible to compute the signs of the points 
$\MNpoint_T$ directly, i.e. without using Lemmas~\ref{lem:MNsigns} 
and~\ref{lem:SYTsigns}.  Conceivably, this could yield a more concise
proof.  It would also be interesting to see if the ideas in \cite{MT-bound} 
can be used to obtain an alternate proof of Theorem~\ref{thm:main}.

Finally, the obvious question: is there a similar interpretation of
$\sgchar(\mu)$ if $\mu$ is an arbitrary partition of $n$?  We propose
that the general answer should take the following form.  
If $\sigma \in \symgroup_n$ is a permutation of cycle type $\mu$, 
and $C \in \modsc{n+1}^\sigma$ is a $\sigma$-fixed curve, then 
\[
  \sgchar(\mu) = \sum_{\mfpoint \in \mmap^{-1}(C)^\sigma} \sgn(\mfpoint)
\]
where $\mmap^{-1}(C)^\sigma$ denotes the $\sigma$-fixed subscheme
of the fibre of the map $\mmap : \mfamily \to \modsc{n+1}$, and
$\sgn(\mfpoint) \in \{\pm 1\}$ 
is some geometrically meaningful sign assigned
to $\mfpoint$.  If $C$ is a $\projspace^1$-chain without double marked
points, it is not hard to show
that $\#\mmap^{-1}(C)^\sigma = \#\MN(\lambda;\mu')$ for some composition 
$\mu'$ with associated partition $\mu$, so in this case, there is 
some way to assign signs so that this equation is true.  However, not all
$\sigma$-fixed curves are of this form, and without
some sort of geometric interpretation for the signs, this is 
not particularly deep.  The 
crux of the question is to explain the geometric meaning of 
the signs in the Murnaghan--Nakayama rule.


\bigskip

\footnotesize%
\noindent
\textsc{J. Levinson, Mathematics Department, 
       Simon Fraser University,
       Burnaby, BC, V5A 1S6, Canada.} \texttt{jake\_levinson@sfu.ca}.

\bigskip

\noindent
\textsc{K. Purbhoo, Combinatorics and Optimization Department, 
       University of Waterloo,  
       Waterloo, ON, N2L 3G1, Canada.} \texttt{kpurbhoo@uwaterloo.ca}.


\begin{thebibliography}{00}

\bibitem{BV}
 S. Billey and R. Vakil, \emph{Intersections of Schubert varieties and other permutation array schemes}, Algorithms in algebraic geometry (A. Dickenstein, F.-O. Schreyer, and A. J. Sommese, eds.), IMA Vol. Math. Appl., \textbf{146}, Springer, New York, 2008, pp. 21--54.

\bibitem{CMPB} M. Chan, A. L\'{o}pez Mart{\i}n, N. Pflueger and M. Teixidor i Bigas, \emph{Genera of Brill-Noether curves and staircase paths in Young tableaux}, Trans. Amer. Math. Soc., \textbf{370} (2018), 3405--3439.

\bibitem{Dev} S. Devadoss, \emph{Combinatorial equivalence of real moduli spaces}, Notices Amer. Math. Soc., \textbf{51} (2004) no. 6, 620--628.

\bibitem{EG-deg} A. Eremenko and A. Gabrielov,
\emph{Degrees of real Wronski maps},
Disc. Comp. Geom., \textbf{28} (2002), 331--347.

\bibitem{EG-pole} A. Eremenko and A. Gabrielov,
\emph{Counterexamples to pole placement by static output feedback.}
Linear Algebra Appl., \textbf{351/352} (2002), 211--218.

\bibitem{EH} D. Eisenbud and J. Harris,
\emph{Divisors on general curves and cuspidal rational curves},
Invent. Math., \textbf{74} (1983), 371--418.

\bibitem{FRT} J. M. Frame, G. d. B. Robinson  and R. M. Thrall,
\emph{The hook graphs of the symmetric group},
Canad. J. Math., \textbf{6} (1954), 316--325.

\bibitem{GW}
A. Garsia and M. Wachs,
\emph{Combinatorial aspects of skew representations of the symmetric group}, 
J. Combin. Theory Ser. A, \textbf{50} (1989) no. 1, 47--81.

\bibitem{GL}
M.~Gillespie and J.~Levinson,
\emph{K-theory and monodromy of Schubert curves via generalized jeu de taquin},
J. Alg. Comb., \textbf{45} (2017) no. 1, 191--243.

\bibitem{HKRW}
I. Halacheva, J. Kamnitzer, L. Rybnikov, A. Weekes,
\emph{Crystals and monodromy of Bethe vectors}, preprint (2017), arXiv:1708.05105.

\bibitem{HHMS}
J. Hauenstein, N. Hein, A. Mart\'in del Campo, and F. Sottile,
\emph{Beyond the Shapiro Conjecture and Eremenko-Gabrielov lower bounds},
website (2010).
\texttt{\small 
www.math.tamu.edu/{\footnotesize$\sim$}frank.sottile/research/pages/lower\_Shapiro/}

\bibitem{HHS}
N. Hein, C. Hillar, and F. Sottile,
\emph{Lower Bounds in Real Schubert Calculus},
S\~ao Paulo J. Math. Sci., \textbf{7} (2013) no. 1, 33--58.

\bibitem{Kl}
S.~L. Kleiman, \emph{The transversality of a general translate}, Compositio
Math., \textbf{28} (1974), 287--297.

\bibitem{KS}
V. Kharlamov and F. Sottile,
\emph{Maximally inflected real rational curves},
Mosc. Math. J., \textbf{3} (2003), no. 3, 947--987, 1199--1200.

\bibitem{LS}
A. Leykin and F. Sottile,
\emph{Galois groups of Schubert problems via homotopy continuation}, Math. Comput., \textbf{28} (2009) no. 267, 1749-1765.

\bibitem{LMV}
K. Lu, E. Mukhin and A. Varchenko,
\emph{Self-dual Grassmannian, Wronski map, and representations of $\mathfrak{gl}_N$, $\mathfrak{sp}_{2r}$, $\mathfrak{so}_{2r+1}$},
Pure Appl. Math. Q., \textbf{13} (2017), no. 2, 291--335. 

\bibitem{MT-bound}
E. Mukhin and V. Tarasov,
\emph{Lower bounds for numbers of real solutions 
in problems of Schubert calculus},
Acta Math.,  \textbf{217}, (2016), no. 1,  177--193.

\bibitem{MTV1}
E. Mukhin, V. Tarasov and A. Varchenko,
\emph{The B. and M. Shapiro conjecture in real algebraic geometry
and the Bethe Ansatz}, Ann. Math., \textbf{170} (2009), no. 2, 863--881.

\bibitem{MTV2}
E. Mukhin, V. Tarasov and A. Varchenko,
\emph{Schubert calculus and representations of general linear group},
J. Amer. Math. Soc.,  \textbf{22}  (2009),  no. 4, 909--940.

\bibitem{MTV3}
E. Mukhin, V. Tarasov and A. Varchenko,
\emph{On reality property of Wronski maps},
Confluentes Math., \textbf{1} (2009), no. 2, 225--247.

\bibitem{Lev}
J. Levinson,
\emph{One-dimensional Schubert problems with respect to osculating flags},
Canad. J. Math., \textbf{69} (2016), 143--177.

\bibitem{LP-openrich}
J. Levinson and K. Purbhoo,
\emph{Class groups of open Richardson varieties in the
Grassmannian are trivial}, J. Commut. Algebra, to appear.

\bibitem{Os} B. Osserman, 
\emph{A limit linear series moduli scheme}, 
Ann. Inst. Four., \textbf{56} (2006) no. 4, 1165--1205.

\bibitem{Pur-Gr}
K. Purbhoo,
\emph{Jeu de taquin and a monodromy problem for Wronksians
of polynomials}, Adv. Math.,  \textbf{224} (2010) no. 3, 827--862.

\bibitem{Pur-OG}
K. Purbhoo,
\emph{Reality and transversality for Schubert calculus in
$\mathrm{OG}(n,2n{+}1)$},
Math. Res. Lett., \textbf{17} (2010) no. 6, 1041--1046.

\bibitem{Pur-ribbon}
K. Purbhoo,
\emph{Wronskians, cyclic group actions, and ribbon tableaux},
Trans. Amer. Math. Soc., \textbf{365} (2013), 1977--2030.

\bibitem{Pur-LG}
K. Purbhoo,
\emph{A marvellous embedding of the Lagrangian Grassmannian},
J. Combin. Theory Ser. A, \textbf{155} (2018), 1--26. 

\bibitem{RS}
J. Rosenthal and F. Sottile,
\emph{Some remarks on real and complex output feedback},
Sys. \& Control Lett., \textbf{33} (1998), 73--80.

\bibitem{Ry}
L. Rybnikov, \emph{Cactus group and monodromy of Bethe vectors}. Int. Math. Res. Not., \textbf{2018} (2016) no. 1, 202--235.

\bibitem{Sp}
D. Speyer,
\emph{Schubert problems with respect to osculating flags
of stable rational curves},
Alg. Geom., \textbf{1} (2014) no. 1, 14--45.

\bibitem{Sot-F}
F. Sottile,
\emph{Frontiers of reality in Schubert calculus},
Bull. Amer. Math. Soc., \textbf{47}  (2010),  no. 1, 31--71.

\bibitem{Sot-RSEG}
F. Sottile,
\emph{Real Solutions to Equations From Geometry,}
University Lecture Series, \textbf{57}, AMS, 2011.

\bibitem{SS}
F. Sottile and E. Soprunova,
\emph{Lower Bounds for Real Solutions to Sparse Polynomial Systems},
Adv. Math., \textbf{204}, (2006), no. 1, 116--151. 

\bibitem{SW}
F. Sottile and J. White, \emph{Double transitivity of Galois groups in Schubert calculus of Grassmannians}, Alg. Geom., \textbf{2} (2015), no. 4, 422--445.

\bibitem{Vak} R. Vakil,
\emph{Schubert induction},
Ann. Math., \textbf{164} (2006), 489--512. 

\bibitem{Whi} D. White, 
\emph{Sign-balanced posets}, 
J. Combin. Theory, \textbf{95} (2001), 1--38.

\bibitem{Whi2} N. White, \emph{The monodromy of real Bethe vectors for the Gaudin model}, preprint (2015), arXiv:1511.04740.

\end{thebibliography}
\end{document}